\newcommand{\urltilde}{\kern -.15em\lower .7ex\hbox{~}\kern .04em}
\renewcommand{\abovecaptionskip}{0pt}
\renewcommand{\belowcaptionskip}{6pt}
\renewcommand{\@makecaption}[2]{
\vspace{\abovecaptionskip}%
\sbox{\@tempboxa}{#1. #2}%
\global\@minipagefalse \hbox to \hsize {{\scshape \hfil #1.
#2\hfil}} \vspace{\belowcaptionskip}}
\newcommand{\Hom}{\operatorname{Hom}}
\newcommand{\rk}{\operatorname{rk}}
\newcommand{\Ker}{\operatorname{Ker}}
\newcommand{\ad}{\operatorname{ad}}
\newcommand{\Gr}{\operatorname{Gr}}
\newcommand{\Ad}{\operatorname{Ad}}
\newcommand{\diag}{\operatorname{diag}}
\newcommand{\GL}{\operatorname{GL}}
\newcommand{\SL}{\operatorname{SL}}
\newcommand{\Sp}{\operatorname{Sp}}
\newcommand{\Spin}{\operatorname{Spin}}
\newcommand{\SO}{\operatorname{SO}}
\newcommand{\sat}{\mathrm{sat}}
\newcommand{\Supp}{\operatorname{Supp}}
\newcommand{\ZZ}{\mathbb Z}
\newcommand{\QQ}{\mathbb Q}
\newcommand{\CC}{\mathbb C}
\newtheorem{theorem}{Theorem}
\newtheorem{proposition}[theorem]{Proposition}
\newtheorem{lemma}[theorem]{Lemma}
\newtheorem{corollary}[theorem]{Corollary}
\newtheorem*{question*}{Question}
\theoremstyle{definition}
\newtheorem{definition}[theorem]{Definition}
\newtheorem{example}[theorem]{Example}
\theoremstyle{remark}
\newtheorem{remark}[theorem]{Remark}
\numberwithin{equation}{section}
\numberwithin{equation}{section}
\newcounter{num}[table]
\newcommand{\no}{\refstepcounter{num}\arabic{num}}
\newcounter{alg}
\newcounter{stepalg}[alg]
\newcommand{\newalg}{\refstepcounter{alg}\Alph{alg}}
\newcommand{\step}{\refstepcounter{stepalg}\Alph{alg}\arabic{stepalg}}
\renewcommand{\arraystretch}{1.2}
\begin{document}

\renewcommand{\proofname}{Proof}
\renewcommand{\abstractname}{Abstract}
\renewcommand{\refname}{References}
\renewcommand{\figurename}{Figure}
\renewcommand{\tablename}{Table}

\title[Degenerations of spherical subalgebras and spherical roots]
{Degenerations of spherical subalgebras\\ and spherical roots}

\author{Roman Avdeev}


\address{%
{\bf Roman Avdeev} \newline\indent National Research University ``Higher School of Economics'', Moscow, Russia}

\email{suselr@yandex.ru}


\subjclass[2010]{14M27, 14M17, 20G07}

\keywords{Algebraic group, spherical variety, spherical subgroup, degeneration}

\begin{abstract}
We obtain several structure results for a class of spherical subgroups of connected reductive complex algebraic groups that extends the class of strongly solvable spherical subgroups.
Based on these results, we construct certain one-parameter degenerations of the Lie algebras corresponding to such subgroups.
As an application, we exhibit explicit algorithms for computing the set of spherical roots of such a spherical subgroup.
\end{abstract}

\maketitle

\section{Introduction}

Let $G$ be a connected reductive complex algebraic group.
A $G$-variety (that is, an algebraic variety equipped with a regular action of~$G$) is said to be \textit{spherical} if it is irreducible, normal, and possesses an open orbit with respect to the induced action of a Borel subgroup $B \subset G$.
A closed subgroup $H \subset G$ is said to be \textit{spherical} if the homogeneous space $G/H$ is spherical as a $G$-variety.

To every spherical homogeneous space $G/H$ one assigns three main combinatorial invariants: the \textit{weight lattice} $\Lambda_G(G/H)$, the finite set $\Sigma_G(G/H)$ of \textit{spherical roots}, and the finite set $\mathcal D_G(G/H)$ of \textit{colors}.
These invariants play a crucial role in the combinatorial classification of spherical homogeneous spaces that was established with contributions of Luna~\cite{Lu01}, Losev~\cite{Lo1}, and Bravi and Pezzini~\cite{BP14, BP15, BP16} (see also an alternative approach to this classification in the preprint~\cite{Cu}).
Besides, according to the celebrated theory of Luna and Vust (see~\cite{LV} or~\cite{Kn91}), knowing $\Lambda_G(G/H)$, $\Sigma_G(G/H)$, and $\mathcal D_G(G/H)$ one obtains a complete combinatorial description of all spherical $G$-varieties containing $G/H$ as an open $G$-orbit in terms of so-called colored fans.
Due to the latter fact, we call $\Lambda_G(G/H)$, $\Sigma_G(G/H)$, and $\mathcal D_G(G/H)$ the \textit{Luna--Vust invariants} of~$G/H$.

In view of the importance of the Luna--Vust invariants in the theory of spherical varieties, a~natural problem is to compute them starting from an explicit form of a spherical subgroup~$H \subset G$.
A~standard way of specifying $H$ is to use a regular embedding of $H$ in a parabolic subgroup $P \subset G$, where ``regular'' means the inclusion $H_u \subset P_u$ of the unipotent radicals of $H$ and~$P$, respectively.
By now, a complete solution of the above problem is known essentially in the following two particular cases:
\begin{enumerate}[label=\textup{(\arabic*)},ref=\textup{\arabic*}]
\item \label{case_P=G}
$P = G$ (that is, $H$ is reductive);

\item \label{case_P=B}
$P = B$ (subgroups contained in a Borel subgroup of $G$ are called \textit{strongly solvable}).
\end{enumerate}
In case~(\ref{case_P=G}), there is a complete classification of spherical homogeneous spaces $G/H$ with reductive $H$ due to \cite{Kr, Mi, Br87}.
A description of the weight lattices and colors for this case follows from results in \cite{Kr, Avd_EWS}, and the sets of spherical roots are known thanks to the paper~\cite{BP15}.
In case~(\ref{case_P=B}), all the Luna--Vust invariants for spherical homogeneous spaces $G/H$ with $H \subset B$ were computed in~\cite{Avd_solv_inv} by using a structure theory of such subgroups developed in~\cite{Avd_solv}.

For arbitrary spherical subgroups, the state of the art in computing the Luna--Vust invariants is as follows.
First, there is a general method tracing back to Panyushev~\cite{Pa94} for computing the weight lattice of a spherical homogeneous space $G/H$ in terms of a regular embedding of $H$ in a parabolic subgroup of~$G$; see Theorem~\ref{thm_criterion_spherical}.
Second, the author is not aware of any general results on computing the colors\footnote{While this paper was under review, our subsequent paper~\cite{Avd_onEWS} appeared where the problem of computing the colors is reduced to that of computing the spherical roots.}.
Third, there are two different general approaches for computing the set of spherical roots: one is due to Luna and Vust (the method of formal curves; see~\cite[\S\,4]{LV} or~\cite[\S\,24]{Tim}) and the other one is due to Losev~\cite{Lo3}.
(In fact, both approaches deal with a generalization of the set of spherical roots to arbitrary $G$-varieties.)
However, these two approaches seem to work effectively only for some special classes of spherical homogeneous spaces.
As a consequence, the problem of finding effective algorithms for computing the spherical roots and colors for arbitrary spherical subgroups still remains of great importance.

In this paper, we are concerned with the problem of computing the set of spherical roots for a given spherical subgroup $H \subset G$.
We propose a general strategy for solving this problem based on the following idea.
First of all, a standard argument reduces the consideration to the case where $G$ is semisimple and $H$ coincides with its normalizer in~$G$.
In this case, $H$ equals the stabilizer of $\mathfrak h$ under the adjoint action of $G$ on~$\mathfrak g$, where $\mathfrak h, \mathfrak g$ are the Lie algebras of $H,G$, respectively.
Then the $G$-orbit $G\mathfrak h$ of $\mathfrak h$ in the Grassmannian $\Gr_{\dim \mathfrak h}(\mathfrak g)$ of $(\dim \mathfrak h)$-dimensional subspaces in~$\mathfrak g$ is isomorphic to $G/H$, and the closure $X$ of this orbit is said to be the \textit{Demazure embedding} of~$G/H$.
It is known from~\cite{Bri90} and~\cite{Lo2} that $X$ is a so-called wonderful $G$-variety; see the precise definition in \S\,\ref{subsec_wv}.
The latter implies that there is a bijection $J \mapsto O_J$ between the subsets of $\Sigma_G(G/H)$ and the $G$-orbits in~$X$ such that for every two subsets $J, J' \subset \Sigma_G(G/H)$ the following properties hold:
\begin{itemize}
\item
$O_J$ is a spherical $G$-variety whose set of spherical roots is~$J$;

\item
the codimension of $O_J$ in $X$ equals $|\Sigma_G(G/H) \setminus J|$;

\item
$O_J$ lies in the closure of $O_{J'}$ if and only if $J \subset J'$.
\end{itemize}
In particular, the open $G$-orbit in $X$ corresponds to the whole set $\Sigma_G(G/H)$.
Now suppose $\mathfrak h$ degenerates into two subalgebras $\mathfrak h_1,\mathfrak h_2 \subset \mathfrak g$ by taking the limit under the action of one-parameter subgroups $\phi_1,\phi_2 \subset G$, respectively, and the $G$-orbits $G\mathfrak h_1, G\mathfrak h_2 \subset \Gr_{\dim \mathfrak h}(\mathfrak g)$ are different and both have codimension~$1$ in~$X$.
Then $\Sigma_G(G\mathfrak h_1) = \Sigma_G(G/H) \setminus \lbrace \sigma_1 \rbrace$ and $\Sigma_G(G\mathfrak h_2) = \Sigma_G(G/H) \setminus \lbrace \sigma_2 \rbrace$ for two different elements $\sigma_1, \sigma_2 \in \Sigma_G(G/H)$ and hence $\Sigma_G(G/H) = \Sigma_G(G\mathfrak h_1) \cup \Sigma_G(G\mathfrak h_2)$.
Thus the problem of finding the set of spherical roots for $G/H$ reduces to the same problem for $G/N_1$ and $G/N_2$ where $N_1,N_2$ are the stabilizers in $G$ of the subalgebras $\mathfrak h_1, \mathfrak h_2$, respectively.
Since the number of spherical roots for $G/N_1$ and $G/N_2$ is strictly less than that for $G/H$, iterating the above procedure in a finite number of steps leads to a finite number of spherical homogeneous spaces having only one spherical root, and for such spaces the unique spherical root is read off directly from the weight lattice.
In principle, this strategy yields a recursive algorithm for computing the set of spherical roots for a given spherical subgroup~$H$, but the main difficulty here is to find explicit constructions of one-parameter degenerations of~$\mathfrak h$ having all the required properties.

The main goal of the present paper is to implement the above strategy for a class of spherical subgroups extending that of strongly solvable ones.
Namely, we consider spherical subgroups $H \subset G$ regularly embedded in a parabolic subgroup $P \subset G$ such that a Levi subgroup $K \subset H$ satisfies $L' \subset K \subset L$ for a Levi subgroup $L \subset P$ and its derived subgroup~$L'$.
For such spherical subgroups~$H$, we generalize several structure results obtained in~\cite{Avd_solv} for strongly solvable spherical subgroups and show in particular that, up to conjugation by an element of the connected center $C$ of~$L$, $H$ is uniquely determined by the pair $(K, \Psi)$ where $\Psi$ is the finite set of so-called \textit{active $C$-roots}, which naturally generalize active roots introduced in loc.~cit.
For subgroups $H$ with $|\Psi| \ge 2$ the above-mentioned structure results enable us to present (at least) two one-parameter subgroups $\phi_1, \phi_2 \subset G$, describe explicitly the corresponding degenerations $\mathfrak h_1, \mathfrak h_2$, and prove that they satisfy all the properties mentioned in the previous paragraph.
More specifically, if $H_u$ is not normalized by~$C$ then it is possible to choose $\phi_1,\phi_2$ to be one-parameter subgroups of~$C$.
In the opposite case we take $\phi_1,\phi_2$ to be certain root unipotent one-parameter subgroups of~$G$.
It is worth mentioning that in both cases the two resulting subgroups $N_1, N_2$ still belong to the class of spherical subgroups under consideration, which enables us to repeat our procedure for them.
When $|\Psi|=1$ (we call such cases \textit{primitive}), computation of the set of spherical roots readily reduces to the case where $P$ is a maximal parabolic subgroup.
In turn, all such cases can be easily classified and moreover for each of them the set of spherical roots turns out to be known.
As a result, this yields an explicit algorithm (we call it the \textit{base algorithm}) for computing the set of spherical roots that terminates at a finite number of primitive cases.

As can be seen from its description, the base algorithm is rather slow: to compute the set of spherical roots for a spherical subgroup $H$ with $|\Sigma_G(G/H)| = r$, in the worst case one needs to calculate $2^r - 2$ intermediate subalgebras.
Keeping this in mind, we propose an optimization of the base algorithm for spherical subgroups $H$ satisfying $K = L$.
The optimized algorithm after performing no more than $r^2-r$ degenerations reduces the problem of computing $\Sigma_G(G/H)$ to the same problem for some other spherical subgroups $H_1, \ldots, H_p$ ($p \le r$) satisfying $|\Psi(H_i)| \le 2$ for all~$i$.
The latter condition is very restrictive and in the case $|\Psi(H_i)| = 1$ we get one of the primitive cases, which requires no further work for~$H_i$.
In principle, it is also feasible to classify all possible $H_i$'s with $|\Psi(H_i)|=2$ that may appear out of arbitrary subgroups $H$ under consideration and then compute the spherical roots for them using our methods; however, these issues are not addressed in the present paper.
Regardless of the above, if $H$ is an arbitrary subgroup in the class from the previous paragraph (not necessarily satisfying $K=L$) then it seems to be very likely that a deeper analysis of the set $\Psi$ along with the base algorithm may lead to a simple combinatorial description of the set $\Sigma_G(G/H)$ purely in terms of the pair $(K,\Psi)$, without performing any degenerations, and it would be very interesting to find such a description.

We now mention two other directions of further research related to this paper.
Firstly, an important problem is to generalize the base algorithm to the case of arbitrary spherical subgroups of $G$ regularly embedded in a parabolic subgroup~$P$.
As was already mentioned above, the main difficulty here is to find constructions of one-parameter degenerations of the corresponding Lie algebras having special properties.
Secondly, a more general problem is to find explicitly (a collection of) degenerations of the Lie algebra $\mathfrak h$ of a self-normalizing spherical subgroup $H$ via which $\mathfrak h$ can reach \textit{any} $G$-orbit in the Demazure embedding of $G/H$.
This problem is quite nontrivial even for the class of spherical subgroups considered in this paper: using our degenerations constructed for such subgroups one can reach only a small part of $G$-orbits of codimension~$1$ (and no $G$-orbits of higher codimensions).
We note that the problem of reaching all $G$-orbits in the Demazure embedding is closely related to determining all the satellites of a given spherical subgroup, which were introduced in~\cite{BM}.

This paper is organized as follows.
In~\S\,\ref{sect_prelim}, we fix notation and conventions and discuss several general results used in this paper.
In~\S\,\ref{sect_gen_SV}, we collect all the necessary material on spherical varieties and provide a detailed presentation of our general strategy for computing the spherical roots.
In~\S\,\ref{sect_active_C-roots} we prove several structure results for the class of spherical subgroups focused on in this paper.
In~\S\,\ref{sect_comp_SR} we apply the results of~\S\,\ref{sect_active_C-roots} to construct one-parameter degenerations of Lie algebras of spherical subgroups under consideration and exhibit the base algorithm for computing the spherical roots for them.
In \S\,\ref{sect_optimization} we propose our optimization of the base algorithm.
Finally, \S\,\ref{sect_examples} contains several examples of computing the spherical roots via our methods.

\subsection*{Acknowledgements}

Some parts of this work were done while the author was visiting the Institut Fourier in Grenoble, France, in the summer of~2018 and in October~2019.
He thanks this institution for hospitality and excellent working conditions and also expresses his gratitude to Michel Brion for support, useful discussions, and valuable comments on this work.
Thanks are also due to Dmitry Timashev for drawing the author's attention to Proposition~\ref{prop_limits}, which led to simplifications in some proofs.
The author is especially grateful to the referees for carefully reading previous versions of this paper, pointing out many inaccuracies in them, and helpful remarks and suggestions.

This work was supported by the Russian Science Foundation, grant no.~18-71-00115.

\section{Preliminaries}
\label{sect_prelim}

\subsection{Notation and conventions}

Throughout this paper, we work over the field $\CC$ of complex numbers.
All topological terms relate to the Zariski topology.
All subgroups of algebraic groups are assumed to be algebraic.
The Lie algebras of algebraic groups denoted by capital Latin letters are denoted by the corresponding small Gothic letters.
A~\textit{$K$-variety} is an algebraic variety equipped with a regular action of an algebraic group~$K$.

$\ZZ^+ = \lbrace z \in \ZZ \mid z \ge 0 \rbrace$;

$\QQ^+ = \lbrace q \in \QQ \mid q \ge 0 \rbrace$;

$(\CC^\times, \times)$ or just $\CC^\times$ is the multiplicative group of the field~$\CC$;

$(\CC, +)$ is the additive group of the field~$\CC$;

$|X|$ is the cardinality of a finite set~$X$;

$\langle v_1,\ldots, v_k \rangle$ is the linear span of vectors $v_1,\ldots, v_k$ of a vector space~$V$;

$V^*$ is the space of linear functions on a vector space~$V$;

$\mathrm S(V)$ is the symmetric algebra of a vector space~$V$;

$\mathrm S^k(V)$ is the $k$th symmetric power of a vector space~$V$;

$\wedge^k(V)$ is the $k$th exterior power of a vector space~$V$;

$L^0$ is the connected component of the identity of an algebraic group~$L$;

$L'$ is the derived subgroup of a group~$L$;

$L_u$ is the unipotent radical of an algebraic group~$L$;

$Z(L)$ is the center of a group~$L$;

$\mathfrak X(L)$ is the character group (in additive notation) of an algebraic group~$L$;

$Z_L(K)$ is the centralizer of a subgroup $K$ in a group~$L$;

$N_L(K)$ is the normalizer of a subgroup $K$ in a group~$L$;

$N_L(\mathfrak u) = \lbrace g \in L \mid \Ad(g) \mathfrak u = \mathfrak u \rbrace$ is the normalizer in a subgroup~$L \subset G$ of a subalgebra $\mathfrak u \subset \mathfrak g$;

$\CC[X]$ is the algebra of regular functions on an algebraic variety~$X$;

$\CC(X)$ is the field of rational functions on an irreducible algebraic variety~$X$;

$\Gr_k(V)$ is the Grassmannian of $k$-dimensional subspaces of a vector space~$V$;

$G$ is a connected reductive algebraic group;

$B \subset G$ is a fixed Borel subgroup;

$T \subset B$ is a fixed maximal torus;

$B^-$ is the Borel subgroup of~$G$ opposite to~$B$ with respect to~$T$, so that $B \cap B^- = T$;

$(\cdot\,,\,\cdot)$ is a fixed inner product on~$\QQ\mathfrak X(T)$ invariant with respect to the Weyl group $N_G(T)/T$;

$\Delta \subset \mathfrak X(T)$ is the root system of~$G$ with respect to~$T$;

$\Delta^+ \subset \Delta$ is the set of positive roots with respect to~$B$;

$\Pi \subset \Delta^+$ is the set of simple roots with respect to~$B$;

$\Lambda_G^+ \subset \mathfrak X(T)$ is the set of dominant weights of $G$ with respect to~$B$;

$\alpha^\vee \in \Hom_\ZZ(\mathfrak X(T), \ZZ)$ is the coroot corresponding to a root $\alpha \in \Delta$;

$h_\alpha \in \mathfrak t$ is the image of $\alpha^\vee$ in $\mathfrak t$ under the chain $\Hom_\ZZ(\mathfrak X(T), \ZZ) \hookrightarrow (\mathfrak t^*)^* \xrightarrow{\sim} \mathfrak t$;

$\mathfrak g_\alpha \subset \mathfrak g$ is the root subspace corresponding to a root~$\alpha \in \Delta$;

$e_\alpha \in \mathfrak g_\alpha$ is a fixed nonzero element.

The simple roots and fundamental weights of simple algebraic groups are numbered as in~\cite{Bo}.

For every $\beta = \sum \limits_{\alpha \in \Pi} k_\alpha \alpha \in \ZZ^+\Pi$, its \textit{support} is defined as $\Supp \beta = \lbrace \alpha \in \Pi \mid k_\alpha > 0 \rbrace$.

We fix a nondegenerate $G$-invariant inner product on~$\mathfrak g$ and for every subspace $\mathfrak u \subset \mathfrak g$ let $\mathfrak u^\perp$ be the orthogonal complement of $\mathfrak u$ in~$\mathfrak g$ with respect to this form.

The groups $\mathfrak X(B)$ and $\mathfrak X(T)$ are identified via restricting characters from~$B$ to~$T$.

Given a parabolic subgroup $P \subset G$ such that $P \supset B$ or $P \supset B^-$, the unique Levi subgroup $L$ of $P$ containing~$T$ is called the \textit{standard Levi subgroup} of~$P$.
By abuse of language, in this situation we also say that $L$ is a standard Levi subgroup of~$G$.
The unique parabolic subgroup $Q$ of $G$ such that $\mathfrak p + \mathfrak q = \mathfrak g$ and $P \cap Q = L$ is said to be \textit{opposite} to~$P$.

Let $L \subset G$ be a standard Levi subgroup and let $K \subset G$ be a reductive subgroup (not necessarily connected) satisfying $L' \subset K \subset L$.
In this situation, we put $B_L = B \cap L$ and $B_K = B \cap K$, so that $B_L$ is a Borel subgroup of~$L$ and $B_K^0$ is a Borel subgroup of~$K$.
If $V$ is a simple $K$-module, by a highest (resp. lowest) weight vector of~$V$ we mean a $B_K$-semi-invariant (resp. $(B^- \cap K)$-semi-invariant) vector in~$V$.
The highest (resp. lowest) weight of~$V$ is the $(T \cap K)$-weight of a highest (resp. lowest) weight vector in~$V$.
These conventions on~$V$ are also valid if $L = K = G$.

Given a standard Levi subgroup $L \subset G$, we let $\Delta_L \subset \Delta$ be the root system of~$L$ and put $\Delta^+_L = \Delta^+ \cap \Delta_L$ and $\Pi_L = \Pi \cap \Delta_L$, so that $\Delta^+_L$ (resp. $\Pi_L$) is the set of all positive (resp. simple) roots of~$L$ with respect to the Borel subgroup~$B_L$.

Let $K$ be a group and let $K_1,K_2$ be subgroups of~$K$.
We write $K = K_1 \rightthreetimes K_2$ if $K$ is a semidirect product of~$K_1,K_2$ with $K_2$ being a normal subgroup of~$K$.
We write $K= K_1 \cdot K_2$ if $K$ is an almost direct product of~$K_1,K_2$, that is, $K = K_1K_2$, $K_1$ and $K_2$ commute with each other, and the intersection $K_1 \cap K_2$ is finite.

\subsection{Levi roots and their properties}
\label{subsec_Levi_roots}

Let $L$ be a standard Levi subgroup of~$G$ and let $C$ be the connected center of~$L$.
We consider the natural restriction map $\varepsilon \colon \mathfrak X(T) \to \mathfrak X(C)$ and extend it to the corresponding map $\varepsilon_\QQ \colon \QQ\mathfrak X(T) \to \QQ\mathfrak X(C)$.
Let $(\Ker \varepsilon_\QQ)^\perp \subset \QQ\mathfrak X(T)$ be the orthogonal complement of $\Ker \varepsilon_\QQ$ with respect to the inner product $(\cdot\,, \cdot)$.
Under the map $\varepsilon_\QQ$ the subspace $(\Ker \varepsilon_\QQ)^\perp$ maps isomorphically to $\QQ\mathfrak X(C)$;
we equip $\QQ\mathfrak X(C)$ with the inner product transferred from $(\Ker \varepsilon_\QQ)^\perp$ via this isomorphism.

Consider the adjoint action of~$C$ on~$\mathfrak g$.
For every $\lambda \in \mathfrak X(C)$, let $\mathfrak g(\lambda) \subset \mathfrak g$ be the corresponding weight subspace of weight~$\lambda$.
It is well known that $\mathfrak g(0) = \mathfrak l$.
We put
\[
\Phi = \lbrace \lambda \in \mathfrak X(C) \setminus \lbrace 0 \rbrace \mid \mathfrak g(\lambda) \ne \lbrace 0 \rbrace \rbrace.
\]
Then there is the following decomposition of $\mathfrak g$ into a direct sum of $C$-weight subspaces:
\begin{equation} \label{eqn_decomposition}
\mathfrak g = \mathfrak l \oplus \bigoplus\limits_{\lambda \in \Phi} \mathfrak g(\lambda).
\end{equation}
In what follows, elements of the set $\Phi$ will be referred to as \textit{$C$-roots}.
It is easy to see that $\Phi = \varepsilon(\Delta \setminus \Delta_L)$.
In particular, $\Phi = - \Phi$.

Now consider the adjoint action of~$L$ on~$\mathfrak g$.
Then each $C$-weight subspace of $\mathfrak g$ becomes an $L$-module in a natural way.

The following proposition seems to be well known; for convenience, we provide a proof of part~(\ref{prop_properties_of_g(lambda)_c}) due to the lack of reference.

\begin{proposition} \label{prop_properties_of_g(lambda)}
The following assertions hold:
\begin{enumerate}[label=\textup{(\alph*)},ref=\textup{\alph*}]
\item \label{prop_properties_of_g(lambda)_a}
\textup(see \cite[Theorem~1.9]{Ko} or \cite[Ch.~3, Lemma~3.9]{GOV}\textup) for every $\lambda \in \Phi$ the subspace $\mathfrak g(\lambda)$ is a simple $L$-module;

\item \label{prop_properties_of_g(lambda)_b}
\textup(see \cite[Lemma~2.1]{Ko}\textup) for every $\lambda, \mu, \nu \in \Phi$ with $\lambda = \mu + \nu$ one has $\mathfrak g(\lambda) = [\mathfrak g(\mu), \mathfrak g(\nu)]$;

\item \label{prop_properties_of_g(lambda)_c}
for every $\lambda \in \Phi$ there is an $L$-module isomorphism $\mathfrak g(-\lambda) \simeq \mathfrak g(\lambda)^*$.
\end{enumerate}
\end{proposition}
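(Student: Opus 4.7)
The plan is to exploit the fixed nondegenerate $G$-invariant inner product on $\mathfrak g$, call it $\langle \cdot\,, \cdot \rangle$, whose existence was stipulated in the notation section. Since $L \subset G$ and $C \subset L$, this form is automatically $L$-invariant and in particular $C$-invariant.

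First I would verify that the $C$-weight decomposition $\mathfrak g = \mathfrak l \oplus \bigoplus_{\lambda \in \Phi} \mathfrak g(\lambda)$ is orthogonal with respect to $\langle \cdot\,, \cdot \rangle$ in the following precise sense: for $\mu, \nu \in \Phi \cup \{0\}$, the subspaces $\mathfrak g(\mu)$ and $\mathfrak g(\nu)$ are orthogonal unless $\mu + \nu = 0$. Indeed, for $x \in \mathfrak g(\mu)$, $y \in \mathfrak g(\nu)$, and $c \in C$, $C$-invariance of the form gives $\langle x, y \rangle = \langle \Ad(c)x, \Ad(c)y \rangle = \mu(c)\nu(c)\langle x,y\rangle$, and if $\mu + \nu \neq 0$ in $\mathfrak X(C)$ then $(\mu + \nu)(c) \neq 1$ for some $c \in C$, forcing $\langle x, y \rangle = 0$.

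Next, the nondegeneracy of $\langle \cdot\,,\cdot \rangle$ on $\mathfrak g$ combined with the orthogonal decomposition above implies that the restriction of the form to the pairing $\mathfrak g(\lambda) \times \mathfrak g(-\lambda) \to \CC$ is nondegenerate: any $x \in \mathfrak g(\lambda)$ in the left kernel would be orthogonal to every $\mathfrak g(\nu)$ (by the previous step for $\nu \neq -\lambda$, and by assumption for $\nu = -\lambda$), hence orthogonal to all of $\mathfrak g$, thus zero. This yields an $L$-equivariant injection $\mathfrak g(-\lambda) \hookrightarrow \mathfrak g(\lambda)^*$, which is an isomorphism by dimension count, proving the claim.

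There is no real obstacle; the only subtlety worth double-checking is the assertion that weight subspaces for distinct weights pair trivially, but this follows immediately from $C$-invariance of the form as shown above. Alternatively, one could derive part~(\ref{prop_properties_of_g(lambda)_c}) from part~(\ref{prop_properties_of_g(lambda)_a}) by noting that $\mathfrak g(\lambda)^*$ is also a simple $L$-module, and then identifying it with $\mathfrak g(-\lambda)$ via highest-weight considerations with respect to $B_L$; however, the invariant-form argument is both shorter and more canonical.
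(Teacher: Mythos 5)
Your argument is correct and coincides with the paper's own (second) proof of part~(\ref{prop_properties_of_g(lambda)_c}): the paper notes precisely that the fixed $G$-invariant inner product on $\mathfrak g$ yields a nondegenerate $L$-equivariant pairing between $\mathfrak g(\lambda)$ and $\mathfrak g(-\lambda)$, and you have simply spelled out the orthogonality of distinct $C$-weight spaces that makes this work. The highest/lowest-weight alternative you mention at the end is in fact the paper's first-listed argument, so both routes you describe appear in the original.
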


\begin{proof}[Proof of~\textup(\ref{prop_properties_of_g(lambda)_c}\textup)]
It is easy to see that the highest weight of $\mathfrak g(-\lambda)$ is opposite to the lowest weight of $\mathfrak g(\lambda)$, which implies the required result.
Alternatively, it suffices to notice that the fixed $G$-invariant inner product on~$\mathfrak g$ yields a nondegenerate $L$-equivariant pairing between $\mathfrak g(\lambda)$ and~$\mathfrak g(-\lambda)$.
\end{proof}

\begin{proposition} \label{prop_bracket}
Suppose that $\lambda,\mu,\nu \in \Phi$ and $\lambda = \mu + \nu$. Then
\begin{enumerate}[label=\textup{(\alph*)},ref=\textup{\alph*}]
\item \label{prop_bracket_a}
$\mathfrak g(\lambda)$ is isomorphic as an $L$-module to a submodule of $\mathfrak g(\mu) \otimes \mathfrak g(\nu)$;

\item \label{prop_bracket_b}
if $\mu = \nu$ then $\mathfrak g(\lambda)$ is isomorphic as an $L$-module to a submodule of $\wedge^2 \mathfrak g(\mu)$.
\end{enumerate}
\end{proposition}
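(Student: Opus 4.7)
The plan is to realize $\mathfrak g(\lambda)$ as a quotient (hence a direct summand) of $\mathfrak g(\mu)\otimes \mathfrak g(\nu)$ via the Lie bracket, and then use complete reducibility to flip ``quotient'' to ``submodule.''

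First I would observe that the Lie bracket on $\mathfrak g$ restricts to an $L$-equivariant bilinear pairing
\[
\mathfrak g(\mu) \times \mathfrak g(\nu) \longrightarrow \mathfrak g(\lambda),
\]
since $L$ acts on $\mathfrak g$ by Lie algebra automorphisms and since $[\mathfrak g(\mu), \mathfrak g(\nu)] \subset \mathfrak g(\mu+\nu) = \mathfrak g(\lambda)$ by the $C$-weight grading. By the universal property of the tensor product, this pairing descends to an $L$-equivariant linear map
\[
\beta \colon \mathfrak g(\mu)\otimes \mathfrak g(\nu) \longrightarrow \mathfrak g(\lambda).
\]
By Proposition~\ref{prop_properties_of_g(lambda)}(\ref{prop_properties_of_g(lambda)_b}), one has $\mathfrak g(\lambda) = [\mathfrak g(\mu), \mathfrak g(\nu)]$, so $\beta$ is surjective.

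Next, I would invoke complete reducibility: since $L$ is a (connected) reductive complex algebraic group, every rational $L$-module is semisimple, so the short exact sequence $0 \to \Ker \beta \to \mathfrak g(\mu)\otimes \mathfrak g(\nu) \to \mathfrak g(\lambda) \to 0$ splits in the category of $L$-modules. Hence $\mathfrak g(\lambda)$ is isomorphic to a (direct summand, in particular, a) submodule of $\mathfrak g(\mu)\otimes \mathfrak g(\nu)$, proving~(\ref{prop_bracket_a}).

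For part~(\ref{prop_bracket_b}), when $\mu=\nu$ the bracket is antisymmetric, so $\beta$ vanishes on the symmetric part of $\mathfrak g(\mu)\otimes \mathfrak g(\mu)$ and therefore factors as an $L$-equivariant map $\overline{\beta}\colon \wedge^2\mathfrak g(\mu) \to \mathfrak g(\lambda)$. Surjectivity of $\overline{\beta}$ again follows from Proposition~\ref{prop_properties_of_g(lambda)}(\ref{prop_properties_of_g(lambda)_b}), and the same complete-reducibility argument embeds $\mathfrak g(\lambda)$ as a submodule of $\wedge^2\mathfrak g(\mu)$. There is no real obstacle here: the only non-formal input is part~(\ref{prop_properties_of_g(lambda)_b}) of the preceding proposition; everything else is the general nonsense of semisimple representations of reductive groups.
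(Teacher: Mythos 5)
Your proof is correct and follows essentially the same route as the paper: the Lie bracket induces an $L$-module homomorphism $\mathfrak g(\mu)\otimes\mathfrak g(\nu)\to\mathfrak g(\lambda)$ (factoring through $\wedge^2\mathfrak g(\mu)$ when $\mu=\nu$), which is surjective by Proposition~\ref{prop_properties_of_g(lambda)}(\ref{prop_properties_of_g(lambda)_b}), and semisimplicity of rational $L$-modules turns the quotient into a submodule. The paper additionally invokes the simplicity of $\mathfrak g(\lambda)$ from part~(\ref{prop_properties_of_g(lambda)_a}), but as you note, complete reducibility alone already suffices.
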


\begin{proof}
The bilinear map $\mathfrak g(\mu) \times \mathfrak g(\nu) \to \mathfrak g(\lambda)$, $(x,y) \mapsto [x,y]$, induces an $L$-module homomorphism $\mathfrak g(\mu) \otimes \nobreak \mathfrak g(\nu) \to \mathfrak g(\lambda)$ in case~(\ref{prop_bracket_a}) and $\wedge^2 \mathfrak g(\mu) \to \mathfrak g(\lambda)$ in case~(\ref{prop_bracket_b}).
The claim now follows from Proposition~\ref{prop_properties_of_g(lambda)}%
(\ref{prop_properties_of_g(lambda)_a},\,%
\ref{prop_properties_of_g(lambda)_b}) and complete reducibility of $L$-modules.
\end{proof}

\begin{proposition}[{see~\cite[Theorem~2.3]{Ko}}] \label{prop_pairs_of_C-roots}
Suppose that $\lambda, \mu \in \Phi$.
Then
\begin{enumerate}[label=\textup{(\alph*)},ref=\textup{\alph*}]
\item \label{prop_pairs_of_C-roots_a}
if $(\lambda, \mu) < 0$ and $\lambda + \mu \ne 0$ then $\lambda + \mu \in \Phi$;

\item \label{prop_pairs_of_C-roots_b}
if $(\lambda, \mu) > 0$ and $\lambda - \mu \ne 0$ then $\lambda - \mu \in \Phi$.
\end{enumerate}
\end{proposition}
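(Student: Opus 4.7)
The plan is to reduce both assertions to the classical fact that if $\alpha, \beta \in \Delta$ satisfy $(\alpha, \beta) < 0$ and $\alpha + \beta \ne 0$ then $\alpha + \beta \in \Delta$. Since $\Phi = \varepsilon(\Delta \setminus \Delta_L)$, I would first lift $\lambda, \mu \in \Phi$ to roots $\alpha, \beta \in \Delta \setminus \Delta_L$ with $\varepsilon(\alpha) = \lambda$ and $\varepsilon(\beta) = \mu$, and then try to replace $\alpha$ by another element of the fiber $\varepsilon^{-1}(\lambda) \cap \Delta$ so as to achieve $(\alpha, \beta) < 0$ in the ambient inner product on $\QQ\mathfrak X(T)$.

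The key technical step is to translate the hypothesis $(\lambda, \mu) < 0$, stated for the transferred inner product on $\QQ\mathfrak X(C)$, into information about the original inner product on $\QQ\mathfrak X(T)$. To this end I would first verify that $\Ker \varepsilon_\QQ = \QQ\Pi_L$: the inclusion $\supseteq$ holds because the simple roots of~$L$ vanish on the connected center~$C$, and equality follows by a rank count. Consequently $(\Ker \varepsilon_\QQ)^\perp$ coincides with the subspace of $W_L$-invariants in $\QQ\mathfrak X(T)$, where $W_L = N_L(T)/T$ is the Weyl group of~$L$, and hence the orthogonal projection $\pi \colon \QQ\mathfrak X(T) \to (\Ker \varepsilon_\QQ)^\perp$ is the averaging operator $\pi(x) = \frac{1}{|W_L|}\sum_{w \in W_L} wx$.

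Since $W_L$ acts trivially on $\mathfrak X(C)$, every element of the $W_L$-orbit of $\alpha$ still lies in $\varepsilon^{-1}(\lambda)$. Using the isometric identification of $(\Ker \varepsilon_\QQ)^\perp$ with $\QQ\mathfrak X(C)$ and the fact that $\beta - \pi(\beta) \in \QQ\Pi_L$ is orthogonal to $\pi(\alpha)$, I would obtain
\[
(\lambda, \mu) = (\pi(\alpha), \pi(\beta)) = (\pi(\alpha), \beta) = \frac{1}{|W_L \alpha|} \sum_{\gamma \in W_L \alpha} (\gamma, \beta).
\]
If $(\lambda, \mu) < 0$ then some summand is negative, say $(\gamma, \beta) < 0$ for some $\gamma \in W_L\alpha$. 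The classical fact recalled above then gives $\gamma + \beta \in \Delta$, unless $\gamma + \beta = 0$; but the latter would force $\lambda + \mu = \varepsilon(\gamma + \beta) = 0$, contradicting the hypothesis of~(a). Since $\varepsilon(\gamma + \beta) = \lambda + \mu \ne 0$, the root $\gamma + \beta$ cannot lie in $\Delta_L$, so $\lambda + \mu = \varepsilon(\gamma + \beta) \in \Phi$. This proves~(a), and~(b) follows immediately by applying~(a) to the pair $(\lambda, -\mu)$, noting that $-\mu \in \Phi$ since $\Phi = -\Phi$.

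The main point requiring care is the identification of the orthogonal projection onto $(\Ker \varepsilon_\QQ)^\perp$ with $W_L$-averaging; once this is in hand, everything else is bookkeeping together with a single appeal to the classical root-system theory.
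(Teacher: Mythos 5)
Your argument is correct and complete. Note that the paper itself does not prove this proposition at all: it simply cites Kostant's Theorem~2.3 in~[Ko], so there is no in-paper proof to compare against; what you have written is a genuine self-contained substitute for that reference. Your route is the natural one: the identification $\Ker \varepsilon_\QQ = \QQ\Pi_L$ (correct, by the rank count you indicate), hence $(\Ker\varepsilon_\QQ)^\perp = (\QQ\Pi_L)^\perp = (\QQ\mathfrak X(T))^{W_L}$, and the standard fact that the orthogonal projection onto the fixed space of a finite isometry group is the averaging operator. The chain $(\lambda,\mu) = (\pi(\alpha),\pi(\beta)) = (\pi(\alpha),\beta) = \frac{1}{|W_L\alpha|}\sum_{\gamma\in W_L\alpha}(\gamma,\beta)$ is justified at every step (the middle equality because $\beta - \pi(\beta)\in\QQ\Pi_L\perp\pi(\alpha)$), and the endgame is clean: a negative summand produces $\gamma\in W_L\alpha\subset\Delta\setminus\Delta_L$ with $(\gamma,\beta)<0$, the classical root-string fact gives $\gamma+\beta\in\Delta$ once $\gamma+\beta\ne 0$ is ruled out via $\varepsilon(\gamma+\beta)=\lambda+\mu\ne 0$, and the same nonvanishing forces $\gamma+\beta\notin\Delta_L$, so $\lambda+\mu\in\varepsilon(\Delta\setminus\Delta_L)=\Phi$. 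The deduction of~(b) from~(a) using $\Phi=-\Phi$ is also fine. The one thing worth making explicit if you write this up is the observation (stated in the paper just before the proposition) that $\varepsilon(\delta)=0$ for $\delta\in\Delta$ forces $\delta\in\Delta_L$, since that is exactly what you use to conclude $\gamma+\beta\notin\Delta_L$.
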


\subsection{One-parameter degenerations in complete varieties}

The following result is a direct consequence of the valuative criterion of properness; see \cite[Chapter~II, Theorem~4.7]{Har}.

\begin{proposition} \label{prop_limits}
Suppose $X$ is a complete variety equipped with an action of a one-parameter group $\mathbb G$ \textup(either multiplicative $(\CC^\times, \times)$ or additive $(\CC, +)$\textup).
Then for every point $x \in X$ there exists $\lim \limits_{t \to \infty} tx = x_\infty$ and $x_\infty$ is a $\mathbb G$-fixed point of~$X$.
\end{proposition}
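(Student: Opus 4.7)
The plan is to apply the valuative criterion of properness directly to the orbit morphism $\phi_x \colon \mathbb G \to X$, $t \mapsto tx$. First I embed $\mathbb G$ as an open subset of $\PP^1$: for $\mathbb G = \CC^\times$ I write $\mathbb G = \PP^1 \setminus \lbrace 0, \infty \rbrace$, and for $\mathbb G = (\CC,+)$ I write $\mathbb G = \PP^1 \setminus \lbrace \infty \rbrace$, so that the point $\infty \in \PP^1$ plays the role of the endpoint for the limit $t \to \infty$.

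Since $\PP^1$ is a smooth projective curve and $X$ is complete, any rational map $\PP^1 \dashrightarrow X$ is in fact a morphism. Equivalently, applying the valuative criterion of properness to the discrete valuation ring $\mathcal O_{\PP^1,\infty}$ and to the morphism $\phi_x$ (viewed as a $\mathrm{Frac}(\mathcal O_{\PP^1,\infty})$-valued point of~$X$) produces a unique extension $\tilde\phi_x \colon \PP^1 \to X$ of~$\phi_x$. Setting $x_\infty := \tilde\phi_x(\infty)$ gives the desired well-defined limit point.

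It remains to verify that $x_\infty$ is $\mathbb G$-fixed. For each $s \in \mathbb G$, let $a_s \colon X \to X$, $y \mapsto sy$, be the automorphism of~$X$ given by the action of~$s$, and let $\mu_s \colon \mathbb G \to \mathbb G$ denote multiplication (respectively translation) by~$s$. A key elementary observation is that $\mu_s$ extends to an automorphism $\tilde \mu_s$ of $\PP^1$ fixing the point~$\infty$: indeed, $t \mapsto st$ extends to the automorphism of $\PP^1$ fixing both $0$ and $\infty$, while $t \mapsto s+t$ extends to the automorphism of $\PP^1$ fixing $\infty$. The identity $a_s \circ \phi_x = \phi_x \circ \mu_s$ valid on $\mathbb G$ then extends, by uniqueness of extension to the complete variety~$X$, to $a_s \circ \tilde\phi_x = \tilde\phi_x \circ \tilde\mu_s$ on all of~$\PP^1$. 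Evaluating at $\infty$ yields $s \cdot x_\infty = \tilde\phi_x(\tilde\mu_s(\infty)) = \tilde\phi_x(\infty) = x_\infty$, as desired.

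There is no serious obstacle here: the argument is a straightforward application of the valuative criterion, and the only point that deserves explicit mention is the automatic extension of $\mu_s$ to an automorphism of $\PP^1$ fixing $\infty$, which is precisely what secures the $\mathbb G$-invariance of the limit.
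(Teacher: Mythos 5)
Your argument is correct and is exactly the route the paper intends: the paper gives no written proof, merely citing the valuative criterion of properness, and your write-up is a careful spelling-out of that citation (extension of the orbit map over $\infty \in \PP^1$, plus the separatedness/uniqueness argument giving $\mathbb G$-invariance of the limit). Nothing is missing.
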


In this paper, we shall apply the above result in the situation where $X = \Gr_k(V)$ for a finite-dimensional vector space~$V$ and some~$k$.
Note that, if $V$ is a Lie algebra, $x \in X$ is a Lie subalgebra of~$V$, and the action of $\mathbb G$ on~$X$ is induced from a one-parameter group action on~$V$ by Lie algebra automorphisms, then the limit $x_\infty$ is automatically a Lie subalgebra of~$V$.

\subsection{Additive degenerations of subspaces in simple \texorpdfstring{$\mathfrak{sl}_2$}{sl\_2}-modules}
\label{subsec_degen_subsp}

Consider the Lie algebra $\mathfrak{sl}_2$ with canonical basis $\lbrace e,h,f\rbrace$, so that $[e,f] = h$, $[h,e] = 2e$, and $[h,f] = -2f$.
Let $V$ be a simple $\mathfrak{sl}_2$-module with highest weight $p \in \ZZ^+$.
Fix a basis $\lbrace v_{p-2i} \mid i=0,\ldots,p \rbrace$ of $V$ such that $h \cdot v_{p-2i} = (p-2i)v_{p-2i}$ for all $i=0,\ldots, p$, $f \cdot v_{p-2i} = v_{p-2i-2}$ for all $i = 0,\ldots, p-1$, and $f \cdot v_{-p} = 0$.

Consider the one-parameter unipotent subgroup $\phi \colon \CC \to \GL(V)$ given by $\phi(t) = \exp (tf)$ and let $U \subset V$ be a subspace with $\dim U = k$.
According to Proposition~\ref{prop_limits}, there exists the limit $\lim \limits_{t \to \infty}\phi(t) U = U_\infty$ and it is a $\phi(t)$-stable subspace of~$U$.

\begin{proposition} \label{prop_limitII_prelim}
One has $U_\infty = \langle v_{-p+2i} \mid i=0,\ldots, k-1 \rangle$.
\end{proposition}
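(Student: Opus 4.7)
The plan is to recognize $U_\infty$ as the unique $k$-dimensional $f$-invariant subspace of~$V$. The first step is to apply Proposition~\ref{prop_limits} with $X = \Gr_k(V)$ and the additive group $\GG = (\CC, +)$ acting via $\phi$. This guarantees that the limit $U_\infty$ exists and is a $\phi$-fixed point of $\Gr_k(V)$, i.e.\ $\phi(t)U_\infty = U_\infty$ for all $t \in \CC$. Since $\phi(t) = \exp(tf)$, differentiating this identity at $t = 0$ gives $f \cdot U_\infty \subset U_\infty$, so $U_\infty$ is an $f$-invariant subspace of~$V$.

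The second step is to describe all $f$-invariant subspaces of~$V$. In the chosen basis, $f$ sends $v_{p-2i} \mapsto v_{p-2i-2}$ for $0 \le i < p$ and $v_{-p} \mapsto 0$, so $f$ acts on $V$ as a single Jordan block of size $p+1$. Standard linear algebra then shows that the only $f$-invariant subspaces of~$V$ are the kernels
\[
\Ker f^j = \langle v_{-p+2i} \mid i = 0, \ldots, j-1 \rangle, \qquad j = 0, 1, \ldots, p+1,
\]
and that there is exactly one such subspace of each admissible dimension.

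The third step closes the argument: as $U_\infty \in \Gr_k(V)$, we have $\dim U_\infty = k$, and combining this with the previous two observations forces $U_\infty = \langle v_{-p+2i} \mid i = 0, \ldots, k-1 \rangle$, which is the desired identity.

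I do not anticipate any serious obstacle. The only step that requires a small justification is the passage from $\phi(\CC)$-invariance of $U_\infty$ to $f$-invariance; this is a standard consequence of differentiating a one-parameter subgroup action at the identity, and could be made fully rigorous by working with a basis and observing that $f$ is the tangent vector to the curve $t \mapsto \phi(t)$ at $t = 0$ in~$\GL(V)$.
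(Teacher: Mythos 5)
Your proof is correct and follows the same route as the paper: pass from $\phi(t)$-stability of the limit to $f$-stability, then observe that $\langle v_{-p+2i} \mid i=0,\ldots,k-1\rangle$ is the unique $f$-stable $k$-dimensional subspace of $V$ (the paper leaves the Jordan-block uniqueness argument implicit, which you spell out). No gaps.
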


\begin{proof}
Being $\phi(t)$-stable, $U_\infty$ is automatically $f$-stable.
It remains to observe that $\langle v_{-p+2i} \mid i=0,\ldots, k-1 \rangle$ is the only $f$-stable $k$-dimensional subspace of~$V$.
\end{proof}

In \S\,\ref{subsec_degen_add}, we shall apply Proposition~\ref{prop_limitII_prelim} in situations where the subspace $U$ is $h$-stable.
In this case, $U = \langle v_{n_0}\rangle \oplus \ldots \oplus \langle v_{n_{k-1}} \rangle$ for some $n_0 < \ldots < n_{k-1}$ and
\[
U_\infty = \langle v_{-p} \rangle \oplus \langle v_{-p+2} \rangle \oplus \ldots \oplus \langle v_{-p+2k-2} \rangle.
\]
For describing $U_\infty$ in our applications, it will be convenient for us to use the following terminology: for every $i = 0,\ldots,k-1$ we say that $\langle v_{n_i} \rangle$ \textit{shifts to} $\langle v_{-p+2i} \rangle$ under the degeneration.
(Warning: in general, $\langle v_{-p+2i} \rangle$ is not the limit of $\langle v_{n_i} \rangle$ regarded as a one-dimensional subspace of~$V$!)
Observe that $-p+2i \le n_i$ for all~$i$.

\begin{example}
Let $V$ be the simple $\mathfrak{sl}_2$-module with highest weight~$6$ and consider the $h$-stable subspace $U = \langle v_{-6} \rangle \oplus \langle v_{-2} \rangle \oplus \langle v_4 \rangle \subset V$.
Then Proposition~\ref{prop_limitII_prelim} yields $U_\infty = \langle v_{-6} \rangle \oplus \langle v_{-4} \rangle \oplus \langle v_{-2} \rangle$.
This degeneration is illustrated in Figure~\ref{fig1} as follows.
The set of weights of~$V$ is expressed as a row of boxes (the ordering of weights is shown at the top).
Each $h$-stable subspace of $V$ is specified by a diagram obtained by putting balls into the boxes corresponding to the $h$-weights of the subspace.
Then, informally speaking, the diagram for $U_\infty$ is obtained from that for~$U$ by applying a horizontal leftward-directed force, which presses all the balls to the left edge of the row.
According to our terminology, under this degeneration the subspaces $\langle v_{-6} \rangle, \langle v_{-2} \rangle, \langle v_4 \rangle$ shift to $\langle v_{-6} \rangle, \langle v_{-4} \rangle, \langle v_{-2} \rangle$, respectively, which is indicated by arrows and agrees with the real shift of each ball.
\end{example}

\begin{figure}[h]
\caption{}\label{fig1}
\begin{tikzpicture}
\draw[->,semithick] (-1.8,-0.4) -- (-1.8, -1.4);
\draw[->,semithick] (-0.6,-0.4) to [out=-90, in=90] (-1.2, -1.4);
\draw[->,semithick] (1.2,-0.4) to [out=-90, in=90] (-0.6, -1.4);
\node [left] at (-2.3,0) {$U\colon$};
\node[above] at (-1.8,0.3) {\footnotesize --$6$};
\draw[fill] (-1.8,0) circle(0.19);
\draw (-2.1,0.3) rectangle (-1.5,-0.3);
\node[above] at (-1.2,0.3) {\footnotesize --$4$};
\draw (-1.5,0.3) rectangle (-0.9,-0.3);
\node[above] at (-0.6,0.3) {\footnotesize --$2$};
\draw[fill] (-0.6,0) circle(0.19);
\draw (-0.9,0.3) rectangle (-0.3,-0.3);
\node[above] at (0,0.3) {\footnotesize $0$};
\draw (0.3,0.3) rectangle (-0.3,-0.3);
\node[above] at (0.6,0.3) {\footnotesize $2$};
\draw (0.3,0.3) rectangle (0.9,-0.3);
\node[above] at (1.2,0.3) {\footnotesize $4$};
\draw[fill] (1.2,0) circle(0.19);
\draw (0.9,0.3) rectangle (1.5,-0.3);
\node[above] at (1.8,0.3) {\footnotesize $6$};
\draw (1.5,0.3) rectangle (2.1,-0.3);
\node [left] at (-2.3,-1.8) {$U_\infty\colon$};
\draw[fill] (-1.8,-1.8) circle(0.19);
\draw (-2.1,-1.5) rectangle (-1.5,-2.1);
\draw[fill] (-1.2,-1.8) circle(0.19);
\draw (-1.5,-1.5) rectangle (-0.9,-2.1);
\draw[fill] (-0.6,-1.8) circle(0.19);
\draw (-0.9,-1.5) rectangle (-0.3,-2.1);
\draw (0.3,-1.5) rectangle (-0.3,-2.1);
\draw (0.3,-1.5) rectangle (0.9,-2.1);
\draw (0.9,-1.5) rectangle (1.5,-2.1);
\draw (1.5,-1.5) rectangle (2.1,-2.1);
\draw[->,thick] (4.2,0.1) to (3,0.1);
\draw[->,thick] (4.2,-0.1) to (3,-0.1);
\node[above] at (3.65,0.1) {$\phi(t)$};
\end{tikzpicture}
\end{figure}

\section{Generalities on spherical varieties}
\label{sect_gen_SV}

Recall from the introduction that a normal irreducible $G$-variety $X$ is said to be spherical if $X$ possesses an open orbit for the induced action of~$B$ and a subgroup $H \subset G$ is said to be spherical if $G/H$ is a spherical homogeneous space.
In this situation, $\mathfrak h$ is referred to as a \textit{spherical subalgebra} of~$\mathfrak g$.

\subsection{Some combinatorial invariants of spherical varieties}
\label{subsec_comb_inv_SV}

Let $X$ be a spherical $G$-variety.
In this subsection, we introduce several combinatorial invariants of $X$ that will be needed in our paper.
All the invariants depend on the fixed choice of a Borel subgroup $B \subset G$.

For every $\lambda \in \mathfrak X(T)$ let $\CC(X)_\lambda^{(B)}$ be the space of $B$-semi-invariant rational functions on~$X$ of weight $\lambda$.
Then the \textit{weight lattice} of $X$ is by definition
\[
\Lambda_G(X) = \lbrace \lambda \in \mathfrak X(T) \mid \CC(X)_\lambda^{(B)} \ne \lbrace 0 \rbrace \rbrace.
\]
The \textit{rank} of~$X$ is defined as $\rk_G(X) = \rk \Lambda_G(X)$.
Since $B$ has an open orbit in~$X$, it follows that for every $\lambda \in \Lambda_G(X)$ the space $\CC(X)^{(B)}_\lambda$ has dimension~$1$ and hence is spanned by a nonzero function~$f_\lambda$.

We note that for two spherical subgroups $H_1,H_2 \subset G$ with $H_1 \subset H_2$ the lattice $\Lambda_G(G/H_2)$ is naturally identified with a sublattice of $\Lambda_G(G/H_1)$.
Moreover, if $H_1$ has a finite index in~$H_2$ then $\Lambda_G(G/H_2)$ is of finite index in $\Lambda_G(G/H_1)$ (see, for instance, \cite[Lemma~2.4]{Gan} or general results in \cite[Corollary~2(i,\,ii)]{Pa90}, \cite[Proposition~5.6, Theorem~9.1]{Tim}), so that $\QQ\Lambda_G(G/H_2) = \QQ\Lambda_G(G/H_1)$.

Put $\mathcal Q_G(X) = \Hom_\ZZ(\Lambda_G(X), \QQ)$.

Every discrete $\QQ$-valued valuation $v$ of the field $\CC(X)$ vanishing on $\CC^\times$ determines an element $\rho_v \in \mathcal Q_G(X)$ such that $\rho_v(\lambda) = v(f_\lambda)$ for all $\lambda \in \Lambda_G(X)$.
It is known that the restriction of the map $v \mapsto \rho_v$ to the set of $G$-invariant discrete $\QQ$-valued valuations of $\CC(X)$ vanishing on~$\CC^\times$ is injective (see~\cite[7.4]{LV} or~\cite[Corollary~1.8]{Kn91}) and its image is a finitely generated cone containing the image in~$\mathcal Q_G(X)$ of the antidominant Weyl chamber (see \cite[Proposition~3.2 and Corollary~4.1,~i)]{BriP} or~\cite[Corollary~5.3]{Kn91}).
We denote this cone by~$\mathcal V_G(X)$.
Results of~\cite[\S\,3]{Bri90} imply that $\mathcal V_G(X)$ is a cosimplicial cone in~$\mathcal Q_G(X)$.
Consequently, there is a uniquely determined linearly independent set $\Sigma_G(X)$ of primitive elements in~$\Lambda_G(X)$ such that
\[
\mathcal V_G(X) = \lbrace q \in \mathcal Q_G(X) \mid q(\sigma) \le 0 \ \text{for all} \ \sigma \in \Sigma_G(X) \rbrace.
\]
Elements of $\Sigma_G(X)$ are called \textit{spherical roots} of~$X$ and $\mathcal V_G(X)$ is called the \textit{valuation cone} of~$X$.
The above discussion implies that every spherical root is a nonnegative linear combination of simple roots.

\begin{proposition}[{see~\cite[Corollary~5.3]{BriP}}] \label{prop_finite_index_in_norm}
Let $H \subset G$ be a spherical subgroup.
The set $\Sigma_G(G/H)$ is a basis of the vector space $\QQ\Lambda_G(G/H)$ if and only if the group $N_G(H)/H$ is finite.
\end{proposition}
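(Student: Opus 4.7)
The plan is to reduce the statement to a condition on the lineality space of the valuation cone $\mathcal V_G(G/H)$, and then identify this lineality space with the cocharacter space of the identity component of $N_G(H)/H$. The first reduction is immediate: since $\Sigma_G(G/H) \subset \Lambda_G(G/H)$ is linearly independent and $\mathcal V_G(G/H) = \{q \in \mathcal Q_G(G/H) \mid q(\sigma) \le 0 \ \text{for all} \ \sigma \in \Sigma_G(G/H)\}$, the set $\Sigma_G(G/H)$ is a basis of $\QQ\Lambda_G(G/H)$ precisely when it spans this space, equivalently when the maximal linear subspace $\mathcal L \subset \mathcal V_G(G/H)$ is zero. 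Thus the claim reduces to showing that $\mathcal L = 0$ if and only if $N_G(H)/H$ is finite.

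For the second step, I would use the natural isomorphism $N_G(H)/H \xrightarrow{\sim} \Aut^G(G/H)$, $gH \mapsto (xH \mapsto xgH)$, and, setting $A = \Aut^G(G/H)^0$, construct a natural isomorphism $\QQ \otimes_\ZZ \Hom(\CC^\times, A) \xrightarrow{\sim} \mathcal L$. Since $A$ commutes with the $G$-action, it preserves each one-dimensional space $\CC(G/H)^{(B)}_\lambda$, so each $\sigma \in A$ scales $f_\lambda$ by a constant $\chi_\sigma(\lambda) \in \CC^\times$; this yields a homomorphism $\chi \colon A \to \Hom(\Lambda_G(G/H), \CC^\times)$. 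Differentiating at the identity produces the map $\phi \mapsto \rho_\phi$ defined by $\chi_{\phi(t)}(\lambda) = t^{\rho_\phi(\lambda)}$, and injectivity is easy: if $\rho_\phi = 0$ then $\phi$ fixes every $B$-semi-invariant rational function, hence acts trivially on $\CC(G/H)^B$, on which $A$ acts faithfully. For surjectivity one must show that every $\rho \in \mathcal L$ arises from some cocharacter of $A$; the standard argument uses Luna--Vust theory to realize $\rho$ by a $G$-stable prime divisor $D$ in some $G$-embedding of $G/H$, and then exhibits an extra $\CC^\times$-action commuting with $G$ and fixing $D$ pointwise, whose restriction to $G/H$ provides the desired cocharacter.

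Once this isomorphism is in place, $\mathcal L = 0$ is equivalent to $A$ having no nontrivial cocharacters. Invoking that $A$ is a diagonalizable group (a theorem of Knop, following from the faithful commuting $G$-action on the spherical variety), this is equivalent to $A$ being trivial, equivalent to $N_G(H)/H$ being finite. The main obstacle is the surjectivity step in the second part: producing a genuine equivariant automorphism from an element of the lineality space, which requires nontrivial input from the geometry of spherical embeddings.
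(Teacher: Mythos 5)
The paper does not prove this proposition at all: it is quoted verbatim from Brion--Pauer \cite[Corollary~5.3]{BriP}, so there is no in-paper argument to compare against. Your outline is exactly the standard proof from that reference (and from Knop's \cite{Kn96}): reduce to the vanishing of the lineality space $\mathcal L$ of $\mathcal V_G(G/H)$, identify $\mathcal L$ with $\QQ\otimes_\ZZ\mathfrak X_*\bigl(\Aut^G(G/H)^0\bigr)$ via the action of equivariant automorphisms on the one-dimensional spaces $\CC(G/H)^{(B)}_\lambda$, and conclude using diagonalizability of $N_G(H)/H$. The reduction step and the injectivity/diagonalizability arguments are correct as written (for injectivity one also needs that $\Aut^G(G/H)$ acts faithfully on the $B$-semi-invariants, i.e.\ embeds into $\Hom(\Lambda_G(G/H),\CC^\times)$, which is \cite[Theorem~5.5]{Kn96}); you should also record why $\rho_\phi$ lands in $\mathcal L$ rather than merely in $\mathcal Q_G(G/H)$ -- namely that the limits of $\phi(t)x$ at $t\to 0$ and $t\to\infty$ produce $G$-invariant valuations realizing both $\rho_\phi$ and $-\rho_\phi$. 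The surjectivity step, which you correctly single out as the substantive point, is genuinely the heart of \cite[\S 5]{BriP}: one realizes a primitive $\rho\in\mathcal L$ and its negative by the two boundary divisors of a complete elementary embedding and extracts from this a $\CC^\times$-action commuting with $G$. As a reconstruction of the cited classical result your proposal is sound; it is a proof sketch rather than a complete proof only at that last step.
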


As can be easily seen from the definitions, the weight lattice and spherical roots depend only on the open $G$-orbit in~$X$.

If $X$ is quasi-affine then there is a finer invariant than the weight lattice.
Consider the natural action of $G$ on $\CC[X]$.
The highest weights of all simple $G$-modules occurring in $\CC[X]$ form a monoid, called the \textit{weight monoid} of~$X$; we denote it by~$\Gamma_G(X)$.
The following result is well known; for a proof see, for instance,~\cite[Proposition~5.14]{Tim}.

\begin{proposition} \label{prop_SM_WL_and_WM}
If $X$ is quasi-affine then $\Lambda_G(X) = \ZZ \Gamma_G(X)$.
\end{proposition}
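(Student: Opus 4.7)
The inclusion $\ZZ\Gamma_G(X) \subset \Lambda_G(X)$ is essentially immediate: $\Lambda_G(X)$ is a group by definition, and each $\mu \in \Gamma_G(X)$ is realized as the weight of a highest weight vector in some simple $G$-submodule of $\CC[X]$, yielding a nonzero $B$-semi-invariant element of $\CC[X] \subset \CC(X)$ of weight~$\mu$.

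For the reverse inclusion, the plan is to show that every nonzero $f \in \CC(X)^{(B)}_\lambda$ can be written as a ratio $f = f_1/f_2$ of two $B$-semi-invariant regular functions; once this is done, the weights of $f_1$ and $f_2$ differ by~$\lambda$, and I will argue separately that both lie in $\Gamma_G(X)$. To produce such a presentation I would consider the denominator ideal $I = \lbrace g \in \CC[X] \mid gf \in \CC[X] \rbrace$. Quasi-affineness of $X$ enters precisely here: the canonical morphism $X \to \operatorname{Spec}\CC[X]$ is an open embedding, so $\CC(X)$ is the fraction field of $\CC[X]$ and therefore $I \ne \lbrace 0 \rbrace$. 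A short check using $B$-semi-invariance of $f$ and $B$-stability of $\CC[X]$ shows that $I$ is also $B$-stable.

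Since the $B$-action on $\CC[X]$ is locally finite, the $B$-submodule of $I$ generated by any nonzero element is finite-dimensional, and the Lie--Kolchin theorem produces inside it a nonzero $B$-semi-invariant $f_2$ of some weight $\mu$; then $f_1 := f_2 f$ is regular and $B$-semi-invariant of weight $\lambda+\mu$. It remains to verify that the weight of every nonzero $B$-semi-invariant regular function on~$X$ lies in $\Gamma_G(X)$: decomposing the locally finite $G$-module $\CC[X]$ into simple $G$-submodules (possible since $G$ is reductive) and using that inside a simple $G$-module of highest weight $\nu$ a nonzero $B$-semi-invariant exists only in weight $\nu$, one concludes that $\mu, \lambda+\mu \in \Gamma_G(X)$, whence $\lambda = (\lambda+\mu) - \mu \in \ZZ\Gamma_G(X)$.

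The only step that is not essentially formal is the existence of a $B$-semi-invariant denominator, and this is handled cleanly by the $B$-stability of $I$ combined with Lie--Kolchin. Quasi-affineness plays no role beyond guaranteeing that $I$ is nonzero, but this is exactly where the hypothesis of the proposition is essential.
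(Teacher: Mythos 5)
Your proof is correct. The paper does not prove this statement itself but refers to \cite[Proposition~5.14]{Tim}, and the argument there is essentially the one you give: the containment $\ZZ\Gamma_G(X) \subset \Lambda_G(X)$ is formal, and the reverse containment is obtained by writing a $B$-semi-invariant rational function of weight $\lambda$ as a quotient of two $B$-semi-invariant regular functions, produced exactly via the $B$-stable denominator ideal (nonzero because quasi-affineness makes $\CC(X)$ the fraction field of $\CC[X]$) together with local finiteness of the $B$-action and the Lie--Kolchin theorem, after which both weights lie in $\Gamma_G(X)$ since every $B$-eigenvector in $\CC[X]$ is a highest-weight vector of some simple summand.
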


Suppose again that $X$ is quasi-affine.
Then the spherical roots of~$X$ admit an alternative description as follows.
Thanks to~\cite[Theorem~2]{VK78}, the sphericity of $X$ is equivalent to $\CC[X]$ being a multiplicity-free $G$-module.
For every $\lambda \in \Gamma_G(X)$, let $\CC[X]_\lambda \subset \CC[X]$ be the unique simple $G$-submodule with highest weight~$\lambda$.
Let $\mathcal T_G(X) \subset \Lambda_G(X)$ be the monoid generated by all weights of the form $\lambda + \mu - \nu$ where $\lambda, \mu, \nu \in \Gamma_G(X)$ and the linear span of the product $\CC[X]_\lambda \cdot \CC[X]_\mu$ contains $\CC[X]_\nu$.
The following result is a particular case of~\cite[Lemma~6.6, iii)]{Kn96}.

\begin{proposition} \label{prop_tail_cone}
If $X$ is quasi-affine then $\QQ^+\Sigma_G(X) = \QQ^+\mathcal T_G(X)$.
\end{proposition}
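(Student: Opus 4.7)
The plan is to deduce the equality from the dual description of $\mathcal V_G(X)$ combined with the cited Knop's lemma. First I would recast the goal in polar form: since $\Sigma_G(X)$ is a linearly independent subset of $\QQ\Lambda_G(X)$ cutting out $\mathcal V_G(X)$ as $\lbrace q \in \mathcal Q_G(X) \mid q(\sigma) \le 0 \text{ for all } \sigma \in \Sigma_G(X) \rbrace$, the bipolar theorem for finitely generated polyhedral cones yields
\[
\QQ^+\Sigma_G(X) = \lbrace v \in \QQ\Lambda_G(X) \mid q(v) \le 0 \text{ for all } q \in \mathcal V_G(X) \rbrace.
\]
Thus the proposition is equivalent to the statement that $\QQ^+\mathcal T_G(X)$ admits exactly the same dual characterization.

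For the easy inclusion $\QQ^+\mathcal T_G(X) \subset \QQ^+\Sigma_G(X)$, I would verify directly that every generator $\tau = \lambda + \mu - \nu$ of $\mathcal T_G(X)$ satisfies $\rho_v(\tau) \le 0$ for every $G$-invariant $\QQ$-valued valuation $v$ of $\CC(X)$ vanishing on~$\CC^\times$, since such valuations give all elements of $\mathcal V_G(X)$. The crucial observation is that such a $v$ takes a common value on all nonzero vectors of any simple $G$-submodule of $\CC(X)$: any nonzero vector generates the module under $G$-translation, and the combination of $G$-invariance with the basic inequality $v(f_1 + f_2) \ge \min\lbrace v(f_1), v(f_2)\rbrace$ forces equality in both directions. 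Applied to $\CC[X]_\lambda$, $\CC[X]_\mu$, and $\CC[X]_\nu$, this gives common values $v(f_\lambda)$, $v(f_\mu)$, $v(f_\nu)$. Writing any nonzero $F \in \CC[X]_\nu$ as a finite sum $F = \sum_i p_i q_i$ with $p_i \in \CC[X]_\lambda$, $q_i \in \CC[X]_\mu$ (possible precisely because $\CC[X]_\nu$ is contained in $\CC[X]_\lambda \cdot \CC[X]_\mu$) yields
\[
v(f_\nu) = v(F) \ge \min_i(v(p_i) + v(q_i)) = v(f_\lambda) + v(f_\mu),
\]
whence $\rho_v(\tau) \le 0$, as required.

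The reverse inclusion is exactly the nontrivial content of \cite[Lemma~6.6, iii)]{Kn96}, which asserts that every linear form on $\QQ\Lambda_G(X)$ nonpositive on $\mathcal T_G(X)$ is in fact induced by some $G$-invariant valuation of $\CC(X)$; equivalently, $\mathcal V_G(X)$ coincides with the polar cone of $\QQ^+\mathcal T_G(X)$. Translating Knop's formulation to our quasi-affine setting is routine once one invokes Proposition~\ref{prop_SM_WL_and_WM} to identify $\Gamma_G(X)$ as a generating set for $\Lambda_G(X)$. The main obstacle, handled entirely by Knop's lemma, is precisely this converse: producing enough $G$-invariant valuations to fill out the polar cone of $\QQ^+\mathcal T_G(X)$, which goes beyond anything accessible by the elementary arguments above and relies on Knop's deeper analysis of the valuation cone in terms of the little Weyl group.
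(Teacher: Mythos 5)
Your proposal is correct and follows essentially the same route as the paper, which gives no argument at all and simply records the proposition as a particular case of \cite[Lemma~6.6, iii)]{Kn96}. The extra detail you supply — the polarity reformulation and the direct check that every $G$-invariant valuation $v$ satisfies $\rho_v(\lambda+\mu-\nu)\le 0$ on the tails, using that such a $v$ is constant on each simple $G$-submodule of $\CC[X]$ — is sound, but the substantive (reverse) inclusion is still delegated to the very same citation, exactly as in the paper.
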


As a concluding remark, we mention the following observation.
If a central subgroup $Z \subset G$ acts trivially on~$X$ then $X$ can be regarded as a spherical $G/Z$-variety.
In this case it is easy to see that all the above invariants of~$X$ as a spherical $G$-variety naturally identify with the corresponding invariants of~$X$ as a spherical $G/Z$-variety.

\subsection{Parabolic induction}
\label{subsec_par_ind}

Let $Q \supset B^-$ be a parabolic subgroup of~$G$ and let $L$ be the standard Levi subgroup of~$Q$.
Given a spherical subgroup $F \subset L$, put $H = F \rightthreetimes Q_u$.
Then $H$ is a spherical subgroup of~$G$.
In this case, we say that the homogeneous space $G/H$ is \textit{parabolically induced} from $L/F$.

In this paper, we shall need the following statement, which is implied by the general result \cite[Proposition~20.13]{Tim}.

\begin{proposition} \label{prop_par_ind}
Under the above notation, $\Lambda_G(G/H) = \Lambda_L(L/F)$ and $\Sigma_G(G/H) = \Sigma_L(L/F)$.
\end{proposition}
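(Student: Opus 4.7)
My plan is to identify the pair $(\Lambda_G(G/H), \mathcal V_G(G/H))$ with $(\Lambda_L(L/F), \mathcal V_L(L/F))$ and invoke the description of spherical roots as the primitive integral generators cutting out the dual cone. The key tool is the fibration $\pi \colon G/H \to G/Q$ whose fiber over $eQ$ is $L/F$, well-defined because $Q_u \subset H$ and $H \cap L = F$.

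For the local structure, let $P \supset B$ denote the parabolic opposite to~$Q$. The Bruhat decomposition shows that $BQ$ is open in $G$ with $B \cap Q = B_L$, so the open $B$-orbit in $G/Q$ is isomorphic to $B/B_L \simeq P_u$. Since $P_u$ is normal in $P$, a direct computation gives a $B$-equivariant isomorphism $\pi^{-1}(BQ/Q) \simeq P_u \times L/F$ on which $P_u$ acts on itself by translation and trivially on $L/F$, while $B_L$ acts on $P_u$ by conjugation and on $L/F$ by translation. Now any $\lambda \in \mathfrak X(T) = \mathfrak X(B)$ is trivial on $U \supset P_u$, so a $B$-semi-invariant rational function $f$ of weight $\lambda$ on $\pi^{-1}(BQ/Q)$ must satisfy $f(pp', x) = f(p', x)$ for all $p \in P_u$, forcing $f$ to be the pullback of some $g \in \CC(L/F)$; $B$-semi-invariance of $f$ with weight $\lambda$ is then equivalent to $B_L$-semi-invariance of $g$ with weight $\lambda$. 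This yields a weight-preserving isomorphism $\CC(G/H)^{(B)}_\lambda \simeq \CC(L/F)^{(B_L)}_\lambda$ for every $\lambda$, giving $\Lambda_G(G/H) = \Lambda_L(L/F)$ and dually $\mathcal Q_G(G/H) = \mathcal Q_L(L/F)$.

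To compare the valuation cones I would use the embedding $\CC(L/F) \hookrightarrow \CC(G/H)$ induced by the projection $\pi^{-1}(BQ/Q) \simeq P_u \times L/F \to L/F$, realizing $\CC(G/H)$ as the purely transcendental extension $\CC(L/F)(t_1,\ldots,t_r)$ with the $t_i$ coordinates on~$P_u$. Restricting a $G$-invariant $\QQ$-valued valuation $v$ on $\CC(G/H)$ to $\CC(L/F)$ yields an $L$-invariant valuation inducing the same element $\rho \in \mathcal Q$ (by the correspondence of $B$-semi-invariants established above), so $\mathcal V_G(G/H) \subseteq \mathcal V_L(L/F)$. Conversely, for $v_L \in \mathcal V_L(L/F)$ the Gauss extension $v(\sum f_\alpha t^\alpha) = \min_\alpha v_L(f_\alpha)$ is $B$-invariant and agrees with $v_L$ on $B_L$-semi-invariants; by the injectivity of $v \mapsto \rho_v$ on $G$-invariant valuations (\cite[Corollary~1.8]{Kn91}, recalled in~\S\,\ref{subsec_comb_inv_SV}), the unique $G$-invariant valuation with the prescribed $\rho$ coincides with this Gauss extension, proving $v_L \in \mathcal V_G(G/H)$. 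Therefore $\mathcal V_G(G/H) = \mathcal V_L(L/F)$, which together with the equality of weight lattices forces $\Sigma_G(G/H) = \Sigma_L(L/F)$.

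The main obstacle is precisely this last step: the Gauss extension of $v_L$ is only manifestly $B$-invariant, and its full $G$-invariance is deduced only indirectly via Knop's uniqueness of $G$-invariant valuations with given $\rho$. A more geometric route would apply the local structure theorem for spherical varieties to the canonical completion of $G/H$, or invoke the parabolic-induction principle~\cite[Proposition~20.13]{Tim} directly.
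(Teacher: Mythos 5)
The paper does not prove this statement from scratch: it simply records that it is a special case of the parabolic-induction principle \cite[Proposition~20.13]{Tim}. Your proposal instead attempts a self-contained argument, and most of it is sound: the identification of the open set $\pi^{-1}(BQ/Q)$ with $P_u \times L/F$, the resulting weight-preserving bijection $\CC(G/H)^{(B)}_\lambda \simeq \CC(L/F)^{(B_L)}_\lambda$ giving $\Lambda_G(G/H) = \Lambda_L(L/F)$, and the inclusion $\mathcal V_G(G/H) \subseteq \mathcal V_L(L/F)$ obtained by restricting a $G$-invariant valuation to the $L$-stable subfield $\CC(L/F) = \CC(G/H)^{P_u}$ are all correct.

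The gap is in the reverse inclusion, exactly where you locate it, and the repair you offer does not work. Injectivity of $v \mapsto \rho_v$ on $G$-invariant valuations \textup(\cite[Corollary~1.8]{Kn91}\textup) gives \emph{uniqueness} of a $G$-invariant valuation with a prescribed image in $\mathcal Q$; it says nothing about \emph{existence}, which is precisely what is needed to show $\rho_{v_L} \in \mathcal V_G(G/H)$. Your Gauss extension is indeed $B$-invariant (both $P_u$ and $B_L$ act on the coordinates of $P_u$ by polynomial substitutions with constant coefficients), but it is not manifestly invariant under, say, $Q_u \subset B^-$, whose action on the open cell mixes the $P_u$- and $L/F$-coordinates by genuinely rational transformations; so you have produced a $B$-invariant valuation with the right $\rho$, not a $G$-invariant one, and the phrase ``the unique $G$-invariant valuation with the prescribed $\rho$'' presupposes the object whose existence is at stake. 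To close the argument one must either pass from the Gauss extension to the associated $G$-invariant valuation $\bar v(f) = v(gf)$ for generic $g$ \textup(\cite[Corollary~1.5]{Kn91}\textup) and prove that $\bar v$ agrees with $v$ on $\CC(G/H)^{(B)}$ --- a nontrivial point that carries the real content --- or argue geometrically: realize $v_L$ as the valuation of an $L$-stable divisor $D$ in an embedding $L/F \hookrightarrow Y$, and observe that $G \times^Q D \subset G \times^Q Y$ is a $G$-stable divisor inducing the same element of $\mathcal Q$. The latter is essentially the proof of \cite[Proposition~20.13]{Tim}, which is what the paper invokes.
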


\subsection{Spherical modules}
\label{subsec_SM_gen}

Let $V$ be a finite-dimensional $G$-module.
We say that $V$ is a \textit{spherical $G$-module} if $V$ is spherical as a $G$-variety.
As was already mentioned in~\S\,\ref{subsec_comb_inv_SV}, according to~\cite[Theorem~2]{VK78}, $V$ is spherical if and only if the $G$-module $\CC[V]$ is multiplicity free.
From the latter property (or directly from the definition) it is easily deduced that every submodule of a spherical $G$-module is again spherical and $V$ is spherical if and only if so is~$V^*$.

Until the end of this subsection we assume that $V$ is a spherical $G$-module.
It is well known that the weight monoid $\Gamma_G(V)$ is free; see, for instance,~\cite[Theorem~3.2]{Kn98}.
Let $F_G(V)$ denote the set of indecomposable elements of~$\Gamma_G(V)$.
By definition, $F_G(V)$ is a linearly independent subset of $\Lambda^+_G$ such that $\Gamma_G(V) = \ZZ^+F_G(V)$.

Consider a decomposition
\begin{equation} \label{eqn_direct_sum}
V = V_1 \oplus \ldots \oplus V_k
\end{equation}
into a direct sum of simple $G$-modules and for each $i=1,\ldots, k$ let $\lambda_i$ be the highest weight of~$V_i^*$.
As $\CC[V] \simeq \mathrm{S}(V^*)$, we find that $\lambda_i \in F_G(V)$ for all $i=1,\ldots,k$.
For every $\lambda \in \Gamma_G(V)$, let $R_\lambda \subset \mathrm{S}(V^*)$ be the simple $G$-submodule with highest weight~$\lambda$.

Now suppose $V$ is a spherical module with respect to an action of another connected reductive algebraic group $K$ satisfying $K'=G'$.
For every $\lambda \in \Gamma_G(V)$, let $\varphi(\lambda)$ be the highest weight of $R_\lambda$ regarded as a $K$-module.
The following proposition is well known; we provide it together with a proof for convenience.

\begin{proposition} \label{prop_free_generators}
The following assertions hold.
\begin{enumerate}[label=\textup{(\alph*)},ref=\textup{\alph*}]
\item \label{prop_free_generators_a}
The above-defined map $\varphi \colon \Gamma_G(V) \to \Gamma_K(V)$ is an isomorphism.
In particular, $\varphi$ induces a bijection $F_G(V) \to F_K(V)$.

\item \label{prop_free_generators_b}
For every $\lambda \in \Gamma_G(V)$, there is an expression $\lambda = \sum \limits_{i=1}^k a_i \lambda_i - \sum \limits_{\alpha \in \Pi} b_\alpha \alpha$ with $a_i, b_\alpha \in \ZZ^+$ such that $\varphi(\lambda) = \sum \limits_{i=1}^k a_i \varphi(\lambda_i) - \sum \limits_{\alpha \in \Pi} b_\alpha \alpha$.
In particular, this property holds for all $\lambda \in F_G(V) \setminus \lbrace \lambda_1,\ldots, \lambda_k \rbrace$.

\item \label{prop_free_generators_c}
$\varphi$ extends to an isomorphism $\varphi \colon \Lambda_G(V) \xrightarrow{\sim} \Lambda_K(V)$.

\item \label{prop_free_generators_d}
$\Sigma_G(V) = \Sigma_K(V)$.
\end{enumerate}
\end{proposition}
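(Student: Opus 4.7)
The plan hinges on the observation that, since $G$ and $K$ are connected reductive with $G' = K'$, every simple $G$-submodule of $\mathrm S(V^*)$ is also a simple $K$-submodule: a simple $G$-module has its connected center acting by a scalar, so its restriction to $G'$ is simple, and extending back to $K$ the connected center $Z(K)^0$ again acts by a scalar, giving a simple $K$-module. Thus the $G$-isotypic and $K$-isotypic decompositions of $\mathrm S(V^*)$ coincide subspace by subspace. First I would arrange $B_G$ and $B_K$ to share the common unipotent radical $U = U_{G'}$ and arrange the maximal tori $T_G, T_K$ both to contain the common maximal torus $T'$ of the shared derived subgroup $G' = K'$; this is possible because $G = Z(G)^0 \cdot G'$ with $Z(G)^0$ a torus (and similarly for $K$), so the unipotent radical of any Borel in $G$ equals the unipotent radical of the induced Borel in $G'$. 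With this setup, any $B_G$-highest weight vector in a $G$-stable subspace is automatically a $B_K$-highest weight vector, and its $T_G$- and $T_K$-weights restrict to the same character of $T'$.

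For part~(\ref{prop_free_generators_a}), given $\lambda, \mu \in \Gamma_G(V)$, I would pick common highest weight vectors $v_\lambda \in R_\lambda$, $v_\mu \in R_\mu$; their product $v_\lambda v_\mu$ is nonzero since $\mathrm S(V^*)$ is an integral domain, it lies in the simple $G$-submodule $R_{\lambda+\mu}$ as its $B_G$-highest weight vector, while simultaneously its $B_K$-weight is $\varphi(\lambda) + \varphi(\mu)$. This yields $\varphi(\lambda + \mu) = \varphi(\lambda) + \varphi(\mu)$, so $\varphi$ is a monoid homomorphism; symmetry gives a two-sided inverse, and any monoid isomorphism between free monoids induces a bijection on the sets of indecomposable generators $F_G(V) \to F_K(V)$.

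For part~(\ref{prop_free_generators_b}) I would exploit the multi-grading of $\mathrm S(V^*)$ induced by the decomposition~\eqref{eqn_direct_sum}. Each $V_i$ is both $G$- and $K$-stable (again by the simplicity argument above), so the multi-grading is equivariant for both actions; since $R_\lambda$ is simple, it sits inside a single multi-graded piece $\mathrm S^{a_1}(V_1^*) \otimes \dots \otimes \mathrm S^{a_k}(V_k^*)$, whose $T_G$-weights have the form $\sum a_i \lambda_i - \sum b_\alpha \alpha$ and whose $T_K$-weights have the form $\sum a_i \varphi(\lambda_i) - \sum b_\alpha' \alpha$ with $b_\alpha, b_\alpha' \in \ZZ^+$. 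Restricting both expressions for the common highest weight vector to $T'$, using $\lambda|_{T'} = \varphi(\lambda)|_{T'}$, $\lambda_i|_{T'} = \varphi(\lambda_i)|_{T'}$, and the linear independence of the simple roots of $G' = K'$, one obtains $b_\alpha = b_\alpha'$. Parts~(\ref{prop_free_generators_c}) and~(\ref{prop_free_generators_d}) are then formal consequences: (c) is the extension of the monoid isomorphism $\varphi$ to group completions combined with Proposition~\ref{prop_SM_WL_and_WM}, and (d) follows from Proposition~\ref{prop_tail_cone} once one observes that the triples $(\lambda, \mu, \nu)$ generating $\mathcal T_G(V)$ coincide via $\varphi$ with those generating $\mathcal T_K(V)$, and by~(b) the corresponding elements $\lambda+\mu-\nu$ and $\varphi(\lambda)+\varphi(\mu)-\varphi(\nu)$ agree as elements of the root lattice of $G' = K'$.

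The main obstacle I expect is the bookkeeping required to bridge the distinct character lattices $\mathfrak X(T_G)$ and $\mathfrak X(T_K)$: \textit{a priori} $\lambda$ and $\varphi(\lambda)$ inhabit different ambient spaces and cannot be compared directly, so the entire argument routes through their common restrictions to $T'$, and likewise the simple roots of $G$ and of $K$ must be matched term by term only via their shared realization as simple roots of $G'$. Making this identification precise in part~(b) is the delicate step; once it is done, everything else reduces to manipulation of objects that are intrinsically attached to the fixed subspaces $R_\lambda \subset \mathrm S(V^*)$.
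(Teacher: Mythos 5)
Your proof is correct and takes essentially the same route as the paper's: part~(\ref{prop_free_generators_a}) via the product of highest weight vectors, part~(\ref{prop_free_generators_b}) via the multigrading of $\mathrm S(V^*)$ coming from decomposition~(\ref{eqn_direct_sum}), part~(\ref{prop_free_generators_c}) from Proposition~\ref{prop_SM_WL_and_WM}, and part~(\ref{prop_free_generators_d}) by identifying $\mathcal T_G(V)$ with $\mathcal T_K(V)$ and invoking Proposition~\ref{prop_tail_cone}. The only difference is that you make explicit the matching of the root-coefficient vectors $b_\alpha$ by restricting to a common maximal torus of $G'=K'$ and using linear independence of the simple roots, a step the paper's proof passes over with ``clearly''.
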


\begin{proof}
(\ref{prop_free_generators_a})
Clearly, $\varphi$ is bijective.
It remains to notice that for every $\lambda, \mu \in \Gamma_G(V)$ the product of highest weight vectors of $R_\lambda$ and $R_\mu$ is a highest weight vector of $R_{\lambda+\mu}$.

(\ref{prop_free_generators_b})
Decomposition~(\ref{eqn_direct_sum}) yields a $G$-module isomorphism
\begin{equation} \label{eqn_S(V)}
\mathrm S(V^*) \simeq \bigoplus \limits_{(s_1,\ldots,s_k) \in (\ZZ^+)^k} \mathrm S^{s_1}(V_1^*) \otimes \ldots \otimes \mathrm S^{s_k}(V_k^*).
\end{equation}
Then for every $\lambda \in \Gamma_G(V)$ there is a tuple $(a_1,\ldots,a_k) \in (\ZZ^+)^k$ such that $R_\mu$ is isomorphic to a submodule of $\mathrm S^{a_1}(V_1^*) \otimes \ldots \otimes \mathrm S^{a_k}(V_k^*)$.
Since each $T$-weight of $V_i^*$ is obtained from $\lambda_i$ by subtracting a linear combination of the roots in~$\Pi$ with nonnegative integer coefficients, it follows that $\lambda = \sum \limits_{i=1}^k a_i \lambda_i - \sum \limits_{\alpha \in \Pi} b_\alpha \alpha$ for some $b_\alpha \in \ZZ^+$.
Clearly, in this situation one also has $\varphi(\lambda) = \sum \limits_{i=1}^k a_i \varphi(\lambda_i) - \sum \limits_{\alpha \in \Pi} b_\alpha \alpha$.

(\ref{prop_free_generators_c})
This is implied by Proposition~\ref{prop_SM_WL_and_WM}.

(\ref{prop_free_generators_d})
Note that the isomorphism $\varphi$ in~(\ref{prop_free_generators_c}) sends each element of $\Lambda_G(V) \cap \QQ\Pi$ to itself.
Then an argument similar to that in~(\ref{prop_free_generators_b}) yields $\mathcal T_G(V) = \mathcal T_K(V)$, hence $\varphi(\mathcal T_G(V)) = \mathcal T_K(V)$.
Since the set of spherical roots is linearly independent and each spherical root is primitive in the weight lattice, Proposition~\ref{prop_tail_cone} yields
$\Sigma_G(V) = \Sigma_K(V)$.
\end{proof}

\begin{remark} \label{rem_restriction}
~
\begin{enumerate}[label=\textup{(\alph*)},ref=\textup{\alph*}]
\item \label{rem_restriction_a}
If $K$ is a subgroup of $G$ then the map $\varphi$ and its extension to $\Lambda_G(V)$ are given by restricting characters from $B$ to~$B_K$.

\item \label{rem_restriction_b}
The proof of Proposition~\ref{prop_free_generators}(\ref{prop_free_generators_b}) implies $\Gamma_G(V) \subset \ZZ^+F_G(V) - \mathcal T_G(V)$.
In view of Propositions~\ref{prop_tail_cone} and~\ref{prop_SM_WL_and_WM} this also yields $\Gamma_G(V) \subset \ZZ^+F_G(V) - \ZZ^+\Sigma_G(V)$.
\end{enumerate}
\end{remark}

Consider again decomposition~(\ref{eqn_direct_sum}) and
let $Z$ be the subgroup of $\GL(V)$ consisting of the elements that act by scalar transformations on each~$V_i$, $i = 1,\ldots,k$.
Let $C \subset Z$ be the image in~$\GL(V)$ of the connected center of~$G$.
We say that $V$ is \textit{saturated} (as a $G$-module) if $C = Z$.
In the general case, one can find a subtorus $C_0 \subset Z$ such that $Z = C \times C_0$, and then $V$ becomes saturated (and of course remains spherical) when regarded as a ($G \times C_0$)-module.
Thus an arbitrary spherical module is obtained from a saturated one by reducing the connected center of the acting group.

\begin{proposition} \label{prop_SR_of_SM}
One has $\QQ\Lambda_G(V) = \QQ\lbrace \lambda_1,\ldots, \lambda_k \rbrace \oplus \QQ \Sigma_G(V)$.
\end{proposition}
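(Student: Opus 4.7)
The plan is to prove the equality $\QQ\Lambda_G(V) = W \oplus \QQ\Sigma_G(V)$, where $W := \QQ\lbrace \lambda_1,\ldots,\lambda_k \rbrace$, by establishing separately: (i)~the sum $\QQ\Lambda_G(V) = W + \QQ\Sigma_G(V)$, and (ii)~the transversality $W \cap \QQ\Sigma_G(V) = \lbrace 0 \rbrace$. Throughout I will use Proposition~\ref{prop_tail_cone} (which applies since $V$ is affine) to identify $\QQ\Sigma_G(V)$ with $\QQ\mathcal T_G(V)$.

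For~(i), my goal is to show that each element of $F_G(V)$ lies in $W + \QQ\mathcal T_G(V)$. Starting from the expression $\lambda = \sum_i a_i \lambda_i - \sum_\alpha b_\alpha \alpha$ supplied by Proposition~\ref{prop_free_generators}(b), this reduces to proving $\sum_i a_i \lambda_i - \lambda \in \ZZ^+\mathcal T_G(V)$. I would prove this by induction on $\sum_i a_i$, in the slightly stronger form that the conclusion holds whenever $\lambda$ is the highest weight of a simple $G$-submodule of $\mathrm S^{a_1}(V_1^*)\cdots \mathrm S^{a_k}(V_k^*) \subset \CC[V]$. In the inductive step (after reordering so that $a_1 \ge 1$), I would factor this module as $[\mathrm S^{a_1-1}(V_1^*) \mathrm S^{a_2}(V_2^*) \cdots \mathrm S^{a_k}(V_k^*)] \cdot V_1^*$, observe that the simple submodule with highest weight $\lambda$ must appear in some product $\CC[V]_\nu \cdot \CC[V]_{\lambda_1}$ with $\CC[V]_\nu$ a simple $G$-submodule of the bracketed factor, and combine the resulting generator $\nu + \lambda_1 - \lambda \in \mathcal T_G(V)$ with the inductive hypothesis applied to $\nu$ and the tuple $(a_1-1, a_2, \ldots, a_k)$.

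For~(ii), since $\QQ\Sigma_G(V) \subset \QQ\Pi$, it suffices to show $W \cap \QQ\Pi = \lbrace 0 \rbrace$. This actually fails at the level of $W \cap \QQ\Pi$ when $V$ is non-saturated (for instance, for $G = \SL_n$ acting on $\CC^n$ one has $\lambda_1 \in \QQ\Pi$), so I would first reduce to the saturated case. Applying Proposition~\ref{prop_free_generators}(c,d) to $K := G \times C_0$, with $C_0$ chosen as in the paragraph preceding the proposition so that $V$ is saturated as a $K$-module, the isomorphism $\varphi$ identifies the subspace $W$ and the subspace $\QQ\Sigma_G(V)$ computed for $G$ with their counterparts for $K$ (using that $\varphi$ restricts to the identity on $\QQ\Pi$, as noted in the proof of (d)). In the saturated case the argument is short: the image of $Z(G)^0$ in $\GL(V)$ is the full rank-$k$ torus of scalar transformations on the summands $V_i$, so the restrictions of $\lambda_1,\ldots,\lambda_k$ to $Z(G)^0$ are linearly independent, whereas every element of $\QQ\Pi$ vanishes on $Z(G)^0$, yielding $W \cap \QQ\Pi = \lbrace 0 \rbrace$. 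The main obstacle is locating this reduction to the saturated case; once it is in place, both parts are essentially direct consequences of the structure of spherical modules recalled in this subsection.
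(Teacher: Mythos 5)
Your proof is correct and follows essentially the same route as the paper's: reduce to the saturated case via Proposition~\ref{prop_free_generators}(\ref{prop_free_generators_c},\,\ref{prop_free_generators_d}), obtain the spanning statement from the expression in Proposition~\ref{prop_free_generators}(\ref{prop_free_generators_b}) together with Proposition~\ref{prop_tail_cone}, and get directness from the linear independence of the restrictions of $\lambda_1,\ldots,\lambda_k$ to the connected center combined with $\QQ\Sigma_G(V) \subset \QQ\Pi$. The only difference is that you spell out, by induction on $\sum_i a_i$, why $\sum_i a_i\lambda_i - \lambda$ lies in $\ZZ^+\mathcal T_G(V)$ --- a step the paper dispatches with ``it follows from the construction.''
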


\begin{proof}
In view of Proposition~\ref{prop_free_generators}(\ref{prop_free_generators_c},\,\ref{prop_free_generators_d}) and Remark~\ref{rem_restriction}(\ref{rem_restriction_a}) it suffices to prove the assertion for saturated~$V$.
Taking into account Remark~\ref{rem_restriction}(\ref{rem_restriction_b}) and Proposition~\ref{prop_SM_WL_and_WM} we obtain $\QQ\Lambda_G(V) = \QQ\lbrace \lambda_1, \ldots, \lambda_k \rbrace + \QQ\Sigma_G(V)$.
As the restrictions of $\lambda_1,\ldots,\lambda_k$ to the connected center of~$G$ are linearly independent and those of $\Sigma_G(V)$ are trivial, we conclude that $\QQ\lbrace \lambda_1, \ldots, \lambda_k \rbrace \cap \QQ\Sigma_G(V) = \lbrace 0 \rbrace$.
\end{proof}

\begin{remark} \label{rem_disconnected}
In~\S\,\ref{subsec_gen_SS}, we shall also deal with disconnected groups of the form $\widehat G = \widehat Z \cdot G$ where $\widehat Z$ is a finite Abelian group.
A finite-dimensional $\widehat G$-module will be called spherical if $V$ is spherical as a $G$-module.
One easily extends the notions of weight lattice and weight monoid to spherical $\widehat G$-modules by considering $(\widehat Z \cdot B)$-weights instead of $B$-weights in both definitions, so that $\Lambda_{\widehat G}(V), \Gamma_{\widehat G}(V) \subset \mathfrak X(\widehat Z \cdot B)$ for every spherical $\widehat G$-module~$V$.
All results mentioned in this subsection remain valid for spherical $\widehat G$-modules.
In particular, the monoid $\Gamma_{\widehat G}(V)$ is free thanks to the natural restriction isomorphism $\Gamma_{\widehat G}(V) \xrightarrow{\sim} \Gamma_G(V)$.
\end{remark}

\subsection{A reduction for spherical modules}
\label{subsec_char_of_SM}

Let $V$ be a finite-dimensional $G$-module (not necessarily simple) and let $\omega$ be a highest weight of~$V$.
Fix a highest-weight vector $v_\omega \in V$ of weight~$\omega$.
Put $Q = \lbrace g \in G \mid g\langle v_\omega \rangle = \langle v_\omega \rangle \rbrace$; this is a parabolic subgroup of~$G$ containing~$B$.
Let $M$ be the standard Levi subgroup of~$Q$ and let $M_0$ be the stabilizer of~$v_\omega$ in~$M$.
Let $Q^- \supset B^-$ be the parabolic subgroup of~$G$ opposite to $Q$.
Fix a lowest weight vector $\xi \in V^*$ of weight~$-\omega$, so that $\xi(v_\omega) \ne 0$.
Put
\[
\widetilde V = \lbrace v \in V \mid (\mathfrak q_u \xi)(v) = 0 \rbrace = \lbrace v \in V \mid \xi(\mathfrak q_u v) = 0 \rbrace
\]
and $V_0 = \widetilde V \cap \Ker \xi$.
Both $V$ and $V_0$ are $M$-modules in a natural way, and there are the decompositions $\widetilde V = \langle v_\omega \rangle \oplus V_0$ and $V = (\mathfrak q^-_u v_\omega) \oplus \widetilde V$ into direct sums of $M$-submodules.
The following result is extracted from the proof of \cite[Theorem~3.3]{Kn98}.

\begin{theorem} \label{thm_sph_modules_imp}
The following assertions hold:
\begin{enumerate}[label=\textup{(\alph*)},ref=\textup{\alph*}]
\item \label{thm_sph_modules_imp_a}
$V$ is a spherical $G$-module if and only if $\widetilde V$ is a spherical $M$-module.

\item \label{thm_sph_modules_imp_b}
Under the conditions of~\textup(\ref{thm_sph_modules_imp_a}\textup), one has $\Lambda_G(V^*) = \Lambda_M(\widetilde V^*)$.
\end{enumerate}
\end{theorem}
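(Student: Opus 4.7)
The plan is to exploit the ``slice'' structure encoded in the decomposition $V = (\mathfrak q_u^- v_\omega) \oplus \widetilde V$. The key point is that $\xi \in V^*$, being $B^-$-semi-invariant and thus $Q_u^-$-invariant (because $Q_u^- \subset B_u^-$), cuts out a $Q_u^-$-stable affine hyperplane $V_1 := \lbrace v \in V \mid \xi(v) = 1 \rbrace$ which meets $\widetilde V$ along $v_\omega + V_0$. First I would prove that the multiplication map
\[
\mu \colon Q_u^- \times (v_\omega + V_0) \longrightarrow V_1, \qquad (u, w) \longmapsto u \cdot w,
\]
is an isomorphism of varieties: at $(e, v_\omega)$ its differential sends $(X, w') \mapsto X \cdot v_\omega + w'$, which by the decomposition $T_{v_\omega} V_1 = (\mathfrak q_u^- v_\omega) \oplus V_0$ is a linear isomorphism; $Q_u^-$-equivariance then propagates \'etaleness everywhere, and injectivity follows from the $M$-equivariant projection $V \twoheadrightarrow V/(\mathfrak q_u^- v_\omega) \simeq \widetilde V$, which recovers $w$ from $\mu(u, w)$. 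Since the dimensions agree, $\mu$ is an isomorphism.

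For part~(a), since $B = Q_u \cdot B_M$, I would use $\mu$ to compare $B$-orbits on $V$ with $B_M$-orbits on~$\widetilde V$. Concretely, $\mu$ induces an isomorphism $\CC(V) \simeq \CC(Q_u^-) \otimes \CC(v_\omega + V_0)$ on function fields, under which the $B$-action decomposes: $B_M$ acts by simultaneous conjugation on the $Q_u^-$-factor and by the standard $M$-action on the slice $v_\omega + V_0$, while $Q_u$ absorbs into the $Q_u^-$-factor after a generic normalization. This yields isomorphisms $\CC(V)_\lambda^{(B)} \xrightarrow{\sim} \CC(\widetilde V)_\lambda^{(B_M)}$ for every~$\lambda$; taking $\lambda = 0$ gives the sphericity equivalence.

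For part~(b), by Proposition~\ref{prop_SM_WL_and_WM} it suffices to show the equality of weight monoids $\Gamma_G(V^*) = \Gamma_M(\widetilde V^*)$ as subsets of $\mathfrak X(T)$. The dual of $\widetilde V \hookrightarrow V$ yields a surjection $V^* \twoheadrightarrow \widetilde V^*$ and hence a restriction $\mathrm S(V) \twoheadrightarrow \mathrm S(\widetilde V)$ of polynomial algebras, which is $M$-equivariant and preserves $T$-weights. A $B$-semi-invariant of weight $\lambda$ in $\mathrm S(V)$ restricts to a $B_M$-semi-invariant of the same weight in $\mathrm S(\widetilde V)$, and the slice argument from part~(a) dualizes to show this restriction is bijective on semi-invariants of each weight. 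The main anticipated obstacle is the surjectivity: one must extend a given $B_M$-semi-invariant polynomial on $\widetilde V^*$ to a $B$-semi-invariant polynomial on all of~$V^*$. The natural candidate is defined via $B$-translation along the slice, and both its polynomiality and its $B$-semi-invariance ultimately rest on the $B^-$-semi-invariance of $\xi$ together with the $Q_u^-$-freeness coming from~$\mu$.
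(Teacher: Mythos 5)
The paper offers no proof of this theorem --- it is imported wholesale from the proof of \cite[Theorem~3.3]{Kn98} --- and your plan of attacking it via the local structure theorem on the hyperplane $\lbrace \xi = 1\rbrace$ is indeed the idea underlying Knop's argument. However, several of your steps fail as written. The injectivity argument for your map $\mu$ is incorrect: the $M$-equivariant projection of $V$ onto $\widetilde V$ along $\mathfrak q_u^- v_\omega$ does \emph{not} recover $w$ from $u\cdot w$. Take $G = \GL_2$, $V = \mathrm S^2\CC^2$ (a spherical module), $v_\omega = e_1^2$, so that $Q = B$, $M = T$, $\widetilde V = \langle e_1^2, e_2^2\rangle$, $V_0 = \langle e_2^2\rangle$ and $\mathfrak q_u^- v_\omega = \langle e_1e_2\rangle$; for $u_t \in Q_u^-$ the lower unitriangular matrix with entry $t$ and $w = e_1^2 + ce_2^2$ one computes $u_t\cdot w = e_1^2 + 2t\,e_1e_2 + (t^2+c)e_2^2$, whose projection to $\widetilde V$ is $e_1^2 + (t^2+c)e_2^2 \ne w$. (The map $\mu$ \emph{is} bijective, but not for this reason; likewise your equivariance argument only establishes \'etaleness at the points $(u, v_\omega)$, since translating by $Q_u^-$ never moves the second coordinate.) In part~(a) there is a further mismatch: $\xi$ is a $B^-$-eigenvector, so the open set $\lbrace\xi \ne 0\rbrace$ and the slice are adapted to $Q^- \supset B^-$, and the Borel $B = Q_u\cdot B_M$ does not preserve them --- there is no sense in which ``$Q_u$ absorbs into the $Q_u^-$-factor.'' This is repairable: test sphericity with the opposite Borel $B^- = B_M^-\, Q_u^-$, which does act compatibly with your decomposition, or dualize and run the whole construction on $V^*$ using the $B$-eigenfunction $v_\omega \in \CC[V^*]$.

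The decisive gap is in part~(b): the equality of weight \emph{monoids} $\Gamma_G(V^*) = \Gamma_M(\widetilde V^*)$ that you propose to prove is false in general, so the weight-preserving bijection between polynomial $B$-semi-invariants of $\mathrm S(V)$ and $B_M$-semi-invariants of $\mathrm S(\widetilde V)$ that you are after cannot exist. In the example above, writing $\chi_i$ for the character $\diag(t_1,t_2)\mapsto t_i$ of $T$, one has $\Gamma_G(V^*) = \ZZ^+\lbrace 2\chi_1,\ 2\chi_1+2\chi_2\rbrace$ while $\Gamma_T(\widetilde V^*) = \ZZ^+\lbrace 2\chi_1,\ 2\chi_2\rbrace$: these generate the same lattice, exactly as the theorem asserts, but the weight $2\chi_2$, realized by $e_2^2 \in \mathrm S^1(\widetilde V)$, is not the highest weight of any simple summand of $\mathrm S(V)$. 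The statement is genuinely only about the lattices $\ZZ\Gamma$, and the proof has to pass through $B$-semi-invariant \emph{rational} functions (or, equivalently, through $\ZZ$-spans), e.g.\ by using the slice on $V^*$ to identify $\CC(V^*)^{(B)}_\lambda$ with the $B_M$-semi-invariants on the slice and reading off $\Lambda_G(V^*) = \ZZ\omega + \Lambda_M(V_0^*) = \Lambda_M(\widetilde V^*)$.
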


In the next two propositions, we assume that $V$ is a spherical $G$-module.

\begin{proposition} \label{prop_SM_red_SR1}
One has
$\Sigma_M(V_0^*)=\Sigma_M(\widetilde V^*) \subset \Sigma_G(V^*)$.
\end{proposition}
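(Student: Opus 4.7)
The proposition has two parts: the equality $\Sigma_M(V_0^*) = \Sigma_M(\widetilde V^*)$ and the inclusion $\Sigma_M(\widetilde V^*) \subset \Sigma_G(V^*)$. My plan is to handle them by somewhat different techniques, both ultimately resting on Proposition~\ref{prop_tail_cone}, which reduces $\QQ^+\Sigma$ to $\QQ^+\mathcal T$ for quasi-affine spherical varieties.

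For the equality, I would exploit the $M$-module decomposition $\widetilde V = \langle v_\omega \rangle \oplus V_0$, noting that $M_0 \supset M'$ forces $M$ to act on $\langle v_\omega \rangle$ through the character $\omega$. This gives $\CC[\widetilde V^*] = \CC[V_0^*][v_\omega]$ as $M$-algebras, with $v_\omega$ a $B_M$-semi-invariant coordinate of weight $\omega$, and every simple $M$-submodule of $\CC[\widetilde V^*]$ takes the form $v_\omega^k R$ for some $k \geq 0$ and some simple $M$-submodule $R \subset \CC[V_0^*]$, with highest weight shifted by $k\omega$. A relation $R_\lambda R_\mu \supset R_\nu$ in $\CC[\widetilde V^*]$ translated into this form forces the $v_\omega$-degrees to satisfy $k_3 = k_1 + k_2$, so the generator $\lambda + \mu - \nu$ of $\mathcal T_M(\widetilde V^*)$ reduces to a generator $\lambda_0 + \mu_0 - \nu_0$ of $\mathcal T_M(V_0^*)$. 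Hence $\mathcal T_M(\widetilde V^*) = \mathcal T_M(V_0^*)$, and Proposition~\ref{prop_tail_cone} delivers $\QQ^+\Sigma_M(\widetilde V^*) = \QQ^+\Sigma_M(V_0^*)$. Set equality then reduces to a primitivity check in the lattices $\Lambda_M(V_0^*) \subset \Lambda_M(\widetilde V^*) = \Lambda_M(V_0^*) + \ZZ\omega$, which I would carry out using the key orthogonality $(\omega, \alpha) = 0$ for $\alpha \in \Pi_M$ (because $\omega$ vanishes on the coroots of $M'$) combined with the fact that every spherical root lies in $\QQ^+\Pi_M$.

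For the inclusion, Theorem~\ref{thm_sph_modules_imp}(b) gives $\Lambda_G(V^*) = \Lambda_M(\widetilde V^*)$, so both sets of spherical roots live in the same lattice and $\mathcal Q_G(V^*) = \mathcal Q_M(\widetilde V^*)$. My plan is to establish the dual inclusion $\mathcal V_G(V^*) \subset \mathcal V_M(\widetilde V^*)$ of valuation cones. The geometric key is the density of $Q_u \widetilde V^*$ in $V^*$, which I would verify by computing the differential of the action morphism $Q_u \times \widetilde V^* \to V^*$ at $(1, \xi)$: it sends $(X, \psi)$ to $X\xi + \psi$, and a direct calculation (using that for $\alpha \in \Delta^+ \setminus \Delta^+_M$ the vector $e_\alpha \xi$ has weight $\alpha - \omega$, so belongs to $(\mathfrak q_u^- v_\omega)^*$, and that these vectors are linearly independent) yields $\mathfrak q_u \cdot \xi = (\mathfrak q_u^- v_\omega)^*$, hence surjectivity onto $V^* = \widetilde V^* \oplus (\mathfrak q_u^- v_\omega)^*$. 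The resulting local decomposition $V^* \simeq Q_u \times \widetilde V^*$ on suitable open subsets identifies $B_M$-semi-invariant rational functions on $\widetilde V^*$, pulled back via the second projection, with $B$-semi-invariant functions on $V^*$ of the same weight (they are automatically $Q_u$-invariant). Therefore each $G$-invariant valuation on $\CC(V^*)$ restricts to the pulled-back subfield to give an $M$-invariant valuation on $\CC(\widetilde V^*)$ with the same image in the common $\mathcal Q_G(V^*) = \mathcal Q_M(\widetilde V^*)$, proving $\mathcal V_G(V^*) \subset \mathcal V_M(\widetilde V^*)$; dualization together with primitivity in the common lattice then yields $\Sigma_M(\widetilde V^*) \subset \Sigma_G(V^*)$.

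The principal obstacles will come from the two primitivity verifications. For (a), without the orthogonality of $\omega$ to $\QQ\Pi_M$ the finer lattice $\Lambda_M(\widetilde V^*)$ might a priori introduce smaller primitive generators along the rays of the valuation cone. For (b), the hard technical step is really the differential computation at $\xi$ establishing the density of $Q_u \widetilde V^*$ in $V^*$; once the $B$-semi-invariants on the two sides correspond under pullback, the valuation-cone inclusion and its dualization to spherical roots are essentially formal.
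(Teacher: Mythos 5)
Your part (a) is correct and is essentially the paper's own argument: the paper likewise derives $\Lambda_M(\widetilde V^*)=\ZZ\omega\oplus\Lambda_M(V_0^*)$ and $\mathcal T_M(\widetilde V^*)=\mathcal T_M(V_0^*)$ from the decomposition $\widetilde V^*\simeq\langle v_\omega\rangle^*\oplus V_0^*$ and concludes via Proposition~\ref{prop_tail_cone}. Your primitivity check using the directness of the sum (equivalently $(\omega,\Pi_M)=0$) is the right way to pass from equality of cones to equality of the sets: both sets consist of the primitive lattice points on the extremal rays of one and the same simplicial cone, and the two lattices induce the same primitive points on $\QQ\Lambda_M(V_0^*)$.

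Part (b), however, has a genuine gap at the final step. The geometric input is sound: the birationality of $Q_u\times\widetilde V^*\to V^*$ is exactly what Knop establishes, the eigenfunctions $f_\lambda$ are $Q_u$-invariant because $Q\supset B$ forces $Q_u\subset B_u$, and restricting $G$-invariant valuations to the $M$-stable subfield $\CC(V^*)^{Q_u}\simeq\CC(\widetilde V^*)$ does give $\mathcal V_G(V^*)\subset\mathcal V_M(\widetilde V^*)$ inside the common space $\mathcal Q$. But dualizing this yields only the inclusion of cones $\QQ^+\Sigma_M(\widetilde V^*)\subset\QQ^+\Sigma_G(V^*)$, and an inclusion of simplicial cones does \emph{not} imply that the generators of the smaller cone are among those of the larger: compare $\QQ^+\lbrace e_1+e_2\rbrace\subset\QQ^+\lbrace e_1,e_2\rbrace$, where $e_1+e_2$ is primitive yet is not a spherical root of the larger system. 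Primitivity in the common lattice only normalizes a generator once its ray is known to be extremal in $\QQ^+\Sigma_G(V^*)$, and that extremality is precisely what is missing; note the asymmetry with part (a), where you have an \emph{equality} of cones and hence an equality of the sets of extremal rays. To close the gap you would need either the reverse cone inclusion $\QQ^+\lbrace\sigma\in\Sigma_G(V^*)\mid\Supp\sigma\subset\Pi_M\rbrace\subset\QQ^+\Sigma_M(\widetilde V^*)$ (after which the two cones coincide and their rays match), or some other identification of each $\sigma\in\Sigma_M(\widetilde V^*)$ with a wall of $\mathcal V_G(V^*)$. The paper sidesteps this entirely by citing Gagliardi's localization result: the homogeneous spherical datum of the open $M$-orbit in $\widetilde V^*$ is the localization at $\Pi_M$ of that of the open $G$-orbit in $V^*$, and localization by definition returns exactly the subset of spherical roots supported on $\Pi_M$, which gives the set inclusion directly.
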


\begin{proof}
Applying \cite[Proposition~3.2]{Gag} in the situation described in the proof of \cite[Theorem~3.3]{Kn98} we find that the so-called homogeneous spherical datum of the open $M$-orbit in $\widetilde V^*$ is obtained from that of the open $G$-orbit in $V^*$ via a so-called localization at the set of simple roots~$\Pi_M$.
The latter implies $\Sigma_M(\widetilde V^*) \subset \Sigma_G(V^*)$.
The $M$-module isomorphism $\widetilde V^* \simeq \langle v_\omega \rangle^* \oplus V_0^*$ yields the equalities $\Lambda_M(\widetilde V^*) = \ZZ \omega \oplus \Lambda_M(V_0^*)$ and $\mathcal T_M(\widetilde V^*) = \mathcal T_M(V_0^*)$, which imply $\Sigma_M(\widetilde V^*) = \Sigma_M(V_0^*)$ thanks to Proposition~\ref{prop_tail_cone}.
\end{proof}

\begin{proposition} \label{prop_SM_red_SR2}
Suppose $W \subset V$ is a simple $G$-submodule with highest weight~$\lambda$ and $W' \subset W \cap V_0$ is a simple $M$-submodule with highest weight~$\mu$.
Then $\lambda - \mu \in \QQ\Sigma_G(V^*)$.
\end{proposition}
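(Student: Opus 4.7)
My plan is to verify that $\lambda - \mu$ lies in $\Lambda_G(V^*) \cap \QQ\Pi$ and then, after reducing to the saturated form of the module, to invoke the direct-sum decomposition of Proposition~\ref{prop_SR_of_SM} to conclude $\lambda - \mu \in \QQ\Sigma_G(V^*)$.

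For the first step: by multiplicity-freeness of $V$ as a spherical $G$-module, the simple summand $W$ with highest weight $\lambda$ coincides with some $V_i$, so $\lambda = \omega_i$. Since $v_\mu \in W_0 \subset W = V_i$ is a $T$-weight vector of the simple $G$-module $V_i$ of highest weight $\omega_i$, one has $\omega_i - \mu \in \ZZ^+\Pi$. Viewing $v_\mu \in V_0$ as an element of $\widetilde V = \CC[\widetilde V^*]_1$ which is $B_M$-semi-invariant of weight $\mu$, I obtain $\mu \in \Lambda_M(\widetilde V^*) = \Lambda_G(V^*)$ by Theorem~\ref{thm_sph_modules_imp}(\ref{thm_sph_modules_imp_b}); consequently $\lambda - \mu \in \Lambda_G(V^*) \cap \QQ\Pi$.

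For the second step I pass to the saturated form of the module as described at the end of \S\,\ref{subsec_SM_gen}: enlarge the acting group to $\widehat G = G \times C_0$, where $C_0 \subset Z \cong (\CC^\times)^k$ is chosen complementary to the image $C$ of the connected center of $G$. Then $V$ is saturated as a $\widehat G$-module, and $\Sigma_{\widehat G}(V^*) = \Sigma_G(V^*)$ by Proposition~\ref{prop_free_generators}(\ref{prop_free_generators_d}). Under $\widehat T = T \times C_0$ the vector $v_\mu \in V_i$ has weight $(\mu, \chi_i)$, where $\chi_i$ is the $C_0$-character on $V_i$, and $\widehat\omega_i = (\omega_i, \chi_i)$; hence $\widehat\omega_i - \widehat\mu = (\omega_i - \mu, 0)$ lies in $\Lambda_{\widehat G}(V^*) \cap \QQ\Pi$, with $\QQ\Pi = \QQ\Pi_{\widehat G}$ sitting inside $\mathfrak X(T) \oplus 0$. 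In the saturated setting the map $Z(\widehat G)^0 \twoheadrightarrow Z$ is surjective, so the restrictions $\widehat\omega_j|_{Z(\widehat G)^0}$ are pullbacks of the linearly independent coordinate characters of $Z$, hence themselves linearly independent; equivalently, $\QQ\{\widehat\omega_1, \ldots, \widehat\omega_k\} \cap \QQ\Pi = \{0\}$ in $\mathfrak X(\widehat T)_\QQ$. Combined with Proposition~\ref{prop_SR_of_SM} applied to $V^*$ as a $\widehat G$-module---namely $\QQ\Lambda_{\widehat G}(V^*) = \QQ\{\widehat\omega_1, \ldots, \widehat\omega_k\} \oplus \QQ\Sigma_{\widehat G}(V^*)$---this yields $\QQ\Lambda_{\widehat G}(V^*) \cap \QQ\Pi = \QQ\Sigma_{\widehat G}(V^*) = \QQ\Sigma_G(V^*)$. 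Therefore $(\omega_i - \mu, 0) \in \QQ\Sigma_G(V^*)$, and under the identification $(w, 0) \leftrightarrow w$ we conclude $\lambda - \mu \in \QQ\Sigma_G(V^*)$.

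The main delicacy is the $C_0$-weight matching: $v_\mu$ and $v_{\omega_i}$ share the $C_0$-weight $\chi_i$ precisely because $W_0 \subset W = V_i$, which is what makes the second component of $\widehat\omega_i - \widehat\mu$ vanish. Once this matching and the linear-independence claim in the saturated setting are in place, the rest is a clean application of the machinery developed in \S\,\ref{subsec_SM_gen}.
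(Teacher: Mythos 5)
Your proof is correct and follows essentially the same route as the paper's: establish $\lambda-\mu\in\Lambda_G(V^*)\cap\QQ\Pi$ via Theorem~\ref{thm_sph_modules_imp}(\ref{thm_sph_modules_imp_b}) and the containment $W_0\subset W$, then reduce to the saturated case and invoke Proposition~\ref{prop_SR_of_SM}. You merely spell out in more detail what the paper leaves implicit, namely the $C_0$-weight matching and the fact that $\QQ\lbrace\lambda_1,\ldots,\lambda_k\rbrace\cap\QQ\Pi=\lbrace 0\rbrace$ in the saturated setting.
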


\begin{proof}
In view of Proposition~\ref{prop_free_generators}(\ref{prop_free_generators_c},\,\ref{prop_free_generators_d}) and Remark~\ref{rem_restriction}(\ref{rem_restriction_a}) it suffices to assume that $V$ is saturated.
Clearly, $\lambda - \mu \in \Lambda_M(\widetilde V^*)$, therefore $\lambda - \mu \in \Lambda_G(V^*)$ by Theorem~\ref{thm_sph_modules_imp}(\ref{thm_sph_modules_imp_b}).
The condition $W' \subset W$ implies that $\lambda - \mu$ is a linear combination of simple roots with nonnegative coefficients.
Since $V$ is saturated, the claim follows from Proposition~\ref{prop_SR_of_SM}.
\end{proof}

\subsection{Classification of spherical modules and some consequences}
\label{subsec_SM_classification}

All modules considered in this subsection are assumed to be finite-dimensional.
The terminology in this subsection follows Knop \cite[\S\,5]{Kn98}.

Given two connected reductive algebraic groups $G_1,G_2$, for $i=1,2$ let $V_i$ be a $G_i$-module and let $\rho_i \colon G \to \GL(V_i)$ be the corresponding representation.
The pairs $(G_1,V_1)$ and $(G_2,V_2)$ are said to be \textit{geometrically equivalent} (or just \textit{equivalent} for short) if there exists an isomorphism of vector spaces $V_1 \xrightarrow{\sim} V_2$ identifying the groups $\rho_1(G_1)$ and $\rho_2(G_2)$.
As an important example, note that for any $G$-module $V$ the pairs $(G,V)$ and $(G,V^*)$ are equivalent.

Observe that for a $G$-module $V$ the property of being spherical depends only on the equivalence class of the pair~$(G,V)$.

A complete classification of simple spherical modules was obtained in \cite{Kac} and is given by the following theorem.

\begin{theorem} \label{thm_simple_sph_modules}
Let $V$ be a simple $G$-module and let $\chi$ be the character via which the connected center of $G$ acts on~$V$.
The $G$-module $V$ is spherical if and only if the following two conditions hold:
\begin{enumerate}[label=\textup{(\arabic*)},ref=\textup{\arabic*}]
\item
up to equivalence, the pair $(G',V)$ appears in Table~\textup{\ref{table_spherical_modules}};

\item
the character $\chi$ satisfies the conditions listed in the fourth column of Table~\textup{\ref{table_spherical_modules}}.
\end{enumerate}
\end{theorem}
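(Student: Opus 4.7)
The plan is to combine the Vinberg--Kimelfeld criterion \cite[Theorem~2]{VK78} recalled in \S\,\ref{subsec_SM_gen}---$V$ is a spherical $G$-module if and only if $\CC[V] \simeq \mathrm S(V^*)$ is a multiplicity-free $G$-module---with a case-by-case analysis. Since sphericality depends only on the image of $G$ in $\GL(V)$, I may assume $G = Z \cdot G'$ where $Z := Z(G)^0$ acts on~$V$ via the character~$\chi$. As $Z$ acts by a single character on each graded piece~$\mathrm S^n(V^*)$, the $G$-isotypic decomposition of $\mathrm S(V^*)$ refines the $G'$-isotypic one; hence $V$ is a spherical $G$-module if and only if any two simple $G'$-summands of $\mathrm S(V^*)$ sharing the same $G'$-highest weight lie in different graded pieces. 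This splits the task into: (i) classifying $(G',V)$ for which $\mathrm S(V^*)$ is multiplicity-free as a $G'$-module, and (ii) for each such $(G',V)$, determining the set of admissible~$\chi$.

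For step~(i), the coarse sieve $\dim V \le \dim B$ (necessary for the existence of an open $B$-orbit) eliminates all but finitely many candidates for each simple~$G'$. If $G'=G'_1\times G'_2$ and the simple module $V=V_1\otimes V_2$ is nontrivial on both factors, a sharper dimension count on a generic $B$-stabilizer forces one factor to be very small (essentially $\SL_2$ with $V_i$ of low dimension), cutting down to a short explicit list on top of the simple-factor case. For each surviving simple pair, sphericality is confirmed positively by exhibiting an open $B$-orbit (routine for $(\SL_n,\omega_1)$, $(\Sp_{2n},\omega_1)$, $(\SO_n,\omega_1)$, and the small exceptional cases) or ruled out by producing a repeated simple summand in some $\mathrm S^n(V^*)$ via a branching rule or a Littlewood--Richardson computation.

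For step~(ii), I would compute $F_{G'}(V)$ from the known decomposition of $\mathrm S(V^*)$ and, for each potential coincidence between the $G'$-highest weights of summands in different graded pieces $\mathrm S^n(V^*)$ and $\mathrm S^m(V^*)$, impose on $\chi$ the arithmetic condition that separates the two $Z$-weights $n\chi$ and $m\chi$; collecting these conditions produces exactly the entries in the fourth column of Table~\ref{table_spherical_modules}. The main obstacle throughout is the case-by-case verification, especially for exceptional-type groups and borderline classical pairs such as $(\SO_{2n}, \omega_1+\omega_n)$, where explicitly exhibiting an open $B$-orbit is cumbersome and where detecting or ruling out repeated summands in symmetric powers requires a careful analysis.
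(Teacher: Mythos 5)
The paper does not actually prove this statement: Theorem~\ref{thm_simple_sph_modules} is imported wholesale from Kac's classification~\cite{Kac}, so there is no internal proof to compare against. Your outline is, in broad strokes, the standard strategy used in the literature (Vinberg--Kimelfeld multiplicity-freeness criterion, a dimension sieve $\dim V \le \dim B$, reduction of the tensor-product case to one small factor, then case-by-case confirmation), and as a plan it is reasonable. Two caveats are worth recording, though.

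First, there is a logical slip in your reduction. You assert that $V$ is a spherical $G$-module \emph{if and only if} any two $G'$-isomorphic simple summands of $\mathrm S(V^*)$ lie in different graded pieces. This equivalence holds only when $\chi \ne 0$: since $\mathfrak X(Z)$ is torsion-free, $n\chi \ne m\chi$ for $n \ne m$ exactly when $\chi \ne 0$, so for $\chi = 0$ two isomorphic $G'$-summands in \emph{different} graded pieces are still isomorphic as $G$-modules and destroy sphericality. The correct statement is that $V$ is $G$-spherical iff each graded piece $\mathrm S^n(V^*)$ is $G'$-multiplicity-free and, whenever isomorphic $G'$-summands occur in degrees $n \ne m$, one has $n\chi \ne m\chi$; this is what actually produces the ``$\chi \ne 0$'' entries in the fourth column of Table~\ref{table_spherical_modules}, and your step~(ii) implicitly uses this corrected version. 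Second, and more substantively, the entire content of the theorem is the finite but lengthy case-by-case verification (which simple pairs survive the sieve, which of those are genuinely multiplicity-free, and which degree coincidences of highest weights occur), all of which your proposal defers. As written it is a correct road map to the result of~\cite{Kac} rather than a proof of it; to make it self-contained you would need to carry out the eliminations and confirmations for each entry, in particular for the exceptional cases \ref{no11}--\ref{no15} of Table~\ref{table_spherical_modules} where branching computations are unavoidable.
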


\begin{table}[h]

\caption{} \label{table_spherical_modules}

\begin{center}

\begin{tabular}{|c|c|c|c|c|c|}
\hline

No. & $G'$ & $V$ & Conditions on $\chi$ & $\wedge^2 V$ & Note \\

\hline

\no \label{no1} &
$\lbrace e \rbrace$ & $\CC^1$ & $\chi \ne 0$ & $\lbrace 0 \rbrace$ & \\

\hline

\no & $\SL_n$ & $\CC^n$ & & $\wedge^2 \CC^n$ & $n \ge 2$ \\

\hline

\no & $\Sp_{2n}$ & $\CC^{2n}$ & & $R(\varpi_2) \oplus \CC^1$ & $n \ge 2$ \\

\hline

\no & $\SO_n$ & $\CC^n$ & $\chi \ne 0$ & $\wedge^2 \CC^n$ & $n \ge 5$ \\

\hline

\no & $\SL_n$ & $\mathrm S^2 \CC^n$ & $\chi \ne 0$ & $R(2\varpi_1 + \varpi_2)$ & $n \ge 2$ \\

\hline

\no \label{no6} & $\SL_n$ & $\wedge^2 \CC^n$ & $\chi \ne 0$ for $n$ even & $R(\varpi_1+\varpi_3)$ &
$n \ge 5$ \\

\hline

\no \label{no7} & $\SL_n \times \SL_m$ & $\CC^n {\otimes} \CC^m$ & $\chi \ne 0$ for
$n = m$ &
\renewcommand{\tabcolsep}{0pt}%
\begin{tabular}{l}
$\wedge^2 \CC^n {\otimes} \mathrm{S}^2\CC^m \oplus$\\
$\mathrm{S}^2 \CC^n {\otimes} {\wedge^2} \CC^m$
\end{tabular} &
$n, m \ge 2$
\\

\hline

\no & $\SL_2 \times \Sp_{2n}$ & $\CC^2 {\otimes} \CC^{2n}$ & $\chi \ne
0$ &
\renewcommand{\tabcolsep}{0pt}%
\begin{tabular}{l}
$\mathrm{S}^2\CC^{2n} \oplus \mathrm{S}^2 \CC^{2} \oplus$\\
$\mathrm{S}^2 \CC^2 {\otimes} R(\varpi_2')$
\end{tabular} &
$n \ge 2$
\\

\hline

\no &
$\SL_3 \times \Sp_{2n}$ &
$\CC^3 {\otimes} \CC^{2n}$ &
$\chi \ne 0$ &
\renewcommand{\tabcolsep}{0pt}%
\begin{tabular}{l}
$\wedge^2 \CC^3 {\otimes} \mathrm{S}^2\CC^{2n} \oplus$\\
$\mathrm{S}^2 \CC^3 {\otimes} R(\varpi'_2) \oplus \mathrm{S}^2 \CC^3$
\end{tabular} &
$n \ge 2$
\\

\hline

\no \label{no10} &
$\SL_n \times \Sp_4$ &
$\CC^n {\otimes} \CC^4$ &
$\chi \ne 0$ for $n = 4$ &
\renewcommand{\tabcolsep}{0pt}%
\begin{tabular}{l}
$\wedge^2 \CC^n {\otimes} \mathrm{S}^2\CC^{4} \oplus$\\
$\mathrm{S}^2 \CC^n {\otimes} R(\varpi'_2) \oplus \mathrm{S}^2 \CC^n$
\end{tabular} &
$n \ge 4$ \\

\hline

\no \label{no11} & $\Spin_7$ & $R(\varpi_3)$ & $\chi \ne 0$ & $R(\varpi_1) \oplus R(\varpi_2)$ & \\

\hline

\no \label{no12} & $\Spin_9$ & $R(\varpi_4)$ & $\chi \ne 0$ & $R(\varpi_2) \oplus R(\varpi_3)$ & \\

\hline

\no \label{no13} & $\Spin_{10}$ & $R(\varpi_5)$ & & $R(\varpi_3)$ & \\

\hline

\no \label{no14} & $\mathsf G_2$ & $R(\varpi_1)$ & $\chi \ne 0$ & $R(\varpi_1) \oplus R(\varpi_2)$ & \\

\hline

\no \label{no15} & $\mathsf E_6$ & $R(\varpi_1)$ & $\chi \ne 0$ & $R(\varpi_3)$ & \\

\hline
\end{tabular}
\end{center}
\end{table}

In Table~\ref{table_spherical_modules}, the notation $R(\lambda)$ stands for the simple $G'$-module with highest weight~$\lambda$ and $\varpi_i$ (resp.~$\varpi_i'$) denotes the $i$th fundamental weight of the first (resp.~second) factor of~$G'$.

In this paper, we shall need Propositions~\ref{prop_simple_SM_wedge1} and~\ref{prop_simple_SM_wedge2} stated below.
Their proofs require the decomposition of $\wedge^2 V$ into simple $G$-submodules for all simple spherical $G$-modules~$V$.
For each case in Table~\ref{table_spherical_modules}, we provide this decomposition (into simple $G'$-modules) in the fifth column.
For cases~\ref{no1}--\ref{no6} this decomposition is classical.
For cases~\ref{no7}--\ref{no10} it follows from the well-known $\GL(U) \times \GL(W)$-module isomorphism
\[
\wedge^2 (U \otimes W) \simeq \wedge^2 U \otimes \mathrm S^2 W \oplus \mathrm S^2 U \otimes \wedge^2 W
\]
(where $U,W$ are arbitrary finite-dimensional vector spaces).
For cases~\ref{no11}--\ref{no15}, the decomposition can be computed using the program LiE~\cite{LiE}.

\begin{proposition} \label{prop_simple_SM_wedge1}
Suppose that $V$ is a simple spherical $G$-module.
Then $V$ is not isomorphic to a submodule of $\wedge^2 V$.
\end{proposition}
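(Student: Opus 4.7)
The plan is to apply the classification in Theorem~\ref{thm_simple_sph_modules}, which lists all pairs $(G',V)$ up to equivalence together with the decomposition of $\wedge^2 V$ into simple $G'$-modules recorded in the fifth column of Table~\ref{table_spherical_modules}. Since any $G$-equivariant embedding $V \hookrightarrow \wedge^2 V$ restricts to a $G'$-equivariant embedding, in each line it suffices in principle to check that the highest weight of $V$ does not coincide with the highest weight of any simple summand of $\wedge^2 V$.

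First I would dispose of lines \ref{no1}--\ref{no13} and \ref{no15} by direct inspection of the table: in each of them the highest weight of~$V$ (read off from column~3) does not occur among the highest weights of the simple summands of $\wedge^2 V$ (column~5), so $V$ is not a submodule of $\wedge^2 V$ even as a $G'$-module. For the tensor-product lines \ref{no7}--\ref{no10} this follows transparently from the standard $\GL(U_1) \times \GL(U_2)$-module identity
\[
\wedge^2(U_1 \otimes U_2) \simeq \wedge^2 U_1 \otimes \mathrm S^2 U_2 \oplus \mathrm S^2 U_1 \otimes \wedge^2 U_2
\]
together with a comparison of the resulting highest weights with that of $V = U_1 \otimes U_2$; for the exceptional lines \ref{no11}--\ref{no13} and~\ref{no15} one merely reads off the single summand(s).

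The main (and only) obstacle is line~\ref{no14}, where $G' = \mathsf G_2$, $V = R(\varpi_1)$ and $\wedge^2 V = R(\varpi_1) \oplus R(\varpi_2)$ visibly contains a copy of $V$ as a $G'$-module. Here I would invoke the central-character condition $\chi \neq 0$ imposed by sphericity in column~4 of Table~\ref{table_spherical_modules}: the connected center $Z(G)^0$ acts on $V$ by $\chi$ and on $\wedge^2 V$ by $2\chi$, so any $G$-submodule of $\wedge^2 V$ isomorphic to~$V$ would force $\chi = 2\chi$, whence $\chi = 0$, contradicting sphericity. I note in passing that this central-character argument is also available uniformly in every line where $\chi \neq 0$ is a required condition, but it is genuinely unavailable in lines such as \ref{no2}, \ref{no3}, \ref{no13} (where $\chi = 0$ is allowed), so the $G'$-level inspection in those lines cannot be bypassed.
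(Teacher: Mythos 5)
Your proof is correct and uses the same two ingredients as the paper's: the classification of simple spherical modules together with the decomposition of $\wedge^2 V$ recorded in Table~\ref{table_spherical_modules}, and the central-character observation that a copy of $V$ inside $\wedge^2 V$ forces $\chi = 2\chi$, i.e.\ $\chi = 0$. The paper simply runs the two steps in the opposite order — it first invokes the central-character argument to reduce the case check to the lines where $\chi = 0$ is permitted (which in particular disposes of the $\mathsf G_2$ line at once), whereas you check every line at the $G'$ level and reserve the central-character argument for the single line where that check fails; both organizations are sound.
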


\begin{proof}
Assuming the converse we obtain that the connected center of~$G$ acts trivially on~$V$.
Then the proof is completed by a case-by-case check of all such cases in Table~\ref{table_spherical_modules}.
\end{proof}

Before discussing the classification of non-simple spherical modules, we need to introduce several additional notions.

We say that a $G$-module $V$ is \textit{decomposable} if there exist connected reductive algebraic groups $G_1, G_2$, a $G_1$-module~$V_1$, and a $G_2$-module~$V_2$ such that the pair $(G,V)$ is equivalent to $(G_1 \times G_2, V_1 \oplus V_2)$.
Clearly, in this situation $V$ is a spherical $G$-module if and only if $V_i$ is a spherical $G_i$-module for $i=1,2$, in which case there is a disjoint union $F_G(V) = F_{G_1}(V_1) \cup F_{G_2}(V_2)$.
We say that $V$ is \textit{indecomposable} if $V$ is not decomposable.

Recall the notation from the paragraph after Remark~\ref{rem_restriction}.
The $(G \times C_0)$-module $V$ is called the \textit{saturation} of the $G$-module~$V$.
Note that the pair $(G \times C_0, V)$ is equivalent to $(G' \times Z, V)$.

A complete classification (up to equivalence) of all indecomposable saturated non-simple spherical modules was independently obtained in~\cite{BR} and~\cite{Lea}.
Both papers contain also a description of all spherical modules with a given saturation, which completes the classification of all spherical modules.
The weight monoids of all indecomposable saturated spherical modules are known thanks to the papers~\cite{HU} (the case of simple modules) and~\cite{Lea} (the case of non-simple modules).
A complete list (up to equivalence) of all indecomposable saturated spherical modules can be found in~\cite[\S\,5]{Kn98} along with various additional data, including the indecomposable elements of the weight monoids.

\begin{proposition} \label{prop_simple_SM_wedge2}
Suppose that $V$ is a simple spherical $G$-module and $W$ is a nonzero $G$-submodule of $\wedge^2 V$ such that $V \oplus W$ is a spherical $G$-module.
Then one of the following two possibilities is realized:
\begin{enumerate}[label=\textup{(\arabic*)},ref=\textup{\arabic*}]
\item
$G' \simeq \SL_n$ \textup($n \ge 2$\textup) and there are $G'$-module isomorphisms $V \simeq \CC^n$, $W \simeq \wedge^2 \CC^n$;

\item
$G' \simeq \Sp_{2n}$ \textup($n \ge 2$\textup) and there are $G'$-module isomorphisms $V \simeq \CC^{2n}$, $W \simeq \CC^1$.
\end{enumerate}
\end{proposition}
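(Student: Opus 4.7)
The plan is to exploit the classification of simple spherical modules in Theorem~\ref{thm_simple_sph_modules} together with the decomposition of $\wedge^2 V$ into simple $G'$-submodules supplied by the fifth column of Table~\ref{table_spherical_modules}. First, I would invoke Proposition~\ref{prop_simple_SM_wedge1} to conclude that $V$ itself does not appear among the simple constituents of $\wedge^2 V$, and hence not among those of~$W$. Consequently, the isotypic decomposition of the $G$-module $V \oplus W$ singles out $V$ and $W$ canonically as complementary summands, so the candidate $W$ is exactly a direct sum of some of the simple constituents of $\wedge^2 V$ listed in Table~\ref{table_spherical_modules}.

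Next, I would pass to the saturation $(G \times C_0, V \oplus W)$, which remains spherical; the problem then reduces to checking, for each simple spherical $V$ appearing in Table~\ref{table_spherical_modules} and each nontrivial $G$-submodule $W \subset \wedge^2 V$ as above, whether the resulting indecomposable saturated module occurs in the classification of non-simple indecomposable saturated spherical modules compiled independently in~\cite{BR} and~\cite{Lea} and reproduced with additional data in~\cite[\S\,5]{Kn98}.

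The concrete verification is then a finite case check. For the row $G' = \SL_n$, $V = \CC^n$ of Table~\ref{table_spherical_modules}, the only candidate is $W \simeq \wedge^2\CC^n$, which does appear on the classification list and produces possibility~(1) of the proposition. For the row $G' = \Sp_{2n}$, $V = \CC^{2n}$, the decomposition $\wedge^2 V \simeq R(\varpi_2) \oplus \CC^1$ offers three nontrivial submodules, of which only $W \simeq \CC^1$ matches an entry of the classification, producing possibility~(2). For each of the thirteen remaining rows of Table~\ref{table_spherical_modules} I would verify in the same manner that no nontrivial $W \subset \wedge^2 V$ yields a spherical $V \oplus W$. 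The principal obstacle is purely bookkeeping: correctly identifying each candidate $V \oplus W$ up to geometric equivalence (with care for low-rank coincidences such as $\SL_2 \simeq \Sp_2$, where possibilities~(1) and~(2) agree) and matching it against the entries in the tables of~\cite[\S\,5]{Kn98}.
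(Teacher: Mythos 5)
Your proposal is correct and takes essentially the same route as the paper's proof: a row-by-row check of Table~\ref{table_spherical_modules}, using the fifth column to enumerate the candidate submodules $W \subset \wedge^2 V$ and testing whether the saturation of $V \oplus W$ occurs in the classification of indecomposable saturated spherical modules in~\cite[\S\,5]{Kn98}. The only (cosmetic) difference is that the paper shortens the bookkeeping by observing that the saturation of $W$ alone must already be spherical, whereas you instead invoke Proposition~\ref{prop_simple_SM_wedge1} to separate $V$ from $W$ in the isotypic decomposition; neither remark affects the substance of the argument.
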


\begin{proof}
For each pair $(G',V)$ in Table~\ref{table_spherical_modules}, we consider all possible nontrivial $G'$-submodules $W \subset \wedge^2 V$ and check via the list in~\cite[\S\,5]{Kn98} whether the saturation of the $G'$-module $V \oplus W$ is a spherical module.
This case-by-case analysis is substantially shortened by observing that the saturation of the $G'$-module $W$ itself should be a spherical module.
The analysis ultimately yields only the two cases listed in the statement, which completes the proof.
\end{proof}

\subsection{Generalities on spherical subgroups}
\label{subsec_gen_SS}

Let $H \subset G$ be a subgroup.
By \cite[\S\,30.3]{Hum} there exists a parabolic subgroup $P \subset G$ such that $H \subset P$ and $H_u \subset P_u$.
In this situation, we say that $H$ is \textit{regularly embedded} in~$P$.
One can choose Levi subgroups $L \subset P$ and $K \subset H$ in such a way that $K \subset L$.
Then by \cite[Lemma~1.4]{Mon} there is a $K$-equivariant isomorphism $P_u/H_u \simeq \mathfrak p_u / \mathfrak h_u$.

Let $S \subset K$ be a generic stabilizer for the natural action of $K$ on $\mathfrak l / \mathfrak k$.
According to \cite[Theorem~2.3, Corollary~2.4, Corollary~8.2]{Kn90} and \cite[Theorem~1(iii), Theorem~3(ii), and~\S\,2.1]{Pa90}, the subgroup $S$ has the following properties:
\begin{enumerate}[label=\textup{(S\arabic*)},ref=\textup{S\arabic*}]
\item \label{S1}
$S$ is reductive;

\item \label{S2}
there are a parabolic subgroup $Q_L \subset L$ and a Levi subgroup $M \subset Q_L$ such that $S = Q_L \cap K$ and $M' \subset S \subset M$.
\end{enumerate}
It is worth mentioning that $S$ may be disconnected; however, the second property guarantees that $S = Z(S)\cdot (S^0)'$.
Then the notions of a spherical $S$-module and its weight lattice extend as described in Remark~\ref{rem_disconnected}.

Replacing $H$, $S$, and $Q_L$ with conjugate subgroups we may assume that $P \supset B^-$, $L$ is the standard Levi subgroup of~$P$, $Q_L \supset B_L$, and $M$ is the standard Levi subgroup of~$Q_L$.
Let $\iota \colon \mathfrak X(T) \to \mathfrak X(T \cap S)$ be the character restriction map.

The following theorem is implied by~\cite[Proposition~I.1]{Br87} and~\cite[Theorem~1.2]{Pa94}; see also~\cite[Theorem~9.4]{Tim}.

\begin{theorem} \label{thm_criterion_spherical}
Under the above notation and assumptions, the following conditions are equivalent.
\begin{enumerate}[label=\textup{(\arabic*)},ref=\textup{\arabic*}]
\item
$H$ is a spherical subgroup of~$G$.

\item
$P/H$ is a spherical $L$-variety.

\item
$K$ is a spherical subgroup of~$L$ and $\mathfrak p_u/\mathfrak h_u$ is a spherical $S$-module.
\end{enumerate}
Moreover, if these conditions hold then $\Ker \iota = \Lambda_L(L/K)$ and $\Lambda_G(G/H) = \iota^{-1}(\Lambda_S(\mathfrak p_u/\mathfrak h_u))$ where the lattices $\Lambda_L(L/K)$ and $\Lambda_S(\mathfrak p_u/\mathfrak h_u)$ are taken with respect to $B_L$ and~$B_S$, respectively.
\end{theorem}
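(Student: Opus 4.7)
The plan is to prove the equivalence in two reduction steps, descending from $G/H$ first to the $L$-variety $P/H$ via the fibration $G/H \to G/P$, and then to the module $\mathfrak p_u/\mathfrak h_u$ via the associated bundle structure $P/H \simeq L \times^K(\mathfrak p_u/\mathfrak h_u)$; the weight-lattice statement will fall out of the same fibrations by tracking semi-invariant rational functions.

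\textbf{Step 1: $(1) \Leftrightarrow (2)$.} Since $P \supset B^-$, the Bruhat decomposition yields that $BP \subset G$ is open, and a direct root computation (using that the roots of $P$ are $\Delta_L$ together with the negatives of those of $P$'s opposite unipotent radical) gives $B \cap P = B_L$. Thus $BP/H$ is an open $B$-stable subset of $G/H$ naturally identified with the associated bundle $B \times^{B_L}(P/H)$, so a Borel of $G$ has an open orbit on $G/H$ if and only if $B_L$ has an open orbit on $P/H$. The same identification yields a weight-preserving bijection $\CC(G/H)^{(B)}_\lambda \simeq \CC(P/H)^{(B_L)}_\lambda$, whence $\Lambda_G(G/H) = \Lambda_L(P/H)$ as sublattices of $\mathfrak X(T)$.

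\textbf{Step 2: $(2) \Leftrightarrow (3)$.} Combining $P = L \rightthreetimes P_u$, $H = K \rightthreetimes H_u$ with the $K$-equivariant isomorphism $P_u/H_u \simeq \mathfrak p_u/\mathfrak h_u$, the projection $P/H \to L/K$ identifies $P/H$ with the homogeneous vector bundle $L \times^K(\mathfrak p_u/\mathfrak h_u)$. The crux is then the general fact that, for a reductive subgroup $K \subset L$ and a $K$-module $V$, the bundle $L \times^K V$ is $L$-spherical if and only if $K$ is a spherical subgroup of $L$ and $V$ is a spherical $S$-module, where $S$ is a generic stabilizer for the $K$-action on $\mathfrak l/\mathfrak k$. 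This is exactly \cite[Proposition~I.1]{Br87} (see also \cite[Theorem~1.2]{Pa94}); its proof expresses the complexity of $L \times^K V$ as the sum of the complexities of $L/K$ and of $V$ regarded as an $S$-variety, using the local structure theorem at a point of the open $B_L$-orbit on $L/K$ to realize $\mathfrak l/\mathfrak k$ with its $S$-module structure as the transverse slice.

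\textbf{Step 3: Weight lattices.} Combining Steps~1 and~2, $\Lambda_G(G/H) = \Lambda_L(L \times^K(\mathfrak p_u/\mathfrak h_u))$. Restricting the bundle over the open $B_L$-orbit on $L/K$ and conjugating by an element of $L$ so that the resulting generic isotropy of $B_L$ aligns with $B_S$ (this is what the normalizing hypotheses $P \supset Q \supset B^-$ and $M' \subset S \subset M$ with $M$ standard achieve), one reduces to the bundle $B_L \times^{B_S}(\mathfrak p_u/\mathfrak h_u)$. A direct computation with semi-invariants shows that a weight $\lambda \in \mathfrak X(T)$ lies in $\Lambda_L$ of this bundle exactly when $\iota(\lambda) \in \Lambda_S(\mathfrak p_u/\mathfrak h_u)$, yielding $\Lambda_G(G/H) = \iota^{-1}(\Lambda_S(\mathfrak p_u/\mathfrak h_u))$. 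The equality $\Ker \iota = \Lambda_L(L/K)$ is the special case corresponding to the trivial fiber (equivalently, it expresses that $\Lambda_L(L/K)$ consists of those $T$-characters whose restrictions to the $L$-conjugate of $T \cap S$ given by the generic isotropy of $B_L$ on $L/K$ are trivial).

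\textbf{Main obstacle.} The heart of the argument is Step~2, namely the identification of the generic stabilizer $S$ as the correct group governing sphericity of the fiber $V$; this requires either Vinberg-style complexity additivity or a careful slice argument via the local structure theorem. The alignment in Step~3 between the generic isotropy of $B_L$ on $L/K$ and the prescribed Borel $B_S$ of $S$ is a delicate conjugation issue that is addressed precisely by the normalizations made in the paragraph preceding the theorem; once this is set up, the weight-lattice identities reduce to routine tracking of $B_L$-semi-invariants through the two fibrations.
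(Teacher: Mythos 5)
Your proposal is correct and follows the same route as the paper, which gives no independent proof but derives the theorem from \cite[Proposition~I.1]{Br87} and \cite[Theorem~1.2]{Pa94} (see also \cite[Theorem~9.4]{Tim}); your two fibrations $G/H \to G/P$ and $P/H \simeq L \times^K(\mathfrak p_u/\mathfrak h_u)$ together with the generic-stabilizer criterion are exactly the content of those cited results. The only point deserving care, which you correctly flag, is the conjugation normalization aligning $S$, $M$, and $Q$ with the fixed Borel so that $\iota$ is well defined on the nose rather than up to conjugacy.
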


It follows from Theorem~\ref{thm_criterion_spherical} that the problem of computing the weight lattice for a spherical homogeneous space $G/H$ reduces to determining the subgroup~$S$ for the affine spherical homogeneous space $L/K$ and finding the weight lattice for the spherical $S$-module $\mathfrak p_u/ \mathfrak h_u$.
As was already mentioned in the introduction, there is a complete classification of all affine spherical homogeneous spaces, and it turns out that the subgroups $S$ for all such spaces are known; see, for instance,~\cite{KnVS}.
Similarly, as was already discussed in~\S\,\ref{subsec_SM_classification}, there is a complete classification of all spherical modules, and the weight monoids (and hence weight lattices) are also known for all of them.
Thus we get an effective procedure for computing $\Lambda_G(G/H)$.
(In fact, this procedure is not yet completely explicit since one needs to take care of appropriate choices of $K$ and $S$ within their conjugacy classes.)

Throughout this paper, we shall work only with subgroups $H$ satisfying $L' \subset K \subset L$.
For such subgroups, $K$ acts trivially on $\mathfrak l/\mathfrak k$, hence $S$ is unique and coincides with $K$.
In this case, $\iota$ is the character restriction map from $T$ to $T \cap K$ and Theorem~\ref{thm_criterion_spherical} takes the following simpler form.

\begin{proposition} \label{prop_S=K}
Suppose that $L' \subset K \subset L$.
Then $H$ is a spherical subgroup of~$G$ if and only if $\mathfrak p_u/\mathfrak h_u$ is a spherical $K$-module.
Moreover, under these conditions one has $\Lambda_G(G/H) = \iota^{-1}(\Lambda_K(\mathfrak p_u / \mathfrak h_u))$ where the lattice $\Lambda_K(\mathfrak p_u /\mathfrak h_u)$ is taken with respect to~$B_K$.
\end{proposition}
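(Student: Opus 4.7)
The plan is to deduce the proposition directly from Theorem~\ref{thm_criterion_spherical}. The only non-routine step is to verify that, under the assumption $L' \subset K \subset L$, a generic stabilizer $S$ for the $K$-action on $\mathfrak{l}/\mathfrak{k}$ coincides with $K$ itself. Once that identification is in hand, both assertions follow by substituting $S = K$ into the conclusions of the theorem.

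To establish $S = K$, I would argue that $K$ acts trivially on $\mathfrak{l}/\mathfrak{k}$. Indeed, this quotient is an $L$-module quotient of $\mathfrak{l}/\mathfrak{l}'$, which is the Lie algebra of the torus $L/L'$. The adjoint action of $L$ on $\mathfrak{l}/\mathfrak{l}'$ is trivial: the normal subgroup $L'$ acts trivially because commutators lie in $\mathfrak{l}'$, so the action factors through $L/L'$, and a torus acts trivially on its own Lie algebra. Hence the $K$-action on $\mathfrak{l}/\mathfrak{k}$ is trivial, so every point is fixed and a generic stabilizer equals the full group $K$.

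Granted this, the clause ``$K$ is a spherical subgroup of $L$'' in condition~(3) of Theorem~\ref{thm_criterion_spherical} becomes automatic: the homogeneous space $L/K$ is a quotient of the torus $L/L'$, and since $B_L \supset T$ and $T$ surjects onto $L/L'$ (because $L = T \cdot L'$), the Borel subgroup $B_L$ acts transitively on $L/K$. Consequently, condition~(3) reduces to the single requirement that $\mathfrak{p}_u/\mathfrak{h}_u$ be a spherical $K$-module, which yields the equivalence in the proposition via (1)$\Leftrightarrow$(3) of the theorem. The weight lattice formula $\Lambda_G(G/H) = \iota^{-1}(\Lambda_K(\mathfrak{p}_u/\mathfrak{h}_u))$ is obtained by direct substitution of $S = K$, with $\iota \colon \mathfrak{X}(T) \to \mathfrak{X}(T \cap K)$ the character restriction map and $\Lambda_K(\mathfrak{p}_u/\mathfrak{h}_u)$ taken with respect to $B_K$, as required. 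I anticipate no serious obstacle; the identification $S = K$ is the only real content, and it rests on the standard triviality of the adjoint action of a reductive group on its abelianized Lie algebra.
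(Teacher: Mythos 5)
Your proposal is correct and follows essentially the same route as the paper: the text preceding Proposition~\ref{prop_S=K} simply observes that $K$ acts trivially on $\mathfrak l/\mathfrak k$ (since $L' \subset K$), hence $S=K$, and then specializes Theorem~\ref{thm_criterion_spherical}. Your additional check that the clause ``$K$ is spherical in $L$'' becomes automatic (because $T$ surjects onto the torus $L/K$) is a detail the paper leaves implicit, and it is verified correctly.
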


\begin{remark}
Under the conditions of Theorem~\ref{thm_criterion_spherical}, some partial results on the set of spherical roots of $G/H$ were obtained in~\cite{Pez}.
Namely, Corollary~8.12 and Theorem~6.15 in loc. cit. assert that
\[
\Sigma_L(P/H) = \lbrace \sigma \in \Sigma_G(G/H) \mid \Supp \sigma \subset \Pi_L \rbrace \ \: \text{and} \ \: \Sigma_G(G/H) \setminus \Sigma_L(P/H) \subset \Delta^+ \setminus \Delta^+_L,
\]
respectively.
In particular, in the situation of Proposition~\ref{prop_S=K} one has $\Sigma_L(P/H) = \Sigma_K(\mathfrak p_u / \mathfrak h_u)$ and thus each spherical root of the spherical $K$-module $\mathfrak p_u / \mathfrak h_u$ is automatically a spherical root of $G/H$.
Since the spherical roots of all spherical modules are known from~\cite[\S\,5]{Kn98}, in this way one may obtain all spherical roots of $G/H$ whose support is contained in~$\Pi_L$.
\end{remark}

\subsection{Normalizers of spherical subgroups}
\label{subsec_norm_SS}

Let $H \subset G$ be a spherical subgroup.

\begin{theorem}[{\cite[Corollary~5.2]{BriP}}] \label{thm_BriP}
The following assertions hold:
\begin{enumerate}[label=\textup{(\alph*)},ref=\textup{\alph*}]
\item \label{thm_BriP_a}
The group $N_G(H)/H$ is diagonalizable.

\item  \label{thm_BriP_b}
$N_G(H) = N_G(H^0)$.
\end{enumerate}
\end{theorem}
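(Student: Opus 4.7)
The plan is to establish part (a) by constructing an injective homomorphism from $N_G(H)/H$ into an explicit diagonalizable group, and then derive part (b) from (a) using the abelianness that diagonalizability entails.

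For part~(a), I would first identify $N_G(H)/H$ with the group $\Aut^G(G/H)$ of $G$-equivariant automorphisms of $G/H$, where the class of $n \in N_G(H)$ acts on $G/H$ by right multiplication $gH \mapsto gn^{-1}H$. Since this action commutes with the $G$-action on $G/H$, it stabilizes each one-dimensional space $\CC(G/H)^{(B)}_\lambda = \CC f_\lambda$ for $\lambda \in \Lambda_G(G/H)$, acting on it by a scalar $\chi_\lambda(nH) \in \CC^\times$. Assembling these scalars over all $\lambda$ yields a group homomorphism
\[
\chi \colon N_G(H)/H \to \Hom_{\ZZ}(\Lambda_G(G/H), \CC^\times),
\]
whose target is the Cartier dual of the lattice $\Lambda_G(G/H)$ and hence is diagonalizable.

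The core of the proof is the injectivity of $\chi$. My plan is to restrict the action of $\Aut^G(G/H)$ to the open $B$-orbit $\mathcal{O} \subset G/H$, fix a base point $x_0 \in \mathcal{O}$, and set $B_{x_0} = B \cap \mathrm{Stab}_G(x_0)$, so that $\mathcal{O} \simeq B/B_{x_0}$. Since an element of $\Aut^G(G/H)$ preserves $\mathcal{O}$ and commutes with $B$, it restricts to a $B$-equivariant automorphism of $\mathcal{O}$, i.e. to an element of $N_B(B_{x_0})/B_{x_0}$; the latter quotient is diagonalizable because $N_B(B_{x_0})/B_{x_0}$ maps faithfully to the torus $T/(T\cap B_{x_0})$ after modding out by the unipotent part $U \cap N_B(B_{x_0})/U\cap B_{x_0}$, which acts trivially on $B/B_{x_0}$ as soon as it commutes with $B$. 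Under this embedding, the scalars $\chi_\lambda$ coincide with the values of $\lambda$, viewed as characters of $N_B(B_{x_0})/B_{x_0}$ via restriction; consequently, an element of $\ker\chi$ maps to an element of $N_B(B_{x_0})/B_{x_0}$ on which every $\lambda \in \Lambda_G(G/H)$ vanishes, forcing this element to be trivial. Hence such $n$ fixes $x_0$, and by $G$-equivariance fixes all of $G/H$, so $n \in H$.

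For part~(b), the inclusion $N_G(H) \subset N_G(H^0)$ is automatic because $H^0$ is a characteristic subgroup of $H$. For the converse, I apply part~(a) to $H^0$ (which is spherical as well, since $G/H^0 \to G/H$ is a finite $G$-equivariant cover): the group $N_G(H^0)/H^0$ is diagonalizable, hence abelian. The finite subgroup $H/H^0 \subset N_G(H^0)/H^0$ is then automatically normal. Thus for every $g \in N_G(H^0)$, we have $gHg^{-1} \subset N_G(H^0)$ (because $gHg^{-1}$ normalizes $gH^0g^{-1} = H^0$) and $gHg^{-1}$ has the same image in $N_G(H^0)/H^0$ as $H$; since both $H$ and $gHg^{-1}$ contain $H^0$, this forces $gHg^{-1} = H$, so $g \in N_G(H)$.

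The main obstacle is the injectivity of $\chi$ in part~(a): it requires a precise comparison between the abstract action of $\Aut^G(G/H)$ on $B$-semi-invariants and the concrete structure of $B$-equivariant automorphisms of the open $B$-orbit. Once this is in place, everything else follows formally from the elementary fact that subgroups of abelian groups are normal.
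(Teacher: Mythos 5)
The paper offers no proof of this statement at all --- it is imported from Brion--Pauer \cite[Corollary~5.2]{BriP} --- so your attempt can only be judged on its own merits. Your deduction of part~(\ref{thm_BriP_b}) from part~(\ref{thm_BriP_a}) is correct (apply (a) to the spherical subgroup $H^0$, use that a diagonalizable group is abelian so $H/H^0$ is normal in $N_G(H^0)/H^0$, and conclude $gHg^{-1}=H$), and the construction of $\chi\colon N_G(H)/H\to\Hom_\ZZ(\Lambda_G(G/H),\CC^\times)$ is the standard first step for~(a).

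The gap is in the injectivity of $\chi$, and it is not cosmetic. Every element of $N_B(B_{x_0})/B_{x_0}$ acts on $\mathcal O=B/B_{x_0}$ by a right translation, and every right translation commutes with the left $B$-action; hence your claim that the unipotent part $(U\cap N_B(B_{x_0}))/(U\cap B_{x_0})$ ``acts trivially on $B/B_{x_0}$ as soon as it commutes with $B$'' is false, and $N_B(B_{x_0})/B_{x_0}$ need not be diagonalizable. Concretely, for $G=\SL_2$ and $H$ a maximal unipotent subgroup, the base point of the open $B$-orbit in $G/H\simeq\CC^2\setminus\lbrace 0\rbrace$ has trivial stabilizer in $B$, so $N_B(B_{x_0})/B_{x_0}=B$, which is not diagonalizable. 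The same defect kills the last step: every $\lambda\in\Lambda_G(G/H)\subset\mathfrak X(B)$ is trivial on the unipotent radical $U$, so the condition $\lambda(b')=1$ for all $\lambda$ only forces $b'\in U B_{x_0}$, not $b'\in B_{x_0}$. What has to be excluded is precisely that a nontrivial element of $\Aut^G(G/H)$ restricts on $\mathcal O$ to translation by a unipotent element, i.e.\ that $N_G(H)$ contains a unipotent element outside $H$; but that assertion is Corollary~\ref{crl_unipotent_elements} of the paper, which is \emph{deduced from} Theorem~\ref{thm_BriP}(\ref{thm_BriP_a}), so your argument is missing exactly the hard content of the theorem and cannot obtain it from $B$-equivariance alone --- the full $G$-action must enter (for instance through a model on which $\ker\chi$ is seen to act trivially on a point-separating algebra of regular functions, as in Brion--Pauer's original argument).
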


\begin{corollary} \label{crl_unipotent_elements}
Every unipotent element of $N_G(H)$ is contained in~$H$.
\end{corollary}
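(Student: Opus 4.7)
The plan is to deduce the corollary immediately from Theorem~\ref{thm_BriP}(\ref{thm_BriP_a}). Since $H$ is normal in $N_G(H)$ by definition, the quotient $N_G(H)/H$ has the structure of a linear algebraic group, and the quotient map $\pi \colon N_G(H) \to N_G(H)/H$ is a morphism of algebraic groups.

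Let $u \in N_G(H)$ be unipotent. The image of a unipotent element under a morphism of algebraic groups is unipotent, so $\pi(u)$ is a unipotent element of $N_G(H)/H$. By Theorem~\ref{thm_BriP}(\ref{thm_BriP_a}), the group $N_G(H)/H$ is diagonalizable, hence isomorphic to a closed subgroup of a torus; in particular, every element of $N_G(H)/H$ is semisimple. Therefore $\pi(u)$ is simultaneously unipotent and semisimple, which forces $\pi(u)$ to be the identity. This means $u \in H$, as claimed.

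There is no serious obstacle here: the argument is essentially a one-line consequence of part~(\ref{thm_BriP_a}) of the preceding theorem together with the standard fact that a diagonalizable group contains no nontrivial unipotent elements. Part~(\ref{thm_BriP_b}) of the theorem is not needed for this particular corollary.
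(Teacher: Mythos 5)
Your argument is correct and is precisely the intended deduction: the paper states this corollary without proof, as an immediate consequence of Theorem~\ref{thm_BriP}(\ref{thm_BriP_a}), via exactly the reasoning you give (the image of a unipotent element in the diagonalizable quotient $N_G(H)/H$ is both unipotent and semisimple, hence trivial). Nothing is missing.
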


In what follows, we put $N = N_G(H)$ for short.

\begin{corollary} \label{crl_Nu=Hu}
One has $N_u = H_u$.
\end{corollary}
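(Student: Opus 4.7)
The plan is to prove the two inclusions $H_u \subset N_u$ and $N_u \subset H_u$ separately, using Theorem~\ref{thm_BriP} and its Corollary~\ref{crl_unipotent_elements}.

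For $N_u \subset H_u$, first note that $N_u$ is a connected unipotent subgroup of~$N$. By Corollary~\ref{crl_unipotent_elements}, every unipotent element of $N$ lies in~$H$, so $N_u \subset H$. (Alternatively, since $N/H$ is diagonalizable by Theorem~\ref{thm_BriP}(\ref{thm_BriP_a}), the image of the unipotent group $N_u$ in $N/H$ must be trivial.) Now $N_u$ is normal in~$N$, hence normal in the subgroup $H \subset N$. Being a connected normal unipotent subgroup of~$H$, it is contained in the unipotent radical~$H_u$.

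For the reverse inclusion $H_u \subset N_u$, observe that $H_u$ is a characteristic subgroup of~$H$ (it is uniquely determined by the group structure of~$H$ as the unipotent radical of~$H^0$). Since $H$ is normal in $N$ by definition of~$N$, any characteristic subgroup of~$H$ is normal in~$N$; in particular, $H_u$ is a connected unipotent normal subgroup of~$N$, whence $H_u \subset N_u$.

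No step looks delicate: the only point to be careful about is that one invokes the characteristic nature of $H_u$ in~$H$ (not merely its normality) when passing from normality in~$H$ to normality in~$N$, and conversely one uses Corollary~\ref{crl_unipotent_elements} (which itself rests on the diagonalizability assertion of Theorem~\ref{thm_BriP}(\ref{thm_BriP_a})) to force $N_u$ into~$H$ before concluding. Thus the whole argument is a short two-line deduction from the previously established results.
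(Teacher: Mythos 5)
Your proof is correct and follows essentially the same route as the paper: the paper's argument likewise gets $H_u \subset N_u$ from $N$ normalizing the characteristic subgroup $H_u$ of~$H$, forces $N_u \subset H$ via Corollary~\ref{crl_unipotent_elements}, and then concludes $N_u \subset H_u$ because $H$ normalizes $N_u$. Your write-up just makes explicit the standard facts the paper leaves implicit.
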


\begin{proof}
Clearly, $N$ normalizes $H_u$, which implies $N_u \supset H_u$.
Then $N_u \subset H$ by Corollary~\ref{crl_unipotent_elements}.
Since $H$ normalizes $N_u$, it follows that $N_u = H_u$.
\end{proof}

\begin{corollary}
Suppose that $H$ is reductive.
Then $N$ is also reductive and $N^0 = (Z_G(H^0) (H^0)')^0$.
\end{corollary}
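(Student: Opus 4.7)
The reductivity of $N$ follows immediately from Corollary~\ref{crl_Nu=Hu}: since $H$ is reductive, $H_u = \lbrace e \rbrace$, and hence $N_u = H_u = \lbrace e \rbrace$, so $N$ is reductive.

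For the equality $N^0 = (Z_G(H) H')^0$, the inclusion $\supset$ is immediate: $Z_G(H) \subset N_G(H) = N$ (centralizers normalize) and $H' \subset H \subset N$, so $Z_G(H) H' \subset N$, and passing to identity components yields the containment. The main task is the reverse inclusion; as both subgroups are connected, it suffices to show that their Lie algebras coincide.

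To compute $\operatorname{Lie}(N)$, I would use Theorem~\ref{thm_BriP}(\ref{thm_BriP_b}), which gives $N = N_G(H^0)$, so $\operatorname{Lie}(N) = \lbrace x \in \mathfrak g : [x, \mathfrak h] \subset \mathfrak h \rbrace$. Since $H^0$ is reductive, we may choose an $H^0$-equivariant decomposition $\mathfrak g = \mathfrak h \oplus \mathfrak m$. For $z \in \mathfrak m$ the bracket $[z, \mathfrak h]$ lies in $\mathfrak m$, so $z \in \operatorname{Lie}(N)$ iff $[z, \mathfrak h] = 0$, i.e., $z \in \mathfrak m^{\mathfrak h}$. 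Hence $\operatorname{Lie}(N) = \mathfrak h \oplus \mathfrak m^{\mathfrak h}$. Combining the reductive decomposition $\mathfrak h = \mathfrak z(\mathfrak h) \oplus [\mathfrak h, \mathfrak h]$ with the observation that the centralizer of $\mathfrak h$ in $\mathfrak g$ equals $\mathfrak z(\mathfrak h) \oplus \mathfrak m^{\mathfrak h}$, one obtains $\operatorname{Lie}(N) = [\mathfrak h, \mathfrak h] + \operatorname{Lie}(Z_G(H^0))$.

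It remains to match this with $\operatorname{Lie}(Z_G(H) H') = \operatorname{Lie}(Z_G(H)) + \operatorname{Lie}(H')$. For connected reductive $H$ the match is immediate, since then $\operatorname{Lie}(H') = [\mathfrak h, \mathfrak h]$ and $\operatorname{Lie}(Z_G(H)) = \operatorname{Lie}(Z_G(H^0))$. The main---though mild---obstacle is the possibly disconnected case: then $\operatorname{Lie}(H')$ may properly contain $[\mathfrak h, \mathfrak h]$ while $\operatorname{Lie}(Z_G(H))$ may be properly contained in $\operatorname{Lie}(Z_G(H^0))$, and one would need to verify that the respective contributions of the finite component group $H/H^0$ to each summand cancel in the sum, leaving the total Lie algebra unchanged; this can be checked by decomposing both $\mathfrak h$ and $\operatorname{Lie}(Z_G(H^0))$ into $H/H^0$-isotypic components and tracing through the non-invariant parts.
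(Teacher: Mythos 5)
Your reductivity argument and the inclusion $Z_G(H)H' \subset N$ agree with the paper. For the reverse inclusion the paper argues at the group level rather than with an invariant complement: $(N^0)'$ is generated by unipotent elements, so Corollary~\ref{crl_unipotent_elements} gives $(N^0)' \subset H$, whence $(N^0)' = H'$ and $N^0 = Z(N^0)^0\,(N^0)' \subset Z_G(H)H'$. Your computation $\operatorname{Lie}(N) = \mathfrak h \oplus \mathfrak m^{\mathfrak h}$ is a legitimate alternative route and does settle the case of connected~$H$.

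The genuine gap is in your final paragraph: the disconnected case is not mild bookkeeping, and the isotypic decomposition you propose has no mechanism to close it. Take $\mathfrak m$ to be $H$-stable (possible since $H$ is reductive). Then $\operatorname{Lie}(H') \subset \mathfrak h$ and $\operatorname{Lie}(Z_G(H)) = \mathfrak z_{\mathfrak g}(\mathfrak h)^H$, so $\operatorname{Lie}\bigl(Z_G(H)H'\bigr) \subset \mathfrak h \oplus (\mathfrak m^{\mathfrak h})^H$: the part of $\mathfrak m^{\mathfrak h}$ on which $H/H^0$ acts nontrivially is lost from $Z_G(H)$ and cannot be recovered from $H'$, which lies inside~$\mathfrak h$. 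Nothing "cancels"; what must actually be proved is that $H/H^0$ acts trivially on $\mathfrak m^{\mathfrak h}$, and this is false for general reductive subgroups (an outer involution swapping two isomorphic simple factors of $H^0$ inverts the connected centre of $Z_G(H^0)$), so sphericity must enter a second time. One way to supply the missing step: by Theorem~\ref{thm_BriP}(\ref{thm_BriP_a}) the group $N/H$ is diagonalizable, hence abelian, so $[N,N] \subset H$; for $h \in H$ and $R = Z(N^0)^0$ the morphism $r \mapsto [h,r]$ maps the torus $R$ into $(R \cap H)^0$, so $(\Ad(h)-1)\operatorname{Lie}(R) \subset \mathfrak h$. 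Since $\operatorname{Lie}(N) = \operatorname{Lie}(R) \oplus [\mathfrak h,\mathfrak h]$ and both $[\mathfrak h,\mathfrak h]$ and $\mathfrak m^{\mathfrak h}$ are $\Ad(H)$-stable, this forces $(\Ad(h)-1)\mathfrak m^{\mathfrak h} \subset \mathfrak h \cap \mathfrak m^{\mathfrak h} = 0$. With this supplement (or by restricting the statement to connected $H$) your argument becomes complete.
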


\begin{proof}
The group $N$ is reductive by Corollary~\ref{crl_Nu=Hu}.
Then $(N^0)' = (H^0)'$ by Corollary~\ref{crl_unipotent_elements} and hence $N^0 \subset Z_G(H^0) (H^0)'$.
On the other hand, $Z_G(H^0) (H^0)' \subset N_G(H^0) = N$ where the equality holds by Theorem~\ref{thm_BriP}(\ref{thm_BriP_b}).
\end{proof}

The next result follows essentially from~\cite[\S\,5.4]{BriP}; see \cite[Lemma~4.25]{ACF} for details.

\begin{proposition} \label{prop_norm_sph_roots}
Let $\widetilde H \subset G$ be a spherical subgroup satisfying $H \subset \widetilde H \subset N$.
Then, modulo the inclusion $\Lambda_G(G/\widetilde H) \hookrightarrow \Lambda_G(G/H)$, one has $\QQ^+\Sigma_G(G/\widetilde H) = \QQ^+\Sigma_G(G/H)$.
\end{proposition}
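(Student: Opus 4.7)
The plan is to work dually, comparing the valuation cones $\mathcal V_G(G/H)$ and $\mathcal V_G(G/\widetilde H)$ via the inclusion $\Lambda_G(G/\widetilde H) \hookrightarrow \Lambda_G(G/H)$, which dualizes to a surjection $\pi \colon \mathcal Q_G(G/H) \twoheadrightarrow \mathcal Q_G(G/\widetilde H)$. Since by definition $\QQ^+\Sigma_G(X)$ is the cone polar to $\mathcal V_G(X)$ inside $\QQ\Lambda_G(X)$, the desired equality will drop out of two claims: first, that $\pi(\mathcal V_G(G/H)) = \mathcal V_G(G/\widetilde H)$; and second, that $\ker\pi \subset \mathcal V_G(G/H)$ as a linear subspace.

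The first claim is supplied by~\cite[\S\,5.4]{BriP}: every $G$-invariant $\QQ$-valued valuation of $\CC(G/H)$ trivial on $\CC^\times$ restricts to such a valuation of the subfield $\CC(G/\widetilde H)$, which corresponds exactly to applying $\pi$ to the associated $\rho_v$; the nontrivial direction (namely, that every $G$-invariant valuation of $\CC(G/\widetilde H)$ lifts back to one of $\CC(G/H)$) is precisely what the cited passage gives.

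The second claim is the main geometric input I would supply. By Theorem~\ref{thm_BriP}(\ref{thm_BriP_a}) the group $\widetilde H/H$ is diagonalizable, and its right translation action on $G/H$ commutes with the $G$-action. For each cocharacter $\phi \colon \CC^\times \to (\widetilde H/H)^0$ I would define the associated $G$-invariant valuation $v_\phi$ of $\CC(G/H)$ by taking the smallest $\CC^\times$-weight appearing in the $\phi$-expansion of a rational function. For a $B$-semi-invariant $f_\lambda$ of weight $\lambda \in \Lambda_G(G/H)$, the function $f_\lambda$ is simultaneously a $\widetilde H/H$-semi-invariant through a character $\chi_\lambda$ of $\widetilde H/H$ (whose kernel on $\Lambda_G(G/H)$ is exactly $\Lambda_G(G/\widetilde H)$), so $v_\phi(f_\lambda) = \langle \phi, \chi_\lambda \rangle$; this already puts $\rho_{v_\phi}$ into $\ker \pi$. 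Using $v_{\phi^{-1}}$ to realize $-\rho_{v_\phi}$, the whole line $\QQ \rho_{v_\phi}$ lies in $\mathcal V_G(G/H)$, and varying $\phi$ fills out all of $\ker \pi$ rationally, since the cocharacters of $(\widetilde H/H)^0$ pair nondegenerately with $\Lambda_G(G/H)/\Lambda_G(G/\widetilde H)$ by the dimension count coming from the $\widetilde H/H$-torsor structure of $G/H \to G/\widetilde H$.

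From these two claims the conclusion is formal: by the second, the polar of $\mathcal V_G(G/H)$ in $\QQ\Lambda_G(G/H)$ must annihilate $\ker\pi$, hence is contained in $\QQ\Lambda_G(G/\widetilde H)$, so $\QQ^+\Sigma_G(G/H) \subset \QQ\Lambda_G(G/\widetilde H)$; and within this subspace, the first claim identifies this polar, via the standard duality of polars under linear surjections, with the polar of $\mathcal V_G(G/\widetilde H)$, namely $\QQ^+\Sigma_G(G/\widetilde H)$. The main obstacle is verifying the second claim: translating the diagonalizability of $\widetilde H/H$ into an explicit family of $G$-invariant valuations with the asserted values on $B$-semi-invariants, and ensuring that they sweep out the full kernel of $\pi$.
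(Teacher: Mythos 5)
Your argument is correct and is essentially the proof underlying the paper's one-line citation to~\cite[\S\,5.4]{BriP} and \cite[Lemma~4.25]{ACF}: one shows that restriction of $G$-invariant valuations maps $\mathcal V_G(G/H)$ onto $\mathcal V_G(G/\widetilde H)$, and that the ``central'' valuations coming from cocharacters of the diagonalizable group $\widetilde H/H$ acting on the right fill out the kernel of $\mathcal Q_G(G/H) \to \mathcal Q_G(G/\widetilde H)$ as a linear subspace of $\mathcal V_G(G/H)$, whence $\mathcal V_G(G/H)$ is the full preimage of $\mathcal V_G(G/\widetilde H)$ and the cones $\QQ^+\Sigma$ coincide by polarity. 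The only point deserving a remark is that $\lambda \mapsto \chi_\lambda$ need only have kernel $\Lambda_G(G/\widetilde H)$ up to finite index when $\widetilde H/H$ is disconnected, which is harmless for the $\QQ$-linear statement.
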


Now suppose that $H$ is regularly embedded in a parabolic subgroup $P \subset G$ and moreover $K \subset L$ for Levi subgroups $K \subset H$ and $L \subset P$.
The following proposition provides an explicit description of the group $N^0$.

\begin{proposition} \label{prop_normalizer}
Under the above assumptions, put $A = Z_L(K^0) \cap N_L(\mathfrak h_u)$.
Then $N^0 = A^0(K^0)' \rightthreetimes H_u$.
\end{proposition}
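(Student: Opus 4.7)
The plan is to prove the two inclusions of $N^{0}=A^{0}(K^{0})'\rightthreetimes H_u$ separately. For $A^{0}(K^{0})'\rightthreetimes H_u\subseteq N^{0}$, I would check that each of the three factors lies in $N$: $H_u\subseteq H\subseteq N$ and $(K^{0})'\subseteq K^{0}\subseteq H\subseteq N$ are automatic, while for $a\in A$ the fact that $a$ centralizes $K^{0}$ and normalizes $\mathfrak h_u$ shows that $a$ normalizes $H^{0}=K^{0}H_u$, so $A\subseteq N_G(H^{0})=N$ by Theorem~\ref{thm_BriP}(\ref{thm_BriP_b}), whence $A^{0}\subseteq N^{0}$. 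Since $A^{0}(K^{0})'\subseteq L$ while $H_u\subseteq P_u$ and $L\cap P_u=\{e\}$, the product with $H_u$ is automatically a semidirect product with $H_u$ as its normal unipotent factor.

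The reverse inclusion requires describing a reductive Levi of $N^{0}$ explicitly. I would use Corollary~\ref{crl_Nu=Hu} to see that $N_u=H_u$, so $N^{0}=M\ltimes H_u$ for some reductive $M$; choosing $M$ to contain the reductive subgroup $K^{0}$ (possible by $H_u$-conjugacy of Levis), Theorem~\ref{thm_BriP}(\ref{thm_BriP_a}) then gives that $N^{0}/H^{0}\simeq M/K^{0}$ is a torus, so $K^{0}$ is normal in $M$. Since $M$ is connected, its conjugation action on $K^{0}$ factors through the identity component of $\Aut(K^{0})$, which consists of inner automorphisms; this yields the crucial decomposition $M=(K^{0})'\cdot Z$ where $Z:=Z_M(K^{0})^{0}$. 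A short argument shows $Z$ is in fact a torus: $Z\cap H^{0}=Z\cap K^{0}\subseteq Z(K^{0})^{0}$ is a torus, while $Z/(Z\cap H^{0})$ embeds into the torus $N^{0}/H^{0}$, so $Z$ is connected solvable reductive.

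The hard part will be to show that, after a suitable $H_u$-conjugation of $M$, one has $Z\subseteq L$: combined with the containments $Z\subseteq Z_G(K^{0})\cap N_G(\mathfrak h_u)$ this will give $Z\subseteq A$ and hence $Z\subseteq A^{0}$ by connectedness, so $M\subseteq A^{0}(K^{0})'$ and $N^{0}=MH_u\subseteq A^{0}(K^{0})'H_u$ as required. To produce this conjugation, I would work inside the parabolic subgroup $Z_G(K^{0})\cap P=Z_L(K^{0})\ltimes Z_{P_u}(K^{0})$ of the reductive group $Z_G(K^{0})$: since $K^{0}$ is normal in $M$ we have $M\subseteq N_G(K^{0})$, and a Mostow-type conjugacy argument would produce $u\in Z_{P_u}(K^{0})$ with $uMu^{-1}\subseteq N_L(K^{0})$. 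The main obstacle is to ensure $u$ can be taken inside $Z_{H_u}(K^{0})\subseteq H_u$, so that $uMu^{-1}$ remains a Levi of $N^{0}$ itself and not merely of some conjugate; this will rely on the auxiliary inclusion $N^{0}\subseteq P$, which can be established from Borel--Tits-type constraints on normalizers of unipotent subgroups combined with compatibility of $P$ with the regular embedding of $H$.
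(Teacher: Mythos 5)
Your first inclusion and your structural analysis of a Levi subgroup $M$ of $N^0$ (namely $N_u=H_u$ via Corollary~\ref{crl_Nu=Hu}, $M/K^0$ a torus via Theorem~\ref{thm_BriP}, hence $M=(K^0)'\cdot Z$ with $Z=Z_M(K^0)^0$ a central torus) are sound and run parallel to the paper's reduction $N^0=C_MK'\rightthreetimes H_u$. The genuine gap is in the step you yourself flag as the ``main obstacle'': forcing the conjugating element $u$ into $H_u$. Neither the inclusion $N^0\subset P$ nor Borel--Tits-type constraints will do this. The mechanism that actually makes it work is the sphericity of $\mathfrak p_u/\mathfrak h_u$ as a $K$-module (Theorem~\ref{thm_criterion_spherical}, Proposition~\ref{prop_S=K}), which your argument never invokes: a spherical module is multiplicity free and therefore has no nonzero trivial isotypic component, so $(\mathfrak p_u/\mathfrak h_u)^{K^0}=0$, i.e. $\mathfrak z_{\mathfrak p_u}(\mathfrak k^0)\subset\mathfrak h_u$ and hence $Z_{P_u}(K^0)^0\subset H_u$. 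That is precisely the statement you need: once you have it, $Z$ lies in the parabolic $Z_G(K^0)^0\cap P$ of $Z_G(K^0)^0$ whose unipotent radical is $Z_{P_u}(K^0)^0$, and the element conjugating the torus $Z$ into the Levi $Z_L(K^0)^0$ automatically lands in $H_u$; the rest of your argument then closes (membership of $uZu^{-1}$ in $N_L(\mathfrak h_u)$ is free since $H_u=N_u$ is normal in $N$). Without it, $Z_{P_u}(K^0)$ has no reason to be contained in $H_u$, and ``compatibility of $P$ with the regular embedding'' gives no control.

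For comparison, the paper avoids the conjugacy machinery entirely by a direct Lie-algebra computation: for $x\in\mathfrak c_M\subset\mathfrak p$ write $x=y+z$ with $y\in\mathfrak l$, $z\in\mathfrak p_u$; from $[x,\mathfrak k]=0$ and $\mathfrak p=\mathfrak l\oplus\mathfrak p_u$ it follows that $[y,\mathfrak k]=[z,\mathfrak k]=0$, so $z$ spans a trivial $K^0$-submodule of $\mathfrak p_u$ and sphericity forces $z\in\mathfrak h_u$; then $y=x-z$ lies in $\mathfrak n$, normalizes $\mathfrak h_u$, and hence lies in $\mathfrak a$. (The paper also needs $N^0\subset P$, but quotes it from earlier work rather than rederiving it.) So your route is salvageable, but only after you insert the sphericity input above; as written, the decisive step is missing.
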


\begin{proof}
Thanks to Theorem~\ref{thm_BriP}(\ref{thm_BriP_b}), it suffices to consider the case of connected~$H$, in which we need to prove that $N^0 = A^0K' \rightthreetimes H_u$.
It is clear from the definition that $A \subset N$, hence $A^0K' \rightthreetimes H_u \subset N^0$.
Next, we know from Corollary~\ref{crl_Nu=Hu} that $N_u = H_u$.
Let $M$ be a Levi subgroup of~$N^0$ such that $M \supset K$ and let $C_M$ be the connected center of~$M$.
As the group $M'$ is generated by its unipotent subgroups, Corollary~\ref{crl_unipotent_elements} implies $M' \subset K$ and $M' = K'$, so that $N^0 = C_MK' \rightthreetimes H_u$.
By \cite[Proposition~3.26]{Avd_solv_inv} we also know that $N^0$ is contained in~$P$.
It remains to show that $C_M \subset A^0K' \rightthreetimes H_u$.
We shall prove the stronger claim $\mathfrak c_M \subset \mathfrak a + \mathfrak h_u$.
Take any element $x \in \mathfrak c_M$ and consider the decomposition $x = y + z$ where $y \in \mathfrak l$ and $z \in \mathfrak p_u$.
Clearly, $[x, \mathfrak k] = 0$, hence $[y,y'] + [z,y'] = 0$ for any $y' \in \mathfrak k$.
Since $[y,y'] \in \mathfrak l$ and $[z,y'] \in \mathfrak p_u$, it follows that $[y,y']=[z,y'] = 0$.
The latter means that $y \in \mathfrak z_{\mathfrak l}(\mathfrak k)$ and $\langle z \rangle \subset \mathfrak p_u$ is a trivial $K^0$-module.
As $\mathfrak p_u /\mathfrak h_u$ is a spherical $K$-module, it is also spherical as a $K^0$-module and therefore cannot contain a trivial one-dimensional $K^0$-submodule, hence $z \in \mathfrak h_u$.
It follows that $y = x - z \in \mathfrak n$ and hence $[y, \mathfrak h_u] \subset \mathfrak h_u$, which implies $y \in \mathfrak a$.
Consequently, $x \in \mathfrak a + \mathfrak h_u$ as required.
\end{proof}

\begin{corollary} \label{crl_second_normalizer}
The following assertions hold:
\begin{enumerate}[label=\textup{(\alph*)},ref=\textup{\alph*}]
\item \label{crl_second_normalizer_a}
$N_G(N^0)^0 = N^0$;

\item \label{crl_second_normalizer_b}
$N_G(N_G(N)) = N_G(N)$.
\end{enumerate}
\end{corollary}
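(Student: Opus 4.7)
The plan is to deduce (a) by applying Proposition~\ref{prop_normalizer} a second time, with $H$ replaced by $N^0$, and to obtain (b) formally from (a) together with Theorem~\ref{thm_BriP}(b).

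For (a), observe that $N^0 = A^0(K^0)' \rightthreetimes H_u$ is itself a spherical subgroup of $G$ regularly embedded in $P$, with Levi $K^* := A^0(K^0)' \subset L$ and unipotent radical $H_u$. Applying Proposition~\ref{prop_normalizer} to $N^0$ yields
\[
N_G(N^0)^0 = (A^*)^0 (K^*)' \rightthreetimes H_u, \qquad A^* := Z_L(K^*) \cap N_L(\mathfrak h_u),
\]
so (a) reduces to the two identifications $(K^*)' = (K^0)'$ and $(A^*)^0 = A^0$.

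The crucial auxiliary fact on which both identifications hinge is that $A^0$ is abelian (in fact a torus). I plan to establish this by extracting from the proof of Proposition~\ref{prop_normalizer} the equality $A^0(K^0)' = C_M(K^0)'$, where $C_M$ is the connected center of a reductive Levi $M \supset K$ of $N^0$. A short calculation, using that $C_M$ centralizes $K^0$, shows $Z_L(K^0) \cap C_M(K^0)' = C_M \cdot (Z(K^0) \cap (K^0)')$ with the last factor finite; since $A^0 \subset Z_L(K^0)$ is connected, this forces $A^0 \subset C_M$, and together with the reverse inclusion $C_M \subset A^0$ already contained in the proof of Proposition~\ref{prop_normalizer} one obtains $A^0 = C_M$. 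Granted this, $A^0$ commutes with $(K^0)'$, so $(K^*)' = (A^0)'(K^0)' = (K^0)'$. For $(A^*)^0 = A^0$, the inclusion $A^* \subset A$ comes from
\[
Z_L(K^*) = Z_L(A^0) \cap Z_L((K^0)') \subset Z_L(Z(K^0)^0) \cap Z_L((K^0)') = Z_L(K^0),
\]
where I use $Z(K^0)^0 \subset A^0$ (which holds because $Z(K^0) \subset K \subset H$ normalizes $H_u$); the reverse inclusion $A^0 \subset A^*$ is a direct consequence of the abelianness of $A^0$. Combining gives $N_G(N^0)^0 = A^0(K^0)' \rightthreetimes H_u = N^0$.

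Part (b) is then immediate: Theorem~\ref{thm_BriP}(b) applied to the spherical subgroup $N$ gives $N_G(N) = N_G(N^0) =: M$; by (a) we have $M^0 = N^0$, hence $N_G(M) \subset N_G(M^0) = N_G(N^0) = M$, the reverse inclusion being trivial. The main obstacle throughout is the abelianness of $A^0$: if it failed, $(A^0)'$ would survive in $(K^*)'$ and the naive identification of $N_G(N^0)^0$ with $N^0$ would break.
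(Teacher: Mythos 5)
Your overall strategy for part (a) --- applying Proposition~\ref{prop_normalizer} a second time, to $N^0$ --- is precisely the route the paper alludes to when it says that (a) ``is readily implied by Proposition~\ref{prop_normalizer}'', and your deduction of (b) from (a) via Theorem~\ref{thm_BriP}(\ref{thm_BriP_b}) coincides with the paper's. However, there is a genuine gap at the pivotal step, the abelianness of $A^0$. You attribute to the proof of Proposition~\ref{prop_normalizer} the two assertions $A^0(K^0)'=C_M(K^0)'$ and $C_M\subset A^0$. Neither is contained there: that proof establishes only $N^0=C_MK'\rightthreetimes H_u=A^0K'\rightthreetimes H_u$ and, at the Lie algebra level, $\mathfrak c_M\subset\mathfrak a+\mathfrak h_u$. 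The $\mathfrak h_u$-component need not vanish, so $C_M$ need not even lie in $L$; the two Levi complements $C_MK'$ and $A^0K'$ of $H_u$ in $N^0$ are only conjugate under $H_u$, not equal. Consequently your computation of $Z_L(K^0)\cap C_M(K^0)'$, which tacitly assumes $C_M\subset Z_L(K^0)$, does not go through, and the inclusion $C_M\subset A^0$ is simply not what the cited proof shows.

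The fact you need is nonetheless true and can be proved without invoking $C_M$ at all: after reducing to connected $H$ via Theorem~\ref{thm_BriP}(\ref{thm_BriP_b}), one has $A^0\cap H=A^0\cap K\subset Z_K(K)=Z(K)$, a diagonalizable group, while $A^0/(A^0\cap H)$ is a connected subgroup of the diagonalizable group $N/H$ of Theorem~\ref{thm_BriP}(\ref{thm_BriP_a}) and hence a torus; thus $A^0$ is connected and solvable, and since every unipotent element of $A^0\subset N$ lies in $H$ by Corollary~\ref{crl_unipotent_elements} and hence is trivial, $A^0$ is a torus. With this repair the remainder of your argument --- the identifications $(K^*)'=(K^0)'$ and $(A^*)^0=A^0$, and part (b) --- is correct. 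Note finally that the paper's written proof of (a) takes a different, structure-free path: it reduces to $N_G(N)^0=N^0$ by Theorem~\ref{thm_BriP}(\ref{thm_BriP_b}), lets $N^\sharp\subset H$ be the common kernel of all characters of $N$, and observes that the connected group $N_G(N)^0$ must act trivially on the diagonalizable quotient $N/N^\sharp$ and therefore normalizes $H$.
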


\begin{proof}
Part~(\ref{crl_second_normalizer_b}) follows from~(\ref{crl_second_normalizer_a}) thanks to Theorem~\ref{thm_BriP}(\ref{thm_BriP_b}).
Part~(\ref{crl_second_normalizer_a}) is readily implied by Proposition~\ref{prop_normalizer}, but for convenience we provide a direct argument\footnote{This argument was communicated to the author by one of the referees.} not involving any structure results.
Again by Theorem~\ref{thm_BriP}(\ref{thm_BriP_b}), it suffices to prove that $N_G(N)^0 = N^0$.
Let $N^\sharp$ be the common kernel of all characters of~$N$.
Then Theorem~\ref{thm_BriP}(\ref{thm_BriP_a}) yields $N^\sharp \subset H$.
Since $N^\sharp$ is preserved by all automorphisms of~$N$, it is normalized by $N_G(N)$, therefore $N_G(N)$ acts on $N/N^\sharp$ by automorphisms.
As $N/N^\sharp$ is diagonalizable, the connected group $N_G(N)^0$ acts trivially on it and hence normalizes~$H$.
Thus $N_G(N)^0 = N^0$.
\end{proof}

\subsection{Wonderful varieties and Demazure embeddings}
\label{subsec_wv}

\begin{definition}
A $G$-variety $X$ is said to be \textit{wonderful} of rank~$r$ if the following conditions are satisfied:
\begin{enumerate}[label=\textup{(W\arabic*)},ref=\textup{W\arabic*}]
\item
$X$ is smooth and complete;

\item
$X$ contains an open $G$-orbit whose complement is a divisor with normal crossings having $r$ irreducible components $D_1, \ldots, D_r$ (called the \textit{boundary divisors} of~$X$);

\item
for every subset $I \subset \lbrace 1,\ldots,r \rbrace$, the set $\bigcap \limits_{i \in I} D_i \setminus \bigcup \limits_{j \notin I} D_j$ is a single $G$-orbit in~$X$.
\end{enumerate}
\end{definition}

It is known from~\cite{Lu96} that every wonderful $G$-variety is spherical.

Let $X$ be a wonderful $G$-variety of rank $r$ and let $D_1,\ldots, D_r$ be its boundary divisors.
The definition implies that for every subset $I \subset \lbrace 1,\ldots, r\rbrace$ the $G$-variety $X_I = \bigcap \limits_{i \in I} D_i$ is a wonderful $G$-variety of rank $r - |I|$.
In particular, for every $i=1,\ldots, r$ the boundary divisor $D_i$ is a wonderful $G$-variety of rank~$r-1$.
It follows from the general theory \cite[\S\S\,2,\,3]{Kn91} that for every wonderful $G$-variety $X$ of rank~$r$ there is a bijection $\lbrace 1,\ldots, r \rbrace \to \Sigma_X$, $i \mapsto \sigma_i$, such that for every subset $I \subset \lbrace 1,\ldots, r \rbrace$ one has $\Sigma_{X_I} = \Sigma_X \setminus \lbrace \sigma_i \mid i \in I \rbrace$.
In particular, $\Sigma_{D_i} = \Sigma_X \setminus \lbrace \sigma_i \rbrace$ for every $i = 1,\ldots, r$.

An explicit construction of (some) wonderful $G$-varieties is given by Demazure embeddings introduced below.

Suppose that $G$ is semisimple and let $H \subset G$ be a spherical subgroup such that $N_G(H) = H$.
Regard the Lie algebra $\mathfrak h$ as a point of the Grassmannian $\Gr_{\dim \mathfrak h}(\mathfrak g)$ and let $X$ be the closure in $\Gr_{\dim \mathfrak h}(\mathfrak g)$ of the $G$-orbit of~$\mathfrak h$.
Then $X$ is called the \textit{Demazure embedding} of the spherical homogeneous space $G/H$.

The following result was proved in~\cite[Theorem~1.1]{Lo2} with earlier contributions~\cite[Theorem~1.4]{Bri90} and~\cite[Theorem]{Lu02}.

\begin{theorem}
The $G$-variety $X$ is wonderful.
\end{theorem}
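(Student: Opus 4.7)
The plan is to identify $X$ with a wonderful embedding of $G/H$ whose existence is known from independent results. First I would invoke Losev's theorem~\cite{Lo2} (with earlier contributions~\cite{Bri90, Lu02}): since $G$ is semisimple and $H = N_G(H)$ is spherical, there exists a wonderful $G$-variety $\overline{X}$ whose open $G$-orbit is equivariantly isomorphic to $G/H$ via some $\iota \colon G/H \hookrightarrow \overline{X}$. The strategy is then to realize $X$ as the image of $\overline{X}$ under a suitable $G$-equivariant morphism into $\Gr_{\dim \mathfrak h}(\mathfrak g)$ and to show that this morphism is an isomorphism.

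Next I would construct a $G$-equivariant morphism $\psi \colon \overline{X} \to \Gr_{\dim \mathfrak h}(\mathfrak g)$ with image~$X$. The hypothesis $N_G(H) = H$ guarantees that $H$ is precisely the $G$-stabilizer of $\mathfrak h \in \Gr_{\dim \mathfrak h}(\mathfrak g)$, so the orbit map $gH \mapsto \Ad(g)\mathfrak h$ defines a $G$-equivariant locally closed embedding of $G/H$ into $\Gr_{\dim \mathfrak h}(\mathfrak g)$ whose image is the open $G$-orbit of~$X$. Precomposing with $\iota^{-1}$ defines $\psi$ on the open $G$-orbit of~$\overline{X}$. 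To extend it to all of~$\overline{X}$, I would form the closure $\Gamma$ of its graph inside $\overline{X} \times \Gr_{\dim \mathfrak h}(\mathfrak g)$; the first projection $\Gamma \to \overline{X}$ is proper, $G$-equivariant and birational. Applying Proposition~\ref{prop_limits} to one-parameter subgroups of~$G$ that degenerate points of the open orbit to an arbitrary point $x \in \overline{X}$, one shows that the limit in $\Gr_{\dim \mathfrak h}(\mathfrak g)$ of $\Ad(g)\mathfrak h$ along such curves is uniquely determined by~$x$; combined with the smoothness of $\overline{X}$, this yields that $\Gamma \to \overline{X}$ is bijective on closed points, hence an isomorphism by Zariski's main theorem. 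The image of $\psi$ is closed, $G$-stable and contains $G \cdot \mathfrak h$, so it coincides with $X$.

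Finally I would verify that $\psi$ is a closed immersion. For each $G$-orbit $O \subset \overline{X}$, the stabilizer $H_O$ of a base point acts on $\mathfrak g$ and fixes $\psi(O) \in \Gr_{\dim \mathfrak h}(\mathfrak g)$, so $H_O \subset N_G(\psi(O))$. Using the local structure of the wonderful variety $\overline{X}$ near its closed orbit, one verifies that each $H_O$ is self-normalizing in~$G$, hence $H_O = N_G(\psi(O))$ and distinct $G$-orbits of $\overline{X}$ are sent to distinct $G$-orbits of~$X$. The same local analysis identifies the differential of $\psi$ with an injection on each tangent space, so that $\psi$ is an isomorphism onto~$X$ and consequently $X$ is wonderful.

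The main obstacle is the first step: the existence of $\overline{X}$ is a deep result whose proof goes substantially beyond the comparison with~$X$. Once $\overline{X}$ is available, the principal remaining difficulty is keeping track of the stabilizers of the various orbits of $\overline{X}$ under~$\psi$ in order to rule out any collapsing of orbits; everything else is formal properties of Grassmannian orbits and of morphisms to projective $G$-varieties.
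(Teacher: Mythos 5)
The paper does not actually prove this statement: it is quoted from the literature (Losev~\cite{Lo2}, with earlier contributions of Brion~\cite{Bri90} and Luna~\cite{Lu02}), so your outline has to be judged against those sources. It is circular at the outset. You invoke \cite{Lo2} to obtain the auxiliary wonderful embedding $\overline X$ of $G/H$, but \cite{Lo2} is precisely the paper in which the wonderfulness of the Demazure embedding is established; the independent input you actually need there is Knop's theorem~\cite{Kn96} that every self-normalizing spherical subgroup admits a wonderful embedding. This is more than a misattribution: it hides the fact that the existence of $\overline X$ and the identification $X\simeq\overline X$ are two separate difficulties, and the second one is the theorem.

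More seriously, the step where you extend the orbit map to a morphism $\psi\colon\overline X\to\Gr_{\dim\mathfrak h}(\mathfrak g)$ by asserting that ``the limit of $\Ad(g)\mathfrak h$ along such curves is uniquely determined by~$x$'' is exactly the content of the theorem, not a formal consequence of Proposition~\ref{prop_limits}. The valuative criterion produces a limit along each individual arc; independence of that limit from the chosen arc is equivalent to the rational map $\overline X\dashrightarrow X$ being defined everywhere. In Luna--Vust terms both $X$ and $\overline X$ are complete toroidal embeddings of $G/H$, the fan of $X$ is a priori only some subdivision of the valuation cone (this is what Brion~\cite{Bri90} gives), and a morphism $\overline X\to X$ extending the identity on $G/H$ exists if and only if that subdivision is trivial, i.e., if and only if $X=\overline X$ --- which is what has to be proved; the morphism that exists for free goes the other way, $X\to\overline X$. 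The later claims --- that every boundary orbit stabilizer $H_O$ is self-normalizing and that $d\psi$ is injective --- are likewise asserted rather than proved. Ruling out a nontrivial subdivision of the valuation cone (equivalently, showing that $X$ has a unique closed orbit, or directly that $X$ is smooth) is where Luna's and Losev's actual work lies, and your outline does not supply a substitute for it.
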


\subsection{The general strategy for computing the set of spherical roots}
\label{subsec_gen_strat}

Let $H \subset G$ be a spherical subgroup specified by a regular embedding in a parabolic subgroup~$P \subset G$.

First of all, we compute the weight lattice $\Lambda_G(G/H)$ by using Theorem~\ref{thm_criterion_spherical}.

Next, we perform several reductions.
\begin{enumerate}[label=\textup{\arabic*.},ref=\textup{\arabic*}]
\item
It follows from the definitions that the set $\Sigma_G(G/H)$ is uniquely determined by the cone $\QQ^+\Sigma_G(G/H)$ and the lattice $\Lambda_G(G/H)$.
Consequently, for determining the set $\Sigma_G(G/H)$ it suffices to compute the cone $\QQ^+\Sigma_G(G/H)$.

\item
As $H \subset Z(G)H \subset N_G(H)$, one has $\QQ^+\Sigma_G(G/H) = \QQ^+\Sigma_G(G/(Z(G)H))$ by Proposition~\ref{prop_norm_sph_roots}.
Taking into account the isomorphism
\[
G/(Z(G)H) \simeq (G/Z(G))/((Z(G)H)/Z(G)),
\]
we may replace $G$ with $G/Z(G)$ and $H$ with $(Z(G)H)/Z(G)$ and assume that $G$ is semisimple.

\item
Using Proposition~\ref{prop_normalizer}, we compute explicitly the subgroup $N_0 = N_G(H)^0$.
Then $\QQ^+\Sigma_G(G/H) = \QQ^+\Sigma_G(G/N_0)$ by Proposition~\ref{prop_norm_sph_roots} (in particular, $|\Sigma_G(G/H)| = |\Sigma_G(G/N_0)|$) and $N_G(N_0)^0 = N_0$ by Corollary~\ref{crl_second_normalizer}(\ref{crl_second_normalizer_a}).
Proposition~\ref{prop_finite_index_in_norm} yields $|\Sigma_G(G/N_0)| = \rk_G(G/N_0)$, hence we get the number of spherical roots for~$H$ at this step.
Replacing $H$ with $N^0$ we may assume that $H$ is connected and $N_G(H)^0 = H$.
\end{enumerate}

In what follows we assume that $G$ is semisimple, $H$ is connected, $N_G(H)^0 = H$, and we need to compute the cone $\QQ^+\Sigma_G(G/H)$.
One more application of Proposition~\ref{prop_norm_sph_roots} yields $\QQ^+\Sigma_G(G/H) = \QQ^+\Sigma_G(G/N_G(H))$, hence it remains to compute the latter cone.

Let $X$ be the closure of the $G$-orbit $G\mathfrak h$ in $\Gr_{\dim \mathfrak h}(\mathfrak g)$.
In view of Theorem~\ref{thm_BriP}(\ref{thm_BriP_b}), $N_G(N_G(H)) = N_G(N_G(H)^0) = N_G(H)$, hence $X$ is the Demazure embedding of $G/N_G(H)$.

Now suppose we have found two subalgebras $\mathfrak h_1, \mathfrak h_2 \subset \mathfrak g$ of dimension $\dim \mathfrak h$ such that $\mathfrak h_1, \mathfrak h_2 \in X$ and the two $G$-orbits $G \mathfrak h_1, G\mathfrak h_2$ are different and both have codimension~$1$ in~$X$.
Put $N_i = N_G(\mathfrak h_i)$ for $i = 1,2$.
Then by the discussion in \S\,\ref{subsec_wv} there are two distinct elements $\sigma_1, \sigma_2 \in \Sigma_G(G/N_G(H))$ such that $\Sigma_G(G/N_1) = \Sigma_G(G/N_G(H)) \setminus \lbrace \sigma_1 \rbrace$ and $\Sigma_G(G/N_2) = \Sigma_G(G/N_G(H)) \setminus \lbrace \sigma_2 \rbrace$.
The latter immediately implies
\[
\Sigma_G(G/N_G(H)) = \Sigma_G(G/N_1) \cup \Sigma_G(G/N_2).
\]
Since $\QQ^+\Sigma_G(G/N_i) = \QQ^+\Sigma_G(G/N_i^0)$ for $i = 1,2$ by Proposition~\ref{prop_norm_sph_roots}, it follows that
\[
\QQ^+\Sigma_G(G/N_G(H)) = \QQ^+(\Sigma_G(G/N_1^0) \cup \Sigma_G(G/N_2^0)).
\]
Consequently, we have reduced the problem of computing the set of spherical roots for $H$ to the same problem for two other subgroups $N_1^0, N_2^0$ such that the number of elements in both subsets $\Sigma_G(G/N_1^0)$ and $\Sigma_G(G/N_2^0)$ is strictly less than that in $\Sigma_G(G/H)$.

By \cite[Proposition~1.3(i)]{Bri90}, the subalgebras $\mathfrak h_1, \mathfrak h_2$ are automatically spherical in~$\mathfrak g$, therefore the two subgroups $N_1^0$ and $N_2^0$ can be explicitly computed using Proposition~\ref{prop_normalizer}.

Iterating the above-described procedure yields an algorithm that in a finite number of steps leads to a finite number of spherical subgroups $\widetilde N_1, \ldots, \widetilde N_s \subset G$ such that for all $i = 1,\ldots,s$ the following properties hold:
\begin{itemize}
\item
$N_G(\widetilde N_i)^0 = \widetilde N_i$;

\item
either $|\Sigma_G(G/\widetilde N_i)| \ge 2$ and the set $\Sigma_G(G/\widetilde N_i)$ is already known (for example, from previous works) or $|\Sigma_G(G/\widetilde N_i)| = 1$.
\end{itemize}
According to Proposition~\ref{prop_finite_index_in_norm}, in the case $|\Sigma_G(G/\widetilde N_i)| = 1$ one has $\rk_G(G/\widetilde N_i)=1$ and the unique element in $\Sigma_G(G/\widetilde N_i)$ is the unique primitive element of $\Lambda_G(G/\widetilde N_i)$ expressed as a nonnegative linear combination of simple roots.

\section{Active \texorpdfstring{$C$}{C}-roots and their properties}
\label{sect_active_C-roots}

In this subsection, we obtain generalizations of the results in~\cite[\S\,2.2]{Avd_solv} on the structure of strongly solvable spherical subgroups.

Let $P \supset B^-$ be a parabolic subgroup of~$G$ with standard Levi subgroup~$L$ and let $P^+ \supset B$ be the parabolic subgroup of~$G$ opposite to~$P$.
Let $C$ denote the connected center of~$L$ and retain all the notation and terminology of~\S\,\ref{subsec_Levi_roots}.
For every $\lambda \in \Phi$, the projection of $\mathfrak g$ to $\mathfrak g(\lambda)$ will be always considered with respect to decomposition~(\ref{eqn_decomposition}).
We shall also use the following additional notation:
\begin{itemize}
\item
for every $\lambda \in \Phi$ the symbol $\widehat \lambda$ stands for the highest weight of the $L$-module $\mathfrak g(\lambda)$;

\item
for every $\delta \in \Delta$ the symbol $\overline \delta$ denotes the image of~$\delta$ under the restriction map $\mathfrak X(T) \to \mathfrak X(C)$.
\end{itemize}

Suppose that $H \subset G$ is a spherical subgroup regularly embedded in~$P$, that is, $H_u \subset P_u$.
Replacing $H$ with a conjugate subgroup if necessary, we may assume that $K = L \cap H$ is a Levi subgroup of~$H$.

From now on until the end of this paper, we shall additionally assume that $L' \subset K \subset L$.
Then $\mathfrak p_u / \mathfrak h_u$ is a spherical $K$-module by Proposition~\ref{prop_S=K}.

It will be convenient for us to work with the subspace~$\mathfrak h^\perp \subset \mathfrak g$.
We have
\begin{equation} \label{eqn_h^perp}
\mathfrak h^\perp = \mathfrak p_u \oplus (\mathfrak h^\perp \cap \mathfrak c) \oplus (\mathfrak h^\perp \cap \mathfrak p^+_u).
\end{equation}
Put $\mathfrak u = \mathfrak h^\perp \cap \mathfrak p_u^+$ for short and note that $\mathfrak u$ is a $K$-module in a natural way.
Moreover, there is a natural $K$-module isomorphism $\mathfrak u \simeq (\mathfrak p_u / \mathfrak h_u)^*$.
In particular, we obtain a $K$-module isomorphism $\CC[\mathfrak p_u/\mathfrak h_u] \simeq \mathrm{S}(\mathfrak u)$.
Recall from \S\,\ref{subsec_SM_gen} that $\mathfrak p_u/\mathfrak h_u$ is a spherical $K$-module if and only if the $K$-module $\mathrm{S}(\mathfrak u)$ is multiplicity free.
The latter property of $\mathrm{S}(\mathfrak u)$ will be extensively used throughout this section.

\begin{definition}
An element $\mu \in \Phi^+$ is said to be an \textit{active $C$-root} if $\mathfrak g(-\mu) \not \subset \mathfrak h$.
\end{definition}

Let $\Psi = \Psi(H)$ denote the set of active $C$-roots.

\begin{definition}
Two active $C$-roots $\lambda, \mu$ are said to be \textit{equivalent} (notation: $\lambda \sim \mu$) if there is a $K$-module isomorphism $\mathfrak g(-\lambda) \simeq \mathfrak g(-\mu)$ (or equivalently $\mathfrak g(\lambda) \simeq \mathfrak g(\mu)$).
\end{definition}

Clearly, this definition determines an equivalence relation on the set~$\Psi$.
Let $\widetilde{\Psi}$ denote the set of all equivalence classes for this relation.
For every $\lambda \in \Psi$, let $\Omega_\lambda \in \widetilde \Psi$ be the equivalence class of~$\lambda$.

As $\mathrm{S}(\mathfrak u)$ is a multiplicity-free $K$-module, so is $\mathfrak u$ itself.
Consequently, for every $\Omega \in \widetilde \Psi$ there exists a unique $K$-submodule $\mathfrak u(\Omega) \subset \mathfrak u$ isomorphic to $\mathfrak g(\lambda)$ for all $\lambda \in \Omega$.
It follows from the definitions that
\begin{equation} \label{eqn_u}
\mathfrak u = \bigoplus\limits_{\Omega \in \widetilde \Psi} \mathfrak u(\Omega)
\end{equation}
and the subspaces $\mathfrak u(\Omega) \subset \mathfrak p^+_u$ have the following properties:

\begin{enumerate}[label=\textup{(\arabic*)},ref=\textup{\arabic*}]
\item
for every $\lambda \in \Omega$ the projection $\mathfrak u (\Omega) \to \mathfrak g(\lambda)$ is a $K$-module isomorphism;

\item
for every $\lambda \in \Phi^+ \setminus \Omega$ the projection $\mathfrak u(\Omega) \to \mathfrak g(\lambda)$ is zero.
\end{enumerate}
For future reference, we mention the following decomposition obtained by combining~(\ref{eqn_h^perp}) and~(\ref{eqn_u}):
\begin{equation} \label{eqn_h^perp_refined}
\mathfrak h^\perp = \mathfrak p_u \oplus (\mathfrak h^\perp \cap \mathfrak c) \oplus \bigoplus\limits_{\Omega \in \widetilde \Psi} \mathfrak u(\Omega).
\end{equation}

For every $\Omega \in \widetilde \Psi$ and $\mu \in \Omega$ there are $K$-module isomorphisms
\begin{equation} \label{eqn_quotient}
\bigoplus\limits_{\lambda \in \Omega} \mathfrak g(-\lambda) / [(\bigoplus\limits_{\lambda \in \Omega} \mathfrak g(-\lambda)) \cap \mathfrak h] \simeq \mathfrak g(-\mu) \simeq \mathfrak u(\Omega)^*.
\end{equation}
Consequently, for every two distinct elements $\lambda,\mu \in \Psi$ with $\lambda \sim \mu$ and for
\[
W = (\mathfrak g(-\lambda) \oplus \mathfrak g(-\mu)) \cap \mathfrak h
\]
there are $K$-module isomorphisms $W \simeq \mathfrak g(-\lambda) \simeq \mathfrak g(-\mu)$ and $W$ projects nontrivially (and hence isomorphically) to both $\mathfrak g(-\lambda)$ and~$\mathfrak g(-\mu)$.
In particular, $W$ is a simple $K$-module and each highest-weight vector of it is the sum of two suitable highest-weight vectors of $\mathfrak g(-\lambda)$ and $\mathfrak g(-\mu)$.

\begin{lemma} \label{lemma_sum}
Suppose that $\lambda \in \Psi$ and $\lambda = \mu + \nu$ for some $\mu, \nu \in \Phi^+$.
Then either $\mu \in \Psi$ or $\nu \in \Psi$.
\end{lemma}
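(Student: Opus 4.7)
The plan is to argue by contradiction, exploiting the fact that $\mathfrak h$ is a Lie subalgebra of $\mathfrak g$ together with the bracket-generation property in Proposition~\ref{prop_properties_of_g(lambda)}(\ref{prop_properties_of_g(lambda)_b}). The key observation is that although the hypothesis involves the positive $C$-roots $\lambda, \mu, \nu \in \Phi^+$, the active condition is phrased in terms of the opposite-sign subspaces $\mathfrak g(-\lambda), \mathfrak g(-\mu), \mathfrak g(-\nu)$, and these lie in the negative part $\mathfrak p_u \subset \mathfrak h$ only when the corresponding $C$-root fails to be active.

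Concretely, I would proceed as follows. Suppose for contradiction that neither $\mu$ nor $\nu$ is active, i.e.\ $\mathfrak g(-\mu) \subset \mathfrak h$ and $\mathfrak g(-\nu) \subset \mathfrak h$. Since $\Phi = -\Phi$, the elements $-\lambda, -\mu, -\nu$ all belong to $\Phi$ and satisfy the additive relation $-\lambda = (-\mu) + (-\nu)$. Applying Proposition~\ref{prop_properties_of_g(lambda)}(\ref{prop_properties_of_g(lambda)_b}) to this triple gives
\[
\mathfrak g(-\lambda) = [\mathfrak g(-\mu), \mathfrak g(-\nu)].
\]
As $\mathfrak h$ is closed under the Lie bracket, both factors lying in $\mathfrak h$ forces $\mathfrak g(-\lambda) \subset \mathfrak h$, contradicting the assumption $\lambda \in \Psi$.

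There is essentially no genuine obstacle in this argument; the whole point is to recognize that the only relevant ingredient is the bracket-surjectivity statement of Proposition~\ref{prop_properties_of_g(lambda)}(\ref{prop_properties_of_g(lambda)_b}), applied to the negatives of $\lambda, \mu, \nu$. The mild subtlety worth highlighting is the sign-flip: one must not apply the proposition directly to $\lambda, \mu, \nu$ themselves (which would produce a statement inside $\mathfrak p_u \subset \mathfrak h$ that is vacuously true), but to $-\lambda, -\mu, -\nu$, which is legal precisely because $\Phi$ is symmetric under negation.
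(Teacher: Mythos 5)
Your proof is correct and is precisely the paper's argument — the paper's entire proof is the one-liner ``This follows readily from $[\mathfrak g(-\mu), \mathfrak g(-\nu)] = \mathfrak g(-\lambda)$'' — with the contrapositive and the appeal to Proposition~\ref{prop_properties_of_g(lambda)}(\ref{prop_properties_of_g(lambda)_b}) spelled out explicitly. One slip in your surrounding commentary (which does not affect the argument itself): $\mathfrak p_u$ is \emph{not} contained in $\mathfrak h$ (only $\mathfrak h_u \subset \mathfrak p_u$ is, and the active $C$-roots exist precisely because this containment of $\mathfrak p_u$ in $\mathfrak h$ fails), and the subspaces $\mathfrak g(\lambda), \mathfrak g(\mu), \mathfrak g(\nu)$ for positive $C$-roots lie in $\mathfrak p_u^+$, not in $\mathfrak p_u$.
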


\begin{proof}
This follows readily from $[\mathfrak g(-\mu), \mathfrak g(-\nu)] = \mathfrak g(-\lambda)$.
\end{proof}

\begin{lemma} \label{lemma_mu+nu_Psi}
Suppose that $\lambda,\mu,\nu \in \Psi$ and $\lambda = \mu + \nu$. Then
\begin{enumerate}[label=\textup{(\alph*)},ref=\textup{\alph*}]
\item \label{lemma_mu+nu_Psi_a}
$\mu \sim \nu$;

\item \label{lemma_mu+nu_Psi_b}
as a $K$-module, $\mathfrak g(\lambda)$ is isomorphic to a submodule of $\wedge^2 \mathfrak g(\mu)$;

\item \label{lemma_mu+nu_Psi_c}
$\lambda \not \sim \mu$.
\end{enumerate}
\end{lemma}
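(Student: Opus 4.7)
The plan is to leverage the fact that $\mathrm{S}(\mathfrak{u})$ is a multiplicity-free $K$-module, which follows from sphericity of $\mathfrak{p}_u/\mathfrak{h}_u$ (Proposition~\ref{prop_S=K}) via the $K$-module isomorphism $\mathfrak{u} \simeq (\mathfrak{p}_u/\mathfrak{h}_u)^*$. The proof of each item will hinge on showing that no simple $K$-type can occur in two distinct graded pieces of $\mathrm{S}(\mathfrak{u})$.

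For part~(a), I would assume for contradiction that $\Omega_\mu \ne \Omega_\nu$, so that $\mathfrak{u}(\Omega_\mu)$ and $\mathfrak{u}(\Omega_\nu)$ are distinct $K$-submodules of $\mathfrak{u}$. Their product inside $\mathrm{S}^2(\mathfrak{u})$ is then $K$-isomorphic to $\mathfrak{u}(\Omega_\mu) \otimes \mathfrak{u}(\Omega_\nu) \simeq \mathfrak{g}(\mu) \otimes \mathfrak{g}(\nu)$. Proposition~\ref{prop_bracket}(\ref{prop_bracket_a}) provides an $L$-embedding, hence $K$-embedding, $\mathfrak{g}(\lambda) \hookrightarrow \mathfrak{g}(\mu) \otimes \mathfrak{g}(\nu)$. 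Thus any simple $K$-constituent of $\mathfrak{g}(\lambda)$ occurs both in $\mathrm{S}^2(\mathfrak{u})$ and in $\mathfrak{u}(\Omega_\lambda) \subset \mathfrak{u} = \mathrm{S}^1(\mathfrak{u})$, contradicting the multiplicity-free property of $\mathrm{S}(\mathfrak{u})$.

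For part~(b), with $\mu \sim \nu$ now established, I fix a $K$-isomorphism $\mathfrak{g}(\nu) \xrightarrow{\sim} \mathfrak{g}(\mu)$ and use it to recast the embedding from Proposition~\ref{prop_bracket}(\ref{prop_bracket_a}) as a $K$-embedding $\mathfrak{g}(\lambda) \hookrightarrow \mathfrak{g}(\mu) \otimes \mathfrak{g}(\mu) = \mathrm{S}^2\mathfrak{g}(\mu) \oplus \wedge^2\mathfrak{g}(\mu)$. The $K$-projection of $\mathfrak{g}(\lambda)$ onto the first summand must vanish: otherwise a simple $K$-constituent of $\mathfrak{g}(\lambda)$ would appear in $\mathrm{S}^2(\mathfrak{u}(\Omega_\mu)) \subset \mathrm{S}^2(\mathfrak{u})$ as well as in $\mathfrak{u}(\Omega_\lambda) \subset \mathrm{S}^1(\mathfrak{u})$, again contradicting multiplicity-freeness. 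Hence $\mathfrak{g}(\lambda)$ embeds as a $K$-submodule of $\wedge^2\mathfrak{g}(\mu)$.

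For part~(c), suppose $\lambda \sim \mu$; combining with~(b) yields a $K$-embedding $\mathfrak{g}(\mu) \hookrightarrow \wedge^2\mathfrak{g}(\mu)$. Now $\mathfrak{u}(\Omega_\mu) \simeq_K \mathfrak{g}(\mu)$ is a submodule of the spherical $K$-module $\mathfrak{u}$, hence a simple spherical $K$-module (the $K$-simplicity of $\mathfrak{g}(\mu)$ follows from the inclusion $L' \subset K$ together with Proposition~\ref{prop_properties_of_g(lambda)}(\ref{prop_properties_of_g(lambda)_a})). Proposition~\ref{prop_simple_SM_wedge1}, extended to disconnected $K$ via Remark~\ref{rem_disconnected} when needed, then forbids such an embedding and produces the desired contradiction. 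I do not expect any real obstacle: the entire argument is driven by simple $K$-type bookkeeping across the graded pieces of $\mathrm{S}(\mathfrak{u})$.
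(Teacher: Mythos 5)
Your proof is correct and follows essentially the same route as the paper's: part (a) via the multiplicity-freeness of $\mathrm S(\mathfrak u)$ applied to the copy of $\mathfrak g(\lambda)$ in $\mathfrak g(\mu)\otimes\mathfrak g(\nu)\subset\mathrm S^2(\mathfrak u)$ versus the copy in $\mathfrak u$, part (b) by killing the $\mathrm S^2\mathfrak g(\mu)$ component of $\mathfrak g(\mu)\otimes\mathfrak g(\mu)$ with the same multiplicity argument, and part (c) by reducing to the impossibility of $\mathfrak g(\mu)\hookrightarrow\wedge^2\mathfrak g(\mu)$ via Proposition~\ref{prop_simple_SM_wedge1}. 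The only (harmless) difference is that you explicitly flag the disconnectedness of $K$ and invoke Remark~\ref{rem_disconnected}, which the paper leaves implicit.
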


\begin{proof}
(\ref{lemma_mu+nu_Psi_a})
Assume that $\mu \not \sim \nu$.
Then it follows from Proposition~\ref{prop_bracket}(\ref{prop_bracket_a}) that, as a $K$-module, $\mathfrak g(\lambda)$ is isomorphic to a submodule of $\mathfrak g(\mu) \otimes \mathfrak g(\nu)$, hence of $\mathrm S^2(\mathfrak g(\mu) \oplus \mathfrak g(\nu))$, hence of $\mathrm S^2(\mathfrak u)$.
Besides, the $K$-module $\mathfrak u$ also contains a copy of~$\mathfrak g(\lambda)$.
Consequently, $\mathrm S(\mathfrak u)$ is not multiplicity free, a contradiction.

(\ref{lemma_mu+nu_Psi_b})
Applying Proposition~\ref{prop_bracket}(\ref{prop_bracket_a}) and  part~(\ref{lemma_mu+nu_Psi_a}) we find that $\mathfrak g(\lambda)$ is isomorphic to a submodule of $\mathfrak g(\mu) \otimes\nobreak \mathfrak g(\mu) \simeq \mathrm S^2 \mathfrak g(\mu) \oplus \wedge^2 \mathfrak g(\mu)$.
If $\mathfrak g(\lambda)$ were isomorphic to a submodule of $\mathrm S^2\mathfrak g(\mu)$ then $\mathrm S(\mathfrak u)$ would contain a copy of $\mathfrak g(\lambda)$ in~$\mathfrak u$ and another copy in $\mathrm S^2 (\mathfrak u)$, which is impossible as $\mathrm S(\mathfrak u)$ is multiplicity free.
Thus $\mathfrak g(\lambda)$ is isomorphic to a submodule of $\wedge^2 \mathfrak g(\mu)$.

(\ref{lemma_mu+nu_Psi_c})
Assume that $\lambda \sim \mu$.
By part~(\ref{lemma_mu+nu_Psi_a}) we also have $\mu \sim \nu$, hence there are $K$-module isomorphisms $\mathfrak g(\lambda) \simeq \mathfrak g(\mu) \simeq \mathfrak g(\nu)$.
Then part~(\ref{lemma_mu+nu_Psi_b}) implies that the $K$-module $\mathfrak g(\lambda)$ is isomorphic to a submodule of $\wedge^2 \mathfrak g(\lambda)$, which is impossible by Proposition~\ref{prop_simple_SM_wedge1}.
\end{proof}

\begin{proposition} \label{prop_crucial}
Suppose that $\lambda, \mu \in \Psi$ and $\lambda = \mu + \nu$ for some $\nu \in \Phi^+ \setminus \Psi$.
Then
\begin{enumerate}[label=\textup{(\alph*)},ref=\textup{\alph*}]
\item \label{prop_crucial_a}
$\Omega_\mu + \nu \subset \Omega_\lambda$;

\item \label{prop_crucial_b}
the subspace $\mathfrak u(\Omega_\mu) \subset \mathfrak p^+_u$ is uniquely determined by $\mathfrak u(\Omega_\lambda)$.
\end{enumerate}
\end{proposition}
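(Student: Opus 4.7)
The plan rests on two observations. First, $\mathfrak h^\perp$ is $\operatorname{ad}(\mathfrak h)$-stable, because $\mathfrak h$ is a Lie subalgebra and the chosen inner product on $\mathfrak g$ is $G$-invariant. Second, the hypothesis $\nu\in\Phi^+\setminus\Psi$ means exactly that $\mathfrak g(-\nu)\subset\mathfrak h$. Combined with $\mathfrak u(\Omega_\lambda)\subset\mathfrak u\subset\mathfrak h^\perp$, this implies that $V:=[\mathfrak g(-\nu),\mathfrak u(\Omega_\lambda)]$ is a $K$-stable subspace of $\mathfrak h^\perp$. Because the right-hand side of~(\ref{eqn_h^perp_refined}) is visibly $C$-stable, so is $\mathfrak h^\perp$, and the projection $\pi_+\colon\mathfrak h^\perp\to\mathfrak u$ onto the positive-$C$-weight part is a well-defined, $K$-equivariant map. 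I would then set $V^+:=\pi_+(V)\subset\mathfrak u$; by construction $V^+$ lies in $\bigoplus\mathfrak g(\lambda'-\nu)$, the sum ranging over $\lambda'\in\Omega_\lambda$ with $\lambda'-\nu\in\Phi^+$, and since $\mathfrak u$ is multiplicity-free as a $K$-module, $V^+$ decomposes as $\bigoplus_{\Omega\in I}\mathfrak u(\Omega)$ for some subset $I\subset\widetilde\Psi$.

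The crucial intermediate step is to verify $\Omega_\mu\in I$, i.e.\ $\mathfrak u(\Omega_\mu)\subset V^+$. Using that the projection $\mathfrak u(\Omega_\lambda)\to\mathfrak g(\lambda)$ is a $K$-isomorphism and that $[\mathfrak g(-\nu),\mathfrak g(\lambda)]=\mathfrak g(\mu)$ by Proposition~\ref{prop_properties_of_g(lambda)}(\ref{prop_properties_of_g(lambda)_b}), the projection of $V^+$ onto the $\mathfrak g(\mu)$-summand is surjective. Among the summands $\mathfrak u(\Omega)$ of $V^+$, only the one with $\mu\in\Omega$, which must be $\mathfrak u(\Omega_\mu)$, can have nonzero $\mathfrak g(\mu)$-projection, so the desired inclusion follows.

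Part~(\ref{prop_crucial_a}) will then follow by examining a $B_K$-highest weight vector $w_\mu\in\mathfrak u(\Omega_\mu)$. Its $C$-weight decomposition takes the form $w_\mu=\sum_{\mu'\in\Omega_\mu}d_{\mu'}e_{\widehat{\mu'}}$, and each coefficient $d_{\mu'}$ is nonzero because the projection $\mathfrak u(\Omega_\mu)\to\mathfrak g(\mu')$ is a $K$-isomorphism sending the highest weight vector to a nonzero multiple of~$e_{\widehat{\mu'}}$. On the other hand $w_\mu\in V^+$ forces every nonzero $C$-weight component of $w_\mu$ to sit at some $\lambda'-\nu$ with $\lambda'\in\Omega_\lambda$; matching the two decompositions yields $\mu'+\nu\in\Omega_\lambda$ for every $\mu'\in\Omega_\mu$, which is the asserted inclusion $\Omega_\mu+\nu\subset\Omega_\lambda$.

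Part~(\ref{prop_crucial_b}) then becomes immediate: $\mathfrak u(\Omega_\mu)$ is the unique $K$-submodule of $V^+$ isomorphic to $\mathfrak g(\mu)$ (equivalently, the $\mathfrak g(\mu)$-$K$-isotypic component of $V^+$), and since $V^+$ is obtained from $\mathfrak u(\Omega_\lambda)$ via operations depending only on the fixed data $G,L,K,\nu,\mu$, the subspace $\mathfrak u(\Omega_\mu)$ is recovered uniquely from $\mathfrak u(\Omega_\lambda)$. The main technical care I anticipate is in juggling the $C$-weight and $K$-module viewpoints: when $|\Omega_\mu|>1$, the subspace $\mathfrak u(\Omega_\mu)$ itself is not $C$-stable, so one must carefully distinguish the $C$-weight decomposition of $\mathfrak g$ from the $K$-isotypic decomposition of $\mathfrak u$, and it is precisely the passage between the two that makes both~(\ref{prop_crucial_a}) and~(\ref{prop_crucial_b}) nontrivial.
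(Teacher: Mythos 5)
Your argument is correct and is essentially the paper's own proof: both form the bracket $[\mathfrak g(-\nu),\mathfrak u(\Omega_\lambda)]$, use that $\mathfrak h^\perp$ is $(\ad\mathfrak h)$-stable and $\mathfrak g(-\nu)\subset\mathfrak h$ to land in $\mathfrak h^\perp$, project onto $\mathfrak u$ (the paper projects to $\mathfrak p^+$ along $\mathfrak p_u$, which amounts to the same thing since $\nu\notin\Omega_\lambda$ rules out a zero-weight component), and then invoke multiplicity-freeness of $\mathfrak u$ to identify the unique copy of $\mathfrak g(\mu)$ inside the image with $\mathfrak u(\Omega_\mu)$. Your weight-matching step for part (a) is just a more explicit spelling-out of what the paper leaves implicit in ``this implies both claims.''
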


\begin{proof}
As $\mathfrak g(-\nu) \subset \mathfrak h$ and $\mathfrak h^\perp$ is ($\ad \mathfrak h$)-stable, it follows that $[\mathfrak u(\Omega_\lambda),\mathfrak g(-\nu)] \subset \mathfrak h^\perp$.
Let $W$ denote the projection of $[\mathfrak u(\Omega_\lambda), \mathfrak g(-\nu)]$ to $\mathfrak p^+$ along $\mathfrak p_u$.
As $\mathfrak p_u \subset \mathfrak h^\perp$ and $\nu \notin \Omega_\lambda$, one has $W \subset \mathfrak h^\perp \cap \mathfrak p^+_u = \mathfrak u$; note that $W$ is $K$-stable.
Clearly, $W$ projects nontrivially onto $\mathfrak g(\mu)$, hence there is a $K$-submodule $W(\Omega_\mu) \subset W$ isomorphic to~$\mathfrak g(\mu)$.
As $\mathfrak u$ is multiplicity free, we conclude that $W(\Omega_\mu)$ is uniquely determined and necessarily coincides with $\mathfrak u(\Omega_\mu)$.
This implies both claims.
\end{proof}

\begin{proposition} \label{prop_nonroot}
Suppose that $\lambda, \mu \in \Psi$, $\lambda \ne \mu$, and $\lambda \sim \mu$. Then $\lambda - \mu \notin \Phi$.
\end{proposition}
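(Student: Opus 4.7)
The plan is to argue by contradiction: assume $\lambda - \mu \in \Phi$. Since $\Phi = -\Phi$ (as noted in \S\,\ref{subsec_Levi_roots}) and the hypotheses $\lambda \ne \mu$ and $\lambda \sim \mu$ are symmetric in $\lambda,\mu$, I may swap the two $C$-roots if necessary to assume $\nu := \lambda - \mu \in \Phi^+$. Thus $\lambda = \mu + \nu$ with $\mu, \nu \in \Phi^+$, and Lemma~\ref{lemma_sum} (applied with the roles of~$\mu$ and~$\nu$ interchangeable) forces a dichotomy based on whether $\nu$ is itself active.

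In Case~1, suppose $\nu \in \Psi$. Then all three elements $\lambda, \mu, \nu$ are in $\Psi$ and $\lambda = \mu + \nu$, so Lemma~\ref{lemma_mu+nu_Psi}(\ref{lemma_mu+nu_Psi_c}) gives $\lambda \not\sim \mu$, contradicting the hypothesis. In Case~2, suppose $\nu \in \Phi^+ \setminus \Psi$. Then Proposition~\ref{prop_crucial}(\ref{prop_crucial_a}) applies to the triple $(\lambda,\mu,\nu)$ and yields $\Omega_\mu + \nu \subset \Omega_\lambda$. But the hypothesis $\lambda \sim \mu$ means precisely that $\Omega_\lambda = \Omega_\mu$, so this inclusion reads $\Omega_\mu + \nu \subset \Omega_\mu$. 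Iterating this inclusion, the element $\mu + k\nu$ belongs to $\Omega_\mu$ for every $k \ge 0$; since $\nu \ne 0$ these elements are pairwise distinct, producing infinitely many elements in the finite set $\Omega_\mu \subset \Psi \subset \Phi$, a contradiction.

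I do not anticipate any serious obstacle: both cases close immediately from the two preceding results (Lemma~\ref{lemma_mu+nu_Psi}(\ref{lemma_mu+nu_Psi_c}) and Proposition~\ref{prop_crucial}(\ref{prop_crucial_a})). The only point worth being careful about is the very first reduction, namely that the symmetry $\Phi = -\Phi$ together with the symmetry of the hypothesis allows one to assume $\lambda - \mu \in \Phi^+$ without loss of generality; everything else is a direct application of the already-established machinery.
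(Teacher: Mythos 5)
Your argument is correct and is essentially the paper's own proof: reduce to $\nu = \lambda - \mu \in \Phi^+$ by swapping, rule out $\nu \in \Psi$ via Lemma~\ref{lemma_mu+nu_Psi}(\ref{lemma_mu+nu_Psi_c}), and then derive the impossible inclusion $\Omega_\mu + \nu \subset \Omega_\mu$ from Proposition~\ref{prop_crucial}(\ref{prop_crucial_a}). The only cosmetic differences are that your appeal to Lemma~\ref{lemma_sum} is superfluous (the dichotomy $\nu \in \Psi$ versus $\nu \notin \Psi$ needs no justification) and that you spell out why the inclusion is absurd, which the paper leaves implicit.
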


\begin{proof}
Assume that $\nu = \lambda - \mu \in \Phi$.
Interchanging $\lambda$ and $\mu$ if needed we may assume $\nu \in \Phi^+$.
Then $\nu \notin \Psi$ by Lemma~\ref{lemma_mu+nu_Psi}(\ref{lemma_mu+nu_Psi_c}).
But in this case Proposition~\ref{prop_crucial}(\ref{prop_crucial_a}) yields $\Omega_\mu + \nu \subset \Omega_\mu$, which is impossible.
\end{proof}

\begin{corollary}
Suppose that $\lambda, \mu \in \Psi$ and $\lambda \sim \mu$.
Then the angle between $\lambda$ and $\mu$ is non-acute.
\end{corollary}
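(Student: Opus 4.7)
The claim is that $(\lambda,\mu)\le 0$, and the plan is to deduce this as an immediate contrapositive from the preceding proposition together with Proposition~\ref{prop_pairs_of_C-roots}.

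More precisely, I will assume $\lambda\neq\mu$ (otherwise there is nothing to say, or the statement is read as trivially vacuous between a vector and itself) and argue by contradiction: suppose $(\lambda,\mu)>0$. Since $\lambda\neq\mu$, we have $\lambda-\mu\neq 0$, so Proposition~\ref{prop_pairs_of_C-roots}(\ref{prop_pairs_of_C-roots_b}) forces $\lambda-\mu\in\Phi$. But $\lambda,\mu\in\Psi$ with $\lambda\sim\mu$, and Proposition~\ref{prop_nonroot} asserts that in this situation $\lambda-\mu\notin\Phi$. This contradiction yields $(\lambda,\mu)\le 0$, i.e.\ the angle between $\lambda$ and $\mu$ is non-acute.

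There is essentially no obstacle here: the corollary is exactly a restatement of Proposition~\ref{prop_nonroot} combined with the standard root-system-style dichotomy encoded in Proposition~\ref{prop_pairs_of_C-roots}. All of the conceptual work has already been done in establishing Proposition~\ref{prop_nonroot} (which in turn relied on the key structural input Proposition~\ref{prop_crucial}(\ref{prop_crucial_a}) and the multiplicity-freeness of $\mathrm S(\mathfrak u)$); the corollary merely translates the non-existence of $\lambda-\mu$ as a $C$-root into a geometric statement about the pairing $(\cdot\,,\,\cdot)$ on $\QQ\mathfrak X(C)$.
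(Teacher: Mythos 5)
Your proof is correct and is essentially identical to the paper's: the paper also assumes $(\lambda,\mu)>0$, invokes Proposition~\ref{prop_pairs_of_C-roots}(\ref{prop_pairs_of_C-roots_b}) to get $\lambda-\mu\in\Phi$, and derives a contradiction with Proposition~\ref{prop_nonroot}. Your explicit remark about the case $\lambda=\mu$ (which Proposition~\ref{prop_nonroot} excludes) is a reasonable clarification that the paper leaves implicit.
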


\begin{proof}
If $(\lambda,\mu) > 0$ then Proposition~\ref{prop_pairs_of_C-roots}(\ref{prop_pairs_of_C-roots_b}) implies $\lambda - \mu \in \Phi$, which contradicts Proposition~\ref{prop_nonroot}.
Thus $(\lambda, \mu) \le 0$ as required.
\end{proof}

\begin{proposition} \label{prop_tough}
Suppose that $\lambda, \mu, \nu \in \Psi$, $\lambda = \mu + \nu$, and $\Omega_\mu \ne \lbrace \mu \rbrace$.
Then the following assertions hold.
\begin{enumerate}[label=\textup{(\alph*)},ref=\textup{\alph*}]
\item \label{prop_tough_a}
$\Omega_\mu = \lbrace \mu_1,\mu_2 \rbrace$ for two distinct elements $\mu_1,\mu_2$ \textup($\mu$ and $\nu$ are among these\textup) such that $\Omega_\lambda \supset \lbrace 2\mu_1, \mu_1 + \mu_2 \rbrace \ni \lambda$ and $2\mu_2 \notin \Phi$.
In particular, $\Omega_\mu + \mu_1 \subset \Omega_\lambda$.

\item \label{prop_tough_b}
The pair $(L', \mathfrak u(\Omega_\mu))$ is equivalent to $(\SL_2, \CC^2)$ and $\mathfrak u(\Omega_\lambda)$ is a trivial one-di\-men\-sion\-al $L'$-module.

\item \label{prop_tough_c}
The subspace $\mathfrak u(\Omega_\mu) \subset \mathfrak p^+_u$ is uniquely determined by~$\mathfrak u(\Omega_\lambda)$.
\end{enumerate}
\end{proposition}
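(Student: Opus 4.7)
My plan is to reduce the structural statement to the classification of pairs of spherical modules given by Proposition~\ref{prop_simple_SM_wedge2}, and then to refine the choice using the Lie algebra structure of $\mathfrak{g}$ together with the hypothesis $\Omega_\mu \ne \lbrace \mu \rbrace$. Setting $V = \mathfrak{u}(\Omega_\mu)$ and $W = \mathfrak{u}(\Omega_\lambda)$, Lemma~\ref{lemma_mu+nu_Psi}(\ref{lemma_mu+nu_Psi_a},\ref{lemma_mu+nu_Psi_b}) gives $\lbrace \mu,\nu \rbrace \subseteq \Omega_\mu$ and realises $\mathfrak{g}(\lambda)$ as a $K$-submodule of $\wedge^2 \mathfrak{g}(\mu)$; transporting via the projection isomorphisms of~(\ref{eqn_u}) yields an embedding $W \hookrightarrow \wedge^2 V$ of $K$-modules. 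Since $V \oplus W$ is a submodule of the spherical $K$-module $\mathfrak u$ and $K' = L'$, Proposition~\ref{prop_simple_SM_wedge2} applied with $G = K$ leaves two alternatives for $(L',V,W)$: (I) $(\SL_n,\CC^n,\wedge^2 \CC^n)$, or (II) $(\Sp_{2n},\CC^{2n},\CC)$ with $n \ge 2$.

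The hard part will be to eliminate everything except case~(I) with $n=2$, that is, the desired configuration $(\SL_2,\CC^2,\CC)$. I intend to proceed by highest-weight comparison: the highest $T$-weight of $\wedge^2 \mathfrak{g}(\mu)$ is $2\widehat{\mu} - \alpha$ for a specific simple root $\alpha \in \Pi_L$ of the active factor, and this must coincide with the highest $T$-weight of $\mathfrak{g}(\lambda)$ up to $(T\cap K)$-equivalence. In case~(I) with $n \ge 3$, combining this with the surjection $\mathfrak g(\mu)\otimes\mathfrak g(\nu)\to\mathfrak g(\lambda)$ of Proposition~\ref{prop_properties_of_g(lambda)}(\ref{prop_properties_of_g(lambda)_b}) forces $\lambda = 2\mu$, whence $\nu = \mu$; the hypothesis $\Omega_\mu \ne \lbrace \mu \rbrace$ then supplies $\mu' \in \Omega_\mu \setminus \lbrace \mu \rbrace$, and I would analyse the three alternatives for $\mu + \mu'$. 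If $\mu+\mu'\in\Psi$, rerunning the argument on the triple $(\mu+\mu',\mu,\mu')$ forces $\mu'=\mu$, a contradiction; if $\mu+\mu'\in\Phi^+\setminus\Psi$, an application of Proposition~\ref{prop_crucial} to appropriate auxiliary elements of $\Psi$ duplicates a $K$-isotype inside $\mathfrak u$, contradicting multiplicity-freeness of $\mathrm S(\mathfrak u)$; if $\mu+\mu'\notin\Phi$, Proposition~\ref{prop_pairs_of_C-roots}(\ref{prop_pairs_of_C-roots_a}) together with the non-acuteness corollary following Proposition~\ref{prop_nonroot} yields $(\mu,\mu')=0$, and a direct examination of the bracket $\wedge^2 \mathfrak u(\Omega_\mu)\to\mathfrak p_u^+$ then produces a second simple $K$-submodule of $\mathfrak u$ of the same isotype as $W$, again breaking multiplicity-freeness. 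Case~(II) is excluded by the parallel observation that the sole trivial summand of $\wedge^2 \CC^{2n}$ cannot accommodate a second element of $\Omega_\mu$ without duplicating isotypes in $\mathrm S(\mathfrak u)$. This proves~(b).

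In the remaining case $(\SL_2,\CC^2,\CC)$, each $\mathfrak g(\eta)$ with $\eta\in\Omega_\mu$ has exactly two $T$-weights $\widehat{\eta}$ and $\widehat{\eta}-\alpha$, where $\alpha \in \Pi_L$ is the simple root of the $\SL_2$-factor; a third element of $\Omega_\mu$ would again duplicate isotypes in $\mathrm S(\mathfrak u)$, forcing $|\Omega_\mu|=2$. Writing $\Omega_\mu = \lbrace \mu_1,\mu_2 \rbrace$ with the labelling chosen so that $2\mu_1 \in \Phi$ (at least one of the two must satisfy this because $\mathfrak g(\lambda)$, being the $L'$-trivial summand of $\wedge^2 V$, arises as the image of some bracket from $\mathfrak u(\Omega_\mu)\times\mathfrak u(\Omega_\mu)$), the inclusions $\lambda \in \lbrace 2\mu_1,\mu_1+\mu_2\rbrace \subseteq \Omega_\lambda$ follow from Proposition~\ref{prop_properties_of_g(lambda)}(\ref{prop_properties_of_g(lambda)_b}) together with matching of the trivial $L'$-type, while $2\mu_2 \notin \Phi$ because a second trivial $L'$-summand of $\mathfrak u$ would again violate multiplicity-freeness. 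This establishes~(a). Part~(c) follows in the style of Proposition~\ref{prop_crucial}(\ref{prop_crucial_b}): $\mathfrak u(\Omega_\mu)$ is recovered as the unique $K$-submodule of $\mathfrak u$ in the isotype of $V$, realised as the projection of $[\mathfrak u(\Omega_\lambda),\mathfrak g(-\mu_1)]$ onto $\mathfrak p_u^+$, and multiplicity-freeness of $\mathrm S(\mathfrak u)$ then guarantees that this determines $\mathfrak u(\Omega_\mu)$ uniquely from $\mathfrak u(\Omega_\lambda)$.
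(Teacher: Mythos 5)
Your opening reduction --- embedding $\mathfrak u(\Omega_\lambda)$ into $\wedge^2 \mathfrak u(\Omega_\mu)$ via Lemma~\ref{lemma_mu+nu_Psi} and invoking Proposition~\ref{prop_simple_SM_wedge2} to leave only the $\SL_n$ and $\Sp_{2n}$ alternatives --- is exactly the paper's first move. But the core of your case elimination does not hold up. The pivotal claim that a highest-weight comparison ``forces $\lambda=2\mu$, whence $\nu=\mu$'' is false: $\mu\sim\nu$ means precisely that $\widehat\mu$ and $\widehat\nu$ have the same restriction to $T\cap K$, so no comparison of $(T\cap K)$-weights can distinguish $2\widehat\mu-\alpha$ from $\widehat\mu+\widehat\nu-\alpha$. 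Consequently the genuinely hard configuration --- $\mu\ne\nu$ with $2\mu_i\notin\Psi$ for every $i$ (the paper's Subcase~2.1) --- is silently skipped; the paper disposes of it only by an explicit bracket computation with highest-weight vectors $z_1=x_1+ay_1$, $z_2=[e_{-\alpha_1},z_1]$, showing $0\ne a([x_1,y_2]-[x_2,y_1])\in\mathfrak g(-\lambda)\cap\mathfrak h$ and hence $\lambda\notin\Psi$, a contradiction. Your other eliminations rest on ``duplicating a $K$-isotype in $\mathrm S(\mathfrak u)$'', but enlarging a single equivalence class $\Omega_\mu$ does not create multiplicity: all of $\Omega_\mu$ contributes one diagonal copy $\mathfrak u(\Omega_\mu)$ to $\mathfrak u$, whatever $|\Omega_\mu|$ is. So neither the bound $|\Omega_\mu|=2$ nor the exclusion of $\Sp_{2n}$ follows this way. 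The paper obtains both from the Dynkin diagram of $E=\lbrace\beta_1,\dots,\beta_k,\alpha_1,\dots,\alpha_l\rbrace$ (the lowest weights of the $\mathfrak g(\mu_i)$ together with the simple roots of the active factor): in the $\Sp_{2n}$ case the node $\alpha_1$ would acquire degree $k+1\ge 3$ in a connected diagram already containing a double edge, and in the $\SL_n$ case the presence of some $2\mu_1\in\Psi$ forces $2\beta_1+\alpha_1\in\Delta^+$, hence a $\mathsf B_2$-configuration at $\lbrace\beta_1,\alpha_1\rbrace$ and therefore $k=n=2$.

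Two further gaps. In part~(\ref{prop_tough_a}) you assert $\mu_1+\mu_2\in\Omega_\lambda$ from ``matching of the trivial $L'$-type'', but membership in $\Psi$ is the statement $\mathfrak g(-\mu_1-\mu_2)\not\subset\mathfrak h$, which does not follow from $\mathfrak g(\mu_1+\mu_2)$ being nonzero of the right isotype; the paper proves it by showing that $\mu_1+\mu_2\notin\Psi$ would give $[x_1,x_2]=[z_1,z_2]-a[x_1,y_2]-a[y_1,x_2]\in\mathfrak h$ and hence $\mathfrak g(-2\mu_1)\subset\mathfrak h$, contradicting $2\mu_1\in\Psi$. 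For part~(\ref{prop_tough_c}), your proposed recovery of $\mathfrak u(\Omega_\mu)$ as the projection of $[\mathfrak u(\Omega_\lambda),\mathfrak g(-\mu_1)]$ is not available: the mechanism of Proposition~\ref{prop_crucial}(\ref{prop_crucial_b}) requires $\mathfrak g(-\nu)\subset\mathfrak h$ so that the bracket stays inside the $(\ad\mathfrak h)$-stable subspace $\mathfrak h^\perp$, whereas here $\mu_1$ is active, so $\mathfrak g(-\mu_1)\not\subset\mathfrak h$ and the bracket need not land in $\mathfrak h^\perp$ at all. The paper instead observes that $[W,W]=(\mathfrak g(-2\mu_1)\oplus\mathfrak g(-\mu_1-\mu_2))\cap\mathfrak h$ is one-dimensional and pins down the gluing parameter $a$, hence $W$, hence $\mathfrak u(\Omega_\mu)$.
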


\begin{proof}
Lemma~\ref{lemma_mu+nu_Psi}(\ref{lemma_mu+nu_Psi_a}) yields $\mu \sim \nu$, thus $\nu \in \Omega_\mu$.
Put $k = |\Omega_\mu|$; the hypothesis implies $k \ge 2$.
Let $\mu_1, \mu_2,\ldots, \mu_k$ be all the distinct elements of~$\Omega_\mu$ ($\mu$ and $\nu$ are among these).
Let $L_0$ denote the product of simple factors of $L'$ that act nontrivially on $\mathfrak u(\Omega_\mu)$.
Let also $\alpha_1,\ldots,\alpha_l \in \Pi$ be the simple roots of~$L_0$.
For each $i = 1,\ldots,k$, let $\beta_i \in \Delta^+$ be the lowest weight of $\mathfrak g(\mu_i)$ regarded as an $L$-module.
Then $(\beta_i, \alpha_j) \le 0$ for all $i = 1,\ldots,k$ and $j = 1,\ldots, l$.
If $\beta_i - \beta_j \in \Delta$ for some $i \ne j$ then $\mu_i - \mu_j = \overline \beta_i - \overline\beta_j \in \Phi$, which is impossible by Proposition~\ref{prop_nonroot}.
Thus $\beta_i - \beta_j \notin \Delta$ and so $(\beta_i,\beta_j) \le 0$ for all $i \ne j$.
Consequently, the angles between the roots in the set $E = \lbrace \beta_1,\ldots, \beta_k,\alpha_1,\ldots, \alpha_l \rbrace$ are pairwise non-acute.
Since $E \subset \Delta^+$, it follows that the roots in $E$ are linearly independent and hence they form a system of simple roots of some root system.
Let $\mathrm{DD}(E)$ denote the Dynkin diagram of the set~$E$.

By Lemma~\ref{lemma_mu+nu_Psi}(\ref{lemma_mu+nu_Psi_b}) the $K$-module $\mathfrak g(\lambda)$ is isomorphic to a submodule of $\wedge^2 \mathfrak g(\mu)$.
As $\lambda \not\sim \mu$ by Lemma~\ref{lemma_mu+nu_Psi}(\ref{lemma_mu+nu_Psi_c}), the $K$-module $\mathfrak g(\mu) \oplus \mathfrak g(\lambda)$ is isomorphic to a submodule of~$\mathfrak u$.
Then Proposition~\ref{prop_simple_SM_wedge2} leaves only the two cases considered below.

\textit{Case}~1: $L_0 \simeq \Sp_{2n}$ ($n \ge 2$) and there are $L_0$-module isomorphisms $\mathfrak g(\mu) \simeq \CC^{2n}$ and $\mathfrak g(\lambda) \simeq \CC^1$.
Then $l = n$ and we let $\alpha_1,\ldots,\alpha_n$ be the standard numbering of the simple roots of~$\Sp_{2n}$.
Clearly, $(\beta_i, \alpha_1) < 0$ and $(\beta_i, \alpha_j) = 0$ for all $i=1,\ldots,k$ and $j = 2,\ldots,n$, hence the diagram $\mathrm{DD}(E)$ has the following two properties:
\begin{itemize}
\item
the nodes corresponding to $\alpha_{n-1},\alpha_n$ are joined by a double edge;

\item
the node corresponding to $\alpha_1$ is joined by an edge with $k+1$ nodes corresponding to $\beta_1,\ldots, \beta_k,\alpha_2$.
\end{itemize}
As $k+1\ge 3$, these two conditions cannot hold simultaneously.

\textit{Case}~2: $L_0 \simeq \SL_n$ ($n \ge 2$) and there are $L_0$-module isomorphisms $\mathfrak g(\mu) \simeq \CC^n$ and $\mathfrak g(\lambda) \simeq \wedge^2 \CC^n$.
Then $l = n-1$ and we let $\alpha_1,\ldots, \alpha_{n-1}$ be the standard numbering of the simple roots of~$\SL_n$.
We may also assume that $(\beta_i, \alpha_1) < 0$ and $(\beta_i, \alpha_j) = 0$ for all $i = 1,\ldots, k$ and $j = 2,\ldots, n-1$.
Note that in the diagram $\mathrm{DD}(E)$ the node corresponding to $\alpha_1$ is joined by an edge with the nodes corresponding to $\beta_1,\ldots, \beta_k$ and also~$\alpha_2$ (when $n \ge 3$).
Further we consider two subcases.

\textit{Subcase}~2.1: $2\mu_i \notin \Psi$ for all $i = 1,\ldots, k$.
Then one automatically has $\mu \ne \nu$ and $\mathfrak g(-2\mu_i) \subset \mathfrak h$ for all $i = 1,\ldots, k$.

Taking into account Proposition~\ref{prop_properties_of_g(lambda)}(\ref{prop_properties_of_g(lambda)_c}) we find that there are $L_0$-module isomorphisms $\mathfrak g(-\mu) \simeq \mathfrak g(-\nu) \simeq (\CC^n)^*$ and $\mathfrak g(-\lambda) \simeq \wedge^2 \mathfrak g(-\nu)$.
Choose a highest-weight vector $x_1 \in \mathfrak g(-\mu)$ and put $y_1 = [e_{-\alpha_1}, x_1]$, so that $y_1$ is a nonzero vector in $\mathfrak g(-\mu)$ not proportional to~$x_1$.
Fix an $L_0$-module isomorphism $\varphi \colon \mathfrak g(-\mu) \xrightarrow{\sim} \mathfrak g(-\nu)$ and put $x_2 = \varphi(x_1)$, $y_2 = \varphi(y_1)$.

Then the natural map $\mathfrak g(-\mu) \times \mathfrak g(-\nu) \to \mathfrak g(-\lambda)$, $(x,y) \mapsto [x,y]$, induces a chain of $L_0$-module homomorphisms
\[
\mathfrak g(-\mu) \otimes \mathfrak g(-\nu) \xrightarrow{\varphi \otimes \operatorname{id}} \mathfrak g(-\nu) \otimes \mathfrak g(-\nu) \to \wedge^2 \mathfrak g(-\nu) \xrightarrow{\psi} \mathfrak g(-\lambda)
\]
where the middle arrow is the natural projection and $\psi$ is an isomorphism.
It is easy to see that the image of the element $x_1\otimes y_2 - y_1 \otimes x_2 \in \mathfrak g(-\mu) \otimes \mathfrak g(-\nu)$ under the first two maps equals $x_2 \otimes y_2 - y_2\otimes x_2 \ne 0$, hence $[x_1,y_2]-[y_1,x_2] \ne 0$.

As $\mu, \nu \in \Psi$ and $\mu \sim \nu$, the $K$-module $W = (\mathfrak g(-\mu) \oplus \mathfrak g(-\nu))\cap \mathfrak h$ projects isomorphically to both $\mathfrak g(-\mu)$ and $\mathfrak g(-\nu)$, hence $W$ has a highest-weight vector of the form $x_0 = x_1 + ax_2$ for some $a \in \CC^\times$.
Then $y_0 = [e_{-\alpha_1},x_0] = y_1 + ay_2 \in W$.
Since $[W,W], \mathfrak g(-2\mu), \mathfrak g(-2\nu) \subset \mathfrak h$, it follows that $0 \ne a([x_1,y_2]-[y_1,x_2]) = [x_0,y_0] - [x_1,y_1] - a^2[x_2, y_2] \in \mathfrak g(-\lambda) \cap \mathfrak h$.
As $\mathfrak g(-\lambda)$ is simple as a $K$-module, we have $\mathfrak g(-\lambda) \subset \mathfrak h$ and $\lambda \notin \Psi$, a contradiction.

\textit{Subcase}~2.2: there exists $i \in \lbrace 1,\ldots, k \rbrace$ such that $2\mu_i \in \Psi$.
Renumbering the elements $\mu_1,\ldots, \mu_k$ we may assume $2\mu_1 \in \Psi$.

Consider the $L$-equivariant surjective map $\pi \colon \wedge^2 \mathfrak g(\mu_1) \to \mathfrak g(2\mu_1)$.
As $L_0$-modules, we have $\mathfrak g(\mu_1) \simeq \CC^n$ and therefore $\wedge^2 \mathfrak g(\mu_1) \simeq \wedge^2 \CC^n$, hence $\wedge^2 \mathfrak g(\mu_1)$ is simple and $\pi$ is an $L$-module isomorphism.
Clearly, the set of $T$-weights of $\mathfrak g(\mu_1)$ is
\begin{equation} \label{eqn_T-weights}
\lbrace \beta_1, \beta_1 + \alpha_1, \beta_1 + \alpha_1 + \alpha_{2}, \ldots, \beta_1 + \alpha_1 + \ldots + \alpha_{n-1} \rbrace.
\end{equation}
It follows that the set of $T$-weights of $\mathfrak g(2\mu_1)$ consists of all sums $\omega_1 + \omega_2$ with $\omega_1,\omega_2$ being distinct elements of the set~(\ref{eqn_T-weights}).
In particular, one of these weights is $2\beta_1+\alpha_1$, which implies $2\beta_1 + \alpha_1 \in \Delta^+$.
As $|E| \ge 3$, the roots $\beta_1$ and $\alpha_1$ cannot generate a root subsystem of $\Delta$ of type~$\mathsf G_2$, hence the only possibility is that they generate a root subsystem of type~$\mathsf B_2$ with $\beta_1$ being short and $\alpha_1$ being long.
Consequently, in the diagram $\mathrm{DD}(E)$ the nodes corresponding to $\beta_1$ and $\alpha_1$ are joined by a double edge with the arrow directed to~$\beta_1$.
In view of the classification of connected Dynkin diagrams, the latter immediately implies $k = n = 2$.

If $\mathfrak g(2\mu_2) \ne 0$ then repeating the above argument for $\mathfrak g(\mu_2)$ and $\mathfrak g(2\mu_2)$ would yield another double edge in the diagram $\mathrm{DD}(E)$, which is impossible.
Thus $\mathfrak g(2\mu_2) = 0$ and $2\mu_2 \notin \Phi$.

For $i=1,2$ fix a highest-weight vector $x_i \in \mathfrak g(-\mu_i)$ and put $y_i = [e_{-\alpha_1}, x_i]$.
Then $\mathfrak g(-2\mu_1)$ is spanned by $[x_1,y_1]$ and $2\mu_2 \notin \Phi$ implies $[x_2,y_2]=0$.
Consider the $K$-module $W = (\mathfrak g(-\mu_1) \oplus \mathfrak g(-\mu_2))\cap \mathfrak h$.
Then $W$ projects isomorphically to both $\mathfrak g(-\mu_1)$ and $\mathfrak g(-\mu_2)$, hence $W$ has a highest-weight vector of the form $x_0 = x_1 + ax_2$ for some $a \in \CC^\times$, and $y_0 = y_1 + ay_2 \in W$.

Now suppose $\mu_1 + \mu_2 \notin \Psi$.
Then $[x_1,y_2],[y_1,x_2] \in \mathfrak g(-\mu_1-\mu_2) \subset \mathfrak h$.
As $[W,W] \subset \mathfrak h$, we get $[x_1,y_1] = [x_0,y_0] - a[x_1,y_2] - a[x_2,y_1] \in \mathfrak h$ and hence $\mathfrak g(-2\mu_1) \subset \mathfrak h$, a contradiction.
Thus $\mu_1 + \mu_2 \in \Psi$.
Applying Lemma~\ref{lemma_mu+nu_Psi}(\ref{lemma_mu+nu_Psi_b}) we find that both $\mathfrak g(2\mu_1), \mathfrak g(\mu_1 + \mu_2)$ are isomorphic to $\wedge^2 \mathfrak g(\mu)$ as $K$-modules, which yields $2\mu_1 \sim \mu_1 + \mu_2$ and thus completes the proof of parts~(\ref{prop_tough_a}) and~(\ref{prop_tough_b}).

Arguing similarly to Subcase~2.1, we find that the element $[x_1,y_2] - [y_1,x_2]$ is nonzero and hence spans $\mathfrak g(-\mu_1 - \mu_2)$.
Next, observe that $[W,W]$ is one-dimensional and spanned by $[x_0,y_0] = [x_1,y_1] + a([x_1,y_2]-[y_1,x_2])$, which implies $[W,W] = (\mathfrak g(-2\mu_1) \oplus \mathfrak g(-\mu_1-\mu_2)) \cap \mathfrak h$.
Thus the latter subspace uniquely determines the value of~$a$, which in turn uniquely determines~$W$.
Since for every $\Omega \in \widetilde \Psi$ the subspaces $\mathfrak u(\Omega)$ and $(\bigoplus \limits_{\rho \in \Omega} \mathfrak g(-\rho)) \cap \mathfrak h$ of~$\mathfrak g$ uniquely determine each other, we conclude that $\mathfrak u(\Omega_\mu)$ is uniquely determined by $\mathfrak u(\Omega_\lambda)$ as required in part~(\ref{prop_tough_c}).
\end{proof}

The following example shows that the situation described in Proposition~\ref{prop_tough} does occur.

\begin{example} \label{crl_ge2_not_active}
Consider the group $G = \SO_7$ preserving the symmetric bilinear form on~$\CC^7$ whose matrix has ones on the antidiagonal and zeros elsewhere.
Then the Lie algebra~$\mathfrak g$ consists of all $(7\times7)$-matrices that are skew-symmetric with respect to the antidiagonal.
We choose $B,B^-,T$ to be the subgroup of all upper triangular, lower triangular, diagonal matrices, respectively, contained in~$G$.
Then $\Pi = \lbrace \alpha_1, \alpha_2, \alpha_3 \rbrace$ where $\alpha_1(t)=t_1t_{2}^{-1}$, $\alpha_2(t)=t_2t_{3}^{-1}$, $\alpha_3(t) = t_3$ for all $t = \diag(t_1,t_2,t_3,1,t_3^{-1},t_2^{-1},t_1^{-1}) \in T$.
We consider a connected subgroup $H \subset G$ regularly embedded in a parabolic subgroup $P \supset B^-$ such that the Lie algebras $\mathfrak h$ and $\mathfrak p$ consist of all matrices in $\mathfrak g$ having the form
\[
\begin{pmatrix}
x+y & 0 & 0 & 0 & 0 & 0 & 0\\
a & x & * & 0 & 0 & 0 & 0\\
b & * & y & 0 & 0 & 0 & 0\\
2c & -b & a & 0 & 0 & 0 & 0\\
* & c & 0 & -a & -y & * & 0\\
* & 0 & -c & b & * & -x & 0\\
0 & * & * & -2c & -b & -a & -x-y\\
\end{pmatrix}
\ \text{and} \
\begin{pmatrix}
* & 0 & 0 & 0 & 0 & 0 & 0\\
* & * & * & 0 & 0 & 0 & 0\\
* & * & * & 0 & 0 & 0 & 0\\
* & * & * & 0 & 0 & 0 & 0\\
* & * & 0 & * & * & * & 0\\
* & 0 & * & * & * & * & 0\\
0 & * & * & * & * & * & *\\
\end{pmatrix},
\]
respectively.
Then
$L$ consists of all block-diagonal matrices with the sequence of blocks $(t, A, 1, (A^\natural)^{-1},t^{-1})$ where $t \in \CC^\times$, $A \in \GL_2$, and $A^\natural$ stands for the transpose of $A$ with respect to the antidiagonal;
$K$ consists of all matrices in $L$ satisfying $t = \det A$;
and $C$ consists of all diagonal matrices of the form $\diag(t_1,t_2,t_2,1,t_2^{-1}, t_2^{-1},t_1^{-1})$.
The pair $(K, \mathfrak p_u/\mathfrak h_u)$ is equivalent to $(\GL_2, \CC^2 \oplus \CC^1)$ with the action of $\GL_2$ given by $(A, (x,y)) \mapsto ((\det A)^{-1}Ax, (\det A)^{-1}y)$.
As the latter module is spherical, we find that $H$ is spherical in $G$ by Proposition~\ref{prop_S=K}.
One has $\Psi = \lbrace \mu_1,\mu_2, \lambda_1, \lambda_2 \rbrace$ with $\widehat \mu_1 = \alpha_2 + \alpha_3$, $\widehat \mu_2 = \alpha_1 + \alpha_2$, $\widehat \lambda_1 = \alpha_2 + 2\alpha_3$, and $\widehat \lambda_2 = \alpha_1+ \alpha_2 + \alpha_3$; the set $\widetilde \Psi$ consists of two classes $\Omega_1 = \lbrace \mu_1, \mu_2 \rbrace$ and $\Omega_2 = \lbrace \lambda_1, \lambda_2 \rbrace$.
Note that $\lambda_1 = 2\mu_1$ and $\lambda_2 = \mu_1+ \mu_2$, so that $\Omega_2 = \Omega_1 + \mu_1$.
Finally, the pair $(L', \mathfrak u(\Omega_1))$ is equivalent to $(\SL_2, \CC^2)$ and $\mathfrak u(\Omega_2)$ is a trivial one-dimensional $L'$-module.

\end{example}

We now introduce the following notation:
\begin{gather*}
\Psi_0 = \lbrace \lambda \in \Psi \mid |\Omega_\lambda| \ge 2 \rbrace, \\
\Psi_0^{\max} = \lbrace \lambda \in \Psi_0 \mid \nexists\, \mu \in \Phi^+ \ \text{with} \ \lambda + \mu \in \Psi \rbrace,
\\
\widetilde \Psi_0 = \lbrace \Omega \in \widetilde \Psi \mid \Omega \subset \Psi_0 \rbrace,
\\
\widetilde \Psi_0^{\max} = \lbrace \Omega \in \widetilde \Psi \mid \Omega \subset \Psi_0^{\max} \rbrace.
\end{gather*}

The following observations are implied by the above definitions along with Propositions~\ref{prop_crucial}(\ref{prop_crucial_a}) and~\ref{prop_tough}(\ref{prop_tough_a}).

\begin{remark} \label{rem_important}
~
\begin{enumerate}[label=\textup{(\alph*)},ref=\textup{\alph*}]
\item \label{rem_important_a}
One has $\Psi_0^{\max} = \bigcup \limits_{\Omega \in \widetilde \Psi_0^{\max}} \Omega$.

\item \label{rem_important_b}
For every $\Omega \in \widetilde \Psi_0 \setminus \widetilde \Psi_0^{\max}$ there exist $\mu_1, \ldots, \mu_m \in \Phi^+$ and $\Omega_1, \ldots, \Omega_m \in \widetilde \Psi_0$ such that $\Omega + \mu_1 + \ldots + \mu_j \subset \Omega_j$ for all $j=1,\ldots, m$ and $\Omega_m \in \widetilde \Psi_0^{\max}$.

\item \label{rem_important_c}
As a consequence of~(\ref{rem_important_b}), for every $\Omega \in \widetilde \Psi_0$ there exist $\theta \in \ZZ^+ \Phi^+$ and $\Omega' \in \widetilde \Psi_0^{\max}$ such that $\Omega + \theta \subset \Omega'$.
\end{enumerate}
\end{remark}

\begin{lemma} \label{lemma_Psi0max_non-acute}
The angles between the $C$-roots in $\Psi_0^{\max}$ are pairwise non-acute, and these $C$-roots are linearly independent.
\end{lemma}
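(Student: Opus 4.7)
The approach is to handle the two assertions separately. For the non-acute angle property, I would proceed by contradiction. Suppose $\lambda, \mu \in \Psi_0^{\max}$ are distinct and $(\lambda, \mu) > 0$. Since $\lambda - \mu \ne 0$, Proposition~\ref{prop_pairs_of_C-roots}(\ref{prop_pairs_of_C-roots_b}) forces $\lambda - \mu \in \Phi$. As $\Phi = \Phi^+ \cup (-\Phi^+)$, at least one of $\lambda - \mu$ and $\mu - \lambda$ lies in $\Phi^+$, and by symmetry between $\lambda$ and $\mu$ we may assume $\nu := \lambda - \mu \in \Phi^+$. Then $\lambda = \mu + \nu$ with $\nu \in \Phi^+$ and $\lambda \in \Psi$, directly contradicting the defining property of $\mu \in \Psi_0^{\max}$ that there is no $\nu' \in \Phi^+$ with $\mu + \nu' \in \Psi$. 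Hence $(\lambda, \mu) \le 0$.

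For linear independence, I would invoke the standard principle that vectors in a Euclidean space having pairwise non-positive inner products and all lying in a common open half-space are linearly independent. Every element of $\Phi^+$ has the form $\varepsilon(\beta)$ with $\beta \in \Delta^+ \setminus \Delta_L$, hence is a $\ZZ^+$-linear combination of the elements of $\varepsilon(\Pi \setminus \Pi_L)$ with at least one strictly positive coefficient, so there is a $\QQ$-linear functional $\ell$ on $\QQ\mathfrak X(C)$ that is strictly positive on all of $\Phi^+$. Given a hypothetical relation $\sum_i c_i \lambda_i = 0$ among distinct $\lambda_i \in \Psi_0^{\max}$, set $v = \sum_{c_i > 0} c_i \lambda_i = \sum_{c_j < 0} (-c_j)\lambda_j$. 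Then $(v, v)$ expands as a sum of products $c_i(-c_j)(\lambda_i, \lambda_j)$ with $c_i, -c_j > 0$ and $(\lambda_i, \lambda_j) \le 0$ by part~(i), so $(v, v) \le 0$; positive-definiteness of the inner product yields $v = 0$. Applying $\ell$ then forces the set of indices with positive $c_i$ (and symmetrically with negative $c_i$) to be empty, completing the proof.

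The essence of the lemma lies in part~(i): once one sees that Proposition~\ref{prop_pairs_of_C-roots}(\ref{prop_pairs_of_C-roots_b}) produces exactly the configuration ruled out by the definition of $\Psi_0^{\max}$, the proof is immediate. Neither the structural Proposition~\ref{prop_crucial} nor Proposition~\ref{prop_tough} is needed for this lemma, so there is no real obstacle; the only subtlety is recognizing that the maximality condition on $\Psi_0^{\max}$ is tailor-made to exclude the sole dangerous outcome of Proposition~\ref{prop_pairs_of_C-roots}.
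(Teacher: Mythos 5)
Your proposal is correct and follows essentially the same route as the paper: Proposition~\ref{prop_pairs_of_C-roots}(\ref{prop_pairs_of_C-roots_b}) yields $\lambda-\mu\in\Phi$, which contradicts the maximality defining $\Psi_0^{\max}$, and linear independence then follows from the standard fact about pairwise non-acutely angled vectors lying in an open half-space. The paper merely states these two steps more tersely, while you spell out the half-space functional and the positive-definiteness argument explicitly.
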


\begin{proof}
Assume that $(\lambda, \mu) > 0$ for two distinct elements $\lambda,\mu \in \Psi_0^{\max}$.
Then $\lambda - \mu \in \Phi$ by Proposition~\ref{prop_pairs_of_C-roots}(\ref{prop_pairs_of_C-roots_b}), which contradicts the definition of~$\Psi_0^{\max}$.
Since all the $C$-roots in $\Psi_0^{\max}$ are contained in an open half-space of $\QQ\mathfrak X(C)$, they are linearly independent.
\end{proof}

\begin{proposition}
Up to conjugation by an element of~$C$, $H$ is uniquely determined by the pair $(K,\Psi)$.
\end{proposition}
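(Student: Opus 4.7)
My plan is to show that any two subgroups $H_1, H_2$ in the class under consideration, having the same Levi $K$ and the same active $C$-root set $\Psi$, are conjugate by an element of $C$. Since each such $H$ is determined by its Lie algebra, which in turn is determined by $\mathfrak h^\perp$, it suffices to produce $c \in C$ with $\Ad(c)\mathfrak h_1^\perp = \mathfrak h_2^\perp$. Using the decomposition~(\ref{eqn_h^perp_refined}), the first two summands $\mathfrak p_u$ and $\mathfrak d^\perp \cap \mathfrak c$ depend only on $K$, so the task reduces to normalizing the subspaces $\mathfrak u(\Omega)$, $\Omega \in \widetilde\Psi$. For singleton classes $|\Omega|=1$ the multiplicity-freeness of $\mathfrak u$ forces $\mathfrak u(\Omega) = \mathfrak g(\lambda)$ with $\Omega = \{\lambda\}$, so those contribute no ambiguity.

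For $\Omega \in \widetilde\Psi_0$, after fixing a basepoint $\mu_\Omega \in \Omega$ and $K$-isomorphisms $\iota_\lambda \colon \mathfrak g(\mu_\Omega) \xrightarrow{\sim} \mathfrak g(\lambda)$, the subspace $\mathfrak u(\Omega)$ is encoded by a projective tuple $[a_\lambda]_{\lambda \in \Omega}$, and all $a_\lambda$ must be nonzero because $\mathfrak u(\Omega)$ projects isomorphically onto every $\mathfrak g(\lambda)$ (the properties recorded right after~(\ref{eqn_u})). The $C$-action on these parameters is $[a_\lambda] \mapsto [\lambda(c)a_\lambda]$. Next, by Propositions~\ref{prop_crucial}(\ref{prop_crucial_b}) and~\ref{prop_tough}(\ref{prop_tough_c}) combined with Remark~\ref{rem_important}(\ref{rem_important_c}), the subspaces $\mathfrak u(\Omega)$ for $\Omega \in \widetilde\Psi_0 \setminus \widetilde\Psi_0^{\max}$ are canonically recovered from those for $\Omega' \in \widetilde\Psi_0^{\max}$ by climbing chains. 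Because this recovery recipe is built from Lie brackets and projections onto $C$-weight subspaces, it is $C$-equivariant, so it suffices to find a single $c \in C$ that simultaneously normalizes $\mathfrak u_1(\Omega)$ to $\mathfrak u_2(\Omega)$ for every $\Omega \in \widetilde\Psi_0^{\max}$.

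Finally, I will check existence of the required $c$ by comparing dimensions on the parameter side. Normalization amounts to prescribing the values of $\lambda(c)/\mu_\Omega(c)$ for each $\lambda \in \Omega \setminus \{\mu_\Omega\}$ and each $\Omega \in \widetilde\Psi_0^{\max}$. By Lemma~\ref{lemma_Psi0max_non-acute}, the set $\Psi_0^{\max}$ is linearly independent in $\QQ\mathfrak X(C)$; since the equivalence classes $\Omega \in \widetilde\Psi_0^{\max}$ are disjoint subsets of $\Psi_0^{\max}$, the collection $\{\lambda - \mu_\Omega : \lambda \in \Omega \setminus \{\mu_\Omega\},\, \Omega \in \widetilde\Psi_0^{\max}\}$ is linearly independent in $\mathfrak X(C)$. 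This linear independence translates into surjectivity of the induced homomorphism from the torus $C$ onto the parameter torus, producing the desired~$c$. I expect the main technical hurdle to be the verification that the chain-determination rules of Propositions~\ref{prop_crucial}(\ref{prop_crucial_b}) and~\ref{prop_tough}(\ref{prop_tough_c}) are genuinely $C$-equivariant: one must retrace their proofs and confirm that every step — brackets, projections to $C$-weight components, and choices of highest-weight vectors — intertwines with $\Ad(c)$, so that normalizing the maximal classes automatically propagates to all smaller ones.
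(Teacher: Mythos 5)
Your argument is correct and follows essentially the same route as the paper's proof: reduce via decomposition~(\ref{eqn_h^perp_refined}) to normalizing the subspaces $\mathfrak u(\Omega)$, use Remark~\ref{rem_important} with Propositions~\ref{prop_crucial}(\ref{prop_crucial_b}) and~\ref{prop_tough}(\ref{prop_tough_c}) to reduce to the classes in $\widetilde\Psi_0^{\max}$, and use the linear independence from Lemma~\ref{lemma_Psi0max_non-acute} to produce the conjugating element of~$C$. The ``equivariance'' hurdle you flag at the end is not really needed: one can simply conjugate first so that the data for the maximal classes of the two subgroups coincide and then invoke the ``uniquely determined by'' statements verbatim (and in any case $\Ad(c)$ for $c\in C$ commutes with $K$ and preserves brackets and $C$-weight spaces, so the check is immediate).
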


\begin{proof}
The equivalence relation on $\Psi$ is recovered from its definition: $\lambda \sim \mu$ if and only if $\mathfrak g(\lambda) \simeq \mathfrak g(\mu)$ as $K$-modules.
In view of decomposition~(\ref{eqn_h^perp_refined}) it now suffices to show that for every $\Omega \in \widetilde \Psi$ the subspace $\mathfrak u(\Omega) \subset \mathfrak p_u^+$ is uniquely determined up to conjugation by an element of~$C$.
If $|\Omega|=1$ and $\Omega = \lbrace \lambda \rbrace$ then $\mathfrak u(\Omega) = \mathfrak g(\lambda)$.
If $|\Omega| \ge 2$ then $\Omega \in \widetilde \Psi_0$.
Combining Remark~\ref{rem_important} with Propositions~\ref{prop_crucial}(\ref{prop_crucial_b}) and~\ref{prop_tough}(\ref{prop_tough_c}) we see that all the $K$-modules $\mathfrak u(\Omega)$ with $\Omega \in \widetilde \Psi_0$ are uniquely determined by the $K$-modules $\mathfrak u (\Omega)$ with $\Omega \in \widetilde \Psi_0^{\max}$.
For every $\lambda \in \Psi_0^{\max}$, fix a highest-weight vector $v_\lambda$ in $\mathfrak g(\lambda)$.
Then for every $\Omega \in \widetilde \Psi_0^{\max}$ a highest-weight vector of $\mathfrak u(\Omega)$ has the form $\sum \limits_{\lambda \in \Omega} a_\lambda v_\lambda$ where $a_\lambda \in \CC^\times$.
Since the set $\Psi_0^{\max}$ is linearly independent (see Lemma~\ref{lemma_Psi0max_non-acute}), conjugating $\mathfrak h$ by an appropriate element of~$C$ we may reach the situation where $a_\lambda = 1$ for all $\lambda \in \Psi_0^{\max}$.
\end{proof}

\section{Implementation of the general strategy}
\label{sect_comp_SR}

Retain all the notation of~\S\,\ref{sect_active_C-roots} and recall that we work with spherical subgroups $H$ satisfying $L' \subset K \subset L$.
In this section, for such subgroups $H$ we implement the general strategy from~\S\,\ref{subsec_gen_strat} for computing the set of spherical roots.

\subsection{Outline}

Thanks to one of the reductions described in~\S\,\ref{subsec_gen_strat}, computing the set of spherical roots of~$H$ easily reduces to the case of semisimple~$G$.
Having this in mind, throughout the whole section we assume that $G$ is semisimple.

For a given spherical subgroup~$H$, to implement one iteration of the general strategy from~\S\,\ref{subsec_gen_strat}, we go through the following steps:
\begin{enumerate}
\item
compute the subgroup $N_G(H)^0$ using Proposition~\ref{prop_normalizer} and replace $H$ with $N_G(H)^0$;

\item
present a collection of one-parameter degenerations of the algebra $\mathfrak h$ inside~$\mathfrak g$ such that for every such degeneration $\widetilde{\mathfrak h} \subset \mathfrak g$ the $G$-orbits $G\mathfrak h, G\widetilde {\mathfrak h} \subset \Gr_{\dim \mathfrak h}(\mathfrak g)$ satisfy $\dim G\mathfrak h = \dim G\widetilde{\mathfrak h} + 1$;

\item \label{step3}
show that for any two different one-parameter degenerations $\widetilde {\mathfrak h}_1, \widetilde {\mathfrak h}_2$ from the above collection the $G$-orbits $G\widetilde {\mathfrak h}_1, G\widetilde {\mathfrak h}_2 \subset \Gr_{\dim \mathfrak h}(\mathfrak g)$ are different.
\end{enumerate}

Depending on the structure of~$H$, we consider two different types of one-parameter degenerations of~$\mathfrak h$, namely, degenerations via the multiplicative group $(\CC^\times, \times)$ and via the additive group $(\CC, +)$; we call them \textit{multiplicative} and \textit{additive} degenerations, respectively.

Multiplicative degenerations (see \S\,\ref{subsec_degen_mult}) apply in the situation $\Psi_0 \ne \varnothing$, they are in bijection with the set $\Psi_0^{\max}$.
The construction of such degenerations involves one-parameter subgroups of~$C$.

Additive degenerations (see \S\,\ref{subsec_degen_add}) apply in the situation $\Psi_0 = \varnothing$, $\Psi \ne \varnothing$, they are in bijection with the set~$\Psi$.
The construction of such degenerations involves one-parameter root unipotent subgroups of~$B^-$.

It is easy to see that, to perform step~(\ref{step3}), for $\Psi_0 \ne \varnothing$ there are always at least two different multiplicative degenerations and for $\Psi_0 = \varnothing$, $\Psi \ne \varnothing$ one can find two different additive degenerations unless $|\Psi|=1$.
We call the cases with $|\Psi|=1$ \textit{primitive} and classify them all in \S\,\ref{subsec_primitive_cases}.
Moreover, it turns out that for all primitive cases the corresponding sets of spherical roots are already known, which completes the algorithm for computing the set of spherical roots for spherical subgroups $H$ with $L' \subset K \subset L$.
In what follows, we shall refer to this algorithm as the \textit{base algorithm}.

\subsection{Description of the group \texorpdfstring{$N_G(H)^0$}{N\_G(H)\^{}0}}
\label{subsec_normalizer}

We fix $P,L,H,K$ as before.

As $Z_L(K)^0 = C$, Proposition~\ref{prop_normalizer} yields $N_G(H)^0 = D^0L' \rightthreetimes H_u$ where $D = C \cap N_L(\mathfrak h_u)$.
In particular, if $\Psi_0 = \varnothing$ then $D = C$ and $N_G(H)^0 = L \rightthreetimes H_u$.
To describe $D$ in the general case, consider the lattice
\[
\Xi = \ZZ\lbrace \mu - \nu \mid \mu, \nu \in \Psi_0^{\max} \ \text{and} \ \mu \sim \nu \rbrace \subset \mathfrak X(C)
\]
and put $\Xi^\sat = \QQ\Xi \cap \mathfrak X(C)$.
The following lemma determines $D$ and $D^0$.

\begin{lemma} \label{lemma_XiAA0}
The following assertions hold.
\begin{enumerate}[label=\textup{(\alph*)},ref=\textup{\alph*}]
\item \label{lemma_XiAA0_a}
$\Xi = \ZZ\lbrace \mu - \nu \mid \mu, \nu \in \Psi \ \text{and} \ \mu \sim \nu \rbrace$.

\item \label{lemma_XiAA0_b}
The group $D$ \textup(resp.~$D^0$\textup) is the common kernel of all characters in~$\Xi$ \textup(resp.~$\Xi^\sat$\textup).
\end{enumerate}
\end{lemma}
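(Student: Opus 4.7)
For part~(\ref{lemma_XiAA0_a}), my plan is to verify both inclusions. The inclusion $\Xi \subseteq \ZZ\lbrace \mu-\nu \mid \mu,\nu \in \Psi,\ \mu \sim \nu \rbrace$ is immediate from $\Psi_0^{\max} \subset \Psi$. For the reverse inclusion, given $\mu,\nu \in \Psi$ with $\mu \sim \nu$, the case $\mu = \nu$ is trivial; otherwise $|\Omega_\mu| \ge 2$ forces $\Omega_\mu \in \widetilde\Psi_0$, and Remark~\ref{rem_important}(\ref{rem_important_c}) furnishes $\theta \in \ZZ^+\Phi^+$ and $\Omega' \in \widetilde\Psi_0^{\max}$ with $\Omega_\mu + \theta \subset \Omega'$. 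I then observe that $\mu+\theta$ and $\nu+\theta$ are equivalent elements of $\Psi_0^{\max}$ whose difference equals $\mu-\nu$, placing $\mu-\nu$ in $\Xi$.

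For part~(\ref{lemma_XiAA0_b}), I plan to replace the condition that $c \in C$ normalize $\mathfrak h_u$ with the equivalent condition that $\Ad(c)$ preserve the subspace $\mathfrak u$. Since $c$ centralizes $K$ we have $\Ad(c)\mathfrak k = \mathfrak k$, so $\Ad(c)\mathfrak h_u = \mathfrak h_u$ is equivalent to $\Ad(c)\mathfrak h = \mathfrak h$, which by $G$-invariance of the fixed inner product on $\mathfrak g$ is equivalent to $\Ad(c)\mathfrak h^\perp = \mathfrak h^\perp$. In decomposition~(\ref{eqn_h^perp_refined}) the summand $\mathfrak p_u$ is a sum of $C$-weight subspaces and the summand $\mathfrak d^\perp \cap \mathfrak c$ lies in $\mathfrak c$, which $C$ centralizes; both are therefore automatically $C$-stable, so the condition further reduces to $\Ad(c)\mathfrak u = \mathfrak u$. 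Since $c$ commutes with $K$, this in turn is equivalent to preserving every summand of the $K$-isotypic decomposition $\mathfrak u = \bigoplus_{\Omega \in \widetilde\Psi} \mathfrak u(\Omega)$.

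Fixing $\Omega \in \widetilde\Psi$ and a highest weight vector $\tilde v_\lambda$ of $\mathfrak g(\lambda)$ for each $\lambda \in \Omega$, the fact that $\mathfrak u(\Omega)$ projects isomorphically onto each $\mathfrak g(\lambda)$ (established in \S\,\ref{sect_active_C-roots}) means that a highest weight vector of $\mathfrak u(\Omega)$ has the form $\sum_{\lambda \in \Omega} a_\lambda \tilde v_\lambda$ with every $a_\lambda$ nonzero. The element $\Ad(c)$ sends it to $\sum_{\lambda \in \Omega} a_\lambda \lambda(c)\tilde v_\lambda$, which is a scalar multiple of the original precisely when $\lambda(c)$ is independent of $\lambda \in \Omega$, equivalently $(\lambda-\mu)(c) = 1$ for all $\lambda,\mu \in \Omega$. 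Intersecting over all classes in $\widetilde\Psi$ yields
\[
D = \bigcap_{\substack{\lambda,\mu \in \Psi \\ \lambda \sim \mu}} \ker(\lambda-\mu),
\]
which by part~(\ref{lemma_XiAA0_a}) is the common kernel of the characters in~$\Xi$. To obtain the description of $D^0$ I will invoke the standard fact that the identity component of the common kernel of a set of characters of a diagonalizable group equals the common kernel of the saturation of the lattice they generate, giving $D^0 = \bigcap_{\chi \in \Xi^\sat}\ker\chi$ as required.

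The principal point of care will be to verify that every coefficient $a_\lambda$ in the highest-weight expression above is nonzero, since this is exactly what forces $(\lambda-\mu)(c)=1$ rather than allowing it to hold trivially via vanishing summands. Fortunately this nonvanishing is built into the very definition of $\mathfrak u(\Omega)$ in \S\,\ref{sect_active_C-roots}, so the obstacle is one of careful citation rather than a new technical hurdle.
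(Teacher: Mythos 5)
Your proof is correct and follows the same route as the paper: part~(a) via Remark~\ref{rem_important}(\ref{rem_important_c}) by translating $\Omega_\mu$ into $\Psi_0^{\max}$ with a common shift~$\theta$, and part~(b) by unwinding the definition of $D$ through decomposition~(\ref{eqn_h^perp_refined}) and the highest-weight vectors of the $\mathfrak u(\Omega)$. The paper's own proof is just a two-line citation of these same ingredients, so your write-up is essentially the intended argument with the details supplied.
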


\begin{proof}
(\ref{lemma_XiAA0_a})
This follows from Remark~\ref{rem_important}(\ref{rem_important_c}).

(\ref{lemma_XiAA0_b})
The assertion for~$D$ follows from its definition and part~(\ref{lemma_XiAA0_a}).
The assertion for $D^0$ follows from that for~$D$.
\end{proof}

In what follows we assume that $N_G(H)^0 = H$, so that $K = D^0L'$.
We also let $X$ denote the closure of the $G$-orbit $G\mathfrak h$ in $\Gr_{\dim \mathfrak h}(\mathfrak g)$, so that $X$ is the Demazure embedding of $G/N_G(H)$.

\subsection{Reduction of the ambient group}
\label{subsec_reduction_AG}

In this subsection we describe a natural reduction that under certain conditions enables one to pass from the pair $(G,H)$ to another pair $(G_0,H_0)$ with a ``smaller'' group~$G_0$.
This reduction, based essentially on the parabolic induction,
keeps all the combinatorics of active roots unchanged and preserves the set of spherical roots.
In principle, it can be applied before any step of the base algorithm, but in our paper it will be especially useful in \S\S\,\ref{subsec_primitive_cases},\,\ref{subsec_further_opt}.

Consider the set $\Pi_0 = \bigcup \limits_{\lambda \in \Psi} \Supp \widehat \lambda$.

\begin{lemma} \label{lemma_reduction_AG}
Let $F$ be a simple factor of $L'$ and let $\Pi_F \subset \Pi$ be the set of simple roots of~$F$.
\begin{enumerate}[label=\textup{(\alph*)},ref=\textup{\alph*}]
\item \label{lemma_reduction_AG_a}
If $F$ acts nontrivially on $\mathfrak u$ then $\Pi_F \subset \Pi_0$.

\item \label{lemma_reduction_AG_b}
If $F$ acts trivially on $\mathfrak u$ then $(\Pi_F, \Pi_0) = 0$.
\end{enumerate}
\end{lemma}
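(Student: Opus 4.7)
My plan rests on the observation that for every $\lambda \in \Psi$ the highest weight $\widehat \lambda$ of the simple $L$-submodule $\mathfrak g(\lambda) \subset \mathfrak g$ is itself a positive root of~$G$, so $\widehat \lambda \in \Delta^+ \subset \ZZ^+\Pi$ and $\Supp \widehat \lambda$ is well defined; moreover, since $\mathfrak g(\lambda)$ is simple as an $L$-module, the simple factor $F$ of $L'$ acts nontrivially on $\mathfrak g(\lambda)$ if and only if $\langle \widehat \lambda, \alpha^\vee \rangle > 0$ for some $\alpha \in \Pi_F$. Via the isomorphisms $\mathfrak u(\Omega_\lambda) \simeq \mathfrak g(\lambda)$ this criterion reads off the $F$-action on $\mathfrak u$ summand by summand: $F$ acts nontrivially on $\mathfrak u$ iff it acts nontrivially on $\mathfrak g(\lambda)$ for some $\lambda \in \Psi$, and trivially on $\mathfrak u$ iff it acts trivially on $\mathfrak g(\lambda)$ for every $\lambda \in \Psi$.

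The crux of the proof is the following dichotomy, which I will establish first: for each $\lambda \in \Psi$, either $\Pi_F \subset \Supp \widehat \lambda$, or $\Pi_F \cap \Supp \widehat \lambda = \emptyset$ together with $(\Pi_F, \Supp \widehat \lambda) = 0$. The key sub-step is that any $\alpha \in \Pi_L$ with $\alpha \notin \Supp \widehat \lambda$ is orthogonal to every root in $\Supp \widehat \lambda$: expanding $\widehat \lambda = \sum_\gamma k_\gamma \gamma$ and pairing with $\alpha^\vee$ gives $\langle \widehat \lambda, \alpha^\vee \rangle = \sum_{\gamma \ne \alpha} k_\gamma \langle \gamma, \alpha^\vee \rangle$, a sum of non-positive terms, while $L$-dominance of $\widehat \lambda$ forces the total to be non-negative, so every summand vanishes and $(\gamma, \alpha) = 0$ for each $\gamma \in \Supp \widehat \lambda$. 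If $\Pi_F \not\subset \Supp \widehat \lambda$ and yet $\Pi_F \cap \Supp \widehat \lambda \ne \emptyset$, then connectivity of $\Pi_F$ in the Dynkin diagram yields adjacent $\beta_{i-1} \in \Pi_F \setminus \Supp \widehat \lambda$ and $\beta_i \in \Pi_F \cap \Supp \widehat \lambda$, contradicting the orthogonality just established for~$\beta_{i-1}$.

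Given the dichotomy, both parts follow immediately. For~(\ref{lemma_reduction_AG_a}), nontriviality of the $F$-action on $\mathfrak u$ produces $\lambda \in \Psi$ with $\Pi_F \cap \Supp \widehat \lambda \ne \emptyset$, so the dichotomy forces $\Pi_F \subset \Supp \widehat \lambda \subset \Pi_0$. For~(\ref{lemma_reduction_AG_b}), triviality of the $F$-action on every $\mathfrak g(\lambda)$, $\lambda \in \Psi$, excludes the first alternative of the dichotomy, so $(\Pi_F, \Supp \widehat \lambda) = 0$ for all $\lambda \in \Psi$, and taking the union over $\lambda$ yields $(\Pi_F, \Pi_0) = 0$. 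The main obstacle is the dichotomy itself, specifically the clean combination of $L$-dominance of~$\widehat \lambda$ with the Dynkin-diagram connectivity of~$\Pi_F$; everything else is bookkeeping.
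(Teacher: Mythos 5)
Your dichotomy (for $\alpha \in \Pi_L \setminus \Supp \widehat\lambda$ one has $(\alpha, \Supp\widehat\lambda)=0$, hence by connectedness of the Dynkin diagram of $F$ either $\Pi_F \subset \Supp\widehat\lambda$ or $\Pi_F$ is disjoint from and orthogonal to $\Supp\widehat\lambda$) is correct, and it gives a valid proof of part~(\ref{lemma_reduction_AG_a}): nontrivial action of $F$ on $\mathfrak g(\lambda)$ forces $\langle \widehat\lambda, \alpha^\vee\rangle>0$ for some $\alpha\in\Pi_F$, hence $\alpha\in\Supp\widehat\lambda$, hence $\Pi_F\subset\Supp\widehat\lambda$. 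This is a slightly different route from the paper, which instead observes that the difference between the highest and lowest weights of $\mathfrak g(\lambda)$ contains every root of $\Pi_F$ with positive coefficient; both are fine.

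Part~(\ref{lemma_reduction_AG_b}), however, has a genuine gap. You claim that triviality of the $F$-action on $\mathfrak g(\lambda)$ \emph{excludes} the alternative $\Pi_F\subset\Supp\widehat\lambda$. That is a converse error: trivial action only says $\langle\widehat\lambda,\alpha^\vee\rangle=0$ for all $\alpha\in\Pi_F$, which is perfectly compatible with $\Pi_F\subset\Supp\widehat\lambda$ and with $(\Pi_F,\Supp\widehat\lambda)\ne 0$. Concretely, take $G=\SL_4$, $\Pi_L=\lbrace\alpha_2\rbrace$, $F$ the $\SL_2$-factor with simple root $\alpha_2$, and the $C$-root $\lambda$ with $\widehat\lambda=\alpha_1+\alpha_2+\alpha_3$: here $\mathfrak g(\lambda)=\mathfrak g_{\alpha_1+\alpha_2+\alpha_3}$ is one-dimensional (since $\alpha_1+\alpha_3\notin\Delta$), $F$ acts trivially on it because $\langle\widehat\lambda,\alpha_2^\vee\rangle=0$, yet $\Pi_F\subset\Supp\widehat\lambda$ and $(\alpha_2,\alpha_1)\ne 0$. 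What actually rules out such a $\lambda$ being active when $F$ is trivial on all of $\mathfrak u$ is the fact that $\mathfrak h$ is a \emph{subalgebra}: for the set $\mathrm E=\lbrace\alpha\in\Delta^+\mid\mathfrak g_{-\alpha}\not\subset\mathfrak h\rbrace$ one has $(\Pi_F,\mathrm E)=0$, every decomposition $\alpha=\beta+\gamma$ of an element of $\mathrm E$ into positive roots has a summand in $\mathrm E$ (so both summands lie in $\ZZ\mathrm E$), and then $\Pi_0\subset\ZZ\mathrm E$ by the cited Lemma~5.11 of~\cite{Avd_solv_inv} (in the example above, one of $\overline{\alpha_1}$ or $\overline{\alpha_3}$ would be forced to be active, and $F$ acts nontrivially on those weight spaces). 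Your argument never uses the subalgebra property, so it cannot close this gap; you would need to supply an argument of this decomposition type to repair~(\ref{lemma_reduction_AG_b}).
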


\begin{proof}
(\ref{lemma_reduction_AG_a})
Let $\lambda \in \Psi$ be such that $F$ acts nontrivially on $\mathfrak g(\lambda)$.
Then $e_\alpha$ acts nontrivially on $\mathfrak g(\lambda)$ for each $\alpha \in \Pi_F$, therefore the difference between the highest and lowest weights of $\mathfrak g(\lambda)$ is a linear combination of simple roots in $\Pi_F$ with positive coefficients, which implies $\Pi_F \subset \Supp \widehat \lambda \subset \Pi_0$.

(\ref{lemma_reduction_AG_b})
Put $\mathrm E = \lbrace \alpha \in \Delta^+ \mid \mathfrak g_{-\alpha} \not\subset \mathfrak h \rbrace$.
As $F$ acts trivially on $\mathfrak u$, one has $(\Pi_F, \alpha) = 0$ for all $\alpha \in \mathrm E$.
Since $\mathfrak h$ is a subalgebra of~$\mathfrak g$, for every $\alpha \in \mathrm E$ and every expression $\alpha = \beta + \gamma$ with $\beta, \gamma \in \Delta^+$ at least one of the summands belongs to~$\mathrm E$ and hence both summands belong to $\ZZ \mathrm E$.
Then $\Pi_0 \subset \ZZ\mathrm E$ by \cite[Lemma~5.11]{Avd_solv_inv} and thus $(\Pi_F, \Pi_0) = 0$.
\end{proof}

Let $L_0 \subset G$ be the standard Levi subgroup with $\Pi_{L_0} = \Pi_0$.
Put $G_0 = L_0'$ and $H_0 = G_0 \cap H$.
Then it is easy to see that $H_0$ is regularly embedded in the parabolic subgroup $P \cap G_0 \subset G_0$ with standard Levi subgroup $L \cap G_0$ and the Levi subgroup $K \cap G_0 \subset H_0$ satisfies $(L \cap G_0)' \subset K \cap G_0 \subset L \cap G_0$.
Thanks to Lemma~\ref{lemma_reduction_AG}, the connected center of $L \cap G_0$ equals $C \cap G_0$ and $(L \cap G_0)'$ coincides with the product of all simple factors of $L'$ contained in~$G_0$.
Combining Lemma~\ref{lemma_reduction_AG} with Proposition~\ref{prop_properties_of_g(lambda)}%
(\ref{prop_properties_of_g(lambda)_a}) we obtain the following property: if $\mathfrak g(\lambda) \cap \mathfrak g_0 \ne \lbrace 0 \rbrace$ for some $\lambda \in \Phi$ then $\mathfrak g(\lambda) \subset \mathfrak g_0$ and $\mathfrak g(\lambda)$ is simple as an $(L \cap G_0)$-module.
It follows that the objects $\Psi$, $\widetilde \Psi$, $\mathfrak u$ are naturally identified with those for~$H_0$ and the pairs $(K, \mathfrak h_0)$, $(K,\mathfrak u)$ are equivalent to $(K \cap G_0, \mathfrak h_0)$, $(K \cap G_0, \mathfrak u)$, respectively.

We say that the pair $(G_0, H_0)$ is obtained from $(G,H)$ by \textit{reduction of the ambient group}.

In the next statement, the set of simple roots of~$G_0$, which is~$\Pi_0$, is regarded as a subset of~$\Pi$.

\begin{proposition} \label{prop_reduction_AG_sph_roots}
One has $\Sigma_G(G/H) = \Sigma_{G_0}(G_0/H_0)$.
\end{proposition}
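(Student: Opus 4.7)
The proof proceeds by parabolic induction. Let $L_1$ denote the standard Levi subgroup of~$G$ with $\Pi_{L_1} = \Pi_L \cup \Pi_0$, let $Q \supset B^-$ be the corresponding standard parabolic, and set $F := H \cap L_1$. The first step is to show $Q_u \subset H$: for $\alpha \in \Delta^+ \setminus \Delta^+_{L_1}$ we have $\alpha \in \Delta^+ \setminus \Delta^+_L$ (so $\mathfrak g_{-\alpha} \subset \mathfrak p_u$) and $\Supp \alpha \not\subset \Pi_{L_1}$; setting $\bar\alpha := \varepsilon(\alpha) \in \Phi^+$, either $\bar\alpha \notin \Psi$, in which case $\mathfrak g(-\bar\alpha) \subset \mathfrak h$, or $\bar\alpha \in \Psi$, in which case $\Supp \widehat{\bar\alpha} \subset \Pi_0$ and every $T$-weight of the $L$-module $\mathfrak g(\bar\alpha)$ has support in $\Supp \widehat{\bar\alpha} \cup \Pi_L \subset \Pi_{L_1}$, contradicting the choice of $\alpha$. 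Hence $\mathfrak g_{-\alpha} \subset \mathfrak h$ in either case, $\mathfrak q_u \subset \mathfrak h$, $H = F \rightthreetimes Q_u$, and Proposition~\ref{prop_par_ind} yields $\Sigma_G(G/H) = \Sigma_{L_1}(L_1/F)$.

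The second step identifies $L_1/F$ with $G_0/H_0$ as $G_0$-varieties. By Lemma~\ref{lemma_reduction_AG} we obtain an almost direct decomposition $L_1 = M_2 \cdot G_0 \cdot C_1$, where $M_2$ is the product of those simple factors of $L'$ acting trivially on $\mathfrak u$ (with simple roots $\Pi_2 := \Pi_L \setminus \Pi_0$ satisfying $(\Pi_0, \Pi_2) = 0$), $G_0 = L_0'$, $C_1 := Z(L_1)^0$, and $M_2$ is normal in $L_1$. The containment $M_2 \subset L' \subset K \subset F$ is clear. For $C_1 \subset F$: each $\widehat\mu$ with $\mu \in \Psi$ lies in $\ZZ\Pi_0 \subset \ZZ\Pi_{L_1}$ and $C_1$ annihilates $\Pi_{L_1}$, so every generator $\mu - \nu$ of the lattice $\Xi$ restricts trivially to $C_1$; together with $C_1 \subset C$, Lemma~\ref{lemma_XiAA0}(\ref{lemma_XiAA0_b}) then gives $C_1 \subset D^0 \subset K \subset F$. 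Consequently, the natural $G_0$-equivariant morphism $G_0/H_0 \to L_1/F$, $gH_0 \mapsto gF$, is injective (kernel $G_0 \cap F = H_0$) and surjective (each $l = m_2 g_0 c_1 \in L_1$ satisfies $lF = g_0 F$), giving the required isomorphism.

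The third step transfers the spherical roots. Setting $C_0 := Z(L_0)^0$, the orthogonality $(\Pi_0, \Pi_2) = 0$ implies $T_{M_2} \subset C_0$, whence $C_0 = C_1 \cdot T_{M_2} \subset F$ (almost direct). Using $T = T_{G_0} \cdot C_0$ and $U_{L_1} = U_{G_0} \cdot U_{M_2}$, the Borel subgroup factorizes as $B_{L_1} = B_{G_0} \cdot (C_0 \cdot U_{M_2})$, in which the extra factor $C_0 \cdot U_{M_2}$ lies in $F$ and acts trivially on $L_1/F$. Extending characters of $B_{G_0}$ trivially on $C_0 \cdot U_{M_2}$ then gives a bijection between $B_{G_0}$- and $B_{L_1}$-semi-invariant rational functions on $L_1/F \cong G_0/H_0$; hence the weight lattices and valuation cones coincide, and so do the sets of spherical roots. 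Combined with the first step, this gives $\Sigma_G(G/H) = \Sigma_{G_0}(G_0/H_0)$.

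The main obstacle is verifying the containments $M_2, C_0, C_1 \subset F$ (in particular $C_0 \subset F$ when $M_2 \ne 1$), which rest on the support property $\Supp \widehat\lambda \subset \Pi_0$ from the definition of $\Pi_0$ combined with the orthogonality $(\Pi_0, \Pi_2) = 0$ of Lemma~\ref{lemma_reduction_AG}(\ref{lemma_reduction_AG_b}).
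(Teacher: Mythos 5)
Your proof is correct and follows essentially the same route as the paper's: pass by parabolic induction to the Levi $\widetilde L$ with simple roots $\Pi_L \cup \Pi_0$ (Proposition~\ref{prop_par_ind}), then identify $\widetilde L/(H \cap \widetilde L)$ with $G_0/H_0$ by showing that the connected center of $\widetilde L$ and the simple factors outside $G_0$ lie in $H \cap \widetilde L$ and hence act trivially. You merely spell out details the paper leaves implicit, namely the verification that $Q_u \subset H$, the containment $Z(\widetilde L)^0 \subset D^0 \subset K$ via Lemma~\ref{lemma_XiAA0}, and the transfer of invariants from $\widetilde L$ to $G_0$.
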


\begin{proof}
Let $\widetilde L \supset L$ be the standard Levi subgroup of $G$ such that $\Pi_{\widetilde L} = \Pi_L \cup \Pi_0$.
Then the homogeneous space $G/H$ is parabolically induced from $\widetilde L/(H \cap \widetilde L)$, which implies $\Lambda_G(G/H) = \Lambda_{\widetilde L}(\widetilde L / (H \cap \widetilde L))$ and $\Sigma_G(G/H) =  \Sigma_{\widetilde L}(\widetilde L/(H \cap \widetilde L))$ by Proposition~\ref{prop_par_ind}.
Thanks to Lemma~\ref{lemma_reduction_AG}, $G_0$ is a normal subgroup of $\widetilde L$ and all simple factors of $\widetilde L'$ not contained in~$G_0$ are contained in~$H$.
As $H = N_G(H)^0$, Lemma~\ref{lemma_XiAA0} implies $Z(\widetilde L) \subset H$.
Consequently, $Z(\widetilde L)$ and all simple factors of $\widetilde L'$ not contained in~$G_0$ act trivially on $\widetilde L / (H \cap \widetilde L)$, which implies $\widetilde L/(H \cap \widetilde L) \simeq G_0/H_0$.
Thus restricting characters from $T$ to $T \cap G_0$ identifies $\Lambda_{\widetilde L}(\widetilde L / (H \cap \widetilde L))$ with $\Lambda_{G_0}(G_0/H_0)$ and $\Sigma_{\widetilde L}(\widetilde L/(H \cap \widetilde L))$ with $\Sigma_{G_0}(G_0/H_0)$.
\end{proof}

\subsection{Multiplicative degenerations}
\label{subsec_degen_mult}

This type of degenerations applies to the situation $\Psi_0 \ne \varnothing$.
The construction of such a degeneration depends on the choice of an active $C$-root $\lambda \in \Psi_0^{\max}$, which is assumed to be fixed throughout this subsection.

According to Remark~\ref{rem_important}(\ref{rem_important_c}), for every $\Omega \in \widetilde \Psi_0$ we fix a choice of $\theta_\Omega \in \ZZ^+ \Phi^+$ and $\Omega' \in \widetilde \Psi_0^{\max}$ such that $\Omega + \theta_\Omega \subset \Omega'$.
In this situation, we say that $\Omega$ is of type~1 if $\lambda \notin \Omega + \theta_\Omega$ and of type~2 otherwise.
We now put $\widetilde \Psi_1 = \lbrace \Omega \in \widetilde \Psi_0 \mid \Omega \ \text{is of type~1}\rbrace$ and $\widetilde \Psi_2 = \lbrace \Omega \in \widetilde \Psi_0 \mid \Omega \ \text{is of type~2}\rbrace$.
For every $\Omega \in \widetilde \Psi$ we put
\[
\overline \Omega =
\begin{cases}
\Omega & \text{if} \ \Omega \notin \widetilde \Psi_0 \ \text{or} \ \Omega \in \widetilde \Psi_1;\\
\Omega \setminus \lbrace \lambda - \theta_\Omega \rbrace & \text{if} \ \Omega \in \widetilde \Psi_2.
\end{cases}
\]

Recall from Lemma~\ref{lemma_Psi0max_non-acute} that the $C$-roots in $\Psi_0^{\max}$ are linearly independent.
Then we can choose an element $\varrho \in \Hom_\ZZ(\mathfrak X(C), \ZZ)$ such that $\varrho(\lambda) < 0$ and $\varrho(\lambda') = 0$ for all $\lambda' \in \Psi_0^{\max} \setminus \lbrace \lambda \rbrace$.
Let $\phi \colon \CC^\times \to C$ be the one-parameter subgroup of $C$ corresponding to~$\varrho$, that is, $\chi(\phi(t)) = t^{\varrho(\chi)}$ for all $\chi \in \mathfrak X(C)$ and $t \in \CC^\times$.

For every $t \in \CC^\times$, we put $\mathfrak h_t = \phi(t)\mathfrak h$.
According to Proposition~\ref{prop_limits}, there exists $\lim \limits_{t \to \infty} \mathfrak h_t$, we denote it by~$\mathfrak h_\infty$.
In what follows, $\mathfrak h_\infty$ is referred to as the \textit{multiplicative degeneration} of~$\mathfrak h$ defined by~$\lambda$.

\begin{proposition} \label{prop_limitI}
There is a decomposition
\begin{equation} \label{eqn_limitI}
\mathfrak h_\infty^\perp = \mathfrak p_u \oplus (\mathfrak h^\perp \cap \mathfrak c) \oplus \bigoplus\limits_{\Omega \in \widetilde \Psi} \mathfrak u_\infty(\Omega)
\end{equation}
where for any $\Omega \in \widetilde \Psi$ the subspace $\mathfrak u_\infty(\Omega)$ is a $K$-module that projects isomorphically onto each $\mathfrak g(\mu)$ with $\mu \in \overline \Omega$ and projects trivially to each $\mathfrak g(\mu)$ with $\mu \in \Phi^+ \setminus \overline \Omega$.
In particular, for every $\Omega \in \widetilde \Psi$ there is a $K$-module isomorphism $\mathfrak u_\infty(\Omega) \simeq \mathfrak u(\Omega)$.
\end{proposition}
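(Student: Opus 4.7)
The plan is to exploit the fact that $\phi(t) \in C \subset L$ acts very transparently on each summand of decomposition~(\ref{eqn_h^perp_refined}), compute the Grassmannian limit of each summand separately, and then recombine. Since the $G$-invariant inner product is preserved by $\phi(t)$, one has $\phi(t)(\mathfrak h^\perp) = (\phi(t)\mathfrak h)^\perp$ and taking orthogonal complements commutes with taking Grassmannian limits, so it suffices to compute $\lim_{t \to \infty} \phi(t) \mathfrak h^\perp$.

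The summand $\mathfrak p_u$ is $L$-stable (hence $C$-stable as a set), and $\phi(t)$ acts trivially on $\mathfrak c$, so both $\mathfrak p_u$ and $\mathfrak d^\perp \cap \mathfrak c$ are fixed as subspaces by $\phi(t)$ and contribute themselves to the limit. For the remaining summand~$\mathfrak u$, I will compute $\lim_{t \to \infty} \phi(t) \mathfrak u(\Omega)$ for each $\Omega \in \widetilde \Psi$ separately; the final decomposition will then follow because the subspaces $\bigoplus_{\mu \in \Omega} \mathfrak g(\mu) \subset \mathfrak p_u^+$ containing the respective $\phi(t)\mathfrak u(\Omega)$ are in direct sum with one another and with $\mathfrak p_u \oplus (\mathfrak d^\perp \cap \mathfrak c)$, so the sum is automatically direct of the correct total dimension.

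The key observation is that $\mathfrak u(\Omega) \subset \bigoplus_{\mu \in \Omega} \mathfrak g(\mu)$ and $\phi(t)$ acts on $\mathfrak g(\mu)$ by the scalar $t^{\varrho(\mu)}$, so the analysis reduces to understanding $\varrho\vert_\Omega$. Using the relation $\varrho(\mu) = \varrho(\mu + \theta_\Omega) - \varrho(\theta_\Omega)$ together with $\Omega + \theta_\Omega \subset \Omega' \subset \Psi_0^{\max}$ and $\varrho\vert_{\Psi_0^{\max} \setminus \{\lambda\}} = 0$, $\varrho(\lambda) < 0$, I will check the three cases: (i)~if $\Omega \notin \widetilde \Psi_0$ then $|\Omega| = 1$ and $\varrho$ is trivially constant on $\Omega$; (ii)~if $\Omega \in \widetilde \Psi_1$ then $\lambda \notin \Omega + \theta_\Omega$, so $\varrho(\mu + \theta_\Omega) = 0$ for all $\mu \in \Omega$ and hence $\varrho$ is constant on $\Omega$; (iii)~if $\Omega \in \widetilde \Psi_2$ then there is a unique $\mu_0 = \lambda - \theta_\Omega \in \Omega$ with $\varrho(\mu_0) = \varrho(\lambda) - \varrho(\theta_\Omega)$ strictly smaller than the common value $\varrho(\mu) = -\varrho(\theta_\Omega)$ for all other $\mu \in \Omega$. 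In cases~(i) and~(ii), $\phi(t)$ acts on $\mathfrak u(\Omega)$ by a single scalar, so $\phi(t)\mathfrak u(\Omega) = \mathfrak u(\Omega)$ and I set $\mathfrak u_\infty(\Omega) = \mathfrak u(\Omega)$; since $\overline \Omega = \Omega$ here, the required properties are those of $\mathfrak u(\Omega)$ listed after~(\ref{eqn_u}).

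The main obstacle is case~(iii), where the action is genuinely non-scalar. Here, rescaling $\phi(t)v$ by $t^{\varrho(\theta_\Omega)}$ (which does not change the subspace spanned), for $v = v_{\mu_0} + \sum_{\mu \in \Omega \setminus \{\mu_0\}} v_\mu$ in~$\mathfrak u(\Omega)$ one obtains $t^{\varrho(\lambda)}v_{\mu_0} + \sum_{\mu \in \Omega \setminus \{\mu_0\}} v_\mu$, which tends to the $\mathfrak g(\mu_0)$-projection-away image $\pi(v)$ as $t \to \infty$, since $\varrho(\lambda) < 0$. Passing to a Pl\"ucker representative of $\phi(t)\mathfrak u(\Omega)$ makes this limit rigorous in the Grassmannian and identifies $\mathfrak u_\infty(\Omega)$ with $\pi(\mathfrak u(\Omega))$. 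To complete the proof I need to show $\pi\vert_{\mathfrak u(\Omega)}$ is a $K$-module isomorphism onto its image: the map is $K$-equivariant (as $\pi$ is the projection along the $K$-stable summand $\mathfrak g(\mu_0)$), and since $\mathfrak u(\Omega)$ and $\mathfrak g(\mu_0)$ are isomorphic simple $K$-modules and $\mathfrak u(\Omega) \ne \mathfrak g(\mu_0)$ (because $|\Omega| \ge 2$ forces $\mathfrak u(\Omega)$ to project nontrivially onto $\mathfrak g(\mu)$ for some $\mu \ne \mu_0$), Schur's lemma forces $\mathfrak u(\Omega) \cap \mathfrak g(\mu_0) = 0$, so $\pi\vert_{\mathfrak u(\Omega)}$ is injective, hence a $K$-isomorphism. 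Its image then projects isomorphically onto $\mathfrak g(\mu)$ for $\mu \in \Omega \setminus \{\mu_0\} = \overline \Omega$ and trivially onto all other $\mathfrak g(\mu')$, finishing the proof.
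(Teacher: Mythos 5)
Your proof is correct and follows essentially the same route as the paper's: reduce to $\lim_{t\to\infty}\phi(t)\mathfrak h^\perp$, observe that the first two summands of~(\ref{eqn_h^perp_refined}) and the $\mathfrak u(\Omega)$ with $\Omega \notin \widetilde\Psi_2$ are $\phi(t)$-stable, and for $\Omega \in \widetilde\Psi_2$ rescale by $t^{\varrho(\theta_\Omega)}$ so that the $\mathfrak g(\mu_0)$-component dies in the limit. Your justification that the limiting vectors stay linearly independent (via simplicity of $\mathfrak u(\Omega)$ and the projection argument) is slightly more explicit than the paper's, which simply records the linear independence of the limits of a chosen basis as an observation, but the substance is identical.
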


\begin{proof}
Observe that $\mathfrak h_\infty^\perp = \lim \limits_{t \to \infty} \mathfrak h_t^\perp$.
Clearly, the first two summands of decomposition~(\ref{eqn_h^perp_refined}) are $\phi(t)$-stable, hence
\[
\mathfrak h_t^\perp = \mathfrak p_u \oplus (\mathfrak h^\perp \cap \mathfrak c) \oplus \bigoplus\limits_{\Omega \in \widetilde \Psi} \mathfrak u_t(\Omega)
\]
where $\mathfrak u_t(\Omega) = \phi(t) \mathfrak u(\Omega)$ for all $\Omega \in \widetilde \Psi$ and $t \in \CC^\times$.
For every $\Omega \in \widetilde \Psi$, put $\mathfrak u_\infty(\Omega) = \lim \limits_{t \to \infty} \mathfrak u_t(\Omega)$, this limit exists by Proposition~\ref{prop_limits}.
Since each $\mathfrak u(\Omega)$ projects nontrivially only to the subspaces $\mathfrak g(\mu)$ with $\mu \in \Omega$, it follows that $\mathfrak u_\infty(\Omega) \subset \bigoplus \limits_{\mu \in \Omega} \mathfrak g(\mu)$, which readily implies~(\ref{eqn_limitI}).

It is easy to see that for $\Omega \notin \widetilde \Psi_2$ the subspace $\mathfrak u(\Omega)$ is $\phi(t)$-stable, hence $\mathfrak u_\infty(\Omega) = \mathfrak u(\Omega)$.
It remains to consider the case~$\Omega \in \widetilde \Psi_2$.
Recall $\theta_\Omega \in \ZZ^+\Phi^+$ and $\Omega' \in \widetilde \Psi_0^{\max}$ such that $\Omega + \theta_\Omega \subset \Omega'$.
Put $\mu_0 = \lambda - \theta_\Omega \in \Omega$, so that $\Omega \setminus \lbrace \mu_0 \rbrace = \overline \Omega$.
Fix a basis $E(\Omega)$ in $\mathfrak u(\Omega)$.
For every $v \in E(\Omega)$ and every $\mu \in \Omega$, let $v_\mu$ be the projection of $v$ to $\mathfrak g(\mu)$, so that $v = \sum \limits_{\mu \in \Omega} v_\mu$.
Then for every $v \in E(\Omega)$ one has $\phi(t)v = \sum\limits_{\mu \in \Omega} t^{\varrho(\mu)} v_\mu$.
Multiplying this vector by $t^{\varrho(\theta_\Omega)}$, we obtain
\[
t^{\varrho(\theta_\Omega)} \phi(t)v = \sum\limits_{\mu \in \Omega} t^{\varrho(\mu+\theta_\Omega)} v_\mu =
t^{\varrho(\lambda)} v_{\mu_0} + \sum \limits_{\mu \in \overline \Omega} v_\mu.
\]
Now observe the following:
\begin{itemize}
\item
the set $\lbrace t^{\varrho(\theta_\Omega)}\phi(t) v \mid v \in E(\Omega) \rbrace$ is a basis of~$\mathfrak u_t(\Omega)$ for all $t \in \CC^\times$;

\item
for every $v \in E(\Omega)$ the limit $\lim \limits_{t \to \infty} (t^{\varrho(\theta_\Omega)} \phi(t)v)$ exists and equals $v_\infty = \sum \limits_{\mu \in \overline \Omega} v_\mu$;

\item
the set $E_\infty(\Omega) = \lbrace v_\infty \mid v \in E(\Omega) \rbrace$ is linearly independent.
\end{itemize}
It follows from the above observations that $E_\infty(\Omega)$ is a basis of~$\mathfrak u_\infty(\Omega)$, which implies the required property of~$\mathfrak u_\infty(\Omega)$.
\end{proof}

Let $H_\infty \subset G$ be the connected subgroup with Lie algebra $\mathfrak h_\infty$.
Then decomposition~(\ref{eqn_limitI}) implies

\begin{theorem} \label{thm_typeI}
The following assertions hold.
\begin{enumerate}[label=\textup{(\alph*)},ref=\textup{\alph*}]
\item \label{thm_typeI_a}
The subgroup $H_\infty$ is regularly embedded in~$P$ and $K$ is a Levi subgroup of~$H_\infty$.

\item \label{thm_typeI_b}
There is a $K$-module isomorphism $\mathfrak p_u/(\mathfrak h_\infty)_u \simeq \mathfrak p_u / \mathfrak h_u$.

\item
One has $\Psi(H_\infty) = \bigcup \limits_{\Omega \in \widetilde \Psi} \overline \Omega$.
Moreover, $\widetilde \Psi(H_\infty) = \lbrace \overline \Omega \mid \Omega \in \widetilde \Psi\rbrace$.
\end{enumerate}
\end{theorem}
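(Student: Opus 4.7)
The plan is to deduce all three assertions from Proposition~\ref{prop_limitI} by passing to orthogonal complements with respect to the fixed $G$-invariant form on~$\mathfrak g$. Set $\mathfrak u_\infty = \bigoplus_{\Omega \in \widetilde \Psi} \mathfrak u_\infty(\Omega) \subset \mathfrak p_u^+$ and let $\mathfrak v_\infty \subset \mathfrak p_u$ denote the orthogonal complement of $\mathfrak u_\infty$ under the perfect pairing of $\mathfrak p_u$ with~$\mathfrak p_u^+$. First I would check that taking the term-by-term orthogonal complement of~\textup{(\ref{eqn_limitI})}, using that $\mathfrak l \perp (\mathfrak p_u \oplus \mathfrak p_u^+)$ and $\mathfrak l' \perp \mathfrak c$ inside~$\mathfrak l$ (whence $\mathfrak k^\perp \cap \mathfrak l = \mathfrak d^\perp \cap \mathfrak c$), yields
\[
\mathfrak h_\infty = \mathfrak k \oplus \mathfrak v_\infty.
\]
By the remark following Proposition~\ref{prop_limits}, $\mathfrak h_\infty$ is automatically a Lie subalgebra of~$\mathfrak g$.

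Part~\textup{(\ref{thm_typeI_a})} is then immediate: $\mathfrak v_\infty = \mathfrak h_\infty \cap \mathfrak p_u$ is a $K$-stable subalgebra of the nilpotent subalgebra~$\mathfrak p_u$, hence a nilpotent ideal of~$\mathfrak h_\infty$, and $\mathfrak h_\infty / \mathfrak v_\infty \simeq \mathfrak k$ is reductive, so $(\mathfrak h_\infty)_u = \mathfrak v_\infty$, the subgroup $H_\infty$ is regularly embedded in~$P$, and $K$ is a Levi subgroup of~$H_\infty$. For part~\textup{(\ref{thm_typeI_b})}, chaining the $K$-module isomorphisms $\mathfrak p_u/\mathfrak v_\infty \simeq \mathfrak u_\infty^*$ (from the perfect pairing), $\mathfrak u_\infty \simeq \mathfrak u$ (from the last clause of Proposition~\ref{prop_limitI}), and $\mathfrak u \simeq (\mathfrak p_u/\mathfrak h_u)^*$ produces $\mathfrak p_u/(\mathfrak h_\infty)_u \simeq \mathfrak p_u/\mathfrak h_u$.

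For part~\textup{(\ref{thm_typeI_c})}, the key observation is that $\mathfrak g(-\mu) \subset \mathfrak h_\infty$ if and only if $\mathfrak g(-\mu) \perp \mathfrak u_\infty$, which is equivalent to the projection of $\mathfrak u_\infty$ onto $\mathfrak g(\mu)$ being zero. By the projection-theoretic description of each $\mathfrak u_\infty(\Omega)$ furnished by Proposition~\ref{prop_limitI}, this happens precisely when $\mu \notin \bigcup_{\Omega \in \widetilde \Psi} \overline \Omega$, which gives the first equality. Since the equivalence relation on active $C$-roots is defined purely through $K$-module isomorphism of the spaces $\mathfrak g(\mu)$, it coincides on $\Psi(H_\infty)$ with the restriction of the one on~$\Phi^+$; as each $\overline \Omega$ is a single such class and is nonempty (even for $\Omega \in \widetilde \Psi_2$, since then $|\Omega| \ge 2$), the equality $\widetilde \Psi(H_\infty) = \lbrace \overline \Omega \mid \Omega \in \widetilde \Psi \rbrace$ follows. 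I expect no genuine obstacle: once Proposition~\ref{prop_limitI} is in hand, the only point demanding a little care is verifying the dimension count in the orthogonal-complement computation, but this is forced by the nondegeneracy of the form on~$\mathfrak g$.
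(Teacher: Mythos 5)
Your argument is correct and is exactly the deduction the paper intends: the paper states Theorem~\ref{thm_typeI} as an immediate consequence of decomposition~(\ref{eqn_limitI}) without writing out the details, and your passage to orthogonal complements (giving $\mathfrak h_\infty = \mathfrak k \oplus \mathfrak v_\infty$, the identification $(\mathfrak h_\infty)_u = \mathfrak v_\infty$, and the projection criterion for $\mathfrak g(-\mu) \subset \mathfrak h_\infty$) supplies precisely those details. The points you flag — nondegeneracy of the pairing between $\mathfrak p_u$ and $\mathfrak p_u^+$, and nonemptiness of $\overline\Omega$ for $\Omega \in \widetilde\Psi_2$ via $|\Omega| \ge 2$ — are handled correctly.
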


We note that $H$ is spherical in~$G$ by \cite[Proposition~1.3(i)]{Bri90}; however, this can be verified directly via Theorem~\ref{thm_typeI}(\ref{thm_typeI_a},\,\ref{thm_typeI_b}) and Proposition~\ref{prop_S=K}.

Now put $N = N_G(H_\infty)^0$ for short.
From Proposition~\ref{prop_normalizer} we know that $N = D_\infty L' \rightthreetimes (H_\infty)_u$ for some connected subgroup $D_\infty \subset C$.
To describe $D_\infty$ more precisely, we consider the following sublattices of~$\mathfrak X(C)$:
\begin{gather*}
\Xi_\infty = \ZZ\lbrace \mu - \nu \mid \mu, \nu \in \Psi_0^{\max}\setminus \lbrace \lambda \rbrace \ \text{and} \ \mu \sim \nu \rbrace \subset \mathfrak X(C), \\
\Xi_\infty^\sat = \QQ\Xi_\infty \cap \mathfrak X(C).
\end{gather*}
The next lemma is obtained similarly to Lemma~\ref{lemma_XiAA0}.

\begin{lemma} \label{lemma_Xi0D0}
The following assertions hold.
\begin{enumerate}[label=\textup{(\alph*)},ref=\textup{\alph*}]
\item \label{lemma_Xi0D0_a}
$\Xi_\infty = \ZZ\lbrace \mu - \nu \mid \mu, \nu \in \Psi(H_\infty) \ \text{and} \ \mu \sim \nu \rbrace$.

\item \label{lemma_Xi0D0_b}
The group $D_\infty$ is the common kernel of all characters in~$\Xi_\infty^\sat$.
\end{enumerate}
\end{lemma}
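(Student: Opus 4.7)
The plan is to mirror the proof of Lemma~\ref{lemma_XiAA0} in the context of~$H_\infty$, exploiting the description of~$\Psi(H_\infty)$ and $\widetilde \Psi(H_\infty) = \lbrace \overline \Omega \mid \Omega \in \widetilde \Psi \rbrace$ supplied by Theorem~\ref{thm_typeI}(c). Since the equivalence relation~$\sim$ is defined purely in terms of the $K$-module structure of the subspaces~$\mathfrak g(\mu)$, and $K$ remains a Levi subgroup of~$H_\infty$ by Theorem~\ref{thm_typeI}(a), two elements $\mu, \nu \in \Psi(H_\infty)$ are equivalent for~$H_\infty$ if and only if they lie in the same class~$\overline \Omega$, equivalently in the same~$\Omega \in \widetilde \Psi$.

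For part~(a), the inclusion of $\Xi_\infty$ into the right-hand side is immediate: $\Psi_0^{\max} \setminus \lbrace \lambda \rbrace \subset \Psi(H_\infty)$, since for every $\Omega \in \widetilde \Psi_0^{\max}$ the set $\overline \Omega$ contains every element of~$\Omega$ except possibly~$\lambda$. For the reverse inclusion, I would take $\mu \ne \nu$ in some $\overline \Omega$, which forces $\Omega \in \widetilde \Psi_0$, and invoke the fixed $\theta_\Omega \in \ZZ^+\Phi^+$ and $\Omega' \in \widetilde \Psi_0^{\max}$ from the construction of the multiplicative degeneration in order to write $\mu - \nu = (\mu + \theta_\Omega) - (\nu + \theta_\Omega)$, with both summands lying in~$\Omega'$. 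A short case analysis then shows that neither summand equals~$\lambda$: if $\Omega \in \widetilde \Psi_1$ then $\lambda \notin \Omega + \theta_\Omega$ by the very definition of type~1, while if $\Omega \in \widetilde \Psi_2$ then $\mu, \nu \ne \lambda - \theta_\Omega$ because $\overline \Omega = \Omega \setminus \lbrace \lambda - \theta_\Omega \rbrace$. Both summands therefore lie in $\Omega' \setminus \lbrace \lambda \rbrace \subset \Psi_0^{\max} \setminus \lbrace \lambda \rbrace$ and remain equivalent, so $\mu - \nu \in \Xi_\infty$.

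For part~(b), I would apply Proposition~\ref{prop_normalizer} to~$H_\infty$ (using that $K$ is still a Levi of~$H_\infty$ and hence $Z_L(K^0)^0 = C$) to obtain $D_\infty = (C \cap N_L((\mathfrak h_\infty)_u))^0$. The key point is to identify $C \cap N_L((\mathfrak h_\infty)_u)$. Since $C$ already normalizes the Levi summand~$\mathfrak k$ of~$\mathfrak h_\infty$, this reduces to the condition that $C$ preserve each $K$-isotypic component $\mathfrak u_\infty(\Omega)$ of the multiplicity-free $K$-module $\mathfrak u_\infty = \mathfrak h_\infty^\perp \cap \mathfrak p_u^+$. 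Because $C$ commutes with~$K$ and $\mathfrak u_\infty(\Omega)$ is simple as a $K$-module, Schur's lemma forces any preserving element $c \in C$ to act on~$\mathfrak u_\infty(\Omega)$ by a scalar; reading this off the $C$-weight decomposition $\mathfrak u_\infty(\Omega) \subset \bigoplus_{\mu \in \overline \Omega} \mathfrak g(\mu)$ (on which $c$ acts via $\mu(c)$) yields $(\mu - \nu)(c) = 1$ for all $\mu, \nu \in \overline \Omega$. Hence $C \cap N_L((\mathfrak h_\infty)_u)$ is the common kernel of the lattice $\ZZ\lbrace \mu - \nu \mid \mu, \nu \in \Psi(H_\infty), \ \mu \sim \nu \rbrace$, and passing to the identity component~$D_\infty$ corresponds to passing to its saturation; part~(a) then replaces this lattice by~$\Xi_\infty$, so its saturation is~$\Xi_\infty^\sat$.

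The main delicate point I expect is the case analysis in part~(a): one must verify that the dropping of~$\lambda$ from exactly those $\overline \Omega$ with $\Omega \in \widetilde \Psi_2$ interacts consistently with the translation by each~$\theta_\Omega$, so as to keep both $\mu + \theta_\Omega$ and $\nu + \theta_\Omega$ away from~$\lambda$. The argument for part~(b) is then a routine application of Schur's lemma combined with the $C$-weight structure, essentially repeating the corresponding step from Lemma~\ref{lemma_XiAA0}(b).
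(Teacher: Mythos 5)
Your proposal is correct and follows essentially the same route as the paper, which proves this lemma ``similarly to Lemma~\ref{lemma_XiAA0}'': part~(\ref{lemma_Xi0D0_a}) by translating differences within each $\overline\Omega$ into $\Omega'$ via the fixed $\theta_\Omega$ from Remark~\ref{rem_important}(\ref{rem_important_c}) (your type-1/type-2 check that the translates avoid $\lambda$ is exactly the extra bookkeeping needed), and part~(\ref{lemma_Xi0D0_b}) by reading off $D_\infty = (C \cap N_L((\mathfrak h_\infty)_u))^0$ from Proposition~\ref{prop_normalizer} and the $C$-weight structure of the isotypic components $\mathfrak u_\infty(\Omega)$. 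No gaps.
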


Put $K_\infty = D_\infty \cdot L'$; this is a Levi subgroup of~$N$.

Let $\iota \colon \mathfrak X(T) \to \mathfrak X(T \cap K)$ and $\iota_\infty \colon \mathfrak X(T) \to \mathfrak X(T \cap K_\infty)$ be the character restriction maps.

Put $\mathfrak u_\infty = \bigoplus \limits_{\Omega \in \widetilde \Psi} \mathfrak u_\infty(\Omega)$ and note the $K$-module isomorphism $\mathfrak u_\infty^* \simeq \mathfrak p_u / (\mathfrak h_\infty)_u$.

\begin{theorem} \label{thm_Nprop_typeI}
The following assertions hold.
\begin{enumerate}[label=\textup{(\alph*)},ref=\textup{\alph*}]
\item \label{thm_Nprop_typeI_a}
$\dim G\mathfrak h_\infty = \dim G\mathfrak h - 1$.

\item \label{thm_Nprop_typeI_b}
$\QQ\Lambda(G/N) \cap \lbrace \widehat \mu \mid \mu \in \Psi \rbrace = \lbrace \widehat \mu \mid \mu \in \Psi(H_\infty) \rbrace$.
In particular, $\widehat \lambda \notin \QQ\Lambda(G/N)$ and $\widehat \mu \in \QQ\Lambda(G/N)$ for all $\mu \in \Psi_0^{\max} \setminus \lbrace \lambda \rbrace$.
\end{enumerate}
\end{theorem}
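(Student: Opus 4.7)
I will split the argument into (a) and (b). For (a), the key reduction is $\dim G\mathfrak h - \dim G\mathfrak h_\infty = \dim N - \dim H$, which holds because $N_G(H) = H$ and $N_G(\mathfrak h_\infty) = N_G(H_\infty)$ has identity component~$N$. Applying Proposition~\ref{prop_normalizer} to both $H$ and $H_\infty$, and using the $K$-module isomorphism $\mathfrak p_u/(\mathfrak h_\infty)_u \simeq \mathfrak p_u/\mathfrak h_u$ from Theorem~\ref{thm_typeI}(\ref{thm_typeI_b}), this difference reduces to $\dim D_\infty - \dim D^0$. Lemmas~\ref{lemma_XiAA0}(\ref{lemma_XiAA0_b}) and~\ref{lemma_Xi0D0}(\ref{lemma_Xi0D0_b}) express the latter as $\rk\Xi - \rk\Xi_\infty$. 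The linear independence of $\Psi_0^{\max}$ (Lemma~\ref{lemma_Psi0max_non-acute}) gives $\rk\Xi = |\Psi_0^{\max}| - |\widetilde\Psi_0^{\max}|$ by summing $|\Omega|-1$ over the equivalence classes; deleting $\lambda$ shrinks $\Omega_\lambda$ (of size~$\ge 2$) by one but leaves it nonempty, decreasing the rank by exactly one.

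For (b), by Proposition~\ref{prop_S=K} applied to $H_\infty$ (using Theorem~\ref{thm_typeI}(\ref{thm_typeI_a})), $\widehat\mu \in \QQ\Lambda(G/N) \Leftrightarrow \widehat\mu|_{T\cap K_\infty} \in \QQ\Lambda_{K_\infty}(\mathfrak u_\infty^*)$. The direction $\mu \in \Psi(H_\infty) \Rightarrow \widehat\mu \in \QQ\Lambda(G/N)$ is immediate: $\widehat\mu|_{T\cap K_\infty}$ is the highest $K_\infty$-weight of the simple submodule $\mathfrak u_\infty(\Omega_\mu) \subset \mathfrak u_\infty$. For the converse I will take $\mu \in \Psi \setminus \Psi(H_\infty)$, so $\mu = \lambda - \theta_\Omega$ for some $\Omega \in \widetilde\Psi_2$, pick any $\nu \in \overline{\Omega_\mu}$, and study $\eta := \widehat\mu|_{T \cap K_\infty} - \widehat\nu|_{T \cap K_\infty}$. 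Since $\mu \sim \nu$ yields the same $L'$-highest weight, $\eta$ sits purely in the $\mathfrak X(D_\infty)\otimes \QQ$ component of the direct sum
\[
\mathfrak X(T \cap K_\infty) \otimes \QQ = \mathfrak X(D_\infty) \otimes \QQ \oplus \mathfrak X(T \cap L') \otimes \QQ,
\]
and $\eta \ne 0$ because $\mu - \nu = \lambda - \kappa$ for $\kappa := \nu + \theta_\Omega \in \Omega_\lambda \setminus \{\lambda\}$, which by the linear independence of $\Psi_0^{\max}$ does not lie in $\QQ\Xi_\infty$.

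Assuming $\widehat\mu|_{T\cap K_\infty} \in \QQ\Lambda_{K_\infty}(\mathfrak u_\infty^*)$, so does $\eta$, and Proposition~\ref{prop_SR_of_SM} provides an expansion
\[
\eta = \sum_{\Omega'} c_{\Omega'}\, \widehat{\nu_{\Omega'}}|_{T \cap K_\infty} + \tau, \qquad \tau \in \QQ\Sigma_{K_\infty}(\mathfrak u_\infty^*).
\]
Restricting to $T \cap K$ kills $\eta$ (since $\mu$ and $\nu$ have the same $K$-module type), and using the analogous direct-sum decomposition on the $K$-side identified with the $K_\infty$-side via Proposition~\ref{prop_free_generators} forces every $c_{\Omega'} = 0$ and $\tau|_{T\cap K} = 0$. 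But $\tau \in \QQ\Sigma_{K_\infty}(\mathfrak u_\infty^*) \subset \QQ\Pi_L|_{T \cap K_\infty}$ is trivial on $D_\infty$ and therefore lies purely in the $\mathfrak X(T \cap L')\otimes\QQ$ component, so $\tau|_{T \cap K} = \tau$; thus $\tau = 0$ and hence $\eta = 0$, a contradiction. The main difficulty is confirming the compatibility of Proposition~\ref{prop_SR_of_SM}'s decomposition under the restriction $T \cap K_\infty \to T \cap K$, which Proposition~\ref{prop_free_generators} handles. The ``in particular'' claims are then automatic: $\lambda \notin \Psi(H_\infty)$ since $\overline{\Omega_\lambda} = \Omega_\lambda \setminus \{\lambda\}$, while for $\mu \in \Psi_0^{\max} \setminus \{\lambda\}$ either $\Omega_\mu \ne \Omega_\lambda$ (whence $\overline{\Omega_\mu} = \Omega_\mu \ni \mu$) or $\mu \in \Omega_\lambda \setminus \{\lambda\} = \overline{\Omega_\lambda}$, so $\mu \in \Psi(H_\infty)$ in either case.
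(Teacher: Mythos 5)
Your argument is correct, and for part~(a) it is essentially the paper's: both reduce $\dim G\mathfrak h - \dim G\mathfrak h_\infty$ to $\dim D_\infty - \dim D^0 = \rk\Xi - \rk\Xi_\infty = 1$; the paper records this as the splitting $\Xi = \Xi_\infty \oplus \ZZ(\mu - \lambda)$ for any $\mu \in \Omega_\lambda \setminus \lbrace \lambda \rbrace$, while you count $\rk\Xi = |\Psi_0^{\max}| - |\widetilde\Psi_0^{\max}|$, which is equivalent given Lemma~\ref{lemma_Psi0max_non-acute}. (A cosmetic point: you invoke $N_G(H)=H$ where the standing hypothesis is only $N_G(H)^0=H$; for the orbit-dimension count this is immaterial.) For part~(b) both proofs hinge on the same decisive element $\widehat\mu - \widehat\nu$ with $\nu \in \overline{\Omega_\mu}$, trivial on $T\cap K$ but nontrivial on $T\cap K_\infty$, yet certify non-membership differently. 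The paper computes both rational weight lattices outright, $\QQ\Lambda_G(G/H) = \QQ\widehat\Xi \oplus \QQ I$ and $\QQ\Lambda_G(G/N) = \QQ\widehat\Xi_\infty \oplus \QQ I$ for a common lift $I$ of the free monoid generators (via Proposition~\ref{prop_SM_WL_and_WM}), and reads off $\QQ\Lambda_G(G/H) = \QQ\Lambda_G(G/N) \oplus \QQ(\widehat\mu - \widehat\nu_{\Omega_\mu})$. You instead run a membership test by contradiction: you expand $\eta = \iota_\infty(\widehat\mu - \widehat\nu)$ in the direct sum $\QQ\Lambda_{K_\infty}(\mathfrak u_\infty^*) = \QQ\lbrace \iota_\infty(\widehat\nu_{\Omega'}) \rbrace \oplus \QQ\Sigma_{K_\infty}(\mathfrak u_\infty^*)$ of Proposition~\ref{prop_SR_of_SM}, restrict to $T \cap K$, and use the linear independence there together with the fact that spherical roots lie in $\QQ\Pi_L$ (hence are seen faithfully already on $T\cap L' \subset T\cap K$) to force $\eta = 0$, contradicting $(\mu-\nu)|_{D_\infty} = (\lambda-\kappa)|_{D_\infty} \ne 0$. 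Your route trades the explicit lattice description for Proposition~\ref{prop_SR_of_SM}; it is marginally less informative (the paper's version also exhibits the one-dimensional complement), but every step you flag as delicate — the compatibility of the two decompositions under restriction from $T\cap K_\infty$ to $T\cap K$, handled by Proposition~\ref{prop_free_generators} and Remark~\ref{rem_restriction}, and the nonvanishing of $\eta$ via the linear independence of $\Psi_0^{\max}$ — does go through, as do your derivations of the two ``in particular'' claims from $\overline{\Omega_\lambda} = \Omega_\lambda \setminus \lbrace \lambda \rbrace$.
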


\begin{proof}
(\ref{thm_Nprop_typeI_a})
Note that $\Xi = \Xi_\infty \oplus \ZZ(\mu - \lambda)$ for every $\mu \in \Omega_\lambda \setminus \lbrace \lambda \rbrace$.
Then $\dim D_\infty - \dim D^0 = 1$ and thus $\dim N - \dim H = 1$ as required.

(\ref{thm_Nprop_typeI_b})
Recall from Proposition~\ref{prop_limitI} that $\mathfrak u \simeq \mathfrak u_\infty$ as $K$-modules.
For every $\Omega \in \widetilde \Psi$ fix an element $\nu_\Omega \in \overline\Omega$.
Clearly, $\lbrace \iota_\infty(\widehat \nu_\Omega) \mid \Omega \in \widetilde \Psi \rbrace \subset F_{K_\infty}(\mathfrak u_\infty^*)$ and $\lbrace \iota(\widehat \nu_\Omega) \mid \Omega \in \widetilde \Psi \rbrace \subset F_{K}(\mathfrak u_\infty^*)$.
Choose a subset $I \subset \mathfrak X(T)$ such that $\lbrace \widehat \nu_\Omega \mid \Omega \in \widetilde \Psi \rbrace \subset I$ and $\left.\iota_\infty\right|_I$ is a bijection onto~$F_{K_\infty}(\mathfrak u_\infty^*)$.
Taking into account the inclusion $K \subset K_\infty$ and Proposition~\ref{prop_free_generators}(\ref{prop_free_generators_a}), we deduce that
$\left.\iota\right|_I$ is a bijection onto~$F_{K}(\mathfrak u_\infty^*)$.
Now consider the following two sublattices of~$\mathfrak X(T)$:
\begin{gather*}
\widehat \Xi = \ZZ \lbrace \widehat \mu - \widehat \nu \mid \mu, \nu \in \Psi_0^{\max} \ \text{and} \ \mu \sim \nu \rbrace, \\
\widehat \Xi_\infty = \ZZ\lbrace \widehat \mu - \widehat \nu \mid \mu, \nu \in \Psi_0^{\max}\setminus \lbrace \lambda \rbrace \ \text{and} \ \mu \sim \nu \rbrace.
\end{gather*}
Clearly, the restriction of $\widehat \Xi$ (resp.~$\widehat \Xi_\infty$) to $C$ is $\Xi$ (resp.~$\Xi_\infty$) and the restrictions of both $\widehat \Xi$, $\widehat \Xi_\infty$ to $T \cap L'$ are trivial.
Taking into account the decompositions
\begin{gather*}
\QQ\mathfrak X(T) \simeq \QQ\mathfrak X(C) \oplus \QQ\mathfrak X(T \cap L'), \\
\QQ\mathfrak X(T \cap K) \simeq \QQ\mathfrak X(D^0) \oplus \QQ\mathfrak X(T \cap L'), \\
\QQ\mathfrak X(T \cap K_\infty) \simeq \QQ\mathfrak X(D_\infty) \oplus \QQ\mathfrak X(T \cap L')
\end{gather*}
along with Lemmas~\ref{lemma_XiAA0}(\ref{lemma_XiAA0_b}) and \ref{lemma_Xi0D0}(\ref{lemma_Xi0D0_b}), we obtain
\begin{gather}
\QQ \Ker \iota = \QQ \widehat \Xi, \label{eqn_Ker_iota} \\
\QQ \Ker \iota_\infty = \QQ \widehat \Xi_\infty. \label{eqn_Ker_iota0}
\end{gather}
Then Proposition~\ref{prop_S=K} implies
\begin{gather}
\QQ\Lambda_G(G/H) = \QQ\widehat \Xi \oplus \QQ I, \label{eqn_WL} \\
\QQ\Lambda_G(G/N) = \QQ\widehat \Xi_\infty \oplus \QQ I. \label{eqn_WL0}
\end{gather}
Now take any $\mu \in \Psi(H_\infty)$.
Then $\mu - \nu_{\Omega_\mu} \in \Xi_\infty$ by Lemma~\ref{lemma_Xi0D0}(\ref{lemma_Xi0D0_a}), hence $\widehat \mu - \widehat \nu_{\Omega_\mu} \in \Ker \iota_\infty$.
As $\widehat\nu_{\Omega_\mu} \in I$, formulas (\ref{eqn_WL0}) and (\ref{eqn_Ker_iota0}) imply $\widehat \mu \in \QQ\Lambda_G(G/N)$.
Next take any $\mu \in \Psi \setminus \Psi(H_\infty)$.
Then $\mu + \theta_{\Omega_\mu} = \lambda$ and the element $\nu = \nu_{\Omega_\mu} + \theta_{\Omega_\mu}$ belongs to~$\Omega_\lambda$, hence the definitions of $\widehat \Xi$ and $\widehat \Xi_\infty$ imply $\widehat \Xi = \widehat \Xi_\infty \oplus \ZZ(\widehat \lambda - \widehat \nu)$.
Now observe that the restrictions of $\widehat \lambda - \widehat \nu$ and $\widehat \mu - \widehat \nu_{\Omega_\mu}$ to both $C$ and $T \cap L'$ coincide, hence $\widehat \lambda - \widehat \nu = \widehat \mu - \widehat \nu_{\Omega_\mu}$ and $\widehat \Xi = \widehat \Xi_\infty \oplus \ZZ(\widehat \mu - \widehat \nu_{\Omega_\mu})$.
Comparing this with~(\ref{eqn_WL}) and~(\ref{eqn_WL0}) we find that $\QQ\Lambda_G(G/H) = \QQ\Lambda_G(G/N) \oplus \QQ(\widehat \mu - \widehat \nu_{\Omega_\mu})$.
Since $\widehat \nu_{\Omega_\mu} \in I \subset \QQ\Lambda_G(G/N)$, it follows that $\widehat \mu \notin \QQ\Lambda_G(G/N)$.
\end{proof}

\begin{corollary}
For different choices of $\lambda \in \Psi^{\max}_0$, the resulting algebras $\mathfrak h_\infty$ belong to different $G$-orbits of codimension~$1$ in~$X$.
\end{corollary}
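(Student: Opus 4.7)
The plan is to argue by contradiction, leveraging the sharp characterization of $\QQ\Lambda_G(G/N)$ supplied by Theorem~\ref{thm_Nprop_typeI}(\ref{thm_Nprop_typeI_b}). For two distinct elements $\lambda_1, \lambda_2 \in \Psi_0^{\max}$, let $\mathfrak h_\infty^{(1)}, \mathfrak h_\infty^{(2)}$ denote the corresponding multiplicative degenerations of~$\mathfrak h$, and put $N^{(i)} = N_G(\mathfrak h_\infty^{(i)})^0$ for $i=1,2$. Theorem~\ref{thm_Nprop_typeI}(\ref{thm_Nprop_typeI_a}) already gives $\dim G\mathfrak h_\infty^{(i)} = \dim G\mathfrak h - 1$, so both orbits have codimension~$1$ in~$X$, and the only task is to distinguish them.

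First I would observe that if $G\mathfrak h_\infty^{(1)} = G\mathfrak h_\infty^{(2)}$, then the $G$-stabilizers of $\mathfrak h_\infty^{(1)}$ and $\mathfrak h_\infty^{(2)}$ in~$\Gr_{\dim\mathfrak h}(\mathfrak g)$ are conjugate, hence so are their identity components $N^{(1)}$ and~$N^{(2)}$. In particular the two homogeneous spaces $G/N^{(1)}$ and $G/N^{(2)}$ are $G$-equivariantly isomorphic, which forces $\QQ\Lambda_G(G/N^{(1)}) = \QQ\Lambda_G(G/N^{(2)})$.

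Next I would apply Theorem~\ref{thm_Nprop_typeI}(\ref{thm_Nprop_typeI_b}) to the degeneration defined by~$\lambda_1$: the ``In particular'' clause yields $\widehat{\lambda_1} \notin \QQ\Lambda_G(G/N^{(1)})$, while $\widehat{\lambda_2} \in \QQ\Lambda_G(G/N^{(1)})$ since $\lambda_2 \in \Psi_0^{\max} \setminus \lbrace \lambda_1 \rbrace$. Applying the same theorem to the degeneration defined by~$\lambda_2$ and using the symmetric fact $\lambda_1 \in \Psi_0^{\max} \setminus \lbrace \lambda_2 \rbrace$ gives $\widehat{\lambda_1} \in \QQ\Lambda_G(G/N^{(2)})$. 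Together with the equality of the two $\QQ$-spans derived in the previous step, this produces the contradiction $\widehat{\lambda_1} \in \QQ\Lambda_G(G/N^{(1)})$, completing the proof.

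No serious obstacle is expected, since all heavy lifting has already been done in Theorem~\ref{thm_Nprop_typeI}. The only substantive point is the routine passage from equality of $G$-orbits to equality of the weight lattices of the corresponding homogeneous spaces, which is an intrinsic invariant of the orbit.
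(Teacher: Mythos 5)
Your proof is correct and is precisely the intended argument: the paper states this as an immediate corollary of Theorem~\ref{thm_Nprop_typeI}(\ref{thm_Nprop_typeI_b}) without writing out the deduction, and your write-up (equal orbits force conjugate normalizers, hence equal spans $\QQ\Lambda_G(G/N)$, contradicting $\widehat{\lambda_1}\notin\QQ\Lambda_G(G/N^{(1)})$ versus $\widehat{\lambda_1}\in\QQ\Lambda_G(G/N^{(2)})$) is exactly the step the paper leaves implicit.
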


\begin{remark}
As was pointed out by a referee, the setup of multiplicative degenerations appeared earlier (but was used differently) in~\cite[proof of Theorem~5.3.1]{BP14}.
\end{remark}

\subsection{Additive degenerations}
\label{subsec_degen_add}

This type of degenerations applies when $\Psi_0 = \varnothing$ and $\Psi \ne \varnothing$, which is assumed in what follows.
As $N_G(H)^0 = H$, one has $K = L$ according to the discussion in~\S\,\ref{subsec_normalizer}.
Then decomposition~(\ref{eqn_h^perp_refined}) takes the form
\begin{equation} \label{eqn_h_perpII}
\mathfrak h^\perp = \mathfrak p_u \oplus \bigoplus \limits_{\mu \in \Psi} \mathfrak g(\mu).
\end{equation}

The construction of an additive degeneration depends on the choice of an active $C$-root $\lambda \in \Psi$, which is assumed to be fixed throughout this subsection.

Put $\delta = \widehat \lambda$ and let $\mathfrak s(\delta) \simeq \mathfrak{sl}_2$ be the subalgebra of~$\mathfrak g$ spanned by $e_\delta$, $h_\delta$, and $e_{-\delta}$.
Consider the one-parameter unipotent subgroup $\phi \colon \CC \to G$ given by $\phi(t) = \exp(te_{-\delta})$.
For every $t \in \CC$, we put $\mathfrak h_t = \phi(t)\mathfrak h$.
According to Proposition~\ref{prop_limits}, there exists $\lim \limits_{t \to \infty} \mathfrak h_t$; we denote it by~$\mathfrak h_\infty$.
In what follows, $\mathfrak h_\infty$ is referred to as the \textit{additive degeneration} of~$\mathfrak h$ defined by~$\lambda$.

Note that $\mathfrak h_t^\perp = \phi(t) \mathfrak h^\perp$ and $\mathfrak h_\infty^\perp = \lim \limits_{t \to \infty} \mathfrak h_t^\perp$.

To describe the subalgebra $\mathfrak h_\infty$, we introduce the set \[
Y(\delta) = \lbrace \alpha \in \Delta \mid \alpha+ \delta \notin \Delta \rbrace.
\]
For every $\alpha \in Y(\delta)$, let $V(\alpha) \subset \mathfrak g$ be the $\mathfrak s(\delta)$-submodule generated by~$e_\alpha$.
The following properties of $V(\alpha)$ are straightforward:
\begin{itemize}
\item
$V(\alpha)$ is a simple $\mathfrak s(\delta)$-module with highest weight $\delta^\vee(\alpha)$;

\item
$e_\alpha$ is a highest-weight vector of~$V(\alpha)$;

\item
$V(\alpha)$ is $T$-stable.
\end{itemize}
Then there is the following decomposition of $\mathfrak g$ into a direct sum of $\mathfrak s(\delta)$-submodules:
\begin{equation} \label{eqn_g_decompII}
\mathfrak g = (h_\delta^\perp \cap \mathfrak t) \oplus \bigoplus \limits_{\alpha \in Y(\delta)} V(\alpha).
\end{equation}
Comparing this with~(\ref{eqn_h_perpII}) we find that
\begin{equation} \label{eqn_h^perp_decompII}
\mathfrak h^\perp = \bigoplus \limits_{\alpha \in Y(\delta)} (\mathfrak h^\perp \cap V(\alpha)).
\end{equation}
By Proposition~\ref{prop_limits}, for every $\alpha \in Y(\delta)$ there exists $\lim \limits_{t \to \infty} (\mathfrak h^\perp_t \cap V(\alpha))$, which we shall denote by $(\mathfrak h^\perp \cap V(\alpha))_\infty$.
Then decompositions~(\ref{eqn_g_decompII}) and~(\ref{eqn_h^perp_decompII}) imply

\begin{proposition} \label{prop_h_infty}
There is the decomposition
\[
\mathfrak h_\infty^\perp = \bigoplus \limits_{\alpha \in Y(\delta)} (\mathfrak h^\perp \cap V(\alpha))_\infty.
\]
\end{proposition}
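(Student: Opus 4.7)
The plan is to exploit the $\phi(t)$-stability of each subspace $V(\alpha)$ to transport decomposition~(\ref{eqn_h^perp_decompII}) through the degeneration and then pass to the limit term by term.

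First, I would note that since each $V(\alpha)$ is by construction an $\mathfrak s(\delta)$-submodule of~$\mathfrak g$ and $\phi(t) = \exp(te_{-\delta})$ with $e_{-\delta} \in \mathfrak s(\delta)$, each $V(\alpha)$ is $\phi(t)$-stable for every $t \in \CC$. Applying $\phi(t)$ to~(\ref{eqn_h^perp_decompII}) therefore gives
\[
\mathfrak h_t^\perp = \phi(t)\mathfrak h^\perp = \bigoplus_{\alpha \in Y(\delta)} \phi(t)(\mathfrak h^\perp \cap V(\alpha)),
\]
and each summand $\phi(t)(\mathfrak h^\perp \cap V(\alpha))$ is a $k_\alpha$-dimensional subspace of $V(\alpha)$, where $k_\alpha = \dim(\mathfrak h^\perp \cap V(\alpha))$.

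Next, for each $\alpha \in Y(\delta)$, the group $\phi(\CC)$ acts on the complete variety $\Gr_{k_\alpha}(V(\alpha))$, so Proposition~\ref{prop_limits} guarantees the existence of the limit $(\mathfrak h^\perp \cap V(\alpha))_\infty \subset V(\alpha)$ as a $k_\alpha$-dimensional subspace.

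Finally, I would invoke the fact that the direct-sum map
\[
\prod_{\alpha \in Y(\delta)} \Gr_{k_\alpha}(V(\alpha)) \longrightarrow \Gr_{\dim \mathfrak h^\perp}(\mathfrak g), \qquad (U_\alpha)_\alpha \mapsto \bigoplus_\alpha U_\alpha,
\]
is a morphism of algebraic varieties (indeed a closed embedding). Since $\mathfrak h_t^\perp$ is the image of the tuple $(\phi(t)(\mathfrak h^\perp \cap V(\alpha)))_\alpha$ under this morphism, continuity of the morphism together with the fact that the source is complete yields
\[
\mathfrak h_\infty^\perp = \lim_{t \to \infty} \mathfrak h_t^\perp = \bigoplus_{\alpha \in Y(\delta)} (\mathfrak h^\perp \cap V(\alpha))_\infty,
\]
which is exactly the claimed decomposition. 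There is no serious obstacle here: the entire content of the argument is the stability of the $V(\alpha)$ under $\mathfrak s(\delta)$, which is built into their very definition, plus the standard compatibility of limits in a Grassmannian with ambient direct-sum decompositions.
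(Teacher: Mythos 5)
Your argument is correct and follows the same route as the paper, which simply asserts that decompositions~(\ref{eqn_g_decompII}) and~(\ref{eqn_h^perp_decompII}) imply the claim; you have supplied the details (the $\phi(t)$-stability of each $V(\alpha)$, so that $\phi(t)(\mathfrak h^\perp \cap V(\alpha)) = \mathfrak h_t^\perp \cap V(\alpha)$, and the compatibility of limits in the Grassmannian with the fixed direct-sum decomposition of~$\mathfrak g$). Nothing further is needed.
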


For every $\alpha \in Y(\delta)$ the limit $(\mathfrak h^\perp \cap V(\alpha))_\infty$ is determined using Proposition~\ref{prop_limitII_prelim}.
Since the subspace $\mathfrak h^\perp \cap V(\alpha) \subset V(\alpha)$ is $h_\delta$-stable, $(\mathfrak h^\perp \cap V(\alpha))_\infty$ is described in terms of shifting $h_\delta$-weight subspaces in $\mathfrak h^\perp \cap V(\alpha)$ as explained in the paragraph after Proposition~\ref{prop_limitII_prelim}.
We shall use this description in our analysis of the structure of~$\mathfrak h_\infty$.

To state the main properties of $\mathfrak h_\infty$, we apply the construction of \S\,\ref{subsec_char_of_SM} with $G = L$, $V = \mathfrak u$, and $\omega = \delta$.
Put $Q = \lbrace g \in L \mid \Ad(g) \mathfrak g_{\delta} = \mathfrak g_{\delta} \rbrace$; this is a parabolic subgroup of~$L$ containing $B_L$.
Let $Q^- \supset B^- \cap L$ be the parabolic subgroup of~$L$ opposite to~$Q$.
Let $M$ be the standard Levi subgroup of~$Q$ and let $M_0$ be the stabilizer of $e_\delta$ in~$M$.
Put $\Delta^+_L(\delta) = \lbrace \alpha \in \Delta_L^+ \mid (\alpha, \delta) > 0 \rbrace$, so that $\Delta^+_L(\delta) = \Delta^+_L \setminus \Delta^+_M$.
Regard the element $e_{-\delta}$ as a linear function on~$\mathfrak u$ via the fixed $G$-invariant inner product on~$\mathfrak g$.
Put
\[
\widetilde {\mathfrak u} = \lbrace x \in \mathfrak u \mid (\mathfrak q_u e_{-\delta})(x) = 0 \rbrace
\]
and $\mathfrak u_0 = \widetilde {\mathfrak u} \cap \Ker e_{-\delta}$.
Note that
\[
\mathfrak u_0 = \bigoplus \limits_{\substack{\alpha \in \Delta^+ \colon \overline \alpha \in \Psi, \\ \delta - \alpha \notin \Delta^+_L(\delta) \cup \lbrace 0 \rbrace}} \mathfrak g_{\alpha}.
\]
Then there is the decomposition $\mathfrak u = \mathfrak g_{\delta} \oplus [\mathfrak q^-_u, \mathfrak g_\delta] \oplus \mathfrak u_0$ into a direct sum of $M$-modules.

We consider the decomposition $\mathfrak h_\infty^\perp = (\mathfrak h_\infty^\perp \cap \mathfrak p_u) \oplus (\mathfrak h_\infty^\perp \cap \mathfrak l) \oplus (\mathfrak h_\infty^\perp \cap \mathfrak p^+_u)$.
Put $\mathfrak u_\infty = \mathfrak h_\infty^\perp \cap \mathfrak p_u^+$ for short.

\begin{proposition} \label{prop_h0_typeII}
The following assertions hold.
\begin{enumerate}[label=\textup{(\alph*)},ref=\textup{\alph*}]
\item \label{prop_h0_typeII_a}
$\mathfrak h_\infty^\perp \cap \mathfrak p_u = \mathfrak p_u$.

\item \label{prop_h0_typeII_b}
$\mathfrak h_\infty^\perp \cap \mathfrak l = \mathfrak q_u^- \oplus \langle h_\delta \rangle$.

\item \label{prop_h0_typeII_c}
The subspace $\mathfrak u_\infty$ is $M$-stable and there is an $M_0$-module isomorphism $\mathfrak u_0 \simeq \mathfrak u_\infty$. Moreover, under this isomorphism each highest-weight vector in $\mathfrak u_0$ of $T$-weight~$\alpha$ corresponds to a highest-weight vector in $\mathfrak u_\infty$ of $T$-weight $\alpha - k_\alpha \delta$ for some $k_\alpha \in \ZZ^+$.
\end{enumerate}
\end{proposition}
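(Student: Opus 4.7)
The overall strategy is to reduce every assertion to a weight-by-weight analysis. By Proposition~\ref{prop_h_infty} one has $\mathfrak{h}_\infty^\perp=\bigoplus_{\alpha\in Y(\delta)}(\mathfrak{h}^\perp\cap V(\alpha))_\infty$, and each $\mathfrak{h}^\perp\cap V(\alpha)$ is the direct sum of those $T$-weight spaces $\mathfrak{g}_\beta$ of $V(\alpha)$ for which $\beta\in\Delta^-\setminus\Delta_L^-$ or $\beta\in\Delta^+\setminus\Delta_L^+$ with $\overline\beta\in\Psi$; in particular this intersection is $h_\delta$-stable. The discussion following Proposition~\ref{prop_limitII_prelim} then identifies $(\mathfrak{h}^\perp\cap V(\alpha))_\infty$ with the sum of the $k(\alpha)$ lowest $T$-weight spaces of $V(\alpha)$, where $k(\alpha)=\dim(\mathfrak{h}^\perp\cap V(\alpha))$. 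Part~(\ref{prop_h0_typeII_a}) is then immediate: $e_{-\delta}\in\mathfrak{p}_u$, so $\mathfrak{p}_u$ is $\ad(e_{-\delta})$-invariant (as it is a subalgebra of~$\mathfrak g$) and hence $\phi(t)$-invariant, which gives $\mathfrak{p}_u\subset\mathfrak{h}_t^\perp$ for every~$t$ and therefore $\mathfrak{p}_u\subset\mathfrak{h}_\infty^\perp$; the reverse inclusion is automatic.

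For part~(\ref{prop_h0_typeII_b}) I would split the $\alpha\in Y(\delta)$ into three types. For $\alpha=\delta$ the explicit computation $\mathfrak{h}^\perp\cap V(\delta)=\mathfrak{g}_\delta\oplus\mathfrak{g}_{-\delta}$ shifts to $\mathfrak{g}_{-\delta}\oplus\langle h_\delta\rangle$, contributing exactly the summand $\langle h_\delta\rangle$. For $\alpha\in\Delta_L\cap Y(\delta)$ with $p=\delta^\vee(\alpha)>0$ the subspace $\mathfrak{h}^\perp\cap V(\alpha)$ equals $\bigoplus_{j=1}^{p}\mathfrak{g}_{\alpha-j\delta}$ (each weight has $C$-weight $-j\lambda\in\Phi^-$ and so lies in $\mathfrak{p}_u$), and these already occupy the bottom $p$ positions of $V(\alpha)$, so no $\mathfrak l$-weight is produced. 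The remaining case is $\alpha\in Y(\delta)\setminus\Delta_L$: here a weight of $V(\alpha)$ can lie in $\Delta_L$ only when $\overline\alpha=n\lambda$ for some integer $n\ge 1$, and the unique such weight is $\alpha-n\delta$. Since $\delta$ is the highest weight of $\mathfrak{g}(\lambda)$, for any $\gamma\in\Delta_L^+$ one has $\delta+\gamma\notin\Delta$ (else $\delta+\gamma$ would be a weight of $\mathfrak g(\lambda)$ exceeding $\delta$), and an application of $\mathfrak{sl}_2$-theory to the $\gamma$-string through $\delta$ shows that $\delta-\gamma\in\Delta$ iff $\gamma\in\Delta_L^+(\delta)$; combining these observations identifies $\gamma:=n\delta-\alpha\in\Delta_L^+$ and shows that the assignment $\gamma\mapsto\alpha$ is a bijection between $\Delta_L^+(\delta)$ and the relevant $\alpha$'s. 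A short bookkeeping argument using $\lambda\in\Psi$ then gives $k(\alpha)\ge m+1$, where $m$ is the number of positions below $-\gamma$ in $V(\alpha)$, which places $\mathfrak{g}_{-\gamma}$ in the bottom $k(\alpha)$ positions; conversely, every $\gamma\in\Delta_L^+\setminus\Delta_L^+(\delta)$ satisfies $-\gamma+\delta\notin\Delta$, so $-\gamma$ is itself the top of its $\delta$-string and is excluded from the shifted subspace. Summing the three types yields $\mathfrak{q}_u^-\oplus\langle h_\delta\rangle$.

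For part~(\ref{prop_h0_typeII_c}) the $M$-stability of $\mathfrak{u}_\infty$ reduces to that of $\mathfrak{h}_\infty$: since $M$ stabilizes $\langle e_\delta\rangle$, invariance of the fixed $G$-invariant form on~$\mathfrak g$ forces $M$ to act on $\langle e_{-\delta}\rangle$ by the inverse character, so $m\phi(t)m^{-1}=\phi(c(m)^{-1}t)$ for a character $c$ of~$M$; combined with $M\subset L=K$ normalizing~$\mathfrak h$, this yields $m\mathfrak{h}_t=\mathfrak{h}_{c(m)^{-1}t}$ and therefore $m\mathfrak{h}_\infty=\mathfrak{h}_\infty$ in the limit. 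The $M_0$-module isomorphism $\mathfrak{u}_0\simeq\mathfrak{u}_\infty$ is then produced by matching highest-weight vectors: for every highest-weight vector of $\mathfrak{u}_0$ of $T$-weight $\alpha$ (so $\overline\alpha\in\Psi$, $\alpha\ne\delta$, $\delta-\alpha\notin\Delta_L^+(\delta)$), the shifted subspace $(\mathfrak{h}^\perp\cap V(\widetilde\alpha))_\infty$, with $\widetilde\alpha$ the top of the $\delta$-string through $\alpha$, contains a $B_M$-semi-invariant vector of $T$-weight $\alpha-k_\alpha\delta$ for a suitable $k_\alpha\in\ZZ^+$; since $M_0$ acts trivially on $\mathfrak{g}_\delta$, the characters $\alpha$ and $\alpha-k_\alpha\delta$ coincide on $T\cap M_0$, so matching these vectors together with the multiplicity-freeness of $\mathfrak u$ as a $K$-module assembles into the desired $M_0$-equivariant isomorphism. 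The main obstacles, in my view, are the combinatorial bookkeeping for~(\ref{prop_h0_typeII_b}), and in~(\ref{prop_h0_typeII_c}) the precise identification of the shift $k_\alpha$ together with the verification that the weight-space matching truly extends to an $M_0$-equivariant map of modules rather than a mere bijection of weight spaces.
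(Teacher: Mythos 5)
Parts (\ref{prop_h0_typeII_a}) and (\ref{prop_h0_typeII_b}) of your argument are correct and essentially coincide with the paper's proof; you organize (\ref{prop_h0_typeII_b}) by $\delta$-strings (elements of $Y(\delta)$) while the paper organizes it by the target root spaces in $\mathfrak l$, but the computations are the same: the only weights of $\mathfrak u$ that can shift into $\mathfrak l$ are $\delta$ (landing on $h_\delta$) and the weights of $[\mathfrak q_u^-,\mathfrak g_\delta]\subset\mathfrak g(\lambda)$ (landing on $\mathfrak q_u^-$). Your derivation of the $M$-stability of $\mathfrak u_\infty$ in (\ref{prop_h0_typeII_c}) from $m\,\phi(t)\,m^{-1}=\phi(\delta(m)^{-1}t)$ and $M\subset L=K$ is a clean alternative to what the paper leaves implicit, and it is valid.

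The genuine gap is the remainder of (\ref{prop_h0_typeII_c}). You assert that matching $B_M$-semi-invariant vectors of weights $\alpha$ and $\alpha-k_\alpha\delta$, ``together with the multiplicity-freeness of $\mathfrak u$,'' assembles into an $M_0$-module isomorphism, and you yourself flag that you have not verified that the weight-space matching is more than a bijection of weight spaces. This is exactly the point that needs a proof: $M$-stability of $\mathfrak u_\infty$ plus a dimension count does not show that each simple $M$-submodule $\mathfrak u_0(\alpha)\subset\mathfrak u_0$ shifts \emph{as a block} by one and the same multiple $k\delta$; a priori different $T$-weight spaces of $\mathfrak u_0(\alpha)$ could shift by different multiples of $\delta$, in which case the limit would not be $M_0$-isomorphic to $\mathfrak u_0(\alpha)$ and the highest-weight statement would fail. (Note also that your intermediate claim --- that $(\mathfrak h^\perp\cap V(\widetilde\alpha))_\infty$ contains a $B_M$-semi-invariant vector --- is not automatic, since the highest-weight vector of the simple $M$-constituent of $\mathfrak u_\infty$ meeting $V(\widetilde\alpha)$ need not lie in $V(\widetilde\alpha)$.) The paper closes this with a specific mechanism: since $\mathfrak m_0$ and $\mathfrak s(\delta)$ commute, the $(\mathfrak m_0\oplus\mathfrak s(\delta))$-submodule $W$ generated by $e_\beta$ (with $\beta\in Y(\delta)$, $\mathfrak g_\alpha\subset V(\beta)$) is isomorphic to $\mathfrak u_0(\alpha)\otimes V(\beta)$; because $\mathfrak h^\perp$ is $M_0$-stable, $\mathfrak h^\perp\cap W$ has the form $\mathfrak u_0(\alpha)\otimes U$ with $U\subset V(\beta)$ an $h_\delta$-stable subspace, and Proposition~\ref{prop_limitII_prelim} applied to $U$ then forces every $\mathfrak g_\gamma\subset\mathfrak u_0(\alpha)$ to shift to $\mathfrak g_{\gamma-k\delta}$ with the same $k$. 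You need this (or an equivalent) argument to complete (\ref{prop_h0_typeII_c}).
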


\begin{proof}
(\ref{prop_h0_typeII_a})
Since $[\mathfrak g_{-\delta},\mathfrak p_u] \subset \mathfrak p_u$, it follows that every root subspace $\mathfrak g_\alpha \subset \mathfrak p_u$ is shifted to itself under the degeneration, which implies the claim.

(\ref{prop_h0_typeII_b})
Observe that $\delta \in Y(\delta)$, $V(\delta) = \mathfrak s(\delta)$, and $V(\delta) \cap \mathfrak h^\perp = \mathfrak g_{-\delta} \oplus \mathfrak g_{\delta}$, which yields $V(\delta) \cap \mathfrak h_\infty^\perp = \mathfrak g_{-\delta} \oplus \langle h_\delta \rangle$.
As $\mathfrak s(\delta)$ acts trivially on $\mathfrak h_\delta^\perp \cap \mathfrak t$ and the latter subspace has zero intersection with~$\mathfrak h^\perp$, we get $\mathfrak h_\infty^\perp \cap \mathfrak t = \langle h_\delta \rangle$.
Now take any $\alpha \in \Delta_L$ and recall from~(\ref{eqn_h_perpII}) that $\mathfrak g_\alpha \notin \mathfrak h^\perp$.
If $(\alpha, \delta) \ge 0$ then $\mathfrak g_\alpha \subset \mathfrak q$ and $[\mathfrak g_\alpha, \mathfrak g_\delta] = 0$ as $\delta$ is the highest weight of~$\mathfrak g(\lambda)$.
Consequently, $\alpha \in Y(\delta)$ and hence no root subspace in $\mathfrak h^\perp$ shifts to~$\mathfrak g_{\alpha}$ under the degeneration.
If $(\alpha, \delta) < 0$ then $\mathfrak g_\alpha \subset \mathfrak q_u^-$, $\alpha + \delta \in \Delta$, and $\mathfrak g_{\alpha+\delta} \subset [\mathfrak q_u^-, \mathfrak g_\delta] \subset \mathfrak g(\lambda) \subset \mathfrak u$.
Since $[\mathfrak g_\alpha, \mathfrak g_{-\delta}] \subset \mathfrak p_u$, it follows that each nonzero subspace $\mathfrak g_{\alpha - k\delta}$ with $k \ge 1$ shifts to itself under the degeneration and hence $\mathfrak g_{\alpha+\delta}$ shifts to~$\mathfrak g_{\alpha}$.
The claim follows.

(\ref{prop_h0_typeII_c})
As a byproduct of the above discussion, we obtain that the only root subspaces in $\mathfrak u$ whose shift under the degeneration belongs to $\mathfrak p_u \oplus \mathfrak l$ are those in $\mathfrak g_\delta \oplus [\mathfrak q_u^-,\mathfrak g_\delta]$.
Consequently, for every $\alpha \in \Delta^+$ with $\mathfrak g_\alpha \subset \mathfrak u_0$ the subspace $\mathfrak g_\alpha$ shifts to some root subspace in $\mathfrak p^+_u$ under the degeneration.
Let $\alpha \in \Delta^+$ be the highest weight of a simple $M$-submodule $\mathfrak u_0(\alpha) \subset \mathfrak u_0$ and let $k \in \ZZ^+$ be such that $\mathfrak g_\alpha$ shifts to $\mathfrak g_{\alpha - k\delta}$.
Choose $\beta \in Y(\delta)$ such that $\mathfrak g_\alpha \subset V(\beta)$.
Note that the subalgebras $\mathfrak m_0$ and $\mathfrak s(\delta)$ commute, therefore the element $e_\beta$ is a highest-weight vector of an $(\mathfrak m_0 \oplus \mathfrak s(\delta))$-module $W \subset \mathfrak g$ isomorphic to $\mathfrak u_0(\alpha) \otimes V(\beta)$.
Note that $\mathfrak u_0(\alpha)  \subset W$, $\mathfrak g_{\alpha - k\delta} \subset W$, and $e_{\alpha - k\delta}$ is a highest-weight vector of a simple $M_0$-submodule $\mathfrak u_0(\alpha - k\delta) \subset W$.
Since $\mathfrak h^\perp$ is $M_0$-stable, it follows that for every $\gamma \in \Delta^+$ with $\mathfrak g_\gamma \subset \mathfrak u_0(\alpha)$ the subspace $\mathfrak g_\gamma$ shifts to $\mathfrak g_{\gamma - k\delta} \in \mathfrak u_0(\alpha - k \delta)$ under the degeneration, which implies all the claims.
\end{proof}

Put $R = Q^- \rightthreetimes P_u$; then $R$ is a standard parabolic subgroup of~$G$ containing $B^-$ and having $M$ as a Levi subgroup.
Let $H_\infty \subset G$ be the connected subgroup with Lie algebra~$\mathfrak h_\infty$ and consider the subgroup $N = N_G(H_\infty)^0$.
We note that $H_\infty$ is spherical in~$G$ by \cite[Proposition~1.3(i)]{Bri90}, although this can be verified directly via Proposition~\ref{prop_S=K} and Theorems~\ref{thm_typeII}(\ref{thm_typeII_a}) and~\ref{thm_sph_modules_imp}(\ref{thm_sph_modules_imp_a}).

\begin{theorem} \label{thm_typeII}
The following assertions hold.
\begin{enumerate}[label=\textup{(\alph*)},ref=\textup{\alph*}]
\item \label{thm_typeII_a}
The subgroup $H_\infty$ is regularly embedded in $R$, $M_0$ is a Levi subgroup of~$H_\infty$, and $\mathfrak r_u / (\mathfrak h_\infty)_u \simeq \mathfrak u_\infty^*$ as $M_0$-modules.

\item \label{thm_typeII_b}
$N = M \rightthreetimes (H_\infty)_u$.

\item \label{thm_typeII_c}
$\dim G\mathfrak h_\infty = \dim G\mathfrak h - 1$.

\item \label{thm_typeII_d}
$\Lambda_G(G/H) = \Lambda_G(G/N) \oplus \ZZ\delta$.
In particular, $\delta \notin \ZZ\Lambda_G(G/N)$.
\end{enumerate}
\end{theorem}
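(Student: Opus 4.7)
The plan is to deduce all four parts from Proposition~\ref{prop_h0_typeII} by taking orthogonal complements, and then to invoke the structural results of~\S\S\,\ref{subsec_norm_SS}--\ref{subsec_gen_SS}. For part~(\ref{thm_typeII_a}), I will use that the fixed $G$-invariant form pairs $\mathfrak{p}_u$ nondegenerately with $\mathfrak{p}_u^+$, is nondegenerate on~$\mathfrak{l}$, and makes $\mathfrak{p}_u$, $\mathfrak{l}$, $\mathfrak{p}_u^+$ pairwise compatible modulo their natural pairings. Combined with Proposition~\ref{prop_h0_typeII}, this yields
\[
\mathfrak{h}_\infty = \mathfrak{m}_0 \oplus \mathfrak{q}_u^- \oplus (\mathfrak{u}_\infty^\perp \cap \mathfrak{p}_u),
\]
where the first summand equals $h_\delta^\perp \cap \mathfrak{m} = \mathfrak{m}_0$ because $\delta$ is trivial on $\mathfrak{m}'$ and corresponds under the form to~$h_\delta$ on~$\mathfrak{z}(\mathfrak{m})$. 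Since $\mathfrak{q}_u^- \oplus (\mathfrak{u}_\infty^\perp \cap \mathfrak{p}_u) \subset \mathfrak{q}_u^- \oplus \mathfrak{p}_u = \mathfrak{r}_u$, the subgroup $H_\infty$ is regularly embedded in~$R$ with Levi~$M_0$, and the perfect pairing between $\mathfrak{p}_u$ and~$\mathfrak{p}_u^+$ gives $\mathfrak{r}_u/(\mathfrak{h}_\infty)_u \simeq \mathfrak{p}_u/(\mathfrak{u}_\infty^\perp \cap \mathfrak{p}_u) \simeq \mathfrak{u}_\infty^*$ as $M_0$-modules; this isomorphism is even $M$-equivariant since $\mathfrak{u}_\infty$ is $M$-stable by Proposition~\ref{prop_h0_typeII}(\ref{prop_h0_typeII_c}).

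For part~(\ref{thm_typeII_b}), I will apply Proposition~\ref{prop_normalizer} to $H_\infty \subset R$ with Levi~$M_0$, which yields $N = A^0(M_0^0)' \rightthreetimes (H_\infty)_u$ with $A = Z_M(M_0^0) \cap N_M((\mathfrak{h}_\infty)_u)$. Since $M_0^0 \supset M'$, the connected centralizer is $Z_M(M_0^0)^0 = Z(M)^0$; both summands $\mathfrak{q}_u^-$ and $\mathfrak{u}_\infty^\perp \cap \mathfrak{p}_u$ of $(\mathfrak{h}_\infty)_u$ are $M$-stable, whence $A^0 = Z(M)^0$. Combining with $(M_0^0)' = M'$ and $M = Z(M)^0 \cdot M'$ yields $N = M \rightthreetimes (H_\infty)_u$. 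Part~(\ref{thm_typeII_c}) is then a dimension count that reduces to $\dim \mathfrak{u} - \dim \mathfrak{u}_\infty = 1 + \dim \mathfrak{q}_u^-$; this follows from the $M$-module decomposition $\mathfrak{u} = \mathbb{C}e_\delta \oplus [\mathfrak{q}_u^-, e_\delta] \oplus \mathfrak{u}_0$ (with $[\,\cdot\,, e_\delta]\colon \mathfrak{q}_u^- \hookrightarrow \mathfrak{u}$ injective as $e_\delta$ is a highest-weight vector of the simple $L$-module~$\mathfrak{g}(\lambda)$) together with $\dim \mathfrak{u}_0 = \dim \mathfrak{u}_\infty$ from Proposition~\ref{prop_h0_typeII}(\ref{prop_h0_typeII_c}).

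For part~(\ref{thm_typeII_d}), Proposition~\ref{prop_S=K} applied to $H$ and $N$ gives $\Lambda_G(G/H) = \Lambda_L(\mathfrak{u}^*)$ and $\Lambda_G(G/N) = \Lambda_M(\mathfrak{u}_\infty^*)$ (both restriction maps being identities since $T \subset M \subset L$). Theorem~\ref{thm_sph_modules_imp}(\ref{thm_sph_modules_imp_b}), applied to the $L$-module~$\mathfrak{u}$ with highest weight $\omega = \delta$, yields $\Lambda_L(\mathfrak{u}^*) = \Lambda_M(\widetilde{\mathfrak{u}}^*) = \mathbb{Z}\delta + \Lambda_M(\mathfrak{u}_0^*)$. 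The key extra ingredient is the identity $\mathbb{Z}\delta + \Lambda_M(\mathfrak{u}_0^*) = \mathbb{Z}\delta + \Lambda_M(\mathfrak{u}_\infty^*)$: by Proposition~\ref{prop_h0_typeII}(\ref{prop_h0_typeII_c}), each simple $M$-submodule of $\mathfrak{u}_\infty$ is obtained from the corresponding simple $M$-submodule of $\mathfrak{u}_0$ by tensoring with a character in~$\mathbb{Z}\delta$ (the highest-weight shifts lie in $\mathbb{Z}\delta$, and $\delta$ is a character of~$M$ trivial on~$M'$), hence so are the simple submodules of $\mathrm{S}(\mathfrak{u}_\infty)$ relative to those of $\mathrm{S}(\mathfrak{u}_0)$, giving the desired identity modulo~$\mathbb{Z}\delta$. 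Finally, to establish directness, namely $\delta \notin \Lambda_G(G/N)$, I will use a rank argument: the assumption $N_G(H)^0 = H$ together with Corollary~\ref{crl_second_normalizer}(\ref{crl_second_normalizer_a}) and Proposition~\ref{prop_finite_index_in_norm} gives $\rk \Lambda_G(G/H) = |\Sigma_G(G/H)|$ and $\rk \Lambda_G(G/N) = |\Sigma_G(G/N)|$, while part~(\ref{thm_typeII_c}) combined with the wonderful variety structure of~\S\,\ref{subsec_wv} gives $|\Sigma_G(G/N)| = |\Sigma_G(G/H)| - 1$. Thus $\rk \Lambda_G(G/H)$ exceeds $\rk \Lambda_G(G/N)$ by exactly one, forcing the sum $\Lambda_G(G/N) + \mathbb{Z}\delta$ to be direct. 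The main technical obstacle is the clean verification of the shift identity for weight monoids, which requires carefully tracking the bijection between simple $M$-submodules of $\mathfrak{u}_0$ and $\mathfrak{u}_\infty$ supplied by Proposition~\ref{prop_h0_typeII}(\ref{prop_h0_typeII_c}) through symmetric powers.
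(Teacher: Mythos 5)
Your argument is correct and, for parts (\ref{thm_typeII_a})--(\ref{thm_typeII_c}) and the first half of (\ref{thm_typeII_d}), it follows the same route as the paper --- you are merely filling in details the paper leaves implicit: (\ref{thm_typeII_a}) is quoted there as an immediate consequence of Proposition~\ref{prop_h0_typeII} (your computation of $\mathfrak h_\infty$ by taking orthogonal complements is exactly what is meant), (\ref{thm_typeII_b}) as an application of Proposition~\ref{prop_normalizer} to $H_\infty \subset R$ (note that the sphericity of $H_\infty$, needed for that proposition, is supplied by the remark preceding the theorem), and (\ref{thm_typeII_c}) as the count $\dim N - \dim H = 1$, which the paper extracts from (\ref{thm_typeII_b}) together with $\dim \mathfrak h_\infty = \dim \mathfrak h$ rather than from your (equivalent, slightly longer) bookkeeping of $\dim \mathfrak u - \dim \mathfrak u_\infty$. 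The one genuine divergence is the directness of the sum in (\ref{thm_typeII_d}). The paper gets it combinatorially from the freeness of weight monoids of spherical modules: combining Proposition~\ref{prop_h0_typeII}(\ref{prop_h0_typeII_c}) with Propositions~\ref{prop_free_generators} and~\ref{prop_SM_WL_and_WM} gives a bijection $F_M(\mathfrak u_0^*) \to F_M(\mathfrak u_\infty^*)$, $\alpha \mapsto \alpha - k_\alpha \delta$, and since $\lbrace \delta \rbrace \cup F_M(\mathfrak u_0^*) = F_M(\widetilde{\mathfrak u}^*)$ is linearly independent, the unimodular change of generators shows $\ZZ\delta + \Lambda_M(\mathfrak u_\infty^*)$ is direct. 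You instead deduce directness from a rank count: $\rk \Lambda_G(G/H) = \rk \Lambda_G(G/N) + 1$ via Proposition~\ref{prop_finite_index_in_norm}, Corollary~\ref{crl_second_normalizer}(\ref{crl_second_normalizer_a}) applied to $H_\infty$, and the fact that a codimension-one orbit in the Demazure embedding loses exactly one spherical root. This is valid and not circular (it depends only on part~(\ref{thm_typeII_c})), but it front-loads the wonderful-variety machinery that this theorem is meant to feed, whereas the paper's argument is self-contained at the level of spherical modules and, as a byproduct, produces the explicit generating set $F_M(\mathfrak u_\infty^*)$ that is reused later (e.g.\ in the proof that distinct choices of $\lambda$ yield distinct orbits, and in Lemma~\ref{lemma_WL_quotient}).
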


\begin{proof}
(\ref{thm_typeII_a})
This follows directly from Proposition~\ref{prop_h0_typeII}.

(\ref{thm_typeII_b})
As $H$ is spherical in~$G$ and $\mathfrak (\mathfrak h_\infty)_u$ is normalized by~$T$, the assertion is implied by Proposition~\ref{prop_normalizer}.

(\ref{thm_typeII_c})
Part~(\ref{thm_typeII_b}) yields $\dim N - \dim H = 1$, which is equivalent to the required equality.

(\ref{thm_typeII_d})
Proposition~\ref{prop_S=K} implies $\Lambda_G(G/H) = \Lambda_L(\mathfrak u^*)$ and similarly $\Lambda_G(G/N) = \Lambda_M(\mathfrak u_\infty^*)$.
Next, one has $\Lambda_L(\mathfrak u^*) = \Lambda_M(\widetilde{\mathfrak u}^*)$ by Theorem~\ref{thm_sph_modules_imp}(\ref{thm_sph_modules_imp_b}).
In view of the decomposition $\widetilde{\mathfrak u} = \mathfrak g_\delta \oplus\nobreak \mathfrak u_0$ we obtain $\Lambda_M(\widetilde{\mathfrak u}^*) = \ZZ \delta \oplus \Lambda_M (\mathfrak u_0^*)$.
Combining Proposition~\ref{prop_h0_typeII}(\ref{prop_h0_typeII_c}) with Proposition~\ref{prop_free_generators} we find that there is a bijection $F_M(\mathfrak u_0^*) \to F_M(\mathfrak u_\infty^*)$ taking each element $\alpha$ to $\alpha - k_\alpha \delta$ for some $k_\alpha \in \ZZ^+$.
It remains to notice that $\Lambda_M(\mathfrak u_0^*) = \ZZ F_M(\mathfrak u_0^*)$ and $\Lambda_M(\mathfrak u_\infty^*) = \ZZ F_M(\mathfrak u_\infty^*)$ by Proposition~\ref{prop_SM_WL_and_WM}.
\end{proof}

The objects $\mathfrak h_\infty, N, \ldots$ defined above depend on the initial choice of an active $C$-root $\lambda \in \Psi$.
To emphasize this dependence, we shall write $\mathfrak h_\infty(\lambda), N(\lambda), \ldots$.
Now choose two different active $C$-roots $\lambda_1,\lambda_2 \in \Psi$ and put $\delta_1 = \widehat \lambda_1, \delta_2 = \widehat \lambda_2$.

\begin{proposition}
The algebras $\mathfrak h_\infty(\lambda_1)$ and $\mathfrak h_\infty(\lambda_2)$ belong to different $G$-orbits of codimension~$1$ in~$X$.
\end{proposition}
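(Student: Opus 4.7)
The plan is to argue by contradiction. Suppose the orbits coincide: $Y := G\mathfrak h_\infty(\lambda_1) = G\mathfrak h_\infty(\lambda_2)$. Since $\mathfrak h_\infty(\lambda_1)$ and $\mathfrak h_\infty(\lambda_2)$ are two points of the same $G$-orbit, their stabilizers $N_G(\mathfrak h_\infty(\lambda_i))$ in $G$ are $G$-conjugate, and hence so are their identity components $N(\lambda_i) = N_G(\mathfrak h_\infty(\lambda_i))^0$. The resulting $G$-equivariant isomorphism $G/N(\lambda_1) \simeq G/N(\lambda_2)$ of spherical homogeneous spaces yields equality of weight lattices as sublattices of $\mathfrak X(T)$:
\[
\Lambda_N := \Lambda_G(G/N(\lambda_1)) = \Lambda_G(G/N(\lambda_2)).
\]

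Next, I would apply Theorem~\ref{thm_typeII}(\ref{thm_typeII_d}) for both $i = 1, 2$ to obtain two direct-sum decompositions $\Lambda_G(G/H) = \Lambda_N \oplus \mathbb Z \delta_i$. Expanding $\delta_2 = c\delta_1 + \lambda_0$ with $c \in \mathbb Z$, $\lambda_0 \in \Lambda_N$, and likewise $\delta_1 = c'\delta_2 + \lambda_0'$, substituting one into the other and using the uniqueness of the decomposition forces $cc' = 1$. Thus $c, c' \in \{\pm 1\}$ and $\delta_1 \mp \delta_2 \in \Lambda_N$ for an appropriate sign.

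The hard part is to derive a contradiction from this last congruence. The strategy is to combine the explicit descriptions of $\Lambda_N$ arising from each of the two active $C$-roots. From the proof of Theorem~\ref{thm_typeII}(\ref{thm_typeII_d}), $\Lambda_N = \Lambda_{M(\lambda_1)}(\mathfrak u_\infty(\lambda_1)^*)$ admits a $\mathbb Z$-basis of the form $\{\alpha - k_\alpha \delta_1 : \alpha \in F_{M(\lambda_1)}(\mathfrak u_0(\lambda_1)^*)\}$ with $k_\alpha \in \mathbb Z^+$ from Proposition~\ref{prop_h0_typeII}(\ref{prop_h0_typeII_c}); a symmetric description is available in terms of $\lambda_2$. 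Restricting the relation $\delta_1 \mp \delta_2 \in \Lambda_N$ to the connected center $C$ of $L$ yields $\lambda_1 \mp \lambda_2 \in \Lambda_N|_C \subset \mathbb Z\Psi$, where $\lambda_i = \delta_i|_C \in \Psi$. By analyzing the $C$-weight structure of $\mathfrak u_\infty(\lambda_i)$ simultaneously from both perspectives — exploiting that all $C$-weights of the basis elements shift by nonnegative integer multiples of $-\lambda_i$ and that $\lambda_1, \lambda_2$ are distinct elements of the open half-space $\Phi^+$ in $\mathfrak X(C)$ — one arrives at the required contradiction.
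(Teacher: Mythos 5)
Your reductions are sound and follow the same route as the paper: if the two orbits coincided then $G/N(\lambda_1)\simeq G/N(\lambda_2)$ as $G$-varieties, the weight lattices $\Lambda_N:=\Lambda_G(G/N(\lambda_1))=\Lambda_G(G/N(\lambda_2))$ would agree inside $\mathfrak X(T)$, and Theorem~\ref{thm_typeII}(\ref{thm_typeII_d}) applied twice forces $\delta_2-c\delta_1\in\Lambda_N$ with $c=\pm1$. The genuine gap is that the decisive step --- deriving a contradiction from this --- is only asserted, and the specific route you indicate does not work as stated. Restricting to the connected center $C$ only yields $\lambda_1\mp\lambda_2\in\ZZ\Psi$, which is no contradiction: $\ZZ\Psi$ contains all integral combinations of elements of $\Psi$ by definition, so this containment carries no information whatsoever. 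As written, the proof stops exactly where the real content begins.

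The missing step can be supplied from the ingredient you already cite. By Proposition~\ref{prop_h0_typeII}(\ref{prop_h0_typeII_c}) there are $k_1,k_2\in\ZZ^+$ such that $\delta_2'=\delta_2-k_1\delta_1\in\Lambda_G(G/N(\lambda_1))$ and $\delta_1'=\delta_1-k_2\delta_2\in\Lambda_G(G/N(\lambda_2))$, and both $\delta_1',\delta_2'$ are \emph{positive roots}, being highest weights of $M$-submodules of $\mathfrak u_\infty\subset\mathfrak p_u^+$. This is what the paper exploits: if $k_1=0$ or $k_2=0$ then $\delta_2$ (resp.~$\delta_1$) lies in one lattice but not in the other by Theorem~\ref{thm_typeII}(\ref{thm_typeII_d}), so the lattices differ; if $k_1,k_2>0$ then $\delta_1'+\delta_2'=-(k_1-1)\delta_1-(k_2-1)\delta_2\in-\ZZ^+\Delta^+$, contradicting $\delta_1'+\delta_2'\in\ZZ^+\Delta^+\setminus\lbrace 0\rbrace$. (Within your framework: $\delta_2-c\delta_1\in\Lambda_N$ together with $\delta_2-k_1\delta_1\in\Lambda_N$ gives $(k_1-c)\delta_1\in\Lambda_N$, whence $k_1=c=1$ by directness of the sum $\Lambda_N\oplus\ZZ\delta_1$; symmetrically $k_2=1$, and then $\delta_1'=-\delta_2'$ contradicts that both are positive roots.) Note also that the paper's version needs no contradiction hypothesis at all: it proves unconditionally that the two weight lattices differ, which already separates the orbits.
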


\begin{proof}
We shall establish the claim by showing that
\begin{equation} \label{eqn_two_lattices}
\Lambda_G(G/N(\lambda_1)) \ne \Lambda_G(G/N(\lambda_2)).
\end{equation}
From Theorem~\ref{thm_typeII}(\ref{thm_typeII_d}) we know that $\delta_i \notin \Lambda_G(G/N(\lambda_i))$ for $i = 1,2$.
According to Proposition~\ref{prop_h0_typeII}(\ref{prop_h0_typeII_c}) there exist non-negative integers $k_1, k_2$ such that $\delta_2' = \delta_2 - k_1 \delta_1$ is a highest weight of the $M(\lambda_1)$-module $\mathfrak u_\infty(\lambda_1)$ and $\delta_1' = \delta_1 - k_2 \delta_2$ is a highest weight of the $M(\lambda_2)$-module $\mathfrak u_\infty(\lambda_2)$.
Then $\delta_2' \in \Gamma_{M(\lambda_1)}(\mathfrak u_\infty(\lambda_1)^*)$ and  $\delta_1' \in \Gamma_{M(\lambda_2)}(\mathfrak u_\infty(\lambda_2)^*)$, hence $\delta_2' \in \Lambda_G(G/N(\lambda_1))$ and $\delta_1' \in \Lambda_G(G/N(\lambda_2))$ by Propositions~\ref{prop_SM_WL_and_WM} and~\ref{prop_S=K}.
If $k_1 = 0$ then we obtain $\delta_2 \in \Lambda_G(G/N(\lambda_1))$, which yields~(\ref{eqn_two_lattices}).
Similarly, (\ref{eqn_two_lattices}) follows if $k_2 = 0$.
Now assume $k_1 > 0, k_2 > 0$.
Observe that $\delta_1', \delta_2' \in \Delta^+$; then $\delta_1'+\delta_2' \in \ZZ^+\Delta^+ \setminus \lbrace 0 \rbrace$.
On the other hand, $\delta'_1 + \delta'_2 = -(k_1-1)\delta_1 - (k_2-1)\delta_2 \in - \ZZ^+\Delta^+$, a contradiction.
\end{proof}

\subsection{Primitive cases}
\label{subsec_primitive_cases}

Here we investigate the case when $|\Psi| = 1$, so that $\Psi = \lbrace \lambda \rbrace$ for some $\lambda \in \Phi^+$.
Let $\alpha \in \Delta^+$ be the lowest weight of the $L$-module $\mathfrak g(\lambda)$.

\begin{lemma}
$\alpha \in \Pi$.
\end{lemma}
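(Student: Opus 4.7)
The plan is to argue by contradiction: assume that $\alpha$ is not simple, so that $\alpha = \beta + \gamma$ for some $\beta, \gamma \in \Delta^+$, and derive a contradiction. The argument splits into three cases according to which of $\beta, \gamma$ belong to $\Delta_L^+$. Throughout, recall that since $\alpha \in \Delta^+ \setminus \Delta_L$ is a weight of $\mathfrak{g}(\lambda)$, one has $\overline{\alpha} = \lambda$, and that by Proposition~\ref{prop_properties_of_g(lambda)}(\ref{prop_properties_of_g(lambda)_a}) the $L$-module $\mathfrak{g}(\lambda)$ is simple, so its $T$-weights are precisely the elements of $\alpha + \ZZ^+\Pi_L$ that occur.

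First, if $\beta, \gamma \in \Delta_L^+$, then $\overline{\beta} = \overline{\gamma} = 0$, whence $\lambda = \overline{\alpha} = 0$, contradicting $\lambda \in \Phi^+$. Next, suppose exactly one of $\beta, \gamma$ lies in $\Delta_L^+$, say $\beta \in \Delta_L^+$. Then $\overline{\gamma} = \lambda$ and $\gamma \in \Delta^+ \setminus \Delta_L$, so $\mathfrak{g}_\gamma \subset \mathfrak{g}(\lambda)$. Since $\alpha$ is the lowest weight of $\mathfrak{g}(\lambda)$ as an $L$-module, one must have $\gamma - \alpha \in \ZZ^+\Pi_L$; but $\gamma - \alpha = -\beta$ is a strictly negative integer combination of $\Pi_L$, a contradiction.

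Finally, suppose neither $\beta$ nor $\gamma$ lies in $\Delta_L^+$, so that $\overline{\beta}, \overline{\gamma} \in \Phi^+$ and $\lambda = \overline{\beta} + \overline{\gamma}$. By Lemma~\ref{lemma_sum} applied to this decomposition, at least one of $\overline{\beta}, \overline{\gamma}$ must lie in $\Psi = \lbrace \lambda \rbrace$; but then the other of $\overline{\beta}, \overline{\gamma}$ equals $0$, forcing the corresponding root to belong to $\Delta_L^+$, contrary to the case assumption.

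The main subtlety lies in the second case: it uses the precise shape of the weights of the simple $L$-module $\mathfrak{g}(\lambda)$ relative to its lowest weight (equivalently, the fact that $\mathfrak{g}(\lambda)$ is spanned by iterated brackets of $e_\alpha$ with elements $e_\delta$ for $\delta \in \Pi_L$), so that any $T$-weight of $\mathfrak{g}(\lambda)$ differs from $\alpha$ by a nonnegative combination of simple roots of $L$. The third case is the conceptual core and relies essentially on the hypothesis $\Psi = \lbrace \lambda \rbrace$ via the sum lemma for active $C$-roots.
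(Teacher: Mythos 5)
Your proof is correct and takes essentially the same route as the paper: rule out the possibility that $\beta$ or $\gamma$ lies in $\Delta_L^+$ using the fact that $\alpha$ is the lowest weight of the simple $L$-module $\mathfrak g(\lambda)$, then apply Lemma~\ref{lemma_sum} to $\lambda = \overline\beta + \overline\gamma$ and contradict $\Psi = \lbrace \lambda \rbrace$. The paper merely compresses your Cases 1 and 2 into the single remark that $\beta, \gamma \notin \Delta_L^+$.
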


\begin{proof}
Assume that $\alpha = \beta + \gamma$ for some $\beta,\gamma \in \Delta^+$.
Then $\overline \alpha = \overline \beta + \overline \gamma$.
Since $\alpha$ is the lowest weight of~$\mathfrak g(\overline \alpha)$, one has $\beta,\gamma \notin \Delta_L^+$ and so $\overline \beta, \overline \gamma \in \Phi^+$.
Then one of $\overline \beta,\overline \gamma$ must be an active $C$-root by Lemma~\ref{lemma_sum}, a contradiction.
Thus $\alpha \in \Pi$.
\end{proof}

Applying the reduction of the ambient group (see \S\,\ref{subsec_reduction_AG}) we may assume $\Supp \widehat \lambda = \Pi$.
Then the following conditions hold:

\begin{enumerate}[label=\textup{(P\arabic*)},ref=\textup{P\arabic*}]
\item \label{P1}
$G$ is a simple group;

\item
$H$ is regularly embedded in a maximal parabolic subgroup $P \supset B^-$ with standard Levi subgroup $L$ such that $\Pi_L = \Pi \setminus \lbrace \alpha \rbrace$;

\item \label{P3}
$H = L \rightthreetimes P_u'$.
\end{enumerate}
(The last condition is implied by Proposition~\ref{prop_properties_of_g(lambda)}(\ref{prop_properties_of_g(lambda)_b}).)

All spherical homogeneous spaces $G/H$ satisfying the above properties are classified in the following theorem, which also provides the corresponding set of spherical roots in each case.

\begin{theorem} \label{thm_primitive_cases}
Suppose that $G$ is a simple group, $\alpha \in \Pi$, and $H \subset G$ is a subgroup satisfying conditions \textup(\ref{P1}\textup)--\textup(\ref{P3}\textup).
Then
\begin{enumerate}[label=\textup{(\alph*)},ref=\textup{\alph*}]
\item \label{thm_primitive_cases_a}
$H$ is a spherical subgroup of $G$ if and only if, up to an automorphism of the Dynkin diagram of~$G$, the pair $(\mathfrak g,\alpha)$ appears in Table~\textup{\ref{table_primitive_cases}};

\item \label{thm_primitive_cases_b}
For each pair $(\mathfrak g,\alpha)$ listed in Table~\textup{\ref{table_primitive_cases}} the set $\Sigma_G(G/H)$ is given in the fifth column of that table.
\end{enumerate}
\end{theorem}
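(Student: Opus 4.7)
The plan is to reduce part~(\ref{thm_primitive_cases_a}) to Kac's classification of simple spherical modules. The first step is to identify $\mathfrak p_u/\mathfrak h_u$ explicitly. Since $H = L \rightthreetimes P_u'$ gives $\mathfrak h_u = [\mathfrak p_u, \mathfrak p_u]$, and the maximality of $P$ forces $\dim_\QQ \QQ\mathfrak X(C) = 1$ (so that every positive $C$-root is a positive integer multiple of $\bar\alpha$), Proposition~\ref{prop_properties_of_g(lambda)}(\ref{prop_properties_of_g(lambda)_b}) yields $[\mathfrak p_u, \mathfrak p_u] = \bigoplus_{\mu \in \Phi^+,\, \mu \ne \bar\alpha} \mathfrak g(-\mu)$. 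Hence $\mathfrak p_u/\mathfrak h_u \simeq \mathfrak g(-\bar\alpha)$ as $L$-modules, and this is a simple $L$-module by Proposition~\ref{prop_properties_of_g(lambda)}(\ref{prop_properties_of_g(lambda)_a}). By Proposition~\ref{prop_S=K}, the sphericity of~$H$ is thus equivalent to the sphericity of the simple $L$-module $\mathfrak g(-\bar\alpha)$.

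Part~(\ref{thm_primitive_cases_a}) then reduces to a direct enumeration over the simple types and the simple roots $\alpha \in \Pi$. For each such pair, the Dynkin diagram of $L'$ is $\Pi \setminus \lbrace \alpha \rbrace$, and the highest $L'$-weight of $\mathfrak g(-\bar\alpha)$ is the negative of the highest root of $\mathfrak g$ whose $\alpha$-coefficient equals~$1$; both are read off directly from the marked Dynkin diagram of $\mathfrak g$. Comparing each resulting pair $(L', \mathfrak g(-\bar\alpha))$ with Kac's list (Theorem~\ref{thm_simple_sph_modules}) and discarding duplicates produced by diagram automorphisms produces exactly the entries of Table~\ref{table_primitive_cases}.

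For part~(\ref{thm_primitive_cases_b}), the hypothesis $K = L$ makes the character restriction $\iota$ in Proposition~\ref{prop_S=K} the identity map, hence $\Lambda_G(G/H) = \Lambda_L(\mathfrak g(-\bar\alpha))$, and both this lattice and the set $\Sigma_L(\mathfrak g(-\bar\alpha))$ are read off from~\cite[\S\,5]{Kn98}. By the remark following Proposition~\ref{prop_S=K}, every element of $\Sigma_L(\mathfrak g(-\bar\alpha))$ is automatically a spherical root of $G/H$, which accounts for all spherical roots supported in~$\Pi_L$. To exhibit the remaining spherical roots (those whose support meets $\lbrace \alpha \rbrace$) one identifies each primitive case with an example already appearing in the classifications of wonderful and spherical homogeneous spaces (for instance in~\cite{BP15, BP16}) and reads off the answer from there.

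The principal obstacle lies in part~(\ref{thm_primitive_cases_b}): matching each primitive case with a previously documented one requires case-by-case verification (particularly for the exceptional types $\mathsf E_6, \mathsf E_7, \mathsf F_4, \mathsf G_2$), and in the few places where no direct match is available one must fall back to a separate computation of the valuation cone of $G/H$ --- for example via the Luna--Vust formal-curve method \cite[\S\,4]{LV} or via Losev's approach~\cite{Lo3}. The enumeration in part~(\ref{thm_primitive_cases_a}) is routine but lengthy, and must be conducted with care to correctly account for all diagram automorphisms.
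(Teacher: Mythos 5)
Your proposal follows essentially the same route as the paper: part~(\ref{thm_primitive_cases_a}) reduces via Proposition~\ref{prop_S=K} and the $L$-module isomorphism $\mathfrak p_u/\mathfrak h_u \simeq \mathfrak g(-\overline\alpha)$ to checking sphericity of a simple $L$-module against Kac's classification (Theorem~\ref{thm_simple_sph_modules}), and part~(\ref{thm_primitive_cases_b}) is settled by matching each entry with already-classified wonderful varieties (the paper uses \cite{BP15} for the reductive cases, \cite{Was} for the non-reductive rank-two cases, \cite{BP16} for the remaining ones, and reads the rank-one cases off the weight lattice, so no fallback to the Luna--Vust or Losev methods is actually needed). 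The only slip is that the negative of the highest root with $\alpha$-coefficient $1$ is the \emph{lowest}, not the highest, weight of the $L'$-module $\mathfrak g(-\overline\alpha)$ (the highest weight is the restriction of $-\alpha$), which is immaterial since either weight determines the module.
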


\begin{table}[h]
\caption{} \label{table_primitive_cases}

\begin{center}
\renewcommand{\tabcolsep}{3pt}%
\begin{tabular}{|c|l|l|c|l|c|}
\hline
No. & \multicolumn{1}{|c|}{($\mathfrak g,\alpha$)} & \multicolumn{1}{|c|}{($L', \mathfrak g(-\overline\alpha)$)} & $\rk$ & \multicolumn{1}{|c|}{$\Sigma_G(G/H)$} & Note \\

\hline
\no\lefteqn{^*}\label{No1} &
($\mathfrak{sl}_n,\alpha_k$) &
(${\SL_k} {\times} {\SL_{n-k}},
\CC^k {\otimes} \CC^{n-k}$) &
$k$ &
\renewcommand{\tabcolsep}{0pt}%
\begin{tabular}{l}
$\alpha_i {+} \alpha_{n-i}$ for $1 {\le} i {\le} k{-}1$, \\[-2pt]
$\alpha_k {+} \ldots {+} \alpha_{n-k}$
\end{tabular}
&
\renewcommand{\tabcolsep}{0pt}%
\begin{tabular}{l}
$n {\ge} 2$, \\[-2pt]
$k {\le} n/2$
\end{tabular}
\\

\hline

\no &
($\mathfrak{sp}_{2n},\alpha_1$) &
(${\Sp_{2n-2}}, \CC^{2n-2}$) &
$1$ &
$\alpha_1 {+} 2(\sum_{i=2}^{n-1} \alpha_i) {+} \alpha_n$
&
$n {\ge} 2$
\\

\hline

\no\label{No3} &
($\mathfrak{sp}_{2n},\alpha_2$) &
(${\SL_2}{\times}{\Sp_{2n-4}}, \CC^2{\otimes}\CC^{2n-4}$) &
$3$ &
\renewcommand{\tabcolsep}{0pt}%
\begin{tabular}{l}
$\alpha_1 {+} \alpha_3$, $\alpha_2$,\\
$\alpha_3 {+} 2(\sum_{i=4}^{n-1}\alpha_i) {+} \alpha_n$
\end{tabular}
&
$n {\ge} 4$
\\

\hline

\no\label{No4} &
($\mathfrak{sp}_{2n},\alpha_3$) &
(${\SL_3}{\times}{\Sp_{2n-6}}, \CC^{3}{\otimes}\CC^{2n-6}$) &
$6$ &
\renewcommand{\tabcolsep}{0pt}%
\begin{tabular}{l}
$\alpha_1$, $\alpha_2$, $\alpha_3$, $\alpha_4$, $\alpha_5$,\\
$\alpha_5 {+} 2(\sum_{i=6}^{n-1}\alpha_i) {+} \alpha_n$
\end{tabular}
&
$n {\ge} 6$
\\

\hline

\no\label{No5} &
($\mathfrak{sp}_{10},\alpha_3$) &
(${\SL_3}{\times}{\Sp_{4}}, \CC^{3}{\otimes}\CC^{4}$) &
$5$ &
$\alpha_1$, $\alpha_2$, $\alpha_3$, $\alpha_4$, $\alpha_5$
&
\\

\hline

\no\label{No6} &
($\mathfrak{sp}_{2n},\alpha_{n-2}$) &
(${\SL_{n-2}{\times}{\Sp_4}}, \CC^{n-2}{\otimes}\CC^4$) &
$6$ &
\renewcommand{\tabcolsep}{0pt}%
\begin{tabular}{l}
$\alpha_1$, $\alpha_2$, $\alpha_3$, $\alpha_{n-1}$, $\alpha_n$,\\[-2pt]
$\alpha_4 {+} \ldots {+} \alpha_{n-2}$
\end{tabular}
&
$n {\ge} 6$
\\

\hline

\no\label{No7} &
($\mathfrak{sp}_{2n},\alpha_{n-1}$) &
(${\SL_{n-1}{\times}{\SL_2}}, \CC^{n-1}{\otimes}\CC^2$) &
$2$ &
\renewcommand{\tabcolsep}{0pt}%
\begin{tabular}{l}
$\alpha_{1}{+}\alpha_n$, $\alpha_{2} {+} \ldots {+} \alpha_{n-1}$
\end{tabular}
&
$n {\ge} 3$
\\

\hline

\no\lefteqn{^*}\label{No8} &
($\mathfrak{sp}_{2n},\alpha_{n}$) &
(${\SL_{n}}, \mathrm{S}^2\CC^{n}$) &
$n$ &
\renewcommand{\tabcolsep}{0pt}%
\begin{tabular}{l}
$2\alpha_{i}$ for $1 {\le} i {\le} n{-}1$, $\alpha_{n}$
\end{tabular}
&
$n {\ge} 2$
\\

\hline

\no\lefteqn{^*}\label{No9} &
($\mathfrak{so}_{2n+1},\alpha_1$) &
(${\SO_{2n-1}}, \CC^{2n-1}$) &
$2$ &
$\alpha_1$, $2\alpha_{2} {+} \ldots {+} 2\alpha_{n}$
&
$n {\ge} 3$
\\

\hline

\no &
($\mathfrak{so}_{2n+1},\alpha_n$) &
(${\SL_{n}}, \CC^{n}$) &
$1$ &
$\alpha_{1} {+} \ldots {+} \alpha_{n}$
&
$n {\ge} 3$
\\

\hline

\no\lefteqn{^*}\label{No11} &
($\mathfrak{so}_{2n},\alpha_1$) &
(${\SO_{2n-2}}, \CC^{2n-2}$) &
$2$ &
\renewcommand{\tabcolsep}{0pt}%
\begin{tabular}{l}
$\alpha_1$, \\[-2pt]
$2\alpha_{2} {+} \ldots {+} 2\alpha_{n-2} {+} \alpha_{n-1} {+} \alpha_{n}$
\end{tabular}
&
$n {\ge} 4$
\\

\hline

\no\lefteqn{^*}\label{No12} &
($\mathfrak{so}_{4n+2},\alpha_{2n+1}$) &
(${\SL_{2n+1}}, \wedge^2\CC^{2n+1}$) &
$n$ &
\renewcommand{\tabcolsep}{0pt}%
\begin{tabular}{l}
$\alpha_{2i-1} {+} 2\alpha_{2i} {+} \alpha_{2i+1}$ \\[-2pt]
for $1 {\le} i {\le} n{-}1$, \\[-2pt]
$\alpha_{2n-1} {+} \alpha_{2n} {+} \alpha_{2n+1}$
\end{tabular}
&
$n {\ge} 2$
\\

\hline

\no\lefteqn{^*}\label{No13} &
($\mathfrak{so}_{4n},\alpha_{2n}$) &
(${\SL_{2n}}, \wedge^2\CC^{2n}$) &
$n$ &
\renewcommand{\tabcolsep}{0pt}%
\begin{tabular}{l}
$\alpha_{2i-1} {+} 2\alpha_{2i} {+} \alpha_{2i+1}$ \\[-2pt]
for $1 {\le} i {\le} n{-}1$, 
$\alpha_{2n}$
\end{tabular}
&
$n {\ge} 2$
\\

\hline

\no &
$(\mathfrak g_2, \alpha_1)$ &
$(\SL_2, \CC^2)$ &
$1$ &
$\alpha_1 {+} \alpha_2$ &
\\

\hline

\no\label{No15} &
$(\mathfrak f_4, \alpha_3)$ &
$({\SL_3} {\times} {\SL_2}, \CC^3 {\otimes} \CC^2)$ &
$2$ &
$\alpha_1 {+} \alpha_4$, $\alpha_2 {+} \alpha_3$ &
\\

\hline

\no\label{No16} &
$(\mathfrak f_4, \alpha_4)$ &
$(\Spin_7, R(\varpi_3))$ &
$2$ &
$\alpha_1 {+} 2\alpha_2 {+} 3\alpha_3$, $\alpha_4$ &
\\

\hline

\no\lefteqn{^*}\label{No17} &
$(\mathfrak e_6, \alpha_6)$ &
$(\Spin_{10}, R(\varpi_5))$ &
$2$ &
\renewcommand{\tabcolsep}{0pt}%
\begin{tabular}{l}
$\alpha_1 {+} \alpha_3 {+} \alpha_4 {+} \alpha_5 {+} \alpha_6$,\\[-2pt]
$2\alpha_2 {+} \alpha_3 {+} 2\alpha_4 {+} \alpha_5$
\end{tabular}
&
\\

\hline

\no\lefteqn{^*}\label{No18} &
$(\mathfrak e_7, \alpha_7)$ &
$(\mathsf E_6, R(\varpi_1))$ & 3 &
\renewcommand{\tabcolsep}{0pt}%
\begin{tabular}{l}
$2\alpha_1 {+} \alpha_2 {+} 2\alpha_3 {+} 2\alpha_4 {+} \alpha_5$,\\[-2pt]
$\alpha_2 {+} \alpha_3 {+} 2\alpha_4 {+} 2\alpha_5 {+} 2\alpha_6$, $\alpha_7$\\
\end{tabular}
&
\\

\hline
\end{tabular}
\end{center}
\end{table}

In Table~\ref{table_primitive_cases}, the symbols $R(\lambda)$ and $\varpi_i$ have the same meaning as in Table~\ref{table_spherical_modules}.

\begin{remark}
For convenience of the reader, for each pair $(\mathfrak g, \alpha)$ in Table~\ref{table_primitive_cases} we also included the information on the $L'$-module structure of~$\mathfrak g(-\overline \alpha)$ (up to equivalence) as well as the value of rank of~$\mathfrak g(-\overline \alpha)$ as a spherical $L$-module.
(This rank equals the cardinality of the set~$\Sigma_G(G/H)$.)
\end{remark}

\begin{proof}[Proof of Theorem~\textup{\ref{thm_primitive_cases}}]
(\ref{thm_primitive_cases_a})
Suppose that the pair $(\mathfrak g, \alpha)$ is fixed.
Taking into account the $L$-module isomorphism $\mathfrak p_u/\mathfrak h_u \simeq \mathfrak g(-\overline \alpha)$ and applying Proposition~\ref{prop_S=K}, we find that $H$ is spherical if and only if $\mathfrak g(-\overline \alpha)$ is a spherical $L$-module.
Clearly, $C$ acts on $\mathfrak g(-\overline \alpha)$ via the character $-\overline \alpha$ and the highest weight of $\mathfrak g(-\overline \alpha)$ as an $L'$-module is uniquely determined by the numbers $\beta^\vee(-\alpha)$ with $\beta \in \Pi \setminus \lbrace \alpha \rbrace$.
Having determined the structure of $\mathfrak g(-\overline \alpha)$ as a $(C \times L')$-module, one then checks if $\mathfrak g(-\overline \alpha)$ is spherical using Theorem~\ref{thm_simple_sph_modules}.

A case-by-case check of all possible pairs $(\mathfrak g,\alpha)$ shows that $\mathfrak g(-\overline \alpha)$ is a spherical $L$-module if and only if $(\mathfrak g,\alpha)$ appears in Table~\ref{table_primitive_cases}.

(\ref{thm_primitive_cases_b})
In all the cases in Table~\ref{table_primitive_cases}, the subgroup $H$ turns out to be wonderful and the corresponding set of spherical roots is already known.
Below we give references for all the cases.

If the group $P_u'$ is trivial then $H$ is reductive; these cases are marked with an asterisk in Table~\ref{table_primitive_cases}.
The corresponding sets of spherical roots are taken from~\cite[\S\,3]{BP15}.
More precisely, cases~\ref{No1}, \ref{No8}, \ref{No9}, \ref{No11}, \ref{No12}, \ref{No13}, \ref{No17}, \ref{No18} in Table~\ref{table_primitive_cases} correspond to cases~3, 12, 8, 14, 16, 16, 18, 22 in loc.~cit., respectively.

If $\rk_G(G/H) = 1$ then the unique spherical root is read off directly from the weight lattice and equals the highest weight of the $L$-module $\mathfrak g(\overline \alpha)$.

If $\rk_G(G/H) = 2$ and the subgroup $H$ is not reductive then the corresponding set of spherical roots is taken from~\cite[\S\,3]{Was}.
More precisely, cases~\ref{No7}, \ref{No15}, \ref{No16} in Table~\ref{table_primitive_cases} correspond to cases $4'$ of Table~C, 1 of Table~F, 2 of Table~F in loc.~cit., respectively.

For the remaining cases~\ref{No3}--\ref{No6} in Table~\ref{table_primitive_cases}, the information on the corresponding sets of spherical roots follows from~\cite[\S\,3.3 and Proposition~3.3.1]{BP16}.
This information is also given in an explicit form in Table~2 of the preprint~\cite{BP09}; cases~3, 5, 11, 18 of that table correspond to our cases~\ref{No3}, \ref{No5}, \ref{No6}, \ref{No4} in Table~\ref{table_primitive_cases}, respectively.
\end{proof}

\section{An optimization of the base algorithm for the case \texorpdfstring{$K = L$}{K = L}}
\label{sect_optimization}

Let $G$ be semisimple and retain all the notation of~\S\,\ref{sect_active_C-roots}.
Throughout this section, we work with subgroups $H$ satisfying $K = L$, so that $\Psi_0 = \varnothing$ and $\mathfrak u = \bigoplus \limits_{\mu \in \Psi} \mathfrak g(\mu)$.

Before proceeding, we make two notation conventions.

Firstly, for every standard Levi subgroup $\widetilde L \subset G$ (not necessarily equal to~$L$) with connected center $\widetilde C$ and every $\xi \in \mathfrak X(\widetilde C)$, the $\widetilde C$-weight subspace of $\mathfrak g$ of weight $\xi$ will be denoted by~$\mathfrak g(\xi)$.
If $\xi$ is a $\widetilde C$-root then the highest weight of $\mathfrak g(\xi)$ as an $\widetilde L$-module will be denoted by~$\widehat \xi$ and we put by definition $\Supp \xi = \Supp \widehat \xi$.
For every set $\widetilde \Phi_0$ of $\widetilde C$-roots we put $\Supp \widetilde \Phi_0 = \bigcup \limits_{\xi \in \widetilde \Phi_0} \Supp \xi$.
Finally, if $\widetilde L \subset L$ then for every $\xi \in \mathfrak X(\widetilde C)$ we shall write $\overline \xi$ for the restriction of $\xi$ to~$\mathfrak X(C)$.
In this section, various pieces of the above notation will be used simultaneously for different standard Levi subgroups of~$G$, but in each case the corresponding Levi subgroup will be clear from the context.

Secondly, if $N \subset G$ is a spherical subgroup different from $H$ and we want to consider analogues for $N$ of objects like $\Psi, \mathfrak u, \ldots$ defined for $H$ then we shall denote them like $\Psi(N), \mathfrak u(N), \ldots$.

\subsection{The main idea}
\label{subsec_main_idea}

There is a decomposition into a disjoint union
\begin{equation} \label{eqn_SM-dec}
\Psi = \Psi_1 \cup \ldots \cup \Psi_p
\end{equation}
with the following properties:
\begin{itemize}
\item
for every simple factor $F$ of $L'$ acting nontrivially on $\mathfrak u$ there exists a unique $i \in \lbrace 1,\ldots, p \rbrace$ such that $F$ acts trivially on each $\mathfrak g(\mu)$ with $\mu \notin \Psi_i$;

\item
for every $i = 1,\ldots,p$, the saturation of the $L$-module $\mathfrak u^i = \bigoplus \limits_{\mu \in \Psi_i} \mathfrak g(\mu)$ is indecomposable\footnote{An equivalent reformulation is as follows: $\mathfrak u^i$ is indecomposable as an $L'$-module.}.
\end{itemize}
In what follows, decomposition~(\ref{eqn_SM-dec}) will be called the \textit{SM-decomposition}\footnote{`SM' is used as an abbreviation for `saturation of a module'.} of~$\Psi$.
Note that the components of this decomposition are uniquely determined up to permutation.

Our optimization of the base algorithm applies when $p \ge 2$ and rests on the following idea.
Each spherical root of $G/H$ is somehow ``controlled'' by exactly one component of the SM-decomposition of~$H$, and to ``extract'' all spherical roots controlled by a given component $\Psi_i$ we perform a special chain of additive degenerations to obtain a new spherical subgroup $H_i$ such that the pair $(L,\mathfrak u^i)$ is equivalent to $(L(H_i), \mathfrak u(H_i))$ and the spherical roots of $H_i$ are precisely those of $G/H$ controlled by~$\Psi_i$.
In this way, we obtain a fast algorithm that reduces computing the spherical roots for $H$ to the same problem for several other spherical subgroups for which the SM-decomposition consists of a single component.

We remark that the classification of saturated indecomposable spherical modules mentioned in~\S\,\ref{subsec_SM_classification} implies $|\Psi_i| \le 2$ for all~$i = 1,\ldots, p$.
Although we shall not make use of this property in our arguments in this section, it clearly shows that the reduction described in the previous paragraph yields a substantial optimization of the base algorithm.

\subsection{Auxiliary results}

Fix $\lambda \in \Psi$ and define $\delta$, $\mathfrak u_0$, $\mathfrak u_\infty$, $N$, $M$, $M_0$ as in~\S\,\ref{subsec_degen_add}.
Then $\mathfrak u_\infty = \mathfrak u(N)$ according to our notation conventions.
We know from the proof of Proposition~\ref{prop_h0_typeII}(\ref{prop_h0_typeII_c}) that there is a bijection $\alpha \mapsto \pi(\alpha)$ between the $T$-weights of $\mathfrak u_0$ and the $T$-weights of $\mathfrak u(N)$ such that $\mathfrak g_\alpha$ shifts to $\mathfrak g_{\pi(\alpha)}$ under the degeneration.
Recall that $\pi(\alpha) = \alpha - k_\alpha \delta$ with $k_\alpha \ge 0$.
We say that a $T$-stable subspace $V \subset \mathfrak u_0$ shifts to a $T$-stable subspace $W \subset \mathfrak u(N)$ under the degeneration if the set of $T$-weights of $W$ is the image of that of~$V$ under~$\pi$.
For future reference, we state the following reformulation of Proposition~\ref{prop_h0_typeII}(\ref{prop_h0_typeII_c}):

\begin{enumerate}[label=\textup{($\diamond$)},ref=\textup{$\diamond$}]
\item \label{property}
under the degeneration, every simple $M$-module $V \subset \mathfrak u_0$ with highest weight $\alpha$ shifts to a simple $M$-module $W \subset \mathfrak u(N)$ with highest weight~$\alpha - k_\alpha \delta$ for some $k_\alpha \ge 0$; moreover, $V \simeq W$ as $M_0$-modules.
\end{enumerate}

Given a subset $\Theta \subset \Psi$ and an element $\nu \in \Psi$, we say that $\nu$ is an \textit{upper} element of $\Theta$ if $\nu \in \Theta$ and $\mu - \nu \notin \Phi^+$ for all $\mu \in \Theta \setminus \lbrace \nu \rbrace$.
Observe that every nonempty subset of $\Psi$ contains at least one upper element.

\begin{lemma} \label{lemma_stable_under_degen}
Suppose that $\lambda \in \Theta$ for some $\Theta \subset \Psi$ and $\lambda$ is an upper element of~$\Theta$.
Then the subspace $\mathfrak u_0 \cap ( \bigoplus \limits_{\mu \in \Theta} \mathfrak g(\mu))$ shifts to itself under the degeneration.
\end{lemma}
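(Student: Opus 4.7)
The goal is to show that the degeneration shift (described in Proposition~\ref{prop_h0_typeII}(\ref{prop_h0_typeII_c})) acts as the identity on every $T$-weight of the subspace in question. By property~(\ref{property}), the shift $k_\alpha$ is constant on each simple $M$-submodule, so it suffices to treat a single such submodule $V \subset \mathfrak u_0 \cap \bigl(\bigoplus_{\mu \in \Theta}\mathfrak g(\mu)\bigr)$. Let $\alpha_{\mathrm{HW}}$ be its highest weight, so $\overline{\alpha_{\mathrm{HW}}} = \mu \in \Theta$; the shifted $M$-module $W$, which lies in $\mathfrak u_\infty \subset \mathfrak p_u^+$, has highest weight $\alpha_{\mathrm{HW}} - k\delta$, whose $C$-weight $\mu - k\lambda$ must therefore belong to $\Phi^+$.

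The case $\mu = \lambda$ is immediate: $\mu - k\lambda = (1 - k)\lambda$ lies in $\Phi^+$ only when $k = 0$. For $\mu \in \Theta \setminus \{\lambda\}$, I would pass to the $V(\beta)$-level description. Fix a weight $\alpha$ of $V$, choose $\beta \in Y(\delta)$ with $\alpha \in V(\beta)$, write $\alpha = \beta - j\delta$, and set $p = \delta^\vee(\beta)$. Combining Propositions~\ref{prop_h_infty} and~\ref{prop_limitII_prelim}, the vanishing of the shift for $\alpha$ is equivalent to $\beta - j'\delta \in \mathfrak h^\perp$ for every $j < j' \leq p$. Since $K = L$ gives $\mathfrak h^\perp \cap \mathfrak l = \{0\}$, this reduces to showing that $\overline{\beta - j'\delta} = \overline\alpha - (j'-j)\lambda$ lies in $\Phi^-$, for then $\beta - j'\delta \in \mathfrak p_u \subset \mathfrak h^\perp$.

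For $j' = j+1$, the upper-element hypothesis yields $\overline\alpha - \lambda \notin \Phi^+$; since $\beta - (j+1)\delta$ is a root, its $C$-weight lies in $\Phi \cup \{0\}$, and it is nonzero because $\overline\alpha \neq \lambda$; hence $\overline\alpha - \lambda \in \Phi^-$. For $r = j' - j \geq 2$, I would work in coordinates with respect to the simple $C$-roots $\varepsilon(\sigma)$ for $\sigma \in \Pi \setminus \Pi_L$, which are linearly independent in $\QQ\mathfrak X(C)$: since $\overline\alpha - \lambda$ has all coordinates $\leq 0$ and $\lambda$ has all coordinates $\geq 0$, the vector $\overline\alpha - r\lambda$ still has all coordinates $\leq 0$; it is nonzero, for otherwise $\overline\alpha = r\lambda$ would force $\overline\alpha - \lambda = (r-1)\lambda \in \Phi^+$, contradicting the previous step. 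Thus $\overline\alpha - r\lambda \in \Phi^-$, completing the verification. The principal hurdle is this coordinatewise check for $r \geq 2$; the case $\mu = \lambda$ is dispatched for free by the constraint on the shifted highest weight.
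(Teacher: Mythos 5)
Your proof is correct, and it rests on the same key observation as the paper's — that the upper-element hypothesis forbids $\mu - \lambda$ from lying in $\Phi^+$ — but it reaches the conclusion by a different mechanism. The paper argues by contradiction in three lines: if a simple $M$-submodule $W \subset \mathfrak g(\mu)$ with highest weight $\alpha$ does not shift to itself, then $\alpha - \delta$ is forced to be a root in $\Delta^+ \setminus \Delta_L^+$ (its restriction to $C$ is sandwiched, coordinatewise, between those of $\alpha$ and of the target $\alpha - k\delta$, both of which lie in $\Phi^+$), whence $\mu - \lambda \in \Phi^+$; this single derivation also kills the case $\mu = \lambda$, since then $\mu - \lambda = 0 \notin \Phi^+$. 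You instead verify the occupancy criterion of Proposition~\ref{prop_limitII_prelim} head-on: for $\mu \ne \lambda$ you show that every root space $\mathfrak g_{\beta - j'\delta}$ strictly below a weight $\alpha$ of $W$ in its $\delta$-string already lies in $\mathfrak p_u \subset \mathfrak h^\perp$ (first step via the upper-element hypothesis, subsequent steps via the sign analysis in the coordinates $\varepsilon(\sigma)$, $\sigma \in \Pi \setminus \Pi_L$), so that nothing can move; the case $\mu = \lambda$ is handled separately through the constraint that the target must restrict into $\Phi^+$. The coordinatewise sign argument in $\QQ\mathfrak X(C)$ is common to both proofs; yours is longer, but it makes explicit the justification of the paper's unproved step ``$\alpha - \delta \in \Delta^+ \setminus \Delta_L^+$'' and yields the slightly stronger byproduct that the entire lower portion of each relevant $\delta$-string is contained in $\mathfrak p_u$.
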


\begin{proof}
Assume there is a simple $M$-submodule $W \subset \mathfrak u_0 \cap (\bigoplus \limits_{\mu \in \Theta} \mathfrak g(\mu))$ that does not shift to itself under the degeneration and let $\alpha$ be its highest weight.
Then $\alpha - \delta \in \Delta^+\setminus \Delta_L^+$.
Let $\mu \in \Theta$ be such that $W \subset \mathfrak g(\mu)$.
Then $\overline \alpha - \overline \delta = \mu - \lambda \in \Phi^+$, a contradiction.
\end{proof}

Let
$
\Psi(N) = \Psi_1(N) \cup \ldots \cup \Psi_q(N)
$
be the SM-decomposition of~$\Psi(N)$.

For every $i = 1,\ldots, p$ let $S_i$ be the product of simple factors of $L'$ that act nontrivially on~$\mathfrak u^i$.

Until the end of this subsection, we assume that some $i \in \lbrace 1,\ldots, p \rbrace$ is fixed and $\lambda \notin \Psi_i$.
The next lemma is straightforward.

\begin{lemma} \label{lemma_S_i}
The following assertions hold.
\begin{enumerate}[label=\textup{(\alph*)},ref=\textup{\alph*}]
\item
$\mathfrak u^i \subset \mathfrak u_0$.

\item
$S_i$ is a normal subgroup of $M'$.

\item
$S_i$ is the product of all simple factors of $M'$ that act nontrivially on~$\mathfrak u^i$.

\item
$S_i$ acts trivially on each simple $M$-submodule of $\mathfrak u_0$ not contained in~$\mathfrak u^i$.

\item
Every simple $L$-submodule of $\mathfrak u^i$ remains simple when regarded as an $M$-module.
\end{enumerate}
\end{lemma}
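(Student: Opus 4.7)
The plan is to derive all five parts from two inputs: the explicit description
\[
\mathfrak u_0 = \bigoplus \limits_{\substack{\alpha \in \Delta^+ \colon \overline \alpha \in \Psi, \\ \delta - \alpha \notin \Delta^+_L(\delta) \cup \lbrace 0 \rbrace}} \mathfrak g_\alpha
\]
given in~\S\,\ref{subsec_degen_add}, and the defining property of the SM-decomposition that every simple factor of $L'$ acting nontrivially on $\mathfrak u$ acts nontrivially on exactly one component~$\mathfrak u^j$. The key observation threading through (b)--(e) is the equivalence, for a simple factor $F$ of~$L'$: $F \subset M'$ if and only if $\Pi_F \subset \Pi_M$ if and only if $F$ acts trivially on~$\mathfrak g(\lambda)$, where $\Pi_M = \lbrace \alpha \in \Pi_L \mid (\alpha, \delta) = 0 \rbrace$; this holds because $\delta$ is the highest weight of the simple $L$-module~$\mathfrak g(\lambda)$. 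Part~(a) is then immediate: if $\mathfrak g_\alpha \subset \mathfrak u^i$ then $\overline \alpha \in \Psi_i$, and since $\lambda \notin \Psi_i$ neither $\alpha = \delta$ nor $\delta - \alpha \in \Delta_L$ can hold (both would force $\overline \alpha = \lambda$), so $\mathfrak g_\alpha \subset \mathfrak u_0$.

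For parts~(b) and~(c), the equivalence does most of the work. Every simple factor $F$ of $S_i$ acts trivially on $\mathfrak u^j$ for each $j \ne i$ by the SM-decomposition, and in particular on $\mathfrak g(\lambda)$ (which lies in some $\mathfrak u^{j_0}$ with $j_0 \ne i$ since $\lambda \notin \Psi_i$); hence $F \subset M'$. Being a simple factor of $L'$ contained in $M'$, such an $F$ is automatically a simple factor of $M'$, giving~(b). For the converse direction of~(c), a simple factor $\widetilde F$ of $M'$ acting nontrivially on $\mathfrak u^i$ cannot be a proper Levi subgroup of some simple factor $F$ of $L'$ with $F \not\subset M'$: otherwise $F$ itself would act nontrivially on~$\mathfrak u^i$, giving $F \subset S_i \subset M'$ by~(b), a contradiction. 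So $\widetilde F$ is itself a simple factor of $L'$, and acting nontrivially on $\mathfrak u^i$ it lies in~$S_i$.

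For~(d), recall from~\S\,\ref{subsec_char_of_SM} that $\mathfrak u_0$ is $M$-stable. Since the $L$-module decomposition $\mathfrak u = \bigoplus_j \mathfrak u^j$ is a fortiori $M$-equivariant, one has $\mathfrak u_0 = \bigoplus_j (\mathfrak u_0 \cap \mathfrak u^j)$ as $M$-modules; any simple $M$-submodule lies in a single summand, and if not in $\mathfrak u^i$ it lies in $\mathfrak u^j$ for some $j \ne i$, on which $S_i$ acts trivially.

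Part~(e) is the main step, but it too reduces to the structural facts above. By the SM-decomposition, simple factors of $L'$ outside $S_i$ act trivially on $\mathfrak u^i$, so the $L'$-action on $\mathfrak g(\mu)$ (for $\mu \in \Psi_i$) factors through~$S_i$; in particular $\mathfrak g(\mu)$ is simple as an $S_i$-module. By~(c), simple factors of $M'$ outside $S_i$ also act trivially on $\mathfrak u^i$, so the $M'$-action on $\mathfrak g(\mu)$ factors through the same $S_i \subset M'$ (from~(b)), yielding $M'$-simplicity and hence $M$-simplicity of $\mathfrak g(\mu)$ (the connected center of $M$ acting by a character). The only genuine conceptual content is the equivalence from the first paragraph; given that, every assertion reduces to a short structural argument, with the most delicate point being the need in (c) and (e) to distinguish full simple factors of $L'$ from proper Levi subgroups thereof.
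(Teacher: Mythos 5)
The paper offers no proof of this lemma at all --- it is introduced with ``The next lemma is straightforward'' --- so there is no argument of the author's to compare against; your write-up supplies the missing verification, and it is correct. The load-bearing observation, that for a simple factor $F$ of $L'$ one has $F \subset M'$ iff $\Pi_F \subset \Pi_M$ iff $F$ acts trivially on $\mathfrak g(\lambda)$ (because $\Pi_M = \lbrace \alpha \in \Pi_L \mid (\alpha, \widehat\lambda) = 0 \rbrace$ and $\widehat\lambda$ is the highest weight of the simple $L$-module $\mathfrak g(\lambda)$), is exactly what makes (a)--(c) work, and your handling of the distinction between full simple factors of $L'$ and simple factors of $M'$ properly contained in them is the right care to take in~(c). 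Two small remarks. In~(d), the assertion that ``any simple $M$-submodule lies in a single summand'' is not automatic for a direct sum of $M$-modules (diagonal embeddings); here it does hold because $\mathfrak u_0$ is a spherical, hence multiplicity-free, $M$-module by Theorem~\ref{thm_sph_modules_imp}(\ref{thm_sph_modules_imp_a}), but you should either cite that or simply replace the claim by the projection argument: if $W \not\subset \mathfrak u^i$ then the $M$-equivariant projection of $W$ to $\bigoplus_{j \ne i} (\mathfrak u_0 \cap \mathfrak u^j)$ is nonzero, hence an isomorphism onto its image, on which $S_i$ acts trivially. In~(e), once you know $\mathfrak g(\mu)$ is simple already as an $S_i$-module and $S_i \subset M$ by~(b), $M$-simplicity is immediate (a module irreducible under a subgroup is irreducible under the whole group), so the appeal to~(c) there is an unnecessary detour.
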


Combining Lemma~\ref{lemma_S_i} with~(\ref{property}) we obtain

\begin{proposition} \label{prop_unique_j}
There exists a unique $j \in \lbrace 1,\ldots,q \rbrace$ with the following properties:
\begin{enumerate}[label=\textup{(\arabic*)},ref=\textup{\arabic*}]
\item
$\mathfrak u^i$ shifts to $\mathfrak u^j(N)$ under the degeneration;

\item
$S_j(N) = S_i$;

\item
there is a bijection $\Psi_i \to \Psi_j(N)$, $\mu \mapsto \mu_*$, such that for every $\mu \in \Psi_i$ one has an $S_i$-module isomorphism $\mathfrak g(\mu_*) \simeq \mathfrak g(\mu)$ and $\widehat \mu_* = \widehat \mu - c_\mu \delta$ where $c_\mu \ge 0$.
\end{enumerate}
\end{proposition}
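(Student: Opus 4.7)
The plan is to first construct the candidate map $\mu \mapsto \mu_*$ from $\Psi_i$ into $\Psi(N)$, and then to use a connectedness argument on the SM-decomposition to identify the index~$j$.

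For the construction, fix $\mu \in \Psi_i$. Lemma~\ref{lemma_S_i}(a),(e) will provide the containment $\mathfrak{g}(\mu) \subset \mathfrak{u}_0$ and the fact that $\mathfrak{g}(\mu)$ remains simple as an $M$-module. Property~(\ref{property}) then yields a simple $M$-submodule $W_\mu \subset \mathfrak{u}(N)$ to which $\mathfrak{g}(\mu)$ shifts, of highest $T$-weight $\widehat{\mu} - c_\mu \delta$ for some $c_\mu \ge 0$, together with an $M_0$-module isomorphism $W_\mu \simeq \mathfrak{g}(\mu)$. Since $N$ falls under the hypothesis $K(N) = L(N) = M$, the $M$-module $\mathfrak{u}(N) = \bigoplus_{\nu \in \Psi(N)} \mathfrak{g}(\nu)$ is multiplicity free with each summand simple, forcing $W_\mu = \mathfrak{g}(\mu_*)$ for a unique $\mu_* \in \Psi(N)$. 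The map $\mu \mapsto \mu_*$ will be injective because shifting is a bijection between the sets of simple $M$-submodules of $\mathfrak{u}_0$ and of $\mathfrak{u}(N)$. Moreover, because $M'$ has no nontrivial characters and hence acts trivially on the line $\mathfrak{g}_\delta$, one has $M' \subset M_0$; combined with Lemma~\ref{lemma_S_i}(b), this gives $S_i \subset M_0$, so that the isomorphism $W_\mu \simeq \mathfrak{g}(\mu)$ is in particular an isomorphism of $S_i$-modules. This settles the claims $\mathfrak{g}(\mu_*) \simeq \mathfrak{g}(\mu)$ and $\widehat{\mu_*} = \widehat{\mu} - c_\mu \delta$ in part~(3).

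Next I will show that all $\mu_*$ with $\mu \in \Psi_i$ lie in a single SM-component of $\Psi(N)$. Two elements of $\Psi$ belong to the same SM-component if and only if they can be joined by a chain whose consecutive pairs share a simple factor of $L'$ acting nontrivially on both, and the analogous description holds for $\Psi(N)$ in terms of simple factors of $M'$. Given $\mu, \mu' \in \Psi_i$ joined by a simple factor $F$ of $L'$, one has $F \subset S_i$, and by Lemma~\ref{lemma_S_i}(c) $F$ is also a simple factor of $M'$; by the $S_i$-isomorphisms above, $F$ acts nontrivially on both $\mathfrak{g}(\mu_*)$ and $\mathfrak{g}(\mu'_*)$. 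Hence $\mu_*$ and $\mu'_*$ are connected in $\Psi(N)$, and by connectedness of $\Psi_i$ the entire image lies in a single component~$\Psi_j(N)$. This gives the inclusion $S_i \subset S_j(N)$ and determines $j$ uniquely.

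The main obstacle will be the reverse inclusion: every $\nu \in \Psi_j(N)$ arises as some $\mu_*$ with $\mu \in \Psi_i$. Such a $\nu$ corresponds to a unique simple $M$-submodule $V_\nu \subset \mathfrak{u}_0$ shifting to $\mathfrak{g}(\nu)$, and by $C$-weight homogeneity $V_\nu \subset \mathfrak{g}(\mu(\nu))$ for a unique $\mu(\nu) \in \Psi$; I must show $\mu(\nu) \in \Psi_i$. I plan to join $\nu$ inside $\Psi_j(N)$ to some already known $\mu_*$ by a chain of shared simple factors of $M'$. The genuine difficulty is that such a factor of $M'$ need not itself be a simple factor of $L'$. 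I will resolve this by invoking the standard Levi structural fact that, since $M$ is the Levi of a parabolic subgroup of $L$, the derived subgroup $M'$ decomposes as the direct product of its intersections with the simple factors of~$L'$, so every simple factor of $M'$ is contained in a unique simple factor of~$L'$. Transferring the chain along these containments produces a chain in $\Psi$ linking $\mu(\nu)$ to an element of~$\Psi_i$, forcing $\mu(\nu) \in \Psi_i$; Lemma~\ref{lemma_S_i}(e) then gives $V_\nu = \mathfrak{g}(\mu(\nu))$ and $\nu = \mu(\nu)_*$. The equality $S_j(N) = S_i$ in part~(2) will then follow from the established bijection combined with Lemma~\ref{lemma_S_i}(c) applied to~$N$.
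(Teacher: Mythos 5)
Your proposal is correct and uses exactly the ingredients the paper itself invokes: the paper derives this proposition in one line by ``combining Lemma~\ref{lemma_S_i} with~(\ref{property})'', and your argument is a careful elaboration of that combination (the shift of each $\mathfrak g(\mu)$ via~(\ref{property}), the inclusion $S_i\subset M'\subset M_0$, and the matching of SM-components). The only cosmetic difference is that you establish the component matching by a chain-transfer argument through simple factors of $M'$ sitting inside simple factors of $L'$, where the paper implicitly relies on Lemma~\ref{lemma_S_i}(\textup{d}) to separate $\mathfrak u^i$ from its complement; both routes are sound and essentially equivalent.
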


\begin{lemma} \label{lemma_SR_of_SM}
One has $\Sigma_M(\mathfrak u(N)^*) \subset \Sigma_L(\mathfrak u^*)$.
\end{lemma}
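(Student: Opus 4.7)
The plan is to deduce the inclusion from Proposition~\ref{prop_SM_red_SR1} applied to the spherical $L$-module $\mathfrak u$, combined with the auxiliary equality $\Sigma_M(\mathfrak u_0^*)=\Sigma_M(\mathfrak u(N)^*)$. First I would observe that the data used in~\S\,\ref{subsec_degen_add}---namely the Levi subgroup $M$ and the $M$-submodule $\mathfrak u_0 \subset \mathfrak u$---coincides exactly with the data produced by the construction of~\S\,\ref{subsec_char_of_SM} when applied to $V = \mathfrak u$, with highest weight $\omega = \delta$ and lowest-weight covector $\xi = e_{-\delta}$. Proposition~\ref{prop_SM_red_SR1} then yields $\Sigma_M(\mathfrak u_0^*) \subset \Sigma_L(\mathfrak u^*)$ at once, so the remaining task is to establish the equality above.

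For that equality I would start from property~(\ref{property}) (equivalently, Proposition~\ref{prop_h0_typeII}(\ref{prop_h0_typeII_c})): every simple $M$-submodule $V \subset \mathfrak u_0$ of highest weight $\alpha$ shifts to a simple $M$-submodule $W \subset \mathfrak u(N)$ of highest weight $\alpha - k_\alpha \delta$ with $V \simeq W$ as $M_0$-modules. Because the character by which $M$ acts on the one-dimensional space $\mathfrak g_\delta$ must be trivial on the semisimple group $M'$, we have $M' \subset M_0$, so the above assignment matches the simple $M$-summands of $\mathfrak u_0$ and $\mathfrak u(N)$ through an $M'$-module isomorphism, and in particular produces an $M'$-module isomorphism $\mathfrak u_0 \simeq \mathfrak u(N)$. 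I would then invoke the saturation construction from~\S\,\ref{subsec_SM_gen}: the saturation of each of $\mathfrak u_0$ and $\mathfrak u(N)$ as an $M$-module is, as a representation pair, equivalent to a module over $M' \times Z$, where $Z$ denotes the torus of invertible scalar transformations acting on the simple summands. The matching just described produces an equivalence of these two saturated pairs, hence an equality of their spherical root sets; Proposition~\ref{prop_free_generators}(\ref{prop_free_generators_d}) then gives $\Sigma_M(\mathfrak u_0^*) = \Sigma_M(\mathfrak u(N)^*)$, completing the proof.

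The main obstacle I expect to navigate carefully is precisely this saturation step: one must verify that the $\delta$-twists distinguishing the $M$-weights of $\mathfrak u_0$ from those of $\mathfrak u(N)$ are absorbed into the auxiliary scaling torus~$Z$, so that after saturation the two modules really give rise to equivalent representation pairs. The rest of the argument is a direct invocation of results already at hand.
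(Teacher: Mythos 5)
Your proposal is correct and follows exactly the paper's (very terse) proof, which cites precisely the three ingredients you use: Proposition~\ref{prop_SM_red_SR1} applied to the data of \S\,\ref{subsec_char_of_SM} with $V=\mathfrak u$ and $\omega=\delta$, property~(\ref{property}), and Proposition~\ref{prop_free_generators}(\ref{prop_free_generators_d}) via saturation to convert the $M_0$-module (hence $M'$-module) isomorphism $\mathfrak u_0\simeq\mathfrak u(N)$ into the equality $\Sigma_M(\mathfrak u_0^*)=\Sigma_M(\mathfrak u(N)^*)$. You have simply filled in the details the paper leaves implicit.
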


\begin{proof}
This is implied by Propositions~\ref{prop_SM_red_SR1} and~\ref{prop_free_generators}(\ref{prop_free_generators_d}) along with~(\ref{property}).
\end{proof}

\begin{lemma} \label{lemma_WL_quotient}
Let $j \in \lbrace 1,\ldots,q\rbrace$ be such that $\mathfrak u^i$ shifts to $\mathfrak u^j(N)$ under the degeneration.
Then $\Lambda_L((\mathfrak u/\mathfrak u^i)^*) = \Lambda_M((\mathfrak u(N)/\mathfrak u^j(N))^*) \oplus \ZZ\delta$.
\end{lemma}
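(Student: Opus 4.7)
The plan is to apply Theorem~\ref{thm_sph_modules_imp} to the $L$-submodule $\mathfrak u':=\bigoplus_{k\ne i}\mathfrak u^k$ (so that $\mathfrak u/\mathfrak u^i\simeq\mathfrak u'$ as $L$-modules), using the distinguished highest weight~$\delta$, and then to transport the answer to $\mathfrak u(N)/\mathfrak u^j(N)$ along the degeneration by mimicking the proof of Theorem~\ref{thm_typeII}(\ref{thm_typeII_d}).

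First I would verify that the construction of~\S\,\ref{subsec_char_of_SM} is valid for~$\mathfrak u'$: since $\lambda\notin\Psi_i$, one has $\mathfrak g(\lambda)\subset\mathfrak u^k$ for some $k\ne i$, whence $\mathfrak g_\delta\subset\mathfrak u'$ and the restriction of $e_{-\delta}$ to~$\mathfrak u'$ is a lowest weight vector of $\mathfrak u'^*$ of weight~$-\delta$. The parabolic~$Q$ and its Levi~$M$ are therefore unchanged, and clearly $\widetilde{\mathfrak u'}=\mathfrak u'\cap\widetilde{\mathfrak u}$. Combining $\widetilde{\mathfrak u}=\mathfrak g_\delta\oplus\mathfrak u_0$ with the inclusion $\mathfrak u^i\subset\mathfrak u_0$ from Lemma~\ref{lemma_S_i} yields the $M$-module decomposition
\[
\widetilde{\mathfrak u'}=\mathfrak g_\delta\oplus(\mathfrak u_0\cap\mathfrak u'),
\]
and Theorem~\ref{thm_sph_modules_imp}(\ref{thm_sph_modules_imp_b}), applied to the spherical $L$-module~$\mathfrak u'$, gives
\[
\Lambda_L((\mathfrak u/\mathfrak u^i)^*)=\Lambda_L(\mathfrak u'^*)=\Lambda_M(\widetilde{\mathfrak u'}^*)=\ZZ\delta\oplus\Lambda_M((\mathfrak u_0\cap\mathfrak u')^*).
\]

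Next I would show that $\mathfrak u_0\cap\mathfrak u'$ shifts to $\mathfrak u'(N):=\bigoplus_{l\ne j}\mathfrak u^l(N)$ in the sense of~(\ref{property}). Since both $\mathfrak u_0$ and~$\mathfrak u(N)$ are spherical (hence multiplicity-free) $M$-modules, the decompositions $\mathfrak u_0=\mathfrak u^i\oplus(\mathfrak u_0\cap\mathfrak u')$ and $\mathfrak u(N)=\mathfrak u^j(N)\oplus\mathfrak u'(N)$ split the sets of simple $M$-submodules into two disjoint parts each. The hypothesis that $\mathfrak u^i$ shifts to $\mathfrak u^j(N)$ then forces the bijection from~(\ref{property}) to restrict to a bijection between simple $M$-submodules of $\mathfrak u_0\cap\mathfrak u'$ and those of~$\mathfrak u'(N)$ with the same shift property. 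Arguing exactly as in the proof of Theorem~\ref{thm_typeII}(\ref{thm_typeII_d}) via Propositions~\ref{prop_free_generators} and~\ref{prop_SM_WL_and_WM}, we obtain
\[
\ZZ\delta\oplus\Lambda_M((\mathfrak u_0\cap\mathfrak u')^*)=\ZZ\delta\oplus\Lambda_M(\mathfrak u'(N)^*)=\Lambda_M((\mathfrak u(N)/\mathfrak u^j(N))^*)\oplus\ZZ\delta,
\]
which combined with the previous display gives the desired equality. The subtle step is the bijection-respects-decomposition assertion, which ultimately reduces to multiplicity-freeness of the ambient $M$-modules together with the hypothesis that $\mathfrak u^i$ shifts to $\mathfrak u^j(N)$.
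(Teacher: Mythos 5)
Your proposal is correct and follows essentially the same route as the paper's proof: apply Theorem~\ref{thm_sph_modules_imp}(\ref{thm_sph_modules_imp_b}) to the complement $\mathfrak u/\mathfrak u^i$ to split off $\ZZ\delta$, then transport the remaining lattice along the degeneration via property~(\ref{property}) together with Propositions~\ref{prop_free_generators} and~\ref{prop_SM_WL_and_WM}, exactly as in the proof of Theorem~\ref{thm_typeII}(\ref{thm_typeII_d}). The only difference is notational ($\mathfrak u_0\cap\mathfrak u'$ versus $\mathfrak u_0/\mathfrak u^i$), and you spell out some verifications the paper leaves implicit.
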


\begin{proof}
As $\lambda \notin \Psi_i$, Theorem~\ref{thm_sph_modules_imp}(\ref{thm_sph_modules_imp_b}) yields $\Lambda_L((\mathfrak u/\mathfrak u^i)^*) = \Lambda_M((\mathfrak u_0/\mathfrak u^i)^*) \oplus \ZZ\delta$.
Combining~(\ref{property}) with Proposition~\ref{prop_free_generators}(\ref{prop_free_generators_b}) we find that there is a bijection
\[
F_M((\mathfrak u_0/\mathfrak u^i)^*) \to F_M((\mathfrak u(N)/\mathfrak u^j(N))^*)
\]
that takes each $\alpha$ to $\alpha - k_\alpha \delta$ for some $k_\alpha \ge 0$.
As $\Lambda_M((\mathfrak u_0/\mathfrak u^i)^*) = \ZZ F_M((\mathfrak u_0/\mathfrak u^i)^*)$ and $\Lambda_M((\mathfrak u(N)/\mathfrak u^j(N))^*) = \ZZ F_M((\mathfrak u(N)/\mathfrak u^j(N))^*)$ by Proposition~\ref{prop_SM_WL_and_WM}, we conclude that $\Lambda_M((\mathfrak u_0/\mathfrak u^i)^*) \oplus \ZZ\delta = \Lambda_M((\mathfrak u(N)/\mathfrak u^j(N))^*) \oplus \ZZ\delta$.
\end{proof}

For every $i = 1,\ldots, p$ put
$\Upsilon_i= \lbrace \mu \in \Psi \setminus \Psi_i \mid \Supp \mu \subset \Supp(\Psi_i) \rbrace$.
Consider also the $\QQ$-vector space $\Xi=\QQ\Sigma_L(\mathfrak u^*)$.

\begin{proposition} \label{prop_opt_A}
Suppose that $\lambda$ is an upper element of $\Upsilon_i$ and let $j \in \lbrace 1,\ldots, q \rbrace$ be as in Proposition~\textup{\ref{prop_unique_j}}.
Then for every $\nu \in \Upsilon_j(N)$ there exists $\mu \in \Upsilon_i$ such that $\mathfrak g(\nu) \subset \mathfrak g(\mu)$ and $\widehat \nu \equiv \widehat \mu \mod \Xi$.
\end{proposition}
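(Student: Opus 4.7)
The natural candidate is $\mu := \overline{\nu}$, which forces $\mathfrak{g}(\nu)\subseteq\mathfrak{g}(\mu)$ automatically, since every $C(M)$-weight subspace of weight $\nu$ lies inside the $C$-weight subspace of weight $\overline{\nu}$. It remains to show $\mu\in\Upsilon_i$ and $\widehat\mu\equiv\widehat\nu\pmod\Xi$.

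For $\mu\in\Upsilon_i$, the plan is to first locate the preimage of $\mathfrak{g}(\nu)$ in $\mathfrak{u}_0$ via property~($\diamond$): it is a simple $M$-submodule $V$ with highest $T$-weight $\widehat\nu+c\delta$ for some integer $c\ge0$. Since $C\subset M$ acts on $V$ by a single character, there is a unique $\rho\in\Psi$ with $V\subset\mathfrak{g}(\rho)$, and restriction of the highest $T$-weight of $V$ to $C$ gives $\rho=\mu+c\lambda$. The hypothesis $\nu\notin\Psi_j(N)$, combined with Proposition~\ref{prop_unique_j}, forces $V\not\subset\mathfrak{u}^i$ and hence $\rho\notin\Psi_i$. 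The support condition $\Supp\mu\subseteq\Supp\Psi_i$ follows by combining $\Supp\widehat\nu\subseteq\Supp\Psi_j(N)\subseteq\Supp\Psi_i$ (the second inclusion coming from the bijection in Proposition~\ref{prop_unique_j} together with $\Supp\delta=\Supp\lambda\subseteq\Supp\Psi_i$) with $\Supp\delta\subseteq\Supp\Psi_i$, via the relation $\rho=\mu+c\lambda$. The activity $\mu\in\Psi$ in the case $c>0$ I would establish by downward induction along the chain $\rho,\rho-\lambda,\ldots,\mu$: at each step Lemma~\ref{lemma_sum} and Proposition~\ref{prop_bracket}, together with multiplicity-freeness of $\mathrm S(\mathfrak{u})$ as a $K$-module, prevent $\mathfrak{g}(-(\mu+k\lambda))$ from lying entirely inside $\mathfrak{h}$.

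For the congruence $\widehat\mu\equiv\widehat\nu\pmod\Xi$, I would apply Proposition~\ref{prop_SM_red_SR2} to the inclusion $V\subset\mathfrak{g}(\rho)$ inside the spherical $L$-module $\mathfrak{u}$, yielding $\widehat\rho-\widehat\nu-c\delta\in\Xi$. A second application of the same proposition to a suitable $M$-submodule of $\mathfrak{g}(\rho)$ of highest weight $\widehat\mu+c\delta$, obtained by iteratively bracketing a highest-weight vector of $\mathfrak{g}(\mu)$ with $e_\delta$, gives $\widehat\rho-\widehat\mu-c\delta\in\Xi$. Subtracting these two congruences produces $\widehat\mu-\widehat\nu\in\Xi$.

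The main obstacle will be the activity step: showing $\mu\in\Psi$ when $c>0$ requires that the downward descent along $\mu+k\lambda$ preserves activity, and I expect this to make essential use of Propositions~\ref{prop_crucial} and~\ref{prop_tough}, since the classes in $\widetilde\Psi$ are quite rigid and one must rule out, by constraining which simple factors of $L'$ can act on the relevant subspaces, the possibility that some intermediate $\mathfrak{g}(-(\mu+k\lambda))$ is entirely contained in $\mathfrak{h}$.
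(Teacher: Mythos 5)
Your setup matches the paper's up to the point where you locate, via property~(\ref{property}), the simple $M$-submodule $V\subset\mathfrak u_0$ with highest weight $\widehat\nu+c\delta$ sitting inside $\mathfrak g(\rho)$ with $\rho=\overline\nu+c\lambda\in\Psi$, and the support computation showing $\Supp\rho\subset\Supp(\Psi_i)$ is essentially the paper's. The gap is the step you yourself flag as the main obstacle: proving $\overline\nu\in\Psi$ when $c>0$. The descent you propose cannot work with the tools you cite. Lemma~\ref{lemma_sum} applied to $\rho=(\overline\nu+(c-1)\lambda)+\lambda$ only says that one of the two summands is active, and $\lambda$ \emph{is} active (it lies in $\Upsilon_i\subset\Psi$), so the disjunction is already satisfied and forces nothing about $\overline\nu+(c-1)\lambda$. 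Propositions~\ref{prop_crucial} and~\ref{prop_tough} are vacuous in this section, since $K=L$ implies $\Psi_0=\varnothing$ and all equivalence classes in $\widetilde\Psi$ are singletons. Most tellingly, your argument never uses the hypothesis that $\lambda$ is an \emph{upper} element of $\Upsilon_i$, which is precisely what the statement is designed around.

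The paper's mechanism is different and removes the problem entirely: since $\Supp(\widehat\nu+c\delta)\subset\Supp(\Psi_i)$, the $C$-root $\rho$ lies in $\Upsilon_i$, and because $\lambda$ is an upper element of $\Upsilon_i$, Lemma~\ref{lemma_stable_under_degen} says that $\mathfrak u_0\cap\bigl(\bigoplus_{\mu\in\Upsilon_i}\mathfrak g(\mu)\bigr)$ shifts to itself under the degeneration; hence $V$ shifts to itself, $c=0$, and $\mu=\rho=\overline\nu$ with no chain to descend. Once $c=0$, the congruence is a single application of Proposition~\ref{prop_SM_red_SR2} to $\mathfrak g(\nu)\subset\mathfrak g(\mu)$; your two-application-and-subtract variant would additionally require you to produce an $M$-submodule of $\mathfrak u_0$ with highest weight $\widehat\mu+c\delta$ by bracketing with $e_\delta$, which is neither verified nor needed. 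To repair your proof, replace the downward induction by the observation that $\rho\in\Upsilon_i$ together with Lemma~\ref{lemma_stable_under_degen} forces $c=0$.
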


\begin{proof}
Applying~(\ref{property}) we find that there is $s \ge 0$ such that $\widehat \nu + s\delta$ is the highest weight of a simple $M$-module in~$\mathfrak u_0$.
As $\Supp \nu \subset \Supp \Psi_j(N) \subset \Supp(\Psi_i)$ and $\Supp \delta \subset \Supp(\Psi_i)$, we find that $\Supp(\widehat\nu + s\delta) \subset \Supp (\Psi_i)$ and hence $\mu = \overline \nu + s \overline \delta \in \Upsilon_i$.
By Lemma~\ref{lemma_stable_under_degen}, the subspace $\mathfrak u_0 \cap(\bigoplus \limits_{\mu \in \Upsilon_i} \mathfrak g(\mu))$ shifts to itself under the degeneration, which implies $s = 0$ and proves the first claim.
The second claim follows from Proposition~\ref{prop_SM_red_SR2}.
\end{proof}

For a given pair $(H, \Psi_k)$ with $k \in \lbrace 1,\ldots,p \rbrace$, we shall consider the following condition:
\begin{enumerate}[label=\textup{($\sharp$)},ref=\textup{$\sharp$}]
\item \label{H_part_case}
$\mu -\nu \notin \Phi^+$ for all $\mu \in \Psi_k$ and $\nu \in \Psi \setminus \Psi_k$.
\end{enumerate}

\begin{proposition} \label{prop_opt_B}
Suppose that $(H, \Psi_i)$ satisfies~\textup(\ref{H_part_case}\textup) and $\lambda$ is an upper element of $\Psi \setminus \Psi_i$.
Then
\begin{enumerate}[label=\textup{(\alph*)},ref=\textup{\alph*}]
\item \label{prop_opt_B_a}
the subspace $\mathfrak u_0$ shifts to itself under the degeneration; in particular, $\mathfrak u(N) = \mathfrak u_0$;

\item \label{prop_opt_B_b}
there exists a unique $j \in \lbrace 1,\ldots,q \rbrace$ with the following properties:
\begin{enumerate}[label=\textup{(\arabic*)},ref=\textup{\arabic*}]
\item \label{prop_opt_B_1}
$\mathfrak u^i = \mathfrak u^j(N)$;

\item \label{prop_opt_B_2}
$S_j(N) = S_i$;

\item \label{prop_opt_B_3}
the pair $(N, \Psi_j(N))$ satisfies~\textup(\ref{H_part_case}\textup).
\end{enumerate}
\end{enumerate}
\end{proposition}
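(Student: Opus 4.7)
The plan is as follows. For part~(a), I would first observe that under the hypotheses $\lambda$ is in fact an upper element of the whole $\Psi$: any $\mu \in (\Psi \setminus \Psi_i) \setminus \lbrace \lambda \rbrace$ satisfies $\mu - \lambda \notin \Phi^+$ by the hypothesis on $\lambda$, while any $\mu \in \Psi_i$ satisfies $\mu - \lambda \notin \Phi^+$ by condition~(\ref{H_part_case}) applied with $\nu = \lambda \in \Psi \setminus \Psi_i$. Then Lemma~\ref{lemma_stable_under_degen} (applied with $\Theta = \Psi$) will imply that $\mathfrak u_0$ shifts to itself, i.e., each simple $M$-submodule of $\mathfrak u_0$ shifts to a simple $M$-submodule with the same $T$-highest weight. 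Since the root subspaces of $\mathfrak g$ are one-dimensional, each such shift is an equality of subspaces, giving $\mathfrak u_0 \subset \mathfrak u_\infty$; combined with the dimension equality from Proposition~\ref{prop_h0_typeII}(\ref{prop_h0_typeII_c}), this yields $\mathfrak u(N) = \mathfrak u_\infty = \mathfrak u_0$.

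For part~(b), existence and uniqueness of $j$ together with the bijection $\mu \mapsto \mu_*$ will come directly from Proposition~\ref{prop_unique_j}, applicable since $\lambda \notin \Psi_i$. Part~(a) ensures that every simple $M$-submodule of $\mathfrak u^i \subset \mathfrak u_0$ shifts to itself, which both forces the constants $c_\mu$ of Proposition~\ref{prop_unique_j} to vanish and yields $\mathfrak u^j(N) = \mathfrak u^i$; properties~(1) and~(2) follow at once.

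The main obstacle will be property~(3). To establish it, I would argue by contradiction: suppose there exist $\mu_* \in \Psi_j(N)$ and $\nu_* \in \Psi(N) \setminus \Psi_j(N)$ with $\mu_* - \nu_* = \varepsilon_N(\beta)$ for some $\beta \in \Delta^+ \setminus \Delta_M^+$. Let $\alpha_1, \alpha_2 \in \Delta^+ \setminus \Delta_L^+$ denote the $T$-highest weights of the simple $M$-submodules $\mathfrak g(\mu_*), \mathfrak g(\nu_*) \subset \mathfrak u_0$, and set $\mu_L = \varepsilon(\alpha_1), \nu_L = \varepsilon(\alpha_2) \in \Psi$; restricting $\varepsilon_N$ further to $C \subset C(N)$ gives $\mu_L - \nu_L = \varepsilon(\beta)$. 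Because the decomposition $\mathfrak u = \bigoplus_k \mathfrak u^k$ consists of whole $C$-weight spaces $\mathfrak g(\mu)$, the inclusion $\mathfrak g(\mu_*) \subset \mathfrak g(\mu_L) \cap \mathfrak u^i$ forces $\mathfrak g(\mu_L) \subset \mathfrak u^i$, whence $\mu_L \in \Psi_i$; similarly $\mathfrak g(\nu_*) \subset \mathfrak g(\nu_L)$ together with $\mathfrak g(\nu_*) \not\subset \mathfrak u^j(N) = \mathfrak u^i$ yields $\mathfrak g(\nu_L) \not\subset \mathfrak u^i$, whence $\nu_L \in \Psi \setminus \Psi_i$. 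If $\beta \in \Delta_L^+ \setminus \Delta_M^+$ then $\varepsilon(\beta) = 0$, so $\mu_L = \nu_L$ lies in the empty set $\Psi_i \cap (\Psi \setminus \Psi_i)$, a contradiction; otherwise $\beta \in \Delta^+ \setminus \Delta_L^+$ and $\varepsilon(\beta) \in \Phi^+$, so $\mu_L - \nu_L \in \Phi^+$ contradicts condition~(\ref{H_part_case}) for $(H, \Psi_i)$.
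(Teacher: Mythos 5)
Your proof is correct and takes essentially the same route as the paper's: you show that the hypotheses make $\lambda$ an upper element of all of $\Psi$ so that Lemma~\ref{lemma_stable_under_degen} applies with $\Theta = \Psi$, derive~(\ref{prop_opt_B_1}) and~(\ref{prop_opt_B_2}) from Proposition~\ref{prop_unique_j} together with part~(\ref{prop_opt_B_a}), and obtain~(\ref{prop_opt_B_3}) by restricting a putative relation in $\Phi^+(N)$ to $C$ and invoking condition~(\ref{H_part_case}) for $(H,\Psi_i)$. The paper's argument is identical in substance, merely terser.
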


\begin{proof}
(\ref{prop_opt_B_a})
It follows from the hypothesis that $\lambda$ is an upper element of $\Psi$.
Then the claim is implied by Lemma~\ref{lemma_stable_under_degen}.

(\ref{prop_opt_B_b})
Taking into account part~(\ref{prop_opt_B_a}) and applying Proposition~\ref{prop_unique_j} we find that there exists a unique $j \in \lbrace 1,\ldots, q \rbrace$ satisfying (\ref{prop_opt_B_1}) and~(\ref{prop_opt_B_2}), so it remains to prove~(\ref{prop_opt_B_3}).
Thanks to part~(\ref{prop_opt_B_a}), the restriction to $C$ of any element in $\Psi(N)$ belongs to~$\Psi$.
In view of~(\ref{prop_opt_B_1}), for every $\xi \in \Psi_j(N)$ and $\theta \in \Psi(N) \setminus \Psi_j(N)$ one has $\overline \xi \in \Psi_i$ and $\overline \theta \in \Psi \setminus \Psi_i$.
Then $\overline \xi - \overline \theta \notin \Phi^+ \cup \lbrace 0 \rbrace$ and hence $\xi - \theta \notin \Phi^+(N)$.
\end{proof}

\subsection{The subgroup~\texorpdfstring{$H_i$}{H\_i}}
\label{subsec_subgroup_H_i}

Throughout this subsection, $i$ denotes an arbitrary element of $\lbrace 1,\ldots,p \rbrace$.
The goal of this subsection is to define the subgroup $H_i$ mentioned in~\S\,\ref{subsec_main_idea} and discuss some properties of it.

We begin with describing several algorithms and discussing their properties.

\medskip

Algorithm~\newalg: \label{alg_A}

Input: a pair $(H, \Psi_i)$

Step~\step: \label{step_A1}
if $\Upsilon_i = \varnothing$ then exit and return $(H, \Psi_i)$;

Step~\step: \label{step_A2}
choose an upper element $\lambda \in \Upsilon_i$, compute the additive degeneration $\mathfrak h_\infty$ of $\mathfrak h$ defined by~$\lambda$ and put $N = N_G(\mathfrak h_\infty)^0$;

Step~\step: \label{step_A3}
identify $j$ as in Proposition~\ref{prop_unique_j};

Step~\step: \label{step_A4}
repeat the procedure for the pair $(N, \Psi_j(N))$.

\medskip

\begin{proposition} \label{prop_reduction_A}
Let $(N_i, \Psi_k(N_i))$ denote an output of Algorithm~\textup{\ref{alg_A}}.
Then
\begin{enumerate}[label=\textup{(\alph*)},ref=\textup{\alph*}]
\item \label{prop_reduction_A_a}
$S_k(N_i)=S_i$;

\item \label{prop_reduction_A_b}
there is a bijection $\Psi_i \to \Psi_k(N_i)$, $\mu \mapsto \mu_*$, such that for every $\mu \in \Psi_i$ one has an $S_i$-module isomorphism $\mathfrak g(\mu_*) \simeq \mathfrak g(\mu)$ and $\widehat \mu_* \equiv \widehat \mu - \sum \limits_{\nu \in \Upsilon_i} c_\nu \widehat \nu \mod \Xi$ with $c_\nu \ge 0$ for all~$\nu$;

\item \label{prop_reduction_A_c}
the pair $(N_i, \Psi_k(N_i))$ satisfies~\textup(\ref{H_part_case}\textup).
\end{enumerate}
\end{proposition}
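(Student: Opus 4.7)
The plan is to argue by induction on the number of iterations that Algorithm~\ref{alg_A} performs on the pair $(H,\Psi_i)$; termination is guaranteed by the fact that Proposition~\ref{prop_h0_typeII}(\ref{prop_h0_typeII_c}) forces $\dim\mathfrak u$ to strictly drop at each iteration. In the base case the algorithm halts immediately, so $\Upsilon_i=\varnothing$ and $(N_i,\Psi_k(N_i))=(H,\Psi_i)$; parts (a) and (b) hold with the identity bijection and all $c_\nu=0$. For~(c), suppose for contradiction that $\mu\in\Psi_i$ and $\nu\in\Psi\setminus\Psi_i$ satisfy $\theta:=\mu-\nu\in\Phi^+$. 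By Proposition~\ref{prop_properties_of_g(lambda)}(\ref{prop_properties_of_g(lambda)_b}), $\mathfrak g(\mu)=[\mathfrak g(\nu),\mathfrak g(\theta)]$, whence $\widehat\mu=\widehat\nu+\beta$ for some $T$-weight $\beta\in\Delta^+$ of $\mathfrak g(\theta)\subset\mathfrak p^+_u$. Comparing simple-root expansions coefficient-wise in the identity $\widehat\nu=\widehat\mu-\beta$ (all three being elements of $\Delta^+$) yields $\Supp\widehat\nu\subseteq\Supp\widehat\mu\subseteq\Supp(\Psi_i)$, i.e.\ $\nu\in\Upsilon_i$, contradicting $\Upsilon_i=\varnothing$.

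For the inductive step, assume the algorithm runs at least one iteration with upper element $\lambda\in\Upsilon_i$, producing $N=N_G(\mathfrak h_\infty)^0$ and the index $j$ from Proposition~\ref{prop_unique_j}. That proposition supplies a bijection $\sigma\colon\Psi_i\to\Psi_j(N)$ with $S_j(N)=S_i$, $\mathfrak g(\sigma(\mu))\simeq\mathfrak g(\mu)$ as $S_i$-modules, and $\widehat{\sigma(\mu)}=\widehat\mu-c_\mu\widehat\lambda$ with $c_\mu\ge 0$. Applying the induction hypothesis to $(N,\Psi_j(N))$ furnishes a bijection $\tau\colon\Psi_j(N)\to\Psi_k(N_i)$ together with the analogues of (a), (b), (c). Setting $\mu_*:=\tau(\sigma(\mu))$ produces the desired bijection for the output; parts (a), (c), and the module-isomorphism half of (b) transfer at once by composition.

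For the congruence in~(b), substituting the explicit form of $\widehat{\sigma(\mu)}$ into the inductive congruence gives
\[
\widehat{\mu_*}\equiv\widehat\mu-c_\mu\widehat\lambda-\sum_{\eta\in\Upsilon_j(N)} c'_\eta\widehat\eta \pmod{\Xi(N)}.
\]
Lemma~\ref{lemma_SR_of_SM} yields $\Xi(N)\subseteq\Xi$, so the congruence also holds modulo~$\Xi$. Proposition~\ref{prop_opt_A} then associates to each $\eta\in\Upsilon_j(N)$ some $\nu'_\eta\in\Upsilon_i$ with $\widehat\eta\equiv\widehat{\nu'_\eta}$ modulo~$\Xi$; substituting, and recalling that $\lambda\in\Upsilon_i$ itself, allows us to regroup the right-hand side as $\widehat\mu-\sum_{\nu\in\Upsilon_i}c''_\nu\widehat\nu$ with all coefficients non-negative. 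The crux of the argument is the base-case observation that $\mu-\nu\in\Phi^+$ automatically forces $\Supp\nu\subseteq\Supp\mu$ via the highest-weight relation, which is what converts the termination condition $\Upsilon_i=\varnothing$ directly into $(\sharp)$; the rest is a bookkeeping exercise that reconciles $\Xi$ with $\Xi(N)$ via Lemma~\ref{lemma_SR_of_SM} and organizes the $\Upsilon$-coefficients along the composition while preserving non-negativity.
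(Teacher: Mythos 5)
Your proof is correct and follows essentially the same route as the paper, which disposes of (a) and (b) by unwinding the algorithm via Proposition~\ref{prop_unique_j}, Proposition~\ref{prop_opt_A} and Lemma~\ref{lemma_SR_of_SM}, and of (c) by observing that the output satisfies $\Upsilon_k(N_i)=\varnothing$. Your induction merely makes that unwinding explicit, and your base-case argument for (c) (that $\mu-\nu\in\Phi^+$ forces $\widehat\mu=\widehat\nu+\beta$ for a weight $\beta$ of $\mathfrak g(\mu-\nu)$, hence $\Supp\widehat\nu\subset\Supp\widehat\mu$ and $\nu\in\Upsilon$) correctly supplies the one detail the paper leaves unstated.
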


\begin{proof}
Parts~(\ref{prop_reduction_A_a}) and~(\ref{prop_reduction_A_b}) follow directly from the description of the algorithm, Propositions~\ref{prop_unique_j}, \ref{prop_opt_A}, and Lemma~\ref{lemma_SR_of_SM}.
Part~(\ref{prop_reduction_A_c}) is implied by $\Upsilon_k(N_i)=\varnothing$.
\end{proof}

Algorithm~\newalg: \label{alg_B}

Input: a pair $(H, \Psi_i)$ satisfying~(\ref{H_part_case})

Step~\step: \label{step_B1}
if $\Psi \setminus \Psi_i = \varnothing$ then exit and return~$(H, \Psi_i)$;

Step~\step: \label{step_B2}
choose an upper element $\lambda \in \Psi \setminus \Psi_i$, compute the additive degeneration $\mathfrak h_\infty$ of $\mathfrak h$ defined by~$\lambda$ and put $N = N_G(\mathfrak h_\infty)^0$;

Step~\step: \label{step_B3}
identify $j$ as in Proposition~\ref{prop_opt_B}(\ref{prop_opt_B_b});

Step~\step: \label{step_B4}
repeat the procedure for the pair $(N, \Psi_j(N))$.

\medskip

We remark that Algorithm~\ref{alg_B} is well defined by property~(\ref{prop_opt_B_3}) of Proposition~\ref{prop_opt_B}(\ref{prop_opt_B_b}).

The next result follows from the description of the algorithm and Proposition~\ref{prop_opt_B}.

\begin{proposition} \label{prop_reduction_F}
Let $(N_i, \Psi_k(N_i))$ be an output of Algorithm~\textup{\ref{alg_B}}.
Then
\begin{enumerate}[label=\textup{(\alph*)},ref=\textup{\alph*}]
\item \label{prop_reduction_F_a}
$\Psi(N_i)=\Psi_k(N_i)$, that is, the SM-decomposition of $\Psi(N_i)$ has only one component;

\item \label{prop_reduction_F_b}
$\mathfrak u^k(N_i)= \mathfrak u^i$;

\item \label{prop_reduction_F_c}
$S_k(N_i)=S_i$.
\end{enumerate}
\end{proposition}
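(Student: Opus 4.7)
My plan is to establish all three parts simultaneously by induction on the number of iterations performed by Algorithm~\ref{alg_B} starting from $(H, \Psi_i)$. To see that this induction is well-founded, I would first observe that each invocation of step~\ref{step_B2} produces a subgroup $N$ with $\dim G\mathfrak{h}_\infty = \dim G\mathfrak{h} - 1$ by Theorem~\ref{thm_typeII}(\ref{thm_typeII_c}), so the algorithm must terminate in finitely many steps.

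For the base case, if the algorithm terminates immediately via step~\ref{step_B1}, then $\Psi \setminus \Psi_i = \varnothing$, hence $\Psi = \Psi_i$. Then the SM-decomposition of $\Psi$ has a single component, namely $\Psi_i$, and the assertions (\ref{prop_reduction_F_a}), (\ref{prop_reduction_F_b}), (\ref{prop_reduction_F_c}) hold trivially with $(N_i, k) = (H, i)$.

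For the inductive step, suppose the algorithm performs at least one iteration. The hypotheses of Proposition~\ref{prop_opt_B} are satisfied: the input pair $(H, \Psi_i)$ satisfies~(\ref{H_part_case}) by assumption, and step~\ref{step_B2} selects $\lambda$ as an upper element of $\Psi \setminus \Psi_i$. Hence Proposition~\ref{prop_opt_B}(\ref{prop_opt_B_b}) produces a unique index $j$ such that $\mathfrak{u}^j(N) = \mathfrak{u}^i$, $S_j(N) = S_i$, and the pair $(N, \Psi_j(N))$ again satisfies~(\ref{H_part_case}); this is precisely the $j$ identified at step~\ref{step_B3}, so Algorithm~\ref{alg_B} can be continued with this pair and performs one fewer iteration. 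Applying the inductive hypothesis to $(N, \Psi_j(N))$ yields an output $(N_i, \Psi_k(N_i))$ with $\Psi(N_i) = \Psi_k(N_i)$, $\mathfrak{u}^k(N_i) = \mathfrak{u}^j(N) = \mathfrak{u}^i$, and $S_k(N_i) = S_j(N) = S_i$, which gives the three desired conclusions.

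There is essentially no obstacle to overcome here: Proposition~\ref{prop_opt_B} is tailored so that a single additive degeneration step preserves the module $\mathfrak{u}^i$, the subgroup $S_i$, and the property~(\ref{H_part_case}), and the induction merely iterates this observation. The only point requiring a moment of attention is that the index $j$ tracked by the algorithm is well-defined at each step, which follows from the uniqueness clause in Proposition~\ref{prop_opt_B}(\ref{prop_opt_B_b}).
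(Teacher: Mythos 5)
Your proof is correct and takes essentially the same route as the paper, which simply states that the result "follows from the description of the algorithm and Proposition~\ref{prop_opt_B}"; your induction on the number of iterations, with termination guaranteed by Theorem~\ref{thm_typeII}(\ref{thm_typeII_c}), is just an explicit write-up of that one-line argument.
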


Algorithm~\newalg: \label{alg_C}

Input: a pair $(H,\Psi_i)$

Step~\step: \label{step_C1}
apply Algorithm~\ref{alg_A} to $(H,\Psi_i)$;

Step~\step: \label{step_C2}
apply Algorithm~\ref{alg_B} to the output of step~\ref{step_C1}.

\medskip

Note that Algorithm~\ref{alg_C} is well defined since the output of Algorithm~\ref{alg_A} satisfies~(\ref{H_part_case}) by Proposition~\ref{prop_reduction_A}(\ref{prop_reduction_A_c}).

Let $(H_i, \Psi_k(H_i))$ denote an output of Algorithm~\ref{alg_C} and let $L_i$ be the Levi subgroup of~$H_i$.

\begin{proposition} \label{prop_H_i}
The following assertions hold.
\begin{enumerate}[label=\textup{(\alph*)},ref=\textup{\alph*}]
\item \label{prop_H_i_a}
$\Psi(H_i)=\Psi_k(H_i)$, that is, the SM-decomposition of $\Psi(H_i)$ has only one component.

\item \label{prop_H_i_b}
$S_k(H_i)=S_i$.

\item \label{prop_H_i_c}
There is a bijection $\Psi_i \to \Psi_k(H_i)$, $\mu \mapsto \mu_*$, such that for every $\mu \in \Psi_i$ one has an $S_i$-module isomorphism $\mathfrak g(\mu_*) \simeq \mathfrak g(\mu)$ and $\widehat \mu_* \equiv \widehat \mu - \sum \limits_{\nu \in \Upsilon_i} c_\nu \widehat \nu \mod \Xi$ with $c_\nu \ge 0$ for all~$\nu$.

\item \label{prop_H_i_d}
$\Sigma_{L_i}(\mathfrak u(H_i)^*) \subset \Sigma_L(\mathfrak u^*)$.

\item \label{prop_H_i_e}
$\Sigma_G(G/H_i) \subset \Sigma_G(G/H)$.

\item \label{prop_H_i_f}
$|\Sigma_G(G/H_i)| = \rk_L (\mathfrak u^i)^*$.
\end{enumerate}
\end{proposition}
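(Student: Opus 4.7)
Let $(N_i, \Psi_{k_A}(N_i))$ denote an output of Algorithm~\ref{alg_A} applied to $(H,\Psi_i)$, and let $(H_i, \Psi_k(H_i))$ denote the subsequent output of Algorithm~\ref{alg_B}, so that $(H_i, \Psi_k(H_i))$ is an output of Algorithm~\ref{alg_C}. The plan is to deduce parts~(\ref{prop_H_i_a})--(\ref{prop_H_i_c}) by chaining Propositions~\ref{prop_reduction_A} and~\ref{prop_reduction_F}, and then to extract~(\ref{prop_H_i_d})--(\ref{prop_H_i_f}) from a direct analysis of the chain of intermediate additive degenerations that constitute Algorithm~\ref{alg_C}.

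Part~(\ref{prop_H_i_a}) is immediate from Proposition~\ref{prop_reduction_F}(\ref{prop_reduction_F_a}), and part~(\ref{prop_H_i_b}) follows by chaining Proposition~\ref{prop_reduction_A}(\ref{prop_reduction_A_a}) with Proposition~\ref{prop_reduction_F}(\ref{prop_reduction_F_c}), giving $S_k(H_i) = S_{k_A}(N_i) = S_i$. For part~(\ref{prop_H_i_c}), I will start with the bijection $\Psi_i \to \Psi_{k_A}(N_i)$ provided by Proposition~\ref{prop_reduction_A}(\ref{prop_reduction_A_b}) and transport it along Algorithm~\ref{alg_B}. The key observation is that Proposition~\ref{prop_opt_B}(\ref{prop_opt_B_a}) says each step of Algorithm~\ref{alg_B} preserves $\mathfrak u_0$ (and in particular the distinguished component indexed by the current analogue of~$i$) pointwise as a subspace of~$\mathfrak g$, so both the $T$-weight structure and the $S_i$-module structure of that component pass unchanged through Algorithm~\ref{alg_B}. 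Composing the bijection from Proposition~\ref{prop_reduction_A}(\ref{prop_reduction_A_b}) with this identity yields the required bijection $\Psi_i \to \Psi_k(H_i)$, preserving the $S_i$-module isomorphisms $\mathfrak g(\mu_*) \simeq \mathfrak g(\mu)$ and the congruences $\widehat\mu_* \equiv \widehat\mu - \sum_{\nu \in \Upsilon_i} c_\nu \widehat\nu \bmod \Xi$.

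For part~(\ref{prop_H_i_d}), I will iterate Lemma~\ref{lemma_SR_of_SM} along the chain of Levis produced by the successive additive degenerations in Algorithm~\ref{alg_C}. For part~(\ref{prop_H_i_e}), observe that the condition $K=L$ is preserved by every additive degeneration (by Theorem~\ref{thm_typeII}(\ref{thm_typeII_b}), since $N = M \rightthreetimes (H_\infty)_u$ with $M$ a standard Levi) and the condition $N_G(\cdot)^0 = \cdot$ is preserved by Corollary~\ref{crl_second_normalizer}(\ref{crl_second_normalizer_a}); hence at every stage we remain in the setting of the general strategy of~\S\,\ref{subsec_gen_strat}. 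By Theorem~\ref{thm_typeII}(\ref{thm_typeII_c}) each intermediate degeneration passes to a $G$-orbit of codimension~$1$ in the Demazure embedding, which by~\S\,\ref{subsec_wv} corresponds to removing exactly one element of the set of spherical roots, so iteration yields $\Sigma_G(G/H_i) \subset \Sigma_G(G/H)$. For part~(\ref{prop_H_i_f}), I will first use Proposition~\ref{prop_finite_index_in_norm} (applicable because $N_G(H_i)^0 = H_i$) to write $|\Sigma_G(G/H_i)| = \rk \Lambda_G(G/H_i)$, then use Proposition~\ref{prop_S=K} together with the inclusion $T \subset L_i$ to identify $\rk \Lambda_G(G/H_i) = \rk \Lambda_{L_i}(\mathfrak u(H_i)^*)$. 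Applying Proposition~\ref{prop_SR_of_SM} to both the $L_i$-module $\mathfrak u(H_i)$ and the $L$-module $\mathfrak u^i$ reduces~(\ref{prop_H_i_f}) to the two equalities $|\Psi_k(H_i)| = |\Psi_i|$ and $|\Sigma_{L_i}(\mathfrak u(H_i)^*)| = |\Sigma_L((\mathfrak u^i)^*)|$; the former was established in~(\ref{prop_H_i_c}), and the latter will be obtained from Proposition~\ref{prop_free_generators}(\ref{prop_free_generators_d}) upon noting that both modules are isomorphic as $S_i$-modules by~(\ref{prop_H_i_c}) and $S_i$ is the maximal subgroup of either derived Levi acting nontrivially.

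The main obstacle will be part~(\ref{prop_H_i_f}), specifically the identification $|\Sigma_L((\mathfrak u^i)^*)| = |\Sigma_{L_i}(\mathfrak u(H_i)^*)|$. The subtlety is that $L_i$ is in general strictly smaller than~$L$, so Proposition~\ref{prop_free_generators}(\ref{prop_free_generators_d}) cannot be applied directly. The cleanest resolution will be to route the comparison through $S_i$: the simple factors of $L'$ and of $L_i'$ lying outside $S_i$ act trivially on $\mathfrak u^i$ and on $\mathfrak u(H_i)$, respectively, so they can be discarded without changing the spherical roots, after which both modules appear as spherical modules of appropriate central extensions of $S_i$ sharing the same saturation in the sense of~\S\,\ref{subsec_SM_classification}, and Proposition~\ref{prop_free_generators}(\ref{prop_free_generators_d}) applies to give the equality.
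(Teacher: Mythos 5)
Your treatment of parts (\ref{prop_H_i_a})--(\ref{prop_H_i_d}) and (\ref{prop_H_i_f}) matches the paper's (much terser) proof: (\ref{prop_H_i_a})--(\ref{prop_H_i_c}) by chaining Propositions~\ref{prop_reduction_A} and~\ref{prop_reduction_F}, (\ref{prop_H_i_d}) by iterating Lemma~\ref{lemma_SR_of_SM} along the chain of degenerations, and (\ref{prop_H_i_f}) by combining Propositions~\ref{prop_finite_index_in_norm} and~\ref{prop_S=K} with the module identifications from (\ref{prop_H_i_a})--(\ref{prop_H_i_c}). Your detour through $S_i$ and the saturation to compare $\Sigma_L((\mathfrak u^i)^*)$ with $\Sigma_{L_i}(\mathfrak u(H_i)^*)$ is a legitimate way of making explicit what the paper compresses into ``the latter value equals $\rk_L \mathfrak u^i$''.

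Part (\ref{prop_H_i_e}), however, has a genuine gap. Your argument runs through the Demazure embedding: each additive degeneration lands on a $G$-orbit of codimension one, and by \S\,\ref{subsec_wv} the corresponding boundary divisor has spherical root set $\Sigma_X \setminus \lbrace \sigma \rbrace$. But that statement concerns the self-normalizing subgroups $N_G(\mathfrak h)$ and $N_G(\mathfrak h_\infty)$, whereas your iteration is carried out on $H$ and $N = N_G(\mathfrak h_\infty)^0$. Proposition~\ref{prop_norm_sph_roots} only guarantees $\QQ^+\Sigma_G(G/N) = \QQ^+\Sigma_G(G/N_G(\mathfrak h_\infty))$, i.e.\ equality of cones, not of the sets of spherical roots: passing between a spherical subgroup and its normalizer can rescale a spherical root (the familiar $\alpha$ versus $2\alpha$ phenomenon), because each set consists of the primitive generators of the rays inside its own weight lattice, and these lattices differ by a finite index. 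So your chain only yields that every ray of $\QQ^+\Sigma_G(G/H_i)$ is a ray of $\QQ^+\Sigma_G(G/H)$, not the containment of sets that (\ref{prop_H_i_e}) asserts. The missing ingredient is Theorem~\ref{thm_typeII}(\ref{thm_typeII_d}): iterating $\Lambda_G(G/H) = \Lambda_G(G/N) \oplus \ZZ\delta$ along the chain shows that $\Lambda_G(G/H_i)$ is a direct summand of $\Lambda_G(G/H)$, so an element primitive in $\Lambda_G(G/H_i)$ remains primitive in $\Lambda_G(G/H)$; combined with the ray containment you did establish, this forces the primitive generators of the common rays to coincide, which is exactly how the paper concludes.
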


\begin{proof}
(\ref{prop_H_i_a})--(\ref{prop_H_i_c}) These assertions follow directly from the description of the algorithm along with Propositions~\ref{prop_reduction_A} and~\ref{prop_reduction_F}.

(\ref{prop_H_i_d})
This follows from the construction and Lemma~\ref{lemma_SR_of_SM}.

(\ref{prop_H_i_e})
It follows from the construction and the discussion in~\S\,\ref{subsec_gen_strat} that each spherical root of $G/H_i$ is proportional to a spherical root of~$G/H$.
By Theorem~\ref{thm_typeII}(\ref{thm_typeII_d}), $\Lambda_G(G/H_i)$ is a direct summand of $\Lambda_G(G/H)$, therefore
each spherical root of $G/H_i$ is primitive in $\Lambda_G(G/H)$ and hence coincides with a spherical root of~$G/H$.

(\ref{prop_H_i_f})
Propositions~\ref{prop_finite_index_in_norm} and~\ref{prop_S=K} yield $|\Sigma_G(G/H_i)| = \rk_{L_i} \mathfrak u(H_i)^*$.
Parts (\ref{prop_H_i_a})--(\ref{prop_H_i_c}) imply that the latter value equals $\rk_L (\mathfrak u^i)^*$.
\end{proof}

\begin{remark}
Being an output of Algorithm~\ref{alg_C}, the subgroup~$H_i$ depends on the sequence of choices of~$\lambda$ at each execution of steps~\ref{step_A2} and~\ref{step_B2}.
The description of Algorithm~\ref{alg_C} along with Lemma~\ref{lemma_WL_quotient} imply that the Levi subgroup $L_i \subset H_i$ is uniquely determined by the formula $\Pi_{L_i} = \lbrace \alpha \in \Pi_L \mid (\alpha, \gamma) = 0 \ \text{for all} \ \gamma \in \Lambda_L((\mathfrak u / \mathfrak u^i)^*) \rbrace$ and hence does not depend on the above-mentioned sequence of choices.
We conjecture that the latter holds true for~$H_i$ itself, that is, the output of Algorithm~\ref{alg_C} is well defined.
\end{remark}

\begin{remark} \label{rem_estimates1}
Put $r = |\Sigma_G(G/H)|$ and $r_i = \rk_L (\mathfrak u^i)^*$ for all $i = 1,\ldots, p$.
Since one additive degeneration reduces the number of spherical roots by one, computing each subgroup $H_i$ via Algorithm~\ref{alg_C} requires $r-r_i$ degenerations.
Thus for computing the whole collection $H_1,\ldots, H_p$ one needs to perform exactly $pr-r$ degenerations, which in any case is no more than $r^2-r$.
We note that the value $r^2-r$ is attained when $H$ is strongly solvable.
\end{remark}

\subsection{The main result}

\begin{theorem} \label{thm_reduction}
There is a disjoint union $\Sigma_G(G/H) = \Sigma_G(G/H_1) \cup \ldots \cup \Sigma_G(G/H_p)$.
\end{theorem}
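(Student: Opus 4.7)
The plan is to combine a cardinality count with a structural ``SM-block" disjointness argument. The inclusion $\bigcup_{i=1}^p \Sigma_G(G/H_i) \subset \Sigma_G(G/H)$ is supplied by Proposition~\ref{prop_H_i}(\ref{prop_H_i_e}), so it suffices to prove the two assertions: $\sum_{i=1}^p |\Sigma_G(G/H_i)| = |\Sigma_G(G/H)|$, and pairwise disjointness of the $\Sigma_G(G/H_i)$. These together force the union to equal $\Sigma_G(G/H)$ and to be disjoint.

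For the cardinality count I would exploit that $K = L$ forces $\Psi_0 = \varnothing$, so Proposition~\ref{prop_normalizer} gives $N_G(H)^0 = H$ and hence $N_G(H)/H$ is finite. Proposition~\ref{prop_finite_index_in_norm} then yields $|\Sigma_G(G/H)| = \rk \Lambda_G(G/H)$, while Proposition~\ref{prop_S=K} together with $T \subset L$ identifies $\Lambda_G(G/H)$ with $\Lambda_L(\mathfrak{u}^*) = \ZZ F_L(\mathfrak{u}^*)$. The SM-decomposition exhibits the saturation of the $L$-module $\mathfrak u$ as an external direct sum of indecomposable saturated spherical modules arising from $\mathfrak{u}^1, \ldots, \mathfrak{u}^p$; consequently $F_L(\mathfrak{u}^*) = \bigsqcup_i F_L((\mathfrak{u}^i)^*)$, so $|\Sigma_G(G/H)| = \sum_i \rk_L \mathfrak{u}^i$, and by Proposition~\ref{prop_H_i}(\ref{prop_H_i_f}) the right-hand side equals $\sum_i |\Sigma_G(G/H_i)|$.

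For disjointness I would split each $\Sigma_G(G/H_i)$ into the sub-block $\Sigma_{L_i}(\mathfrak{u}(H_i)^*)$ of spherical roots supported in $\Pi_{L_i}$ (by parabolic induction applied to $H_i$) and $|\Psi_i|$ residual spherical roots whose supports meet $\Pi \setminus \Pi_{L_i}$. Proposition~\ref{prop_H_i}(\ref{prop_H_i_d}) places the first piece inside $\Sigma_L(\mathfrak{u}^*)$, which in turn admits the disjoint refinement $\Sigma_L(\mathfrak{u}^*) = \bigsqcup_j \Sigma_L((\mathfrak{u}^j)^*)$ coming from the additivity of the tail cone over independent saturated summands. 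The identifications in Proposition~\ref{prop_H_i}(\ref{prop_H_i_b}),(\ref{prop_H_i_c})---asserting that the saturated $S_i$-module structure of $\mathfrak{u}(H_i)$ matches that of $\mathfrak{u}^i$---would lock the first piece into $\Sigma_L((\mathfrak{u}^i)^*)$, giving disjointness across different $i$ for the first piece. The residual spherical roots are primitive integer combinations of the $\widehat{\mu}_*$ with $\mu \in \Psi_i$ minus simple roots, with supports controlled by $\Supp \Psi_i$; since distinct blocks $\Psi_i$ involve distinct simple factors $S_i$ of $L'$ and the accompanying generators of $\Pi \setminus \Pi_L$ are themselves confined to $\Supp \Psi_i$, these residuals are also disjoint across $i$.

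The main obstacle is verifying the inclusion $\Sigma_{L_i}(\mathfrak{u}(H_i)^*) \subset \Sigma_L((\mathfrak{u}^i)^*)$ as subsets of $\mathfrak{X}(T)$. While the saturated $S_i$-module structures coincide, Proposition~\ref{prop_H_i}(\ref{prop_H_i_c}) only gives $\widehat{\mu}_* \equiv \widehat{\mu} - \sum_{\nu \in \Upsilon_i} c_\nu \widehat{\nu}$ modulo $\Xi = \QQ\Sigma_L(\mathfrak{u}^*)$, so a priori the shifted generators $\widehat{\mu}_*$ could produce spherical roots straying into other SM-blocks. Ruling this out requires using that $\Upsilon_i \subset \{\mu \in \Psi : \Supp \mu \subset \Supp \Psi_i\}$ keeps all shifts inside the $i$-th block, combined with the primitivity of spherical roots inside the weight lattice to absorb any ``cross-block" corrections.
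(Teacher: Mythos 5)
Your reduction of the theorem to two assertions --- the cardinality identity $\sum_i |\Sigma_G(G/H_i)| = |\Sigma_G(G/H)|$ and pairwise disjointness --- is exactly the skeleton of the paper's proof, and your cardinality count (via Propositions~\ref{prop_finite_index_in_norm}, \ref{prop_S=K}, \ref{prop_H_i}(\ref{prop_H_i_f}) and the additivity of rank over the SM-components) matches the paper's chain $|\Sigma_G(G/H)| = \rk_L\mathfrak u = \sum_i \rk_L\mathfrak u^i = \sum_i|\Sigma_G(G/H_i)|$. The gap is in the disjointness half, and it is genuine. The paper does not attempt your block-by-block splitting; it proves the much stronger statement that $\Lambda_G(G/H_i) \cap \Lambda_G(G/H_j) = 0$ for $i \ne j$, which immediately forces the spherical-root sets to be disjoint. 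That proof hinges on the linear independence of the weights $\widehat\lambda$, $\lambda \in \Psi$, modulo $\Xi$ (from Proposition~\ref{prop_SR_of_SM}), followed by a careful sign analysis of the coefficients in the expressions~(\ref{eqn_mu_*}) for the $\widehat\mu_*$, ultimately deriving a contradiction from an element lying simultaneously in $\ZZ^+\Phi^+$ and $-\ZZ^+\Phi^+$.

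Your route fails at two points. First, you yourself flag the unproven inclusion $\Sigma_{L_i}(\mathfrak u(H_i)^*) \subset \Sigma_L((\mathfrak u^i)^*)$ as the ``main obstacle'' and only gesture at how to close it; since Proposition~\ref{prop_H_i}(\ref{prop_H_i_c}) controls $\widehat\mu_*$ only modulo $\Xi = \QQ\Sigma_L(\mathfrak u^*)$, and $\Xi$ mixes contributions from all blocks, this is precisely where the real work lies, and it is not done. Second, and more fatally, your argument for the ``residual'' spherical roots rests on the supports of distinct blocks being separated, but the sets $\Supp\Psi_i$ for different $i$ can overlap heavily: in Example~6.3 one has $\widehat\lambda_2 = \alpha_1 + \ldots + \alpha_6$ in one block, $\widehat\lambda_3 = \alpha_4+\alpha_5+\alpha_6$ in a second, and $\widehat\lambda_4 = \alpha_6$ in a third, so the simple roots $\alpha_5,\alpha_6 \in \Pi\setminus\Pi_L$ are shared among blocks. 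A support argument therefore cannot separate the residual roots of different blocks; only the lattice-intersection argument (or something equivalent) does. As written, the proposal establishes the inclusion $\bigcup_i\Sigma_G(G/H_i) \subset \Sigma_G(G/H)$ and the count, but not the disjointness, so the proof is incomplete.
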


\begin{proof}
For every $i=1,\ldots, p$, let $\Pi_{S_i}$ be the set of simple roots of~$S_i$.

We have the chain
\[
|\Sigma_G(G/H)| = \rk_L\mathfrak u^* = \sum \limits_{i=1}^p \rk_L (\mathfrak u^i)^* = \sum \limits_{i=1}^p |\Sigma_G(G/H_i)|
\]
where the first equality is implied by Propositions~\ref{prop_finite_index_in_norm} and~\ref{prop_S=K} and the last one follows from Proposition~\ref{prop_H_i}(\ref{prop_H_i_f}).
In view of Proposition~\ref{prop_H_i}(\ref{prop_H_i_e}) it suffices to check that $\Lambda_G(G/H_i) \cap \Lambda_G(G/H_j) = \lbrace 0 \rbrace$ for all $i,j = 1,\ldots, p$ with $i \ne j$.
Note that $\Lambda_G(G/H_i) = \Lambda_{L_i} (\mathfrak u(H_i)^*)$ and $\Lambda_G(G/H_j) = \Lambda_{L_j} (\mathfrak u(H_j)^*)$ by Proposition~\ref{prop_S=K}.
Proposition~\ref{prop_H_i}(\ref{prop_H_i_c}) provides a bijection $\Psi(H) \to \Psi(H_1) \cup \ldots \cup \Psi(H_p)$, $\mu \mapsto \mu_*$, such that for every $k = 1,\ldots,p$ and every $\mu \in \Psi_k$ there is an expression

\begin{equation} \label{eqn_mu_*}
\widehat \mu_* \equiv \widehat \mu - \sum \limits_{\nu \in \Psi \setminus \Psi_k} c_{\mu, \nu} \widehat \nu \mod \Xi \quad \text{with} \quad c_{\mu,\nu} \ge 0.
\end{equation}
Now fix $i,j \in \lbrace 1,\ldots, p \rbrace$ with $i\ne j$ and take any $\sigma \in \Lambda_{L_i} (\mathfrak u(H_i)^*) \cap \Lambda_{L_j} (\mathfrak u(H_j)^*)$.
Then Proposition~\ref{prop_SR_of_SM} yields
\begin{equation} \label{eqn_sigma}
\sum \limits_{\mu \in \Psi_i} a_\mu \widehat \mu_* + \rho_i = \sigma = \sum \limits_{\nu \in \Psi_j} b_\nu \widehat \nu_* + \rho_j
\end{equation}
where $a_\mu,b_\nu \in \QQ$, $\rho_i \in \QQ\Sigma_{L_i}(\mathfrak u(H_i)^*)$, and $\rho_j \in \QQ\Sigma_{L_j}(\mathfrak u(H_j)^*)$.
By Proposition~\ref{prop_H_i}(\ref{prop_H_i_d}), we get
\begin{equation} \label{eqn_equiv}
\sum \limits_{\mu \in \Psi_i} a_\mu \widehat \mu_* \equiv \sum \limits_{\nu \in \Psi_j} b_\nu \widehat \nu_* \mod \Xi.
\end{equation}
It follows from the linear independence of $F_L(\mathfrak u^*)$ and Proposition~\ref{prop_SR_of_SM} that the weights $\widehat \lambda$ with $\lambda \in \Psi$ are linearly independent modulo~$\Xi$, therefore in the expressions of both sums in~(\ref{eqn_equiv}) via~(\ref{eqn_mu_*}) the coefficients at each $\widehat \lambda$ should coincide.
Put
\begin{gather*}
\Psi_i^+ = \lbrace \mu \in \Psi_i \mid a_\mu > 0 \rbrace, \quad
\Psi_i^- = \lbrace \mu \in \Psi_i \mid a_\mu < 0 \rbrace, \\
\Psi_j^+ = \lbrace \nu \in \Psi_j \mid b_\nu > 0 \rbrace, \quad
\Psi_j^- = \lbrace \nu \in \Psi_j \mid b_\nu < 0 \rbrace
\end{gather*}
and assume that $\Psi_j^+ \ne \varnothing$.
Then for every $\nu \in \Psi_j^+$ the coefficient at $\widehat \nu$ in the right-hand side of~(\ref{eqn_equiv}) equals exactly~$b_\nu$ and hence is positive.
In view of~(\ref{eqn_mu_*}), for the coefficient at $\widehat \nu$ in the left-hand side of~(\ref{eqn_equiv}) to be positive it is necessary that there exist $\mu \in \Psi_i^-$ with $c_{\mu,\nu} > 0$.
In particular, we find that $\Psi_i^- \ne \varnothing$.
Similarly, we show that for every $\mu \in \Psi_i^-$ there exists $\nu \in \Psi_j^+$ with $c_{\nu,\mu} > 0$.
Now put $\gamma = \sum \limits_{\mu \in \Psi_i^-} \widehat \mu_* + \sum \limits_{\nu \in \Psi_j^+} \widehat \nu_*$.
Then it follows from the above and~(\ref{eqn_mu_*}) that, modulo~$\Xi$, $\gamma$ is a linear combination of the set $\lbrace \widehat \varkappa \mid \varkappa \in \Psi\rbrace$ with nonpositive coefficients, which yields $\overline \gamma \in - \ZZ^+\Phi^+$ as the restriction of $\Xi$ to $\mathfrak X(C)$ vanishes.
On the other hand, all the summands in the expression for~$\gamma$ are positive roots, hence $\overline \gamma \in \ZZ^+\Phi^+$.
It follows that $\overline \gamma = 0$, therefore $\widehat \mu_*,\widehat \nu_* \in \Delta_L^+$ for all $\mu \in \Psi_i^-$ and $\nu \in \Psi_j^+$.
If $\dim \mathfrak g(\mu_*) \ge 2$ for some~$\mu \in \Psi_i^-$ then there is $\alpha \in \Pi_{S_i}$ such that $(\alpha, \widehat \mu_*) > 0$.
Then $\widehat \mu_*$ is necessarily a root of~$S_i$, which is impossible by the description of~$H_i$.
Consequently, $\dim \mathfrak g(\mu_*) = 1$ for all $\mu \in \Psi_i^-$, which by the definition of the SM-decomposition implies $|\Psi_i| = 1$, $\rho_i = 0$, and $\Psi_i = \Psi_i^-$.
Similarly, we show that $|\Psi_j| = 1$, $\rho_j =0$, and $\Psi_j = \Psi_j^+$.
Let $\mu$ (resp.~$\nu$) be the only element of~$\Psi_i$ (resp.~$\Psi_j$).
Then the left-hand side of~(\ref{eqn_sigma}) equals $a_\mu\widehat\mu_* \in -\ZZ^+\Pi \setminus \lbrace 0 \rbrace$ whereas the right-hand side equals $b_\nu\widehat \nu_* \in \ZZ^+ \Pi \setminus \lbrace 0 \rbrace$, a contradiction.
Thus $\Psi_j^+ = \varnothing$.
Repeating the same argument for $-\sigma$ we find that $\Psi_j^- = \varnothing$.
Similarly, $\Psi_i^+ = \Psi_i^- = \varnothing$, hence $a_\mu = b_\nu = 0$ for all $\mu \in \Psi_i$ and $\nu \in \Psi_j$.
Then~(\ref{eqn_sigma}) takes the form $\rho_i = \sigma = \rho_j$.
Since $\Supp \rho_i \subset \Pi_{S_i}$, $\Supp \rho_j \subset \Pi_{S_j}$, and $\Pi_{S_i} \cap \Pi_{S_j} = \varnothing$, it follows that $\sigma = 0$.
\end{proof}

\subsection{A further optimization}
\label{subsec_further_opt}

For every $i \in \lbrace 1,\ldots, p \rbrace$, let $H_i, L_i$ be as in~\S\,\ref{subsec_subgroup_H_i}, let $(G_i,K_i)$ be the pair obtained from $(G,H_i)$ by reduction of the ambient group, and regard the set of simple roots of~$G_i$ as a subset of~$\Pi$ (see~\S\,\ref{subsec_reduction_AG}).

Thanks to Proposition~\ref{prop_reduction_AG_sph_roots}, Theorem~\ref{thm_reduction} may be reformulated as follows.

\begin{theorem} \label{thm_reduction_ref}
There is a disjoint union $\Sigma_G(G/H) = \Sigma_{G_1}(G_1/K_1) \cup \ldots \cup \Sigma_{G_p}(G_p/K_p)$.
\end{theorem}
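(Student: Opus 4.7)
The plan is to derive Theorem~\ref{thm_reduction_ref} as an immediate consequence of Theorem~\ref{thm_reduction} combined with Proposition~\ref{prop_reduction_AG_sph_roots}; no further work is required since the real content has already been established in Theorem~\ref{thm_reduction}, and what remains is essentially a bookkeeping step.

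First, I would invoke Theorem~\ref{thm_reduction} to obtain the disjoint decomposition
\[
\Sigma_G(G/H) = \Sigma_G(G/H_1) \cup \ldots \cup \Sigma_G(G/H_p).
\]
Then, for each index $i \in \lbrace 1,\ldots,p \rbrace$, I would recall that the pair $(G_i, K_i)$ is obtained from $(G, H_i)$ by reduction of the ambient group, as defined in~\S\,\ref{subsec_reduction_AG}. Proposition~\ref{prop_reduction_AG_sph_roots} therefore gives $\Sigma_G(G/H_i) = \Sigma_{G_i}(G_i/K_i)$ for every $i$. Substituting these equalities into the decomposition above yields the claimed identity.

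Since the replacement $\Sigma_G(G/H_i) \leftrightarrow \Sigma_{G_i}(G_i/K_i)$ is an equality of subsets of the simple-root lattice of~$G$ (with $\Pi_{G_i} \subset \Pi$), the disjointness in the new decomposition is inherited from the disjointness in Theorem~\ref{thm_reduction}. There is no main obstacle: all the substantive work — constructing the subgroups $H_i$ via the degeneration algorithms in \S\,\ref{subsec_subgroup_H_i}, and proving the partition property — was carried out in the proof of Theorem~\ref{thm_reduction}, while the passage from $H_i$ to $(G_i, K_i)$ is a purely formal reduction that preserves spherical roots.
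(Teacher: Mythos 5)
Your proposal is correct and matches the paper exactly: the paper introduces Theorem~\ref{thm_reduction_ref} with the single sentence that, thanks to Proposition~\ref{prop_reduction_AG_sph_roots}, Theorem~\ref{thm_reduction} may be reformulated in this way, which is precisely your two-step substitution argument. Nothing further is needed.
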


The goal of this subsection is to propose a shorter way to compute each pair $(G_i, K_i)$.

For every $i=1,\ldots, p$, the definition of~$\Upsilon_i$ implies that $\mathfrak l \oplus \bigoplus \limits_{\mu \in \Phi^+ \setminus(\Psi_i \cup \Upsilon_i)} \mathfrak g(-\mu)$ is a subalgebra of~$\mathfrak g$.
Let $\widehat H_i \subset G$ be the corresponding connected subgroup.
Note that $H \subset \widehat H_i$, $L$ is a Levi subgroup of~$\widehat H_i$, and $\Psi(\widehat H_i) = \Psi_i \cup \Upsilon_i$.
Observe that $\Psi_i$ is a component of the SM-decomposition of $\Psi(\widehat H_i)$.

\medskip

Algorithm~\newalg: \label{alg_D}

Input: a pair $(H,\Psi_i)$

Step~\step: \label{step_D1}
replace $(H, \Psi_i)$ with $(\widehat H_i, \Psi_i)$;

Step~\step: \label{step_D2}
perform steps \ref{step_A1}, \ref{step_A2}, \ref{step_A3};

Step~\step: \label{step_D3}
repeat the procedure for the pair $(N, \Psi_j(N))$.

\medskip

Observe that the output of Algorithm~\ref{alg_D} depends on the sequence of choices of~$\lambda$ at each execution of step~\ref{step_A2}.

\begin{proposition}
Given a pair~$(H, \Psi_i)$ with $i \in \lbrace 1,\ldots, p \rbrace$, for every implementation of Algorithm~\textup{\ref{alg_D}} with output $(N_i, \Psi_m(N_i))$ there exists an implementation of Algorithm~\textup{\ref{alg_C}} with output $(H_i, \Psi_k(H_i))$ such that the pair $(G_i,K_i)$ is obtained from $(G, N_i)$ by reduction of the ambient group.
\end{proposition}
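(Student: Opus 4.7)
The plan is to match the given implementation of Algorithm~\ref{alg_D} with a specific implementation of Algorithm~\ref{alg_C} and then verify that, after the ambient group reduction, the two outputs agree. I begin with the observation that after the first execution of step~\ref{step_D1} the subgroup $\widehat H_i$ satisfies $\Psi(\widehat H_i)=\Psi_i\cup\Upsilon_i$ with $\Supp\Upsilon_i\subset\Supp\Psi_i$, so the SM-decomposition of $\Psi(\widehat H_i)$ has only one component and every subsequent execution of step~\ref{step_D1} is trivial. Hence Algorithm~\ref{alg_D} reduces to a single application of Algorithm~\ref{alg_A} to $(\widehat H_i,\Psi_i)$. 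Let $\lambda^{(1)},\ldots,\lambda^{(s)}\in\Upsilon_i$ be the upper elements selected at the successive executions of step~\ref{step_A2}. I would then construct the matching implementation of Algorithm~\ref{alg_C} by running the Algorithm~\ref{alg_A} portion on $(H,\Psi_i)$ using this very sequence $\lambda^{(1)},\ldots,\lambda^{(s)}$, producing an intermediate subgroup $H'_i$, and completing the Algorithm~\ref{alg_B} portion with any valid sequence to produce~$H_i$.

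The first core assertion to prove is that $\Pi_0(H_i)=\Pi_0(N_i)$, so that both ambient group reductions use the same $G_i=L_0'$. The bijections $\mu\mapsto\mu_*$ from Propositions~\ref{prop_reduction_A}(\ref{prop_reduction_A_b}) (applied once to $H'_i$ and once to $N_i$, both being outputs of Algorithm~\ref{alg_A}) and~\ref{prop_H_i}(\ref{prop_H_i_c}) (applied to $H_i$) are built iteratively by tracking the successive additive degenerations. Since the Algorithm~\ref{alg_A} portions coincide by construction, the bijection yields the same weights $\widehat{\mu_*}$ for both $H'_i$ and $N_i$. Moreover, each degeneration in the Algorithm~\ref{alg_B} portion is carried out along $\delta=\widehat\lambda$ for some $\lambda$ in an SM-component of $\Psi(H'_i)$ different from $\Psi_k(H'_i)$; by Lemma~\ref{lemma_reduction_AG}(\ref{lemma_reduction_AG_b}) such $\delta$ is supported outside $\Supp\Psi_k(H'_i)$, so by the shift description in Proposition~\ref{prop_h0_typeII}(\ref{prop_h0_typeII_c}) these shifts cannot touch the $\Psi_k$-part, and the bijection for $H_i$ agrees with that for $H'_i$. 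Consequently $\Pi_0(H_i)=\bigcup_{\mu\in\Psi_i}\Supp\widehat{\mu_*}=\Pi_0(N_i)$.

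The second core assertion is that $\mathfrak g_i\cap\mathfrak h_i=\mathfrak g_i\cap\mathfrak n_i$. Running Algorithm~\ref{alg_A} in parallel on $\mathfrak h$ and $\widehat{\mathfrak h}_i=\mathfrak h\oplus\bigoplus_{\mu\in\Psi\setminus(\Psi_i\cup\Upsilon_i)}\mathfrak g(-\mu)$ with the same choices, the limit of the ``extra'' subspace gives a $T$-stable subspace $\mathfrak z\subset\mathfrak g$ with $\mathfrak n_i=\mathfrak h'_i+\mathfrak z$. Each $T$-weight $\alpha$ occurring in the extra subspace satisfies $\Supp\alpha\not\subset\Supp\Psi_i$ (since each such $\mu$ lies in an SM-component $\Psi_j$ with $j\ne i$, whose associated simple factors of $L'$ are disjoint from those of the $S_i$-component by Lemma~\ref{lemma_reduction_AG}), and Proposition~\ref{prop_h0_typeII}(\ref{prop_h0_typeII_c}) shows each degeneration shifts $T$-weights by an integer multiple of $\widehat{\lambda^{(j)}}$, which is supported inside $\Supp\Psi_i$; hence this property is preserved under iteration, yielding $\Supp\alpha\not\subset\Pi_0(H_i)$ for every $T$-weight $\alpha$ of~$\mathfrak z$. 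Since $\mathfrak g_i$ has nonzero $T$-weight spaces only for $\alpha$ with $\Supp\alpha\subset\Pi_0(H_i)$, this forces $\mathfrak g_i\cap\mathfrak z=0$, and working weight-by-weight gives $\mathfrak g_i\cap\mathfrak n_i=\mathfrak g_i\cap\mathfrak h'_i$. A symmetric argument, based on the fact that Algorithm~\ref{alg_B} shifts only $T$-weight spaces whose support meets $\Supp(\Psi\setminus\Psi_i)\subset\Pi\setminus\Pi_0(H_i)$, yields $\mathfrak g_i\cap\mathfrak h_i=\mathfrak g_i\cap\mathfrak h'_i$. Combining these two equalities, $K_i=G_i\cap H_i=G_i\cap N_i$, which is precisely the assertion that $(G_i,K_i)$ is obtained from $(G,N_i)$ by reduction of the ambient group.

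The main obstacle will be the careful $T$-weight tracking in the second assertion, particularly the verification that iterated additive degenerations preserve the disjointness of $T$-weight supports across distinct SM-components. This ultimately rests on the combination of Lemma~\ref{lemma_reduction_AG}(\ref{lemma_reduction_AG_b}) — which separates the simple factors of $L'$ acting on different SM-components — with the explicit shift formula of Proposition~\ref{prop_h0_typeII}(\ref{prop_h0_typeII_c}), applied both to the Algorithm~\ref{alg_A} degenerations (where the relevant $\delta^{(j)}$ live inside $\Supp\Psi_i$) and to the Algorithm~\ref{alg_B} degenerations (where the $\delta$'s live outside $\Supp\Psi_i$).
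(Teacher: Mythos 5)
Your overall strategy --- synchronize the choices of $\lambda$ in the Algorithm~\ref{alg_A} portion of Algorithm~\ref{alg_C} with those made in Algorithm~\ref{alg_D}, then check that the two outputs agree after reduction of the ambient group --- is the same as the paper's. However, your opening observation is false and the key technical point is asserted rather than proved. The SM-decomposition of $\Psi(\widehat H_i)=\Psi_i\cup\Upsilon_i$ does \emph{not} have only one component: that decomposition is governed by which simple factors of $L'$ act nontrivially on which $\mathfrak g(\mu)$ (and by indecomposability of saturations), not by supports of the weights $\widehat\mu$, and the paper states explicitly in \S\,\ref{subsec_further_opt} that its components are all the nonempty sets $\Psi_j\cap(\Psi_i\cup\Upsilon_i)$. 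In the strongly solvable case every component is a singleton, so $\Psi(\widehat H_i)$ has $1+|\Upsilon_i|$ components even though $\Supp\Upsilon_i\subset\Supp\Psi_i$. Consequently your claim that every subsequent execution of step~\ref{step_D1} is trivial is unjustified (and not needed: the paper never asserts it, only that Algorithm~\ref{alg_D} differs from Algorithm~\ref{alg_A} by inserting step~\ref{step_D1} at each iteration).

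The more serious gap is in the claim that running Algorithm~\ref{alg_A} on $H$ and Algorithm~\ref{alg_D} on $\widehat H_i$ with the same $\lambda$'s ``yields the same weights $\widehat\mu_*$''. The two runs degenerate \emph{different} subspaces: $\mathfrak h^\perp$ contains the extra summand $\bigoplus_{\mu\in\Psi\setminus(\Psi_i\cup\Upsilon_i)}\mathfrak g(\mu)$ while $\widehat{\mathfrak h}_i^{\,\perp}$ does not, and by Proposition~\ref{prop_limitII_prelim} the limit of an $h_\delta$-stable subspace of a $\delta$-string depends on which weight spaces are occupied (everything is pressed to the bottom of the string), so a priori the extra occupied weight spaces change where the $\Psi_i\cup\Upsilon_i$-part lands. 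Ruling out this interference is exactly the content of Lemma~\ref{lemma_stable_under_degen} and of the first claim of Proposition~\ref{prop_opt_A} (the inclusion $\mathfrak g(\nu)\subset\mathfrak g(\mu)$ is an equality of subspaces, not merely a congruence of weights modulo $\Xi$); this is how the paper obtains $\mathfrak u^l(\widetilde N_i)=\mathfrak u(N_i)$ and then, via Propositions~\ref{prop_reduction_F}(\ref{prop_reduction_F_b}) and~\ref{prop_H_i}(\ref{prop_H_i_a}), the single identity $\mathfrak u(H_i)=\mathfrak u(N_i)$ from which both of your ``core assertions'' follow at once. Your substitute, the support-tracking argument in the second core assertion, also has a hole: for $\mu\in\Psi\setminus(\Psi_i\cup\Upsilon_i)$ the definition of $\Upsilon_i$ only gives $\Supp\widehat\mu\not\subset\Supp\Psi_i$ for the \emph{highest} weight; an individual $T$-weight $\gamma$ of $\mathfrak g(\mu)$ differs from $\widehat\mu$ by a sum of simple roots of $L$ and can perfectly well satisfy $\Supp\gamma\subset\Supp\Psi_i$, so the asserted vanishing $\mathfrak g_i\cap\mathfrak z=0$ does not follow from what you wrote.
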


\begin{proof}
The description of Algorithm~\ref{alg_D} implies that the SM-decomposition of $\Psi(N_i)$ has only one component, so that $\mathfrak u^m(N_i) = \mathfrak u(N_i)$.
Moreover, one clearly has $S_m(N_i) = S_i$.
Note that Algorithm~\ref{alg_D} differs from Algorithm~\ref{alg_A} by only adding step~\ref{step_D1} at each iteration.
Then we choose the implementation of Algorithm~\ref{alg_A} where each choice of~$\lambda$ at step~\ref{step_A2} is the same as that at the corresponding iteration of Algorithm~\ref{alg_D}.
Let $(\widetilde N_i, \Psi_l(\widetilde N_i))$ be the output of this implementation of Algorithm~\ref{alg_A}.
Taking into account the first claim in Proposition~\ref{prop_opt_A}, we find that $\mathfrak u^l(\widetilde N_i) = \mathfrak u^m(N_i) = \mathfrak u(N_i)$.
Now apply any implementation of Algorithm~\ref{alg_B} to $(\widetilde N_i, \Psi_l(\widetilde N_i))$ and let $(H_i, \Psi_k(H_i))$ be the output.
Then Propositions~\ref{prop_reduction_F}(\ref{prop_reduction_F_b}) and~\ref{prop_H_i}(\ref{prop_H_i_a}) imply $\mathfrak u(H_i) = \mathfrak u(N_i)$.
Recall from Proposition~\ref{prop_H_i}(\ref{prop_H_i_b}) that $S_i$ is the product of simple factors of $L_i$ that act nontrivially on $\mathfrak u(H_i)$.
Combining this with $S_i = S_k(N_i)$ and $\mathfrak u(H_i) = \mathfrak u(N_i)$, we conclude that the reduction of the ambient group yields the same result for both pairs $(G,H_i)$ and $(G,N_i)$.
\end{proof}

As can be seen from the construction, Algorithm~\ref{alg_D} avoids a part of degenerations performed in Algorithm~\ref{alg_C} and thus indeed enables one to compute the pair $(G_i,K_i)$ in a shorter way.
The following proposition suggests that in fact this provides a considerable optimization of Algorithm~\ref{alg_C}.

\begin{proposition} \label{prop_SSSS_3_degen}
Suppose that $H$ is strongly solvable.
Then Algorithm~\textup{\ref{alg_D}} always performs no more than three degenerations.
\end{proposition}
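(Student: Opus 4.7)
My first observation is that when $H$ is strongly solvable one can take $P=B^-$, hence $L=K=T$. In particular every weight space $\mathfrak g(\mu)$ is a single root space $\mathfrak g_\mu$, every equivalence class $\Omega_\mu$ is a singleton, and so the SM-decomposition $\Psi = \Psi_1 \cup \cdots \cup \Psi_p$ is a decomposition into singletons $\Psi_k = \{\mu_k\}$; moreover, multiplicity-freeness of $\mathrm S(\mathfrak u^*)$ as a $T$-module forces $\Psi$ to be a linearly independent subset of~$\Delta^+$. Fix the index~$i$ and write $\mu_i$ for the unique element of~$\Psi_i$.

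My plan is to reduce the problem to a combinatorial analysis. After Step~\ref{step_D1} of the first iteration we have $\Psi(\widehat H_i) = \{\mu_i\} \cup \Upsilon_i$, and every element of $\Upsilon_i$ has support contained in $\Supp \mu_i$. Combining this with the reduction of the ambient group from~\S\,\ref{subsec_reduction_AG} (justified by Proposition~\ref{prop_reduction_AG_sph_roots}), I may replace $G$ by the semisimple subgroup whose set of simple roots equals $\Supp \mu_i$; thus I may assume that $\mu_i$ is a positive root of full support and that $\{\mu_i\} \cup \Upsilon_i$ is a linearly independent set of positive roots of the ambient group.

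The key input is the classification of active-root configurations of strongly solvable spherical subgroups obtained in~\cite{Avd_solv}. In the reduced setting above, it produces a short, explicit list of admissible shapes for the pair $(\mu_i, \Upsilon_i)$, parametrised by the Dynkin type of the ambient group. For each admissible configuration I would trace through Algorithm~\ref{alg_D}: at the $k$th iteration, one picks an upper element $\delta_k \in \Upsilon$ and performs the additive degeneration of \S\,\ref{subsec_degen_add}, using Proposition~\ref{prop_h0_typeII} and the bijection of Proposition~\ref{prop_unique_j} to read off the new leading root $\mu_i^{(k+1)} = \mu_i^{(k)} - c\,\delta_k$ and the shifted active roots; Step~\ref{step_D1} of the next iteration then restricts attention to active roots whose support lies in $\Supp \mu_i^{(k+1)}$, together with a further reduction of the ambient group. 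The central quantitative claim is that each iteration strictly decreases a suitable combinatorial invariant (for instance the rank of the surviving ambient group after Step~\ref{step_D1} and reduction), and that for every admissible initial configuration this invariant drops to a value for which $\Upsilon$ is forced to be empty after at most three additive degenerations. At that point Step~\ref{step_A1} terminates the algorithm.

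The main obstacle is the case-by-case verification that three suffices, rather than some larger number depending on the ambient rank. This is exactly where the strongly solvable hypothesis is essential: outside this setting no such small uniform bound is available, and one has to fall back on the generic estimate of Remark~\ref{rem_estimates1}. The structural constraints from~\cite{Avd_solv} are what make the case analysis finite, and careful bookkeeping of supports across the combined shift-and-restriction step is what prevents the bound from exceeding~$3$.
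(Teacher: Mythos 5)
Your setup is fine (strongly solvable means $L=K=T$, $\Phi=\Delta$, singleton SM-components, and the reduction to $\Supp\mu_i$ via Proposition~\ref{prop_reduction_AG_sph_roots}), but the proof stops exactly where the actual content begins. You defer the entire quantitative claim --- that three degenerations always suffice --- to an unexecuted ``case-by-case verification'' over ``admissible shapes'', and you even flag this as the main obstacle without resolving it. As written, there is no argument for why the bound is $3$ rather than something growing with the rank; note that within a single Dynkin type (say $\mathsf A_n$) the rank is unbounded, so a case analysis ``parametrised by the Dynkin type'' is not obviously finite in the way you need unless one first extracts a uniform structural statement.

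That uniform statement is precisely what the paper's proof pulls out of the structure theory in~\cite{Avd_solv}: for the unique element $\alpha$ of $\Psi_i$ there is a distinguished simple root $\alpha_i\in\Supp\alpha$ and active roots $\beta_1,\ldots,\beta_s\in\Upsilon_i$ with $\alpha=\alpha_i+k_1\beta_1+\cdots+k_s\beta_s$ ($k_j>0$), pairwise orthogonal supports not containing $\alpha_i$, $\alpha-k_j\beta_j\in\Delta^+$, every $\beta\in\Upsilon_i$ supported inside some $\Supp\beta_j$, and --- crucially --- $\beta_1,\ldots,\beta_s$ are exactly the upper elements of $\Upsilon_i$. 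Then $s$ is the number of edges of the Dynkin diagram of $\Supp\alpha$ incident to $\alpha_i$, hence $s\le 3$, and one checks that each iteration of Algorithm~\ref{alg_D} replaces $\alpha$ by $\alpha-k_j\beta_j$ and deletes precisely the part of $\Upsilon_i$ supported in $\Supp\beta_j$, so the algorithm terminates after exactly $s$ iterations at the active-root set $\lbrace\alpha_i\rbrace$. Without identifying this decomposition and the valence bound on Dynkin diagrams, your proposal does not establish the proposition.
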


\begin{proof}
As $H$ is strongly solvable, one has $\Phi = \Delta$ and each component of the SM-decomposition of~$\Psi$ is a singleton.
Fix $i \in \lbrace 1,\ldots, p \rbrace$ and let $\alpha$ be the unique element of~$\Psi_i$.
Then structure results (see~\cite[\S\,3]{Avd_solv} or~\cite[\S\S\,5.1--5.2]{Avd_solv_inv}) imply that there are a simple root $\alpha_i \in \Supp \alpha$ and active roots $\beta_1,\ldots, \beta_s \in \Upsilon_i$ with the following properties:
\begin{itemize}
\item
$\alpha = \alpha_i + k_1\beta_1 + \ldots + k_s \beta_s$ for some $k_1,\ldots, k_s >0$;

\item
$\alpha_i \notin \Supp \beta_j$ for all $j = 1,\ldots,s$;

\item
$\Supp \beta_j$ is orthogonal to $\Supp \beta_m$ for all $j \ne m$;

\item
$\alpha - k_j \beta_j \in \Delta^+$ for all $j = 1,\ldots, s$;

\item
for every $\beta \in \Upsilon_i$ there exists $j \in \lbrace 1,\ldots,s \rbrace$ such that $\Supp \beta \subset \Supp \beta_j$;

\item
the upper elements of $\Upsilon_i$ are $\beta_1,\ldots, \beta_s$.
\end{itemize}
Clearly, $s$ is the number of edges incident to~$\alpha_i$ in the Dynkin diagram of~$\Supp \alpha$, hence $s \le 3$.
One iteration of Algorithm~\ref{alg_D} chooses $j \in \lbrace 1,\ldots, s \rbrace$ and replaces $H$ with a new subgroup whose set of active roots is $\lbrace \alpha - k_j\beta_j \rbrace \cup \lbrace \beta \in \Upsilon_i \mid \Supp \beta \not \subset \Supp \beta_j \rbrace$.
Consequently, the whole Algorithm~\ref{alg_D} performs exactly $s$ iterations and returns a subgroup whose set of active roots is $\lbrace \alpha_i \rbrace$.
\end{proof}

\begin{remark}
Algorithm~\ref{alg_D} and Theorem~\ref{thm_reduction_ref} reduce computation of the spherical roots for spherical subgroups under consideration to the same problem for several pairs $(G,H)$ satisfying the following conditions:
\begin{enumerate}[label=\textup{(S\arabic*)},ref=\textup{S\arabic*}]
\item \label{C1}
$\Supp \Psi = \Pi$;

\item \label{C2}
the SM-decomposition of $\Psi$ has only one component.
\end{enumerate}
As was already mentioned in~\S\,\ref{subsec_main_idea}, in this case the classification of spherical modules yields $|\Psi| \le 2$.
If $|\Psi|=1$ then we get one of the primitive cases, which are classified in~\S\,\ref{subsec_primitive_cases}.
It is an interesting and feasible problem to classify all pairs $(G,H)$ satisfying~(\ref{C1}), (\ref{C2}), and $|\Psi| = 2$ and compute the sets of spherical roots for them (for example, using our methods).
\end{remark}

\section{Examples}
\label{sect_examples}

In this section, we present several examples of computing the set of spherical roots for spherical subgroups.
In each case, $G = \SL_n$ for some~$n$ and
we choose $B,B^-,T$ to be the subgroup of all upper triangular, lower triangular, diagonal matrices, respectively, contained in~$G$.
Then $\Pi = \lbrace \alpha_1, \alpha_2, \ldots, \alpha_{n-1} \rbrace$ where $\alpha_i(t)=t_it_{i+1}^{-1}$ for all $t = \diag(t_1,\ldots,t_n) \in T$.
In all examples, there is a unique choice of a parabolic subgroup $P \supset B^-$ such that $H$ is regularly embedded in~$P$ and $L' \subset K \subset L$.
The fact that $H$ is spherical in~$G$ is always checked via Proposition~\ref{prop_S=K} and the classification of spherical modules.
The set $F_K(\mathfrak p_u / \mathfrak h_u)$ is always computed via Proposition~\ref{prop_free_generators}(\ref{prop_free_generators_a}) by using the data in~\cite[\S\,5]{Kn98}.

\renewcommand{\arraystretch}{1}%

\begin{example}
$G = \SL_4$, $H$ is the connected subgroup of~$G$ whose Lie algebra consists of all matrices of the form
\[
\begin{pmatrix}
a & 0 & 0 & 0\\
b & x & y & 0\\
c & z & -x & 0\\
d & -c & b & -a
\end{pmatrix}.
\]
For this $H$, the groups $P$, $L$, $C$, $K$ consist of all matrices in $G$ having the form
\[
\begin{pmatrix}
* & 0 & 0 & 0 \\
* & * & * & 0 \\
* & * & * & 0 \\
* & * & * & *
\end{pmatrix},
\begin{pmatrix}
* & 0 & 0 & 0 \\
0 & * & * & 0 \\
0 & * & * & 0 \\
0 & 0 & 0 & *
\end{pmatrix},
\begin{pmatrix}
t_1 & 0 & 0 & 0 \\
0 & t_2 & 0 & 0 \\
0 & 0 & t_2 & 0 \\
0 & 0 & 0 & t_1^{-1}t_2^{-2}
\end{pmatrix},
\begin{pmatrix}
t & 0 & 0 & 0 \\
0 & * & * & 0 \\
0 & * & * & 0 \\
0 & 0 & 0 & t^{-1}
\end{pmatrix},
\]
respectively.
Then $\Psi = \lbrace \lambda_1, \lambda_2 \rbrace$ with $\widehat \lambda_1 = \alpha_1 + \alpha_2$ and $\widehat \lambda_2 = \alpha_2 + \alpha_3$, the set $\widetilde \Psi$ consists of one class $\lbrace \lambda_1, \lambda_2 \rbrace$, and
$\Psi_0 = \Psi_0^{\max} = \Psi$.

Using Lemma~\ref{lemma_XiAA0}(\ref{lemma_XiAA0_b}) or Proposition~\ref{prop_normalizer} we check that $N_G(H)^0 = H$.

Let $\iota \colon \mathfrak X(T) \to \mathfrak X(T \cap K)$ be the character restriction map.
Then
\[
\Lambda_L(L/K) = \Ker \iota = \frac12\ZZ(\alpha_1 - \alpha_3)
\]
and $\mathfrak p_u /\mathfrak h_u$ is a simple $K$-module with lowest weight $-\iota(\alpha_1 + \alpha_2) = -\iota(\alpha_2 + \alpha_3)$, so that $F_K(\mathfrak p_u / \mathfrak h_u) = \lbrace \iota(\alpha_1 + \alpha_2) \rbrace$ and $\Lambda_K(\mathfrak p_u / \mathfrak h_u) = \ZZ \iota(\alpha_1 + \alpha_2)$.
Then Proposition~\ref{prop_S=K} yields $\Lambda_G(G/H) = \ZZ\lbrace\alpha_1 + \alpha_2, \frac12(\alpha_1 - \alpha_3)\rbrace$ and hence $|\Sigma_G(G/H)| = \rk \Lambda_G(G/H) = 2$ by Proposition~\ref{prop_finite_index_in_norm}.

For $i=1,2$ let $\mathfrak h_i$ denote the multiplicative degeneration of~$\mathfrak h$ defined by~$\lambda_i$ and put $N_i = N_G(\mathfrak h_i)^0$.
Then Propositions~\ref{prop_limitI} and~\ref{prop_normalizer} imply that the algebras $\mathfrak n_1$, $\mathfrak n_2$ consist of all matrices in~$\mathfrak g$ of the form
\[
\begin{pmatrix}
* & 0 & 0 & 0 \\
* & * & * & 0 \\
* & * & * & 0 \\
* & 0 & 0 & *
\end{pmatrix},
\begin{pmatrix}
* & 0 & 0 & 0 \\
0 & * & * & 0 \\
0 & * & * & 0 \\
* & * & * & *
\end{pmatrix},
\]
respectively.
As can be seen, both subgroups $N_1,N_2$ are regularly embedded in the same parabolic subgroup~$P$, $\Psi(N_1) = \lbrace \lambda_2 \rbrace$, $\Psi(N_2) = \lbrace \lambda_1 \rbrace$.
Then Proposition~\ref{prop_reduction_AG_sph_roots} and Theorem~\ref{thm_primitive_cases}(\ref{thm_primitive_cases_b}) yield $\Sigma_G(G/N_1) = \lbrace \alpha_2 + \alpha_3 \rbrace$ and $\Sigma_G(G/N_2) = \lbrace \alpha_1 + \alpha_2 \rbrace$.
Since both elements $\alpha_1 + \alpha_2$ and $\alpha_2 + \alpha_3$ are primitive in~$\Lambda_G(G/H)$, we finally obtain $\Sigma_G(G/H) = \lbrace \alpha_1 + \alpha_2, \alpha_2 + \alpha_3 \rbrace$.
\end{example}

\begin{example} \label{ex_2}
$G = \SL_4$, the subgroups $H$, $P$, $L$, $C$ consist of all matrices in $G$ having the form
\[
\begin{pmatrix}
* & * & 0 & 0\\
* & * & 0 & 0\\
0 & 0 & * & 0\\
0 & 0 & * & *
\end{pmatrix},
\begin{pmatrix}
* & * & 0 & 0 \\
* & * & 0 & 0 \\
* & * & * & 0 \\
* & * & * & *
\end{pmatrix},
\begin{pmatrix}
* & * & 0 & 0 \\
* & * & 0 & 0 \\
0 & 0 & * & 0 \\
0 & 0 & 0 & *
\end{pmatrix},
\begin{pmatrix}
t_1 & 0 & 0 & 0 \\
0 & t_1& 0 & 0 \\
0 & 0 & t_2 & 0 \\
0 & 0 & 0 & t_1^{-2}t_2^{-1}
\end{pmatrix},
\]
respectively, and $K = L$.
Then $\Psi = \lbrace \lambda_1, \lambda_2 \rbrace$ with $\widehat \lambda_1 = \alpha_1 + \alpha_2$ and $\widehat \lambda_2 =  \alpha_1 + \alpha_2 + \alpha_3$, the set
$\widetilde \Psi$ consists of two classes $\lbrace \lambda_1 \rbrace$ and $\lbrace \lambda_2 \rbrace$, and $\Psi_0 = \varnothing$.

Using Lemma~\ref{lemma_XiAA0}(\ref{lemma_XiAA0_b}) or Proposition~\ref{prop_normalizer} we check that $N_G(H)^0 = H$.

The $K$-module $\mathfrak p_u / \mathfrak h_u$ is a direct sum of two simple $K$-modules with lowest weights $-(\alpha_1 + \alpha_2)$ and $-(\alpha_1 + \alpha_2 + \alpha_3)$, and so
\[
F_K(\mathfrak p_u / \mathfrak h_u) = \lbrace\alpha_1 + \alpha_2, \alpha_1 + \alpha_2 + \alpha_3, \alpha_1 + 2\alpha_2 + \alpha_3 \rbrace.
\]
Then Proposition~\ref{prop_S=K} yields $\Lambda_G(G/H) = \Lambda_K(\mathfrak p_u / \mathfrak h_u) = \ZZ\lbrace \alpha_1, \alpha_2, \alpha_3 \rbrace$, which implies $|\Sigma_G(G/H)| = \rk \Lambda_G(G/H) = 3$ by Proposition~\ref{prop_finite_index_in_norm}.

For $i=1,2$ let $\mathfrak h_i$ denote the additive degeneration of~$\mathfrak h$ defined by~$\lambda_i$ and put $N_i = N_G(\mathfrak h_i)^0$.
Then Propositions~\ref{prop_h_infty} and~\ref{prop_normalizer} imply that the algebras $\mathfrak n_1, \mathfrak n_2$ consist of all matrices in~$\mathfrak g$ of the form
\[
\begin{pmatrix}
* & 0 & 0 & 0 \\
* & * & 0 & 0 \\
* & * & * & 0 \\
* & 0 & 0 & *
\end{pmatrix},
\begin{pmatrix}
* & 0 & 0 & 0 \\
* & * & 0 & 0 \\
0 & 0 & * & 0 \\
* & * & * & *
\end{pmatrix},
\]
respectively.
We now discuss three different ways on how one can proceed.

Firstly, observe that both subgroups $N_1,N_2$ are regularly embedded in~$B^-$ and hence are strongly solvable.
Since for strongly solvable spherical subgroups there are explicit formulas for all the Luna--Vust invariants  given by \cite[Theorem~5.28]{Avd_solv_inv}, at this point we can apply part~(c) of the above-cited theorem and get
\begin{equation} \label{eqn_example}
\Sigma_G(G/N_1) = \lbrace \alpha_2, \alpha_3 \rbrace \ \text{and} \ \Sigma_G(G/N_2) = \lbrace \alpha_1, \alpha_2 \rbrace.
\end{equation}

Secondly, we can repeat the procedure for each of the subgroups $N_1,N_2$.
We have $\Psi(N_1) = \lbrace \beta_1, \beta_2 \rbrace$ with $\beta_1 = \alpha_2 + \alpha_3$, $\beta_2 = \alpha_3$ and $\Psi(N_2) = \lbrace \gamma_1, \gamma_2 \rbrace$ with $\gamma_1 = \alpha_1 +\nobreak \alpha_2$, $\gamma_2 =\nobreak \alpha_2$.
For $i = 1,2$ let $\mathfrak h_{1i}$ (resp.~$\mathfrak h_{2i}$) denote the additive degeneration of~$\mathfrak n_1$ (resp.~$\mathfrak n_2$) defined by~$\beta_i$ (resp.~$\gamma_i$) and put $N_{1i} = N_G(\mathfrak h_{1i})^0$ (resp.~$N_{2i} = N_G(\mathfrak h_{2i})^0$).
Then Propositions~\ref{prop_h_infty} and~\ref{prop_normalizer} imply that the algebras $\mathfrak n_{11}, \mathfrak n_{12}, \mathfrak n_{22}$ consist of all matrices in~$\mathfrak g$ of the form
\[
\begin{pmatrix}
* & 0 & 0 & 0 \\
* & * & 0 & 0 \\
* & * & * & 0 \\
* & * & 0 & *
\end{pmatrix},
\begin{pmatrix}
* & 0 & 0 & 0 \\
* & * & 0 & 0 \\
* & 0 & * & 0 \\
* & * & * & *
\end{pmatrix},
\begin{pmatrix}
* & 0 & 0 & 0 \\
0 & * & 0 & 0 \\
* & * & * & 0 \\
* & * & * & *
\end{pmatrix},
\]
respectively, and $\mathfrak n_{21} = \mathfrak n_{12}$.
Clearly, $|\Psi(N_{ij})|=1$ for all $i,j = 1,2$, therefore Proposition~\ref{prop_reduction_AG_sph_roots} and Theorem~\ref{thm_primitive_cases} yield $\Sigma_G(G/N_{11}) = \lbrace \alpha_3 \rbrace$, $\Sigma_G(G/N_{12}) = \Sigma_G(G/N_{21}) = \lbrace \alpha_2 \rbrace$, and $\Sigma_G(G/N_{22}) = \lbrace \alpha_1 \rbrace$.

Thirdly, one can apply our optimization of the base algorithm for $N_1$ and~$N_2$.
Clearly, all components of the SM-decompositions of $\Psi(N_1),\Psi(N_2)$ are singletons.
Below we list the results of applying Algorithm~\ref{alg_D} followed by reduction of the ambient group in the various cases;
the output is always $(\SL_2, \text{maximal torus})$ (which is a general feature of strongly solvable spherical subgroups).

$(N_1, \lbrace \beta_1 \rbrace)$: the simple root of $\SL_2$ is identified with~$\alpha_2$; one degeneration performed;

$(N_1, \lbrace \beta_2 \rbrace)$: the simple root of $\SL_2$ is identified with~$\alpha_3$; no degenerations performed;

$(N_2, \lbrace \gamma_1 \rbrace)$: the simple root of $\SL_2$ is identified with~$\alpha_1$; one degeneration performed;

$(N_2, \lbrace \gamma_2 \rbrace)$: the simple root of $\SL_2$ is identified with~$\alpha_2$; no degenerations performed.

By Proposition~\ref{prop_reduction_AG_sph_roots} and Theorem~\ref{thm_primitive_cases}(\ref{thm_primitive_cases_b}) we get~(\ref{eqn_example}).

Since all the three elements $\alpha_1, \alpha_2, \alpha_3$ are primitive in~$\Lambda_G(G/H)$, we finally obtain $\Sigma_G(G/H) = \lbrace \alpha_1, \alpha_2, \alpha_3 \rbrace$.

Note that for computing $\Sigma_G(G/N_1)$ and $\Sigma_G(G/N_2)$ via Algorithm~\ref{alg_D} only one degeneration is required in each case whereas the base algorithm and Algorithm~\ref{alg_C} require two degenerations.
\end{example}

\begin{example}
$G = \SL_7$, the subgroups $H$, $P$, $L$ consist of all matrices in $G$ having the form
\[
\begin{pmatrix}
* & * & 0 & 0 & 0 & 0 & 0\\
* & * & 0 & 0 & 0 & 0 & 0\\
0 & 0 & * & 0 & 0 & 0 & 0\\
* & * & * & * & * & 0 & 0\\
* & * & * & * & * & 0 & 0\\
* & * & * & * & * & * & 0\\
0 & 0 & * & 0 & 0 & 0 & *\\
\end{pmatrix},
\begin{pmatrix}
* & * & 0 & 0 & 0 & 0 & 0\\
* & * & 0 & 0 & 0 & 0 & 0\\
* & * & * & 0 & 0 & 0 & 0\\
* & * & * & * & * & 0 & 0\\
* & * & * & * & * & 0 & 0\\
* & * & * & * & * & * & 0\\
* & * & * & * & * & * & *\\
\end{pmatrix},
\begin{pmatrix}
* & * & 0 & 0 & 0 & 0 & 0\\
* & * & 0 & 0 & 0 & 0 & 0\\
0 & 0 & * & 0 & 0 & 0 & 0\\
0 & 0 & 0 & * & * & 0 & 0\\
0 & 0 & 0 & * & * & 0 & 0\\
0 & 0 & 0 & 0 & 0 & * & 0\\
0 & 0 & 0 & 0 & 0 & 0 & *\\
\end{pmatrix},
\]
respectively, $C$ consists of all matrices in $G$ of the form $\diag (t_1,t_1,t_2,t_3,t_3,t_4,t_5)$, and $K=L$.
Then $\Psi = \lbrace \lambda_1, \lambda_2, \lambda_3, \lambda_4 \rbrace$ with $\widehat \lambda_1 = \alpha_1 + \alpha_2$, $\widehat \lambda_2 =  \alpha_1 + \ldots + \alpha_6$, $\widehat \lambda_3 = \alpha_4+\alpha_5+\alpha_6$, and $\widehat \lambda_4 = \alpha_6$; $\Psi_0 = \varnothing$.
The SM-decomposition of $\Psi$ is $\Psi = \lbrace \lambda_1,\lambda_2 \rbrace \cup \lbrace \lambda_3 \rbrace \cup \lbrace \lambda_4 \rbrace$.

Using Lemma~\ref{lemma_XiAA0}(\ref{lemma_XiAA0_b}) or Proposition~\ref{prop_normalizer} we check that $N_G(H)^0 = H$.

Here are results of applying Algorithm~\ref{alg_D} followed by reduction of the ambient group in the various cases:

$(H, \lbrace \lambda_1,\lambda_2 \rbrace)$: the output is $(G_1, K_1)$ with $G_1 = \SL_4$ whose $i$th simple root is identified with $\alpha_i$ ($i = 1,2,3$) and $K_1$ described below; one degeneration performed;

$(H, \lbrace \lambda_3 \rbrace)$: the output is $(G_2,K_2)$ with $G_2 = \SL_3$ whose $i$th simple root is identified with $\alpha_{i+3}$ ($i = 1,2$) and $K_2$ described below; one degeneration performed;

$(H, \lbrace \lambda_4 \rbrace)$: the output is $(G_3, K_3)$ with $G_3 = \SL_2$ whose simple root is identified with~$\alpha_6$ and $K_3$ being a maximal torus; no degenerations performed.

The subgroups $K_1 \subset G_1$, $K_2 \subset G_2$ consist of all matrices having the form
\[
\begin{pmatrix}
* & * & 0 & 0\\
* & * & 0 & 0\\
0 & 0 & * & 0\\
0 & 0 & * & *\\
\end{pmatrix},
\begin{pmatrix}
* & * & 0\\
* & * & 0\\
0 & 0 & *\\
\end{pmatrix},
\]
respectively.
From Example~\ref{ex_2} we know that $\Sigma_{G_1}(G_1/K_1) = \lbrace \alpha_1, \alpha_2, \alpha_3 \rbrace$.
The cases of $(G_2, K_2)$ and $(G_3, K_3)$ are primitive, and by Theorem~\ref{thm_primitive_cases}(\ref{thm_primitive_cases_b}) the corresponding sets of spherical roots are $\lbrace \alpha_4 + \alpha_5 \rbrace$ and $\lbrace \alpha_6 \rbrace$, respectively.
Applying Theorem~\ref{thm_reduction_ref}, we finally obtain $\Sigma_G(G/H) = \lbrace \alpha_1, \alpha_2, \alpha_3, \alpha_4 + \alpha_5, \alpha_6 \rbrace$.

Note that our computations of $(G_1,K_1)$, $(G_2,K_2)$, $(G_3,K_3)$ via Algorithm~\ref{alg_D} required $1+1+0 = 2$ degenerations.
Computing the same pairs via Algorithm~\ref{alg_C} would require $2 + 4 + 4 = 10$ degenerations (see Remark~\ref{rem_estimates1}).
\end{example}


\end{document}